\documentclass[a4paper,11pt]{article}
\usepackage[T1]{fontenc}
\usepackage{lmodern,amsmath,amssymb,amsthm,mathtools,mathrsfs}
\usepackage[margin=25.4mm]{geometry}
\usepackage{microtype,booktabs,array,placeins,cite,longtable,enumitem}
\usepackage[hidelinks,bookmarksnumbered]{hyperref}
\hypersetup{pdftitle={A unified approach to infinite log-concavity of combinatorial sequences},
 pdfauthor={Matthew H. Y. Xie, Candice X. T. Zhang, Philip B. Zhang}}
\allowdisplaybreaks[2]
\newtheorem{theorem}{Theorem}[section]
\newtheorem{lemma}[theorem]{Lemma}
\newtheorem{proposition}[theorem]{Proposition}
\newtheorem{corollary}[theorem]{Corollary}
\theoremstyle{remark}
\newtheorem{remark}[theorem]{Remark}

\numberwithin{equation}{section}
\newcommand{\D}{\mathscr D}
\newcommand{\LL}{\mathcal L}
\newcommand{\Rea}{\operatorname{Re}}
\renewcommand{\Im}{\operatorname{Im}}
\newcommand{\B}{\mathrm B}
\newcommand{\norm}[2]{\lVert #1\rVert_{#2,\infty}}
\newcommand{\pnorm}[2]{\lVert #1\rVert_{#2,1}}

\newcommand{\Log}{\operatorname{Log}}

\title{A unified approach to infinite log-concavity\\of combinatorial sequences}
\author{Matthew H. Y. Xie\textsuperscript{1}\quad
Candice X.~T. Zhang\textsuperscript{1}\quad Philip B. Zhang\textsuperscript{2}\\[6pt]
\small \textsuperscript{1}School of Mathematical Sciences,\\
\small Tianjin University of Technology, Tianjin 300384, P. R. China\\[4pt]
\small \textsuperscript{2}College of Mathematical Sciences\\
\small \& Institute of Mathematics and Interdisciplinary Sciences,\\
\small Tianjin Normal University, Tianjin 300387, P. R. China\\[5pt]
\small \href{mailto:xie@email.tjut.edu.cn}{\texttt{xie@email.tjut.edu.cn}};\quad
\href{mailto:zhangxutong@email.tjut.edu.cn}{\texttt{zhangxutong@email.tjut.edu.cn}}\\
\small \href{mailto:zhang@tjnu.edu.cn}{\texttt{zhang@tjnu.edu.cn}}}

\date{}
\begin{document}
\maketitle
\begin{abstract}
We establish a sufficient condition for infinite log-concavity
and apply it to five families of combinatorial sequences.
We give a new proof of infinite log-concavity for the
Boros--Moll coefficient sequences $(d_k(n))_{k=0}^n$ for
fixed $n$. For the transposed sequences
$(d_\ell(\ell+k))_{k\ge0}$, we prove Zhao's conjecture on
infinite log-concavity for every fixed integer $\ell\ge3$.
For the normalized sequences in Euler's difference table,
we confirm the infinite log-concavity conjecture of
Chen, Gu, Ma and Wang under essential iteration, in which
both endpoints are discarded after each step.
We also confirm a conjecture of Medina, Moll and Rowland
on the infinite log-concavity of the coefficient sequences
of polynomials arising from iterated primitives of
$\log(1+x)$.
Finally, we answer a question of Br\"and\'en and Chasse by
showing that, for positive integers $d$, the sequence
$(k^d)_{k\ge0}$ is infinitely log-concave if and only if $d\ne2$.
For every real $d\ge3$, all its iterates are positive at every
index $k\ge1$. The proofs combine holomorphic estimates with
finite computer-assisted verification.
\end{abstract}
\noindent\textit{Keywords:} infinite log-concavity; Boros--Moll polynomials;
Euler's difference table; power sequences; $r$-factor log-concavity.\par
\noindent\textit{2020 Mathematics Subject Classification:} Primary 05A20;
Secondary 26C10, 30D15, 33C45.

\section{Introduction}\label{sec:introduction}
The purpose of this paper is to establish a unified approach
to infinite log-concavity and to apply it to several families
of combinatorial sequences. Recall that a sequence
$a=(a_k)_{k=0}^{n}$ or $a=(a_k)_{k=0}^{\infty}$ of nonnegative
real numbers is log-concave if
\[
 a_k^2\ge a_{k-1}a_{k+1}
\]
for all relevant indices $k$. For a finite sequence without
internal zeros, log-concavity implies unimodality. Log-concavity
is a classical topic in enumerative combinatorics, closely
related to real-rootedness and Newton's inequalities;
see the surveys of Stanley \cite{Stanley} and
Br\"and\'en \cite{BrandenSurvey}.

To study higher-order log-concavity, define the operator
$\LL$ by
\[
 (\LL a)_k=a_k^2-a_{k-1}a_{k+1}.
\]
Unless essential iteration is explicitly specified, we extend a sequence
by zero outside its original support.
A nonnegative sequence $a$ is said to be $r$-log-concave
if $\LL^j a$ is nonnegative for $1\le j\le r$.
It is called infinitely log-concave if $\LL^j a$ is
nonnegative for every $j\ge0$. Thus infinite log-concavity
requires that nonnegativity be preserved under arbitrarily
many applications of $\LL$. On a fixed support, it is
\emph{strictly infinitely log-concave} if every iterate is
positive at each index of the original positive support.
The notion of infinite log-concavity was introduced by
Boros and Moll \cite[p.~157]{BorosMoll} in their study of certain
polynomials arising from the evaluation of a quartic integral. They conjectured that the coefficient
sequences of these polynomials are infinitely log-concave.
This conjecture stimulated further investigations of
higher-order log-concavity and led to several related
problems for combinatorial sequences
\cite{UminskyYeats,McNamaraSagan,Branden,BrandenChasse}.

An important result in this direction is due to
Br\"and\'en \cite{Branden}. If a polynomial has nonnegative
coefficients and only real nonpositive zeros, then the
generating polynomial of its coefficient sequence after
applying $\LL$ has the same properties. Consequently,
the original coefficient sequence is infinitely log-concave.
Br\"and\'en thereby settled a conjecture proposed independently by Fisk,
McNamara and Sagan \cite[Conjecture~7.1]{McNamaraSagan}, and Stanley.
Another approach relies on stronger log-concavity inequalities
that are preserved under repeated applications of $\LL$
\cite{CravenCsordas,UminskyYeats,McNamaraSagan}.
Neither of these two criteria applies directly to all five
families considered here.

Hou and Zhang \cite{HouZhang} proved eventual $r$-log-concavity
of the partition function for every fixed positive integer $r$.
Hou and Li \cite[Theorem~3.2]{HouLi} gave more general criteria
based on asymptotic ratio expansions. In these fixed-order results,
the starting index may depend on the order. These estimates alone
do not establish positivity of all iterates on a common tail or
resolve the boundary conditions of the original sequence.
We study the logarithmic curvature $h_j$ of the $j$th iterate through
\[
 h_{j+1}=2h_j-\D\log(1-e^{-h_j}),
\]
where $\D$ is the centered second-difference operator. Each iteration
doubles the curvature up to this correction. By holomorphic interpolation
on nested disks, we estimate the correction through a number of
iterations that may grow with the sequence index. We combine these
estimates with exact rational bounds near the endpoints to prove
positivity under repeated applications of $\LL$. The precise local
criterion and boundary conventions are given in Section~\ref{sec:stopping}.

Our approach was developed independently of Liu and Mao's work
on the Taylor coefficients of the Riemann xi-function. Liu and Mao
\cite{LiuMao} also use the curvature recurrence, amplification on
shrinking disks, and a closure argument with a stopping time at each
position. They obtain estimates valid at growing iteration depths.
Here we treat curvature size and analytic radius separately, extend
the estimates to further curvature scales, and account for the
boundary conventions of the applications below. The precise
comparison is given in Remark~\ref{rem:liumao-comparison}.

We apply the criterion first to the two Boros--Moll families.
For the coefficient sequences, we give a new proof of strict
infinite log-concavity and an explicit iteration bound in
Theorem~\ref{thm:rows}, complementing the real-rootedness proof of
Xie and Zhang \cite{XieZhangRows}. We write each coefficient as a
binomial factor times an integral correction. For the transposed
sequences $(d_\ell(\ell+k))_{k\ge0}$, we settle the conjecture of
Zhao \cite{ZhaoPublished} for every integer $\ell\ge3$
in Theorem~\ref{thm:main}.
After finitely many preliminary iterations, we apply holomorphic
estimates at increasing depths. Earlier work on these two families is
discussed in Sections~\ref{sec:rows} and~\ref{sec:tails}.

The other three applications concern Euler's difference table,
the Medina--Moll--Rowland polynomials, and power sequences.
Chen, Gu, Ma, and Wang \cite{EulerOriginal} proved second-order interior
inequalities for Euler's table and posed an infinite-order question.
For the normalized sequences in Euler's table, we discard both
endpoints after each application of $\LL$. In Theorem~\ref{thm:euler},
we prove that every remaining entry is positive for $n\ge3$.
We also confirm the conjecture of Medina, Moll, and Rowland
\cite[Conjecture~4.3 and Theorem~4.5]{MMROriginal} on the coefficient
sequences of polynomials arising from iterated primitives of
$\log(1+x)$. Their result for $n\le300$ is extended to every $n\ge1$, with an explicit iteration bound, in Theorem~\ref{thm:mmr}.
Finally, we answer the question of Br\"and\'en and Chasse
\cite[Question~1]{BrandenChasse} in Theorem~\ref{thm:powers}:
for positive integers $d$,
$(k^d)_{k\ge0}$ is infinitely log-concave if and only if $d\ne2$.
For every real $d\ge3$, all its iterates are strictly positive at
indices $k\ge1$. Bounds on the required number of iterations are stated in
Corollary~\ref{cor:power-entry-bounds}.

Table~\ref{tab:five-applications} compares initial logarithmic
curvatures and iteration bounds, with $h_0$, $T_q$, and $q_*$
as defined in Section~\ref{sec:stopping}.
For the Medina--Moll--Rowland polynomials, $H_n=\sum_{s=1}^n1/s$ and the entries shown are
the reversed coefficients of $n!A_n$. For the finite sequences, these
asymptotic expansions hold uniformly in $n$, as $b\to\infty$ for the
Boros--Moll and Medina--Moll--Rowland sequences and as $k\to\infty$ for Euler.
The other two expansions have fixed $\ell$ or $d$.

\begin{table}[htbp]
\centering
\small
\renewcommand{\arraystretch}{1.45}
\setlength{\tabcolsep}{5pt}
\begin{tabular}{@{}
 >{\raggedright\arraybackslash}p{.23\textwidth}
 >{\raggedright\arraybackslash}p{.25\textwidth}
 >{\raggedright\arraybackslash}p{.30\textwidth}
 >{\raggedright\arraybackslash}p{.15\textwidth}@{}}
\toprule
Sequence and notation & Initial curvature $h_0(k)$
& Iteration bound & Range\\
\midrule
Boros--Moll\par
$a_k=d_k(n)$\par
$0\le k\le n$
& $\displaystyle\frac1k+\frac1{n-k}+O(b^{-2})$
& $T_{1/3}(k)\le\lfloor\log_2n\rfloor+1$
& $n\ge2$\\
\addlinespace[6pt]
Transposed Boros--Moll\par
$a_k=d_\ell(\ell+k)$\par
$k\ge0$
& $\displaystyle\frac{\ell-1/4}{k^2}+O_\ell(k^{-5/2})$
& $T_{q_*}(k)=2\log_2k+O_\ell(1)$\par
as $k\to\infty$
& $\ell\ge3$\\
\addlinespace[6pt]
Euler\par
$\widehat E_{n,k}=E_{n,k}/k!$\par
$0\le k\le n$
& $\displaystyle\frac1k+O(k^{-2})$
& $T_{1/3}(k)\le J(k)$\par
$J(k)=\lceil\log_2k\rceil+1$
& $n\ge750$\par
$k\ge128$\par
$J(k)<n-k$\\
\addlinespace[6pt]
Medina--Moll--Rowland\par
$a_k=\binom nk(H_n-H_k)$\par
$0\le k\le n-1$
& $\displaystyle\frac1k+\frac1{n-1-k}+O(b^{-2})$
& $T_{3/8}(k)\le$\par
$\max\{8,\lfloor\log_2n\rfloor+1\}$
& $n\ge1$\\
\addlinespace[6pt]
Powers\par
$a_k=k^d$, $k\ge0$
& $\displaystyle\frac d{k^2}+O_d(k^{-4})$
& $T_{3/8}(k)\le$\par
$\displaystyle\max\!\left\{0,\left\lceil\log_2\frac{3k^2}{d-2}\right\rceil\right\}$
& $d\ge3$\par
$k\ge256$\\
\bottomrule
\end{tabular}

\medskip
\begin{minipage}{\textwidth}
\footnotesize
Here $b=\min\{k,n-k\}$ for the Boros--Moll sequences and
$b=\min\{k,n-1-k\}$ for the Medina--Moll--Rowland sequences.
\end{minipage}
\caption{Initial curvatures and iteration bounds for the five families.}
\label{tab:five-applications}
\end{table}

The iteration count depends on the initial curvature. If its
holomorphic asymptotic in a sector is $\alpha k^{-p}$, with $\alpha>0$
and $0<p<2$, then $T_{q_*}(k)=p\log_2k+O(1)$. For inverse-square
decay, the leading coefficient also changes under iteration. For fixed
$\alpha>2$, we obtain $T_{q_*}(k)=2\log_2k-\log_2(\alpha-2)+O(1)$,
with error depending on the initial function and $\alpha$.
The leading-coefficient recurrence and the critical value $2$ already
occur in the fixed-order analysis of Hou and Li
\cite[Theorem~3.2, equation~(3.10)]{HouLi}.
Gajdzica \cite[Theorems~4.6--4.7 and Corollary~4.11]{Gajdzica}
treats polynomial-type expansions, and Banerjee, Bringmann, and Rolen
\cite[Theorem~1.13 and Section~5]{BanerjeeBringmannRolen} formulate
general fixed-iterate asymptotics.

The remainder of this paper is organized as follows.
In Section~\ref{sec:stopping}, we establish a local criterion for
infinite log-concavity and derive the analytic and rational estimates
used in its applications. In Sections~\ref{sec:rows} and~\ref{sec:tails},
we study the Boros--Moll coefficient sequences and their transposed
counterparts, respectively. Section~\ref{sec:euler} concerns the
normalized sequences in Euler's difference table under essential
iteration. The Medina--Moll--Rowland polynomials are treated in
Section~\ref{sec:mmr}, followed by power sequences in
Section~\ref{sec:powers}. Finally, in Section~\ref{sec:consequences}
we derive asymptotic estimates for $T_{q_*}(k)$ and investigate
the limiting behavior of the iterates.
\FloatBarrier

\section{A sufficient condition for infinite log-concavity}\label{sec:stopping}

In this section, we give a sufficient condition for positivity under
repeated applications of the log-concavity operator, for both fixed
support and essential iteration. We use local ratio bounds at depths
that may depend on the index, and establish complex-analytic and
exact rational estimates for the applications.

\subsection{A local criterion for iterated positivity}
In the five applications, $k$ denotes the sequence index and $j$ the
iteration depth. Once the family parameter is fixed, we write
$a_k^{[j]}=(\LL^j a)_k$.
For fixed support, let $\mathcal I$ be a finite interval of integers or a
half-line bounded below, and let $a$ be positive on $\mathcal I$ and zero
outside it.  Integer translation only relabels the entries, so we may
normalize $\mathcal I$ to $\{0,\ldots,n\}$ or $\mathbb Z_{\ge0}$ when convenient.
In particular, the positive support of $(k^d)_{k\ge0}$ is
$\{1,2,\ldots\}$. Its zero at $k=0$ lies outside this support.
For essential iteration, take $\mathcal I=\{0,\ldots,n\}$ and retain
$\{j,\ldots,n-j\}$ at layer $j$.
Put $q_*=(3-\sqrt5)/2$ and $\rho_*=-\log q_*$.
At an interior index, whenever the neighboring terms are positive, write
\[
 c_j(k)=\frac{a_{k-1}^{[j]}a_{k+1}^{[j]}}{(a_k^{[j]})^2},\quad
 s_j(k)=1-c_j(k),\quad h_j(k)=-\log c_j(k).
\]
At an endpoint of the original support, where the sequence is extended
by zero, set $c_j=0$ and $s_j=1$. Thus
$a_k^{[j+1]}=(a_k^{[j]})^2s_j(k)$, and positivity of the next iterate
is equivalent to $s_j>0$. Write
$\D f(z)=f(z-1)-2f(z)+f(z+1)$ for the centered second-difference
operator. Direct substitution gives
\begin{align}
 c_{j+1}(k)&=c_j(k)^2
 \frac{(1-c_j(k-1))(1-c_j(k+1))}{(1-c_j(k))^2},
 \label{eq:ratio-iteration}\\
 s_{j+1}(k)&=1-\frac{(1-s_j(k))^2s_j(k-1)s_j(k+1)}{s_j(k)^2},
 \label{eq:s-iteration}\\
 h_{j+1}&=2h_j-\D\log(1-e^{-h_j}).\label{eq:curvature-iteration}
\end{align}
Put $g(w)=\Log(1-e^{-w})$, where $\Log$ denotes the principal logarithm.
If $\Rea w>0$, then $\Rea(1-e^{-w})>0$, so $g$ is holomorphic and
$|\Im g|<\pi/2$. The last recurrence applies wherever both unit shifts
are interior. In the analytic recurrence, $\log(1-e^{-h_j})$ is
interpreted as $g(h_j)$. We set $\phi(w)=\log((1-e^{-w})/w)$, extending
the quotient by $1$ at zero and choosing the logarithm near zero
so that $\phi(0)=0$.

For an interpolating function, we use the same notation
\begin{equation}\label{eq:function-turan}
 (\LL f)(z)=f(z)^2-f(z-1)f(z+1),
\end{equation}
using the same function at the shifted arguments. If a sequence agrees with $f$ on an
integer interval, the function and sequence iterates agree whenever
all required initial indices remain in that interval. Zeros imposed
at a sampling boundary need not agree with the interpolating function.

For $R>1$, \emph{$R$-factor log-concavity} means
$a_k^2\ge R a_{k-1}a_{k+1}$ at every interior index, or
$c_j(k)\le1/R$ at layer $j$. This property is preserved by $\LL$
for $R\ge1/q_*=(3+\sqrt5)/2$ \cite[Lemma~2.1]{McNamaraSagan}.
Thus $[0,q_*]$ is preserved by the ratio recurrence under these
positivity assumptions.
More locally, if all relevant $c_j$ lie in $[0,1)$ and $0<q\le q_*$, then
\begin{equation}\label{eq:threshold-preservation}
 c_j(k)\le q\quad\Longrightarrow\quad
 c_{j+1}(k)\le\frac{c_j(k)^2}{(1-c_j(k))^2}\le q.
\end{equation}
For $0<q\le q_*$, let $T_q(k)$ be the first admissible layer $j$
with $c_j(k)\le q$, and set $T_q(k)=\infty$ if no such layer exists.
These depths are counted from the original sequence, including all
preliminary iterations.
We use $q=1/3$ for the Boros--Moll coefficient sequences
$(d_k(n))_{k=0}^n$ and normalized Euler sequences, $q=3/8$ for the coefficient sequences of the Medina--Moll--Rowland polynomials and for powers, and $q=q_*$ for the iteration estimates for transposed sequences.
Reaching this interval is sufficient, but not necessary, for infinite
log-concavity. Z.~Liu \cite[Theorem~1.1]{LiuNontermination} gives
finitely supported sequences with strictly positive iterates whose
interior ratios satisfy $c_j(k)=1/2>q_*$ at every layer.

A corresponding local sufficient condition for positive half-line
sequences appears in \cite[Lemma~5.1]{LiuMao}. To include
essential iteration, let $M(k)$ be the last layer containing the
coefficient at $k$. Thus $M(k)=\infty$ for fixed support and
$M(k)=\min\{k,n-k\}$ for essential iteration.

\begin{lemma}\label{lem:local-stop}
Let $a$ be positive on $\mathcal I$, and fix $0<q\le q_*$. Suppose that for
each $k\in \mathcal I$ there is an iteration index
$T(k)\in\mathbb Z_{\ge0}\cup\{\infty\}$, with $T(k)\le M(k)$, such that
\[
 a_k^{[j]}>0\quad(0\le j\le T(k)),\qquad
 c_{T(k)}(k)\le q\quad\text{if }T(k)<M(k).
\]
Then all iterates are positive on their prescribed supports. At each fixed position $k$, any bound $c_T(k)\le q$ persists at
every later layer at which the ratio at $k$ remains admissible.
\end{lemma}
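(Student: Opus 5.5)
We argue by induction on the layer $j$, using the stopping times $T(k)$ from the hypothesis together with the local threshold preservation \eqref{eq:threshold-preservation}. The claim to be proved at layer $j$ has two parts:
\[
 \text{(P$_j$)}\quad a_k^{[j]}>0 \text{ for every $k$ retained at layer $j$},
\]
\[
 \text{(Q$_j$)}\quad c_j(k)\le q \text{ whenever } T(k)\le j<M(k).
\]
Here ``retained at layer $j$'' means $k\in\mathcal I$ and $j\le M(k)$. Under essential iteration the retained set is $\{j,\ldots,n-j\}$, and $M(k)=\min\{k,n-k\}$ exactly encodes this.

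\emph{Base case and the two regimes.} At layer $0$, (P$_0$) is the positivity of $a$ on $\mathcal I$. (Q$_0$) holds trivially, since $T(k)=0$ forces $c_0(k)\le q$ by hypothesis. For the inductive step, assume (P$_j$) and (Q$_j$), and take $k$ retained at layer $j+1$, so $j+1\le M(k)$. If $j+1\le T(k)$, positivity of $a_k^{[j+1]}$ is given directly by the hypothesis. Otherwise $T(k)\le j<M(k)$, so (Q$_j$) gives $c_j(k)\le q<1$. Then
\[
 a_k^{[j+1]}=(a_k^{[j]})^2s_j(k)>0,
\]
using (P$_j$) at $k$. At an endpoint of the original support the convention $s_j=1$ makes this trivial.

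\emph{Propagating the ratio bound.} To get (Q$_{j+1}$), fix $k$ with $T(k)\le j+1<M(k)$. If $T(k)=j+1$, this is the hypothesis. If $T(k)\le j$, we have $c_j(k)\le q$, and we apply \eqref{eq:ratio-iteration}. The neighbours $k\pm1$ are retained at layers $j$ and $j+1$, because $M(k\pm1)\ge M(k)-1\ge j+1$ under essential iteration, and trivially under fixed support. By (P$_j$) and (P$_{j+1}$), all quantities in \eqref{eq:ratio-iteration} are positive, so $c_j(k\pm1)\in[0,1)$. Here $c_j(k\pm1)<1$ is equivalent to $a^{[j+1]}_{k\pm1}>0$, or $c=0$ at a support endpoint. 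Hence $(1-c_j(k-1))(1-c_j(k+1))\le 1$, and \eqref{eq:threshold-preservation} yields
\[
 c_{j+1}(k)\le \frac{c_j(k)^2}{(1-c_j(k))^2}\le q,
\]
since $q\le q_*$ solves $q^2/(1-q)^2\le q$.

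This also proves the final sentence of the lemma: the bound persists at every later admissible layer at $k$. The main obstacle is this neighbour-positivity bookkeeping. We must make sure that (P$_{j+1}$) at $k\pm1$ is available when (Q$_{j+1}$) at $k$ is proved. The cleanest fix is to order the induction so that (P$_{j+1}$) is established for all $k$ first. This is possible because (P$_{j+1}$) uses only layer-$j$ information. We also need to check that the neighbours of a position with $j+1<M(k)$ are still retained, which is where the strict inequality $T(k)<M(k)$ in the hypothesis is used. Finally, when $T(k)=\infty$, or $T(k)=M(k)$, only the direct positivity hypothesis is used.
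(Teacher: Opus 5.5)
Your proof is correct and is essentially the paper's argument. The paper phrases it as a minimal counterexample: it takes the first layer $r$ with a nonpositive entry $a_k^{[r]}$ and uses positivity of all earlier layers to propagate $c_{T(k)}(k)\le q$ up to layer $r-1$ via the threshold preservation inequality. That is the contrapositive of your forward induction on (P$_j$) and (Q$_j$), with the same neighbour-retention bookkeeping under essential iteration.
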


\begin{proof}
Suppose that $r$ is the first layer containing a nonpositive coefficient,
say $a_k^{[r]}$. At fixed endpoints the entries are repeatedly squared, so $k$ is
not such an endpoint. By hypothesis, $T(k)<r\le M(k)$ and
$c_{T(k)}(k)\le q$. Since all iterates before layer $r$ are
positive, the ratios needed in \eqref{eq:threshold-preservation}
belong to $[0,1)$ through layer $r-2$. They are admissible also under
essential iteration, because $k$ is still present at layer $r$.
Consequently $c_{r-1}(k)\le q<1$, and
\[
 a_k^{[r]}=(a_k^{[r-1]})^2(1-c_{r-1}(k))>0,
\]
a contradiction. Once positivity is known at every layer, each
attained bound persists by \eqref{eq:threshold-preservation}.
\end{proof}

For fixed support, we usually take $T(k)<\infty$ at each interior
index. The endpoints satisfy the hypothesis with $T(k)=0$.
For essential iteration, $T(k)=M(k)$ means that positivity is checked
through the last coefficient layer, where no ratio is required.
For fixed support, $T(k)=\infty$ is allowed when positivity at that
position is already known at every layer.

The hypotheses can be checked locally. For fixed support, a sufficient
set of inequalities at a center $k$ is, for some finite $T$,
\begin{equation}\label{eq:local-dependency-cone}
 a_v^{[j]}>0\quad
 (0\le j\le T,\ v\in \mathcal I,\ |v-k|\le T+1-j),
 \qquad c_T(k)\le q.
\end{equation}
Alternatively, one may work with the $s$-recurrence. For
$I\subseteq \mathcal I$, write $I^+$ for its one-step neighborhood in $\mathcal I$.
For iterations through depth $T$, suppose that values
$0<\sigma_j(k)\le1$ are defined on sets $I_0,\ldots,I_T$ satisfying
$I_{j+1}^+\subseteq I_j$, agree with the initial
$s$-values at layer zero, and satisfy \eqref{eq:s-iteration}
thereafter. Only original zero-extension endpoints are assigned
$\sigma=1$. By induction, these values agree with $s_j$ on the
retained sets, and the coefficients there are positive. If $k\in I_T$
and $\sigma_T(k)\ge1-q$, the hypothesis of the lemma follows.
Under essential iteration only the available layers are retained.
A newly exposed edge is not a zero-extension boundary.

\subsection{Holomorphic estimates}\label{subsec:prepared-entry}
Liu and Mao \cite[Lemma~3.4]{LiuMao} also estimate the iterated curvature
on shrinking disks. We use the bounded imaginary part of
$\Log(1-e^{-h})$ to estimate the correction independently of the
size of the curvature. The lower bound and disk radius can therefore
be chosen separately, without an upper bound for $|h_0|$ on that disk.
We write $\norm f R$ for the supremum norm on the relevant disk
of radius $R$.

\begin{lemma}\label{lem:strip-difference}
If $F$ is holomorphic and bounded on a disk, then on a concentric
disk whose radius is smaller by an amount $D>1$,
\begin{equation}\label{eq:modulus-difference}
 \norm{\D F}{\mathrm{inner}}\le\frac{2\norm F{\mathrm{outer}}}{D^2-1}.
\end{equation}
If $g$ is holomorphic on a disk of radius $R$ and $|\Im g|\le\pi/2$,
then, on the concentric disk of radius $0\le r<R-1$,
\begin{equation}\label{eq:strip-difference}
 \norm{\D g}r\le
 \frac{4R}{(R+r)((R-r)^2-1)}
 \le\frac4{(R-r)^2-1}.
\end{equation}
If the domain is $\Rea z>-a$, then, for $x=\Rea z+a>1$,
\begin{equation}\label{eq:halfplane-strip}
 |\D g(z)|\le\frac2{x^2-1}.
\end{equation}
\end{lemma}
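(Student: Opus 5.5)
All three bounds will come from Cauchy-type integral representations of the second difference. They differ only in how we control the function on the outer circle: by its modulus for \eqref{eq:modulus-difference}, and by its imaginary part for \eqref{eq:strip-difference} and \eqref{eq:halfplane-strip}.

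\textbf{Modulus bound.} Fix $z$ in the inner disk and let $\Gamma$ be the circle of radius $D$ about $z$. This circle lies in the closed outer disk, or in it after a limiting argument with radius $D-\varepsilon$. The residue theorem gives
\[
 \D F(z)=\frac1{2\pi i}\oint_\Gamma F(w)\Bigl(\frac1{w-z+1}-\frac2{w-z}+\frac1{w-z-1}\Bigr)dw ,
\]
and the kernel simplifies to $2/\bigl((u)(u^2-1)\bigr)$ with $u=w-z$. On $|u|=D$ we have $|u^2-1|\ge D^2-1$. Hence the kernel has modulus at most $2/(D(D^2-1))$, and multiplying by the length $2\pi D$ and dividing by $2\pi$ gives $2\norm F{\mathrm{outer}}/(D^2-1)$.

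\textbf{Strip bound.} For \eqref{eq:strip-difference} we cannot use $|g|$, since it may be unbounded. We use $\Im g$ instead, which is bounded by $\pi/2$. Subtracting a real constant from $g$ changes neither $\D g$ nor $\Im g$.

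Take $\rho<R$ and the circle $|w|=\rho$ about the common center. On the disk of radius $\rho$, the Schwarz formula recovers $g$ from $v=\Im g$ up to a real constant:
\[
 g(z)=iv\text{-Poisson}+ \text{const}, \qquad g'(z)\text{-type kernels}=\frac1{2\pi}\int_0^{2\pi}v(\rho e^{it})\,\frac{2\rho e^{it}}{(\rho e^{it}-z)^2}\,dt\cdot i .
\]
For the second difference this gives
\[
 \D g(z)=\frac{i}{2\pi}\int_0^{2\pi} v(\rho e^{it})\,K(\rho e^{it},z)\,dt ,
 \qquad K(\zeta,z)=\D_z\frac{\zeta+z}{\zeta-z}=\frac{4\zeta}{(\zeta-z)\bigl((\zeta-z)^2-1\bigr)} .
\]
Here we used $\frac{\zeta+z}{\zeta-z}=-1+\frac{2\zeta}{\zeta-z}$ together with the kernel identity from the modulus bound.

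Bounding $|v|\le\pi/2$ gives $\norm{\D g}{r}\le\frac14\int_0^{2\pi}|K|\,dt$. We estimate $|K|$ using $|\zeta-z|\ge\rho-r$, which gives $|K|\le4\rho/\bigl((\rho-r)((\rho-r)^2-1)\bigr)$. This crude route yields $\le 2\pi\rho/((\rho-r)((\rho-r)^2-1))$, which is not the stated constant.

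The sharper route keeps one factor in integrated form:
\[
 \frac1{2\pi}\int_0^{2\pi}\frac{\rho\,dt}{|\rho e^{it}-z|}\le\ \cdots
\]
Better still, use that $\frac1{2\pi}\int_0^{2\pi}\frac{\rho^2-|z|^2}{|\rho e^{it}-z|^2}dt=1$ (the Poisson kernel). Write
\[
 |K|\le\frac{4\rho}{|\zeta-z|^2}\cdot\frac{|\zeta-z|}{|\zeta-z|^2-1}\le\frac{4\rho}{|\zeta-z|^2}\cdot\frac{\rho-r}{(\rho-r)^2-1},
\]
where the last step holds because $x/(x^2-1)$ is decreasing for $x>1$. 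The Poisson identity then gives
\[
 \frac14\int_0^{2\pi}\frac{4\rho}{|\zeta-z|^2}\,dt=\frac{2\pi\rho}{\rho^2-|z|^2}.
\]
Hence $|\D g(z)|\le\frac{2\pi\rho(\rho-r)}{(\rho^2-r^2)((\rho-r)^2-1)}=\frac{2\pi\rho}{(\rho+r)((\rho-r)^2-1)}$ at $|z|=r$. This has the right shape but a factor $\pi/2$ too many.

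The main obstacle is this constant. To remove the $\pi$, I would replace the Poisson-kernel bound on $|v|$ with an exact evaluation. The map $g\mapsto\frac{2}{\pi}g$ sends the function into the strip $|\Im|<1$, and the extremal function there is $\frac{4}{\pi}\cdot$ the strip map, roughly $\log\frac{1+w}{1-w}$. So I would use subordination to the strip map $S(w)=\log\frac{1+w}{1-w}$, whose image is $|\Im S|<\pi/2$. Concretely, $g=S\circ\omega+c$ with $\omega$ a self-map of the disk of radius $\rho$ into the unit disk. The second difference is then bounded through Cauchy estimates on $\omega$ combined with $|S'(w)|\le 2/(1-|w|^2)$. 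Alternatively, I would write $\D g(z)=\int_{-1}^{1}(1-|t|)g''(z+t)\,dt$ and use the sharp strip estimate $|g''(\xi)|\le 4\rho(\rho+|\xi|)^{-1}(\rho-|\xi|)^{-2}\cdot$(const) for strip-valued functions. I expect this pursuit of the sharp constant to be the real work. Finally, letting $\rho\uparrow R$ gives the first inequality in \eqref{eq:strip-difference}, and the second follows from $R\le R+r$.

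\textbf{Half-plane bound.} For \eqref{eq:halfplane-strip}, apply \eqref{eq:strip-difference} with center $z$, $r=0$, and disks of radius $R<x$ contained in $\Rea z>-a$. Letting $R\uparrow x$ gives $4x/(x(x^2-1))$. To reach the stated $2/(x^2-1)$, I would instead use the half-plane Schwarz formula on the line $\Rea w=-a$. By the Poisson identity for the half-plane, the kernel integral equals exactly $\pi$ times the $|v|$ bound, and the factor-2 improvement comes from integrating the half-plane Poisson kernel against the shifted differences.
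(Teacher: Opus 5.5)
Your modulus bound is correct. The contour integral with kernel $2/(u(u^2-1))$ is an integral form of the paper's argument via Cauchy's estimates and $\D F(z)=2\sum_{k\ge1}a_{2k}$, and it gives the same constant.

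\textbf{The strip bound has a genuine gap.} Your completed argument gives $2\pi\rho/((\rho+r)((\rho-r)^2-1))$, and the loss cannot be repaired within your framework. The factor $\pi/2$ enters at the step $\bigl|\int vK\,dt\bigr|\le\frac{\pi}{2}\int|K|\,dt$, not in the later estimate of $|K|$. At $z=0$ you have $K(\zeta,0)=4/(\zeta^2-1)$. Hence $\frac14\int_0^{2\pi}|K|\,dt\ge2\pi/(\rho^2+1)>2\pi/(R^2+1)$. This already exceeds the target $4/(R^2-1)$ whenever $R^2>(\pi+2)/(\pi-2)$, so no sharper evaluation of $\int|K|$ can recover the stated constant.

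\textbf{What is missing is a way to use the oscillation of the kernel.} The paper does this mode by mode. If $h(\zeta)=\sum_m b_m\zeta^m$ is holomorphic on the unit disk with $|\Im h|\le\pi/2$, then $b_m\varrho^m=\frac{i}{\pi}\int_0^{2\pi}\Im h(\varrho e^{it})e^{-imt}\,dt$ for $m\ge1$. Since $\int_0^{2\pi}|\cos(mt+\alpha)|\,dt=4$ rather than $2\pi$, this gives $|b_m|\le2$.
\begin{itemize}
\item The paper applies this to $h=g\circ\psi$, where $\psi(\zeta)=(z+R\zeta)/(1+\overline z\zeta/R)$ maps the unit disk onto the outer disk with $\psi(0)=z$.
\item Then $\D g(z)=\sum_{m\ge1}b_m(\xi_+^m+\xi_-^m)$, with $\xi_+=R/(A-\overline z)$, $\xi_-=-R/(A+\overline z)$ and $A=R^2-|z|^2$.
\item Expand in powers of $\overline z/A$. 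The terms in which $m$ plus the power of $\overline z$ is odd cancel. Bounding the rest termwise gives $|\xi_+^m+\xi_-^m|\le(R/(A-t))^m+(-1)^m(R/(A+t))^m$ with $t=|z|$.
\item The two geometric series sum to exactly $4R/((R+t)((R-t)^2-1))$.
\end{itemize}
The re-centering by $\psi$ is what produces the factor $R+t$; expanding about the original center gives $R-t$ instead.

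\textbf{Neither remedy you sketch closes the gap.} The subordination argument is not carried out. Integrating a pointwise $g''$ bound is too lossy. Consider the bound $|g''(\xi)|\le4\rho/((\rho+|\xi|)(\rho-|\xi|)^2)$ with constant $1$, which does follow from $|b_1|,|b_2|\le2$ after re-centering at $\xi$. At $r=0$ it integrates against $1-|\tau|$ to $4\rho^{-2}(1+\frac1{3\rho}+O(\rho^{-2}))$, which exceeds $4/(\rho^2-1)$ for large $\rho$.

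\textbf{The half-plane bound is also not reached as written.} The choice $r=0$ only gives $4/(x^2-1)$, and your Poisson-identity estimate would give $\pi/(x^2-1)$. Once the disk bound is available, \eqref{eq:halfplane-strip} follows by using disks internally tangent to the line $\Rea z=-a$, with $R-r=x$ and $R\to\infty$: then $4R/((2R-x)(x^2-1))\to2/(x^2-1)$. The paper argues directly with $H(\zeta)=g(z+2x\zeta/(1-\zeta))$. Under this map, $z+1$ and $z-1$ correspond to $1/(2x+1)$ and $-1/(2x-1)$. This gives $|\D g(z)|\le2\sum_{k\ge1}\bigl((2x-1)^{-k}+(-1)^k(2x+1)^{-k}\bigr)=2/(x^2-1)$.

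In the half-plane alone, your Schwarz-kernel approach would in fact succeed if the kernel integral were evaluated exactly. One has $\int_{-\infty}^{\infty}\frac{ds}{\sqrt{x^2+s^2}\,(x^2+s^2-1)}=\frac{2}{\sqrt{x^2-1}}\arctan\frac{1}{\sqrt{x^2-1}}\le\frac{2}{x^2-1}$. It is the disk bound that genuinely needs the coefficient argument.
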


\begin{proof}
At a point of the inner disk, expand $F(z+w)=\sum_{k\ge0}a_kw^k$.
By Cauchy's estimate and a limit from smaller radii,
$|a_k|\le\norm F{\mathrm{outer}}D^{-k}$. Summing
$\D F(z)=2\sum_{k\ge1}a_{2k}$ proves \eqref{eq:modulus-difference}.

For $g(z+w)=\sum_{k\ge0}a_kw^k$ on $|w|<D$, comparison of Fourier
coefficients gives
\[
 a_kr^k=\frac{i}{\pi}\int_0^{2\pi}
 \Im g(z+re^{it})e^{-ikt}\,dt,\qquad 0<r<D,\ k\ge1.
\]
Rotating the integrand and using
$\int_0^{2\pi}|\cos(kt+\alpha)|\,dt=4$ yields $|a_k|r^k\le2$.
To use the whole outer disk, translate its center to zero and fix $z$
with $t=|z|\le r$. Put $A=R^2-t^2$. The biholomorphic map from the unit disk to the outer disk
\[
 \psi(\zeta)=\frac{z+R\zeta}{1+\overline z\zeta/R}
\]
sends $0$ to $z$, and the preimages of $z+1,z-1$ are
$\xi_+=R/(A-\overline z)$ and $\xi_-=-R/(A+\overline z)$.
Write $g(\psi(\zeta))=\sum_{m\ge0}b_m\zeta^m$; the coefficient
bound just proved gives $|b_m|\le2$ for $m\ge1$.
Expanding the two denominators together and retaining the even or odd
powers of $\overline z$ gives
\[
 |\xi_+^m+\xi_-^m|
 \le\left(\frac R{A-t}\right)^m
       +(-1)^m\left(\frac R{A+t}\right)^m.
\]
Since $t<R-1$, both geometric series converge. Hence
\[
 |\D g(z)|\le
 2\left(\frac R{A-t-R}-\frac R{A+t+R}\right)
 =\frac{4R}{(R+t)((R-t)^2-1)}.
\]
The denominator decreases with $t$, since its derivative is
$-(R-t)(R+3t)-1<0$. This proves \eqref{eq:strip-difference}.
For the half-plane, apply the same coefficient bound to
$H(\zeta)=g(z+2x\zeta/(1-\zeta))$ on the unit disk.
With $u=(2x+1)^{-1}$ and $v=(2x-1)^{-1}$, the linear shifts correspond
to $u$ and $-v$, so
\[
 |\D g(z)|\le2\sum_{k\ge1}(v^k+(-u)^k)
 =2\left(\frac v{1-v}-\frac u{1+u}\right)=\frac2{x^2-1}.
\]
\end{proof}
Consider concentric disks $D_j=\{|z-M|\le R_j\}$, where
$R_0>\cdots>R_K\ge0$ have successive gaps greater than one. Define
\[
 \mathcal E_0=0,\qquad
 \mathcal E_j=\sum_{i=0}^{j-1}
 \frac{2^{1-i}}{(R_i-R_j)^2-1}\quad(1\le j\le K).
\]

\begin{theorem}
\label{thm:unified-entry}
Let $h_0$ be holomorphic near $D_0$, real on its real diameter, and satisfy
\begin{equation}\label{eq:entry-margin}
 \inf_{z\in D_j}\Rea h_0(z)>\mathcal E_j
 \qquad(0\le j\le K).
\end{equation}
Then the curvatures defined by the recurrence are holomorphic on these
disks, have positive real part there, and satisfy
\begin{equation}\label{eq:disk-budget}
 |2^{-j}h_j(z)-h_0(z)|\le\mathcal E_j\qquad(z\in D_j).
\end{equation}
\end{theorem}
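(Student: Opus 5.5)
Define $h_{j+1}=2h_j-\D g(h_j)$ with $g(w)=\Log(1-e^{-w})$, and write $u_j=2^{-j}h_j$. Dividing the recurrence by $2^{j+1}$ gives
\[
 u_{j+1}=u_j-2^{-(j+1)}\,\D\bigl(g(h_j)\bigr),
\qquad\text{hence}\qquad
 u_j-h_0=-\sum_{i=0}^{j-1}2^{-(i+1)}\D\bigl(g(h_i)\bigr).
\]
The proof is by induction on $j$, with the following hypothesis at stage $j$. For every $i\le j$, $h_i$ is holomorphic on a neighbourhood of $D_i$ and is real on the real diameter. Moreover, on $D_i$ it satisfies $\Rea h_i>0$ and \eqref{eq:disk-budget}. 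The case $j=0$ is just \eqref{eq:entry-margin} with $\mathcal E_0=0$.

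\emph{Holomorphy.} The shifts $z\pm1$ of a point $z\in D_{i+1}$ lie in $D_i$, because the gap $R_i-R_{i+1}$ exceeds one. Since $\Rea h_i>0$ on $D_i$, the composition $g\circ h_i$ is holomorphic on a neighbourhood of $D_i$. It also satisfies $|\Im g(h_i)|<\pi/2$, as observed after \eqref{eq:curvature-iteration}. Therefore $h_{i+1}$ is holomorphic near $D_{i+1}$. Realness on the diameter is inherited, since $g$ is real on the positive reals.

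\emph{Error bound.} The key step is to bound each correction term on the \emph{final} disk $D_j$ directly, not step by step. Each function $g(h_i)$ with $i<j$ is holomorphic on the disk of radius $R_i$, and $D_j$ is concentric with it and has radius $R_j$. Moreover $R_i-R_j\ge R_i-R_{i+1}>1$. Applying \eqref{eq:strip-difference} with $R=R_i$ and $r=R_j$ gives
\[
 \norm{\D g(h_i)}{R_j}\le\frac{4}{(R_i-R_j)^2-1}.
\]
The bounded-imaginary-part version is used here because nothing is known about the size of $|h_i|$ itself. (If holomorphy is only available on the open disk, use radii slightly below $R_i$ and take a limit.) Multiplying by $2^{-(i+1)}$ gives $2^{1-i}/((R_i-R_j)^2-1)$. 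Summing over $i<j$ yields exactly $|u_j-h_0|\le\mathcal E_j$ on $D_j$, which is \eqref{eq:disk-budget}.

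\emph{Positivity.} It follows that on $D_j$
\[
 \Rea h_j=2^j\Rea u_j\ge 2^j\Bigl(\inf_{D_j}\Rea h_0-\mathcal E_j\Bigr)>0
\]
by \eqref{eq:entry-margin}. This closes the induction.

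The main obstacle is the bookkeeping of domains. The inductive hypothesis must give $\Rea h_i>0$ on all of $D_i$, not merely on $D_j$, so that $g\circ h_i$ is holomorphic on the larger disk where the strip estimate is applied. This is why the hypothesis is carried for all $i\le j$, each on its own disk. The bound for $\D g(h_i)$ is then evaluated using the gap $R_i-R_j$ rather than the one-step gap $R_i-R_{i+1}$. This is exactly the reason $\mathcal E_j$ involves $R_i-R_j$ and is not a cumulative sum of one-step errors.
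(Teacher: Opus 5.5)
Your proposal is correct and follows the same route as the paper: telescope the normalized recurrence, bound each summand $2^{-i-1}\D g(h_i)$ on $D_j$ by the bounded-imaginary-part estimate of Lemma~\ref{lem:strip-difference} with outer disk $D_i$ and gap $R_i-R_j$, and close the induction using the margin \eqref{eq:entry-margin}. The paper's brief ``by induction'' leaves implicit that $\Rea h_i>0$ must hold on each whole disk $D_i$, and your proposal makes this explicit.
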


\begin{proof}
We use the bound on the imaginary part to obtain an estimate uniform
in the iteration depth. Whenever the curvature has positive real part,
$g(h_i)=\Log(1-e^{-h_i})$ is holomorphic and its imaginary part lies in
$(-\pi/2,\pi/2)$. Telescoping the recurrence gives
\[
 2^{-j}h_j-h_0=-\sum_{i<j}2^{-i-1}\D g(h_i).
\]
On $D_j$, we apply Lemma~\ref{lem:strip-difference} with outer disk
$D_i$ to bound the $i$th summand by $2^{1-i}/((R_i-R_j)^2-1)$.
The estimate \eqref{eq:disk-budget} follows by induction, and all
real parts are positive by \eqref{eq:entry-margin}.
\end{proof}

Using the first bound in \eqref{eq:strip-difference} in the same proof,
we may replace $\mathcal E_j$ in the theorem by
\begin{equation}\label{eq:mobius-error}
 \widetilde{\mathcal E}_0=0,\qquad
 \widetilde{\mathcal E}_j=
 \sum_{i<j}\frac{2^{1-i}R_i}
 {(R_i+R_j)((R_i-R_j)^2-1)}.
\end{equation}
This also applies when $R_K=0$: the strict inequalities ensure that
$h_K$ is holomorphic near the center.

Evaluation of \eqref{eq:disk-budget} at the center gives the bounds used
in the applications. If $A_-\le h_0(M)\le A_+$, then
\begin{equation}\label{eq:entry-center-bounds}
 2^j(A_--\mathcal E_j)\le h_j(M)
 \le2^j(A_++\mathcal E_j).
\end{equation}
In particular, for $0<q\le q_*$, the bound
\begin{equation}\label{eq:entry-terminal-test}
 2^K(A_--\mathcal E_K)\ge\log(1/q)
\end{equation}
implies $e^{-h_K(M)}\le q$.
A direct evaluation of the telescoping sum gives the sharper estimate
\begin{equation}\label{eq:center-error}
 |2^{-j}h_j(M)-h_0(M)|\le\mathcal C_j,
 \qquad \mathcal C_j=\sum_{i<j}\frac{2^{1-i}}{R_i^2-1}.
\end{equation}
Here only $\Rea h_i>0$ on $D_i$ for $i<j$ is required.
If $R_K=0$, then $\mathcal C_K=\widetilde{\mathcal E}_K$.

At an integer center, both unit shifts remain in the preceding disk
because successive radii differ by more than one. If the initial function interpolates
the curvature of positive initial entries, uniqueness of the recurrence
identifies these functions with the curvatures of positive sequence
iterates on the required index ranges. For essential iteration, this identification holds only
while the corresponding entries remain admissible.

\begin{corollary}\label{cor:reference-entry}
In Theorem~\ref{thm:unified-entry}, write $h_0=H+u$, with $H$ and $u$
holomorphic and real on the real diameter.  It suffices to give numbers
$b_j,e_j$ such that
\begin{equation}\label{eq:reference-entry-input}
 \Rea H\ge b_j,\qquad |u|\le e_j\quad\hbox{on }D_j,
 \qquad b_j-e_j>\mathcal E_j.
\end{equation}
If $|u(M)|\le e_c$, one may take $A_\pm=H(M)\pm e_c$ in
\eqref{eq:entry-center-bounds}--\eqref{eq:entry-terminal-test}.
A sharper real bound for $h_0(M)$ may be used instead.
\end{corollary}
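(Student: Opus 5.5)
The corollary is a direct reduction to Theorem~\ref{thm:unified-entry} and to the center bounds \eqref{eq:entry-center-bounds}--\eqref{eq:entry-terminal-test}. I would first check the margin hypothesis \eqref{eq:entry-margin} on each disk $D_j$. For $z\in D_j$, real parts are additive, so
\[
 \Rea h_0(z)=\Rea H(z)+\Rea u(z)\ge b_j-|u(z)|\ge b_j-e_j .
\]
Since $D_j$ is compact and $h_0$ is continuous there, the infimum of $\Rea h_0$ over $D_j$ is at least $b_j-e_j$. This number strictly exceeds $\mathcal E_j$ by \eqref{eq:reference-entry-input}, so \eqref{eq:entry-margin} holds. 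The theorem also needs $h_0$ to be holomorphic near $D_0$ and real on the real diameter. Both properties follow from $h_0=H+u$, because $H$ and $u$ have them by assumption; here I read ``on $D_j$'' as including a neighbourhood of $D_0$, as in the theorem. Theorem~\ref{thm:unified-entry} then gives holomorphy, positive real parts, and the bound \eqref{eq:disk-budget}.

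For the center statement, I would use that $M$ lies on the real diameter, so $H(M)$ and $u(M)$ are real. From $|u(M)|\le e_c$ we get
\[
 H(M)-e_c\le h_0(M)\le H(M)+e_c .
\]
These are exactly the hypotheses $A_-\le h_0(M)\le A_+$ needed for \eqref{eq:entry-center-bounds}. That inequality was derived by evaluating \eqref{eq:disk-budget} at the center, and \eqref{eq:entry-terminal-test} follows from it. More precisely, $h_j(M)$ is real, and $h_K(M)\ge 2^K(A_--\mathcal E_K)\ge\log(1/q)$, which gives $e^{-h_K(M)}\le q$. Any sharper real enclosure of $h_0(M)$ can be used in the same way, since only the inequality $A_-\le h_0(M)\le A_+$ enters.

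The only point needing care, and so the nearest thing to an obstacle, is the strictness of the margin. Passing from pointwise bounds to an infimum could lose strictness, but compactness of $D_j$, together with the strict inequality $b_j-e_j>\mathcal E_j$, makes this harmless.
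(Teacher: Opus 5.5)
Your proposal is correct and matches the paper's proof: bound $\Rea(H+u)\ge\Rea H-|u|$ on each $D_j$, note $|h_0(M)-H(M)|=|u(M)|\le e_c$, and then apply Theorem~\ref{thm:unified-entry} with the center bounds. The compactness remark is unnecessary, since the infimum automatically satisfies the non-strict bound $\ge b_j-e_j$, and the strict gap $b_j-e_j>\mathcal E_j$ is a comparison between constants, so no strictness can be lost.
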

\begin{proof}
Use $\Rea(H+u)\ge\Rea H-|u|$ and
$|h_0(M)-H(M)|\le e_c$, then apply the theorem.
\end{proof}

\begin{corollary}\label{cor:geometric-budget}
Let $R_j=R(3/4)^j$. If the smallest successive gap has square at
least $D^2>1$, then
\begin{equation}\label{eq:geometric-budget}
 \mathcal E_j<\frac{42}{(1-D^{-2})R^2},\qquad
 \widetilde{\mathcal E}_j<\frac{26}{(1-D^{-2})R^2}
 \quad(1\le j\le K).
\end{equation}
\end{corollary}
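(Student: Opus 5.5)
The plan is to reduce both sums to a one-variable recursion that no longer involves $R$ or $D$, and then to bound that recursion by an explicit numerical induction.

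\emph{Removing the $-1$.} For $i<j\le K$ we have $R_i-R_j\ge R_{j-1}-R_j\ge D$. Hence $(R_i-R_j)^2\ge D^2$, and therefore
\[
(R_i-R_j)^2-1\ge(1-D^{-2})(R_i-R_j)^2 .
\]
So it suffices to prove
\[
\sum_{i<j}\frac{2^{1-i}}{(R_i-R_j)^2}<\frac{42}{R^2}
\qquad\text{and}\qquad
\sum_{i<j}\frac{2^{1-i}R_i}{(R_i+R_j)(R_i-R_j)^2}<\frac{26}{R^2}.
\]

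\emph{Reindexing.} Put $m=j-i\ge1$. Then $R_i-R_j=R(3/4)^i\bigl(1-(3/4)^m\bigr)$ and $R_i/(R_i+R_j)=1/(1+(3/4)^m)$. Since $2^{1-i}(4/3)^{2i}=2(8/9)^i$, the $i$th term of the first sum equals $2(8/9)^{j-m}f(m)/R^2$, where $f(m)=(1-(3/4)^m)^{-2}$. For the second sum, $f$ is replaced by $g(m)=f(m)/(1+(3/4)^m)$. Define
\[
S_j=\sum_{m=1}^j2(8/9)^{j-m}f(m),\qquad T_j=\sum_{m=1}^j 2(8/9)^{j-m}g(m).
\]
Then
\[
S_1=2f(1),\quad S_{j+1}=\tfrac89S_j+2f(j+1),\qquad T_1=2g(1),\quad T_{j+1}=\tfrac89T_j+2g(j+1).
\]
The claim is now that $S_j<42$ and $T_j<26$ for all $j\ge1$. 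The main difficulty is that these constants are fairly tight: $S_j$ peaks at about $40.6$ and $T_j$ at about $25.1$. A crude bound such as $f\le16$ would therefore fail, and a short explicit computation of the first few terms is needed.

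\emph{Numerical induction.} Both $f$ and $g$ are decreasing in $m$. The inductive step is as follows: if $S_j<42$ and $2f(j+1)\le 42/9$, i.e. $f(j+1)\le 7/3$, then $S_{j+1}<42$. Now $f(4)=(256/175)^2\approx2.14<7/3$, so the step works for every $j+1\ge4$. It remains to check the initial cases directly:
\[
S_1=32,\qquad S_2\approx28.44+10.45\approx38.9,\qquad S_3\approx34.6+5.98\approx40.6,
\]
all below $42$.

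Similarly, the step for $T$ needs $g(j+1)\le13/9\approx1.444$. Since $g(5)\approx1.719/1.237\approx1.39$, this holds for $j+1\ge5$. The initial values are
\[
T_1=2g(1)=128/7\approx18.29,\quad T_2\approx22.95,\quad T_3\approx24.61,\quad T_4\approx25.13,
\]
all below $26$.

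\emph{Conclusion.} Combining these bounds with the factor $(1-D^{-2})^{-1}R^{-2}$ gives both inequalities in \eqref{eq:geometric-budget}, with strict inequality. The only delicate point is carrying out the rational (or interval) evaluations of $f(1),\dots,f(4)$ and $g(1),\dots,g(5)$ precisely enough to confirm the margins $42-40.6$ and $26-25.13$.
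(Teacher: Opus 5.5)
Your proposal is correct and follows essentially the paper's proof. Both arguments use the reduction $(R_i-R_j)^2-1\ge(1-D^{-2})(R_i-R_j)^2$, rescale to sums with ratio $8/9$, and run an induction on the recurrence whose fixed point is the target constant ($21$ and $13$ once your factor $2$ is removed), using $f(m)\le 7/3$ for $m\ge4$ and $g(m)\le 13/9$ for $m\ge5$. The only difference is that the paper certifies the initial cases with the rational upper bounds $16,6,3$ and $64/7,7/2,13/6,5/3$, which is the rigorous form your decimal evaluations need.
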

\begin{proof}
Set $b=8/9$. By the definition of $\mathcal E_j$,
\[
 \mathcal E_j\le\frac{2}{(1-D^{-2})R^2}
 \sum_{r=1}^j\frac{b^{j-r}}{(1-(3/4)^r)^2}.
\]
The reciprocals of the denominators are at most $16,6,3,7/3$ for
$r=1,2,3,r\ge4$, respectively.  Since
$16b^2+6b+3<21$ and $(7/3)/(1-b)=21$, the sum is less than $21$;
the cases $j=1,2$ obey the same bound.

For the second estimate, the corresponding sum is
\[
 S_j=\sum_{m=1}^j
 \frac{b^{j-m}}{(1+(3/4)^m)(1-(3/4)^m)^2}.
\]
For the first four terms, the reciprocals of the denominators are at most
$64/7,7/2,13/6,5/3$, respectively,
which give $S_j<13$ for $1\le j\le4$.
For $j\ge5$ the new term is at most
$1073741824/772820587<13/9$.
The recurrence $S_j=(8/9)S_{j-1}
+((1+(3/4)^j)(1-(3/4)^j)^2)^{-1}$ therefore gives $S_j<13$
for every $j$. This proves the constant $26$.
\end{proof}
For geometric radii through $R_{K-1}$, if $R_{K-1}^2>\eta>1$, then
\eqref{eq:center-error} gives
\begin{equation}\label{eq:geometric-center-error}
 \mathcal C_j<\frac{18}{(1-\eta^{-1})R^2}
 \qquad(1\le j\le K),
\end{equation}
since $\sum_{i\ge0}2(8/9)^i=18$. This estimate also permits $R_K=0$.

\begin{corollary}\label{thm:analytic-amplification}
Let $h_0$ be holomorphic near $|z-M|\le R$ and real on its real diameter.
Suppose that
\begin{equation}\label{eq:halfplane-stop}
 \Rea h_0\ge\mu>0,\qquad \mu R^2\ge128.
\end{equation}
Put
\begin{equation}\label{eq:analytic-depth}
 K=\begin{cases}
 0,&\mu\ge\rho_*,\\
 \left\lceil\log_2(2\rho_*/\mu)\right\rceil,&0<\mu<\rho_*,
 \end{cases}
 \qquad R_j=R(3/4)^j\quad(0\le j\le K).
\end{equation}
The iterates $h_j$ are holomorphic with $\Rea h_j>0$ on
$|z-M|\le R_j$ for $0\le j\le K$, and $h_K(M)\ge\rho_*$.
For $0\le j\le K$, we also have
\begin{equation}\label{eq:center-doubling}
 h_0(M)-\frac{54}{R^2}
 <2^{-j}h_j(M)<h_0(M)+\frac{54}{R^2},
\end{equation}
and $\tau=\min\{0\le j\le K:h_j(M)\ge\rho_*\}$ satisfies
\begin{equation}\label{eq:disk-time-sandwich}
\max\!\left\{0,
\left\lceil\log_2\frac{\rho_*}{h_0(M)+54/R^2}\right\rceil\right\}
\le\tau\le K.
\end{equation}
\end{corollary}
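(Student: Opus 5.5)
The plan is to apply Theorem~\ref{thm:unified-entry} with the geometric radii $R_j=R(3/4)^j$ and to control the error terms with Corollary~\ref{cor:geometric-budget} and \eqref{eq:geometric-center-error}. The only real input is to check that the disks do not shrink too fast, that is, that the last gaps still exceed one, using $\mu R^2\ge128$.

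If $\mu\ge\rho_*$, then $K=0$ and there is nothing to iterate. Real-valuedness on the diameter gives $h_0(M)\ge\Rea h_0(M)\ge\mu\ge\rho_*$, and both \eqref{eq:center-doubling} and \eqref{eq:disk-time-sandwich} are immediate.

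Now assume $0<\mu<\rho_*$. Then $K\ge1$ and $2^{K-1}<2\rho_*/\mu$, so $2^K<4\rho_*/\mu$. Since $9/16>1/2$, we get
\[
R_K^2=R^2(9/16)^K\ge R^2 2^{-K}>\frac{\mu R^2}{4\rho_*}\ge\frac{32}{\rho_*}>33.
\]
The smallest successive gap is $R_{K-1}-R_K=R_K/3$, whose square exceeds $33/9>3.6$. We may therefore take $D^2=3.6$ in Corollary~\ref{cor:geometric-budget}, which gives
\[
\mathcal E_j<\frac{42}{(1-1/3.6)R^2}<\frac{59}{R^2}\le\frac{59}{128}\,\mu<\frac{\mu}{2}\qquad(1\le j\le K).
\]
Hence $\inf_{D_j}\Rea h_0\ge\mu>\mathcal E_j$, which is exactly \eqref{eq:entry-margin}. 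Theorem~\ref{thm:unified-entry} then yields holomorphy of the $h_j$ and $\Rea h_j>0$ on $D_j$ for $0\le j\le K$.

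For the center estimate I would use \eqref{eq:geometric-center-error} rather than $\mathcal E_j$. Take $\eta=R_{K-1}^2=(16/9)R_K^2>58$. Then $\mathcal C_j<18/((1-1/58)R^2)<19/R^2<54/R^2$, and this gives \eqref{eq:center-doubling}; recall that $h_j(M)$ is real because $h_0$ is real on the diameter. The terminal bound follows from \eqref{eq:entry-center-bounds} with $A_-=\mu$ and $2^K\ge2\rho_*/\mu$:
\[
h_K(M)\ge2^K(\mu-\mathcal E_K)>2^K\frac{\mu}{2}\ge\rho_*.
\]
Thus $\tau\le K$ is well defined. For the lower bound on $\tau$, the upper half of \eqref{eq:center-doubling} gives $h_j(M)<2^j(h_0(M)+54/R^2)$. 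This is below $\rho_*$ whenever $j<\log_2\bigl(\rho_*/(h_0(M)+54/R^2)\bigr)$, so no such $j$ can equal $\tau$, which proves \eqref{eq:disk-time-sandwich}.

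The main obstacle is the bookkeeping that keeps the last disk radius $R_K$ large. The argument depends on the comparison $(9/16)^K\ge2^{-K}$ together with $2^K<4\rho_*/\mu$, which turns the hypothesis $\mu R^2\ge128$ into a lower bound on $R_K$. That bound in turn controls both the gap condition $D>1$ and the margin $\mathcal E_K<\mu/2$. The numerical constants above have slack but should be rechecked.
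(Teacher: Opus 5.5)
Your proof is correct and follows the paper's route: you apply Theorem~\ref{thm:unified-entry} with the geometric radii, use Corollary~\ref{cor:geometric-budget} for the real-part margin, and read off \eqref{eq:center-doubling} and \eqref{eq:disk-time-sandwich} from the bounds at the center. The one difference is bookkeeping: the paper keeps the factor $(9/8)^{K-1}$ and uses $K\ge2$ to get a squared gap greater than $9/2$, so the single bound $\mathcal E_j<54/R^2$ gives both the margin and the center estimate, whereas your cruder $D^2=3.6$ gives only $59/R^2$ and so forces the detour through \eqref{eq:geometric-center-error} (there take $\eta=58$ rather than $\eta=R_{K-1}^2$, since that bound requires $R_{K-1}^2>\eta$ strictly).
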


\begin{proof}
If $\mu\ge\rho_*$, then $h_0(M)\ge\rho_*$ and $K=\tau=0$,
so all assertions are immediate. Assume $0<\mu<\rho_*$. Then $K\ge2$
and $2^{K-1}\mu<2\rho_*<2$.  The hypothesis $\mu R^2\ge128$ makes
the smallest gap in \eqref{eq:analytic-depth} have square greater than
$9/2$. By Corollary~\ref{cor:geometric-budget},
\begin{equation*}
 \mathcal E_j<54/R^2\le(27/64)\mu.
\end{equation*}
Applying Theorem~\ref{thm:unified-entry} with $A_-=\mu$, we obtain
holomorphic iterates with $\Rea h_j>0$ for $0\le j\le K$ and
$h_K(M)>(37/64)2^K\mu\ge(37/32)\rho_*$.
From the bounds at the center, we also obtain
\eqref{eq:center-doubling} and the lower bound in
\eqref{eq:disk-time-sandwich}.  The final radius exceeds one, so the required unit shifts
remain within the domains of the preceding iterates.  No monotonicity of
$h_j(M)$ is required.
\end{proof}

For curvature iterates at an integer center $n$, let $\tau(n)$ denote
the first layer at which $h_j(n)\ge\rho_*$; set $\tau(n)=\infty$ if
no such layer exists.

\begin{remark}[Comparison with Liu and Mao]\label{rem:liumao-comparison}
For inverse-linear initial curvatures, Liu and Mao
\cite[Lemma~3.4]{LiuMao} construct holomorphic iterates with positive
real part through every depth $0\le j\le\lceil\log_2n\rceil$ under
the hypotheses of that lemma. They prove
\[
 |2^{-j}g_j(n)-g_0(n)|<\frac{1500}{n^2}.
\]
Together with the initial bounds in that lemma, this estimate
gives $\min\{j:g_j(n)\ge\rho_*\}=\log_2n+O(1)$.
In their Lemma~2.2, they identify these analytic curvatures with the
sequence iterates. In their Lemma~5.1, they deduce global positivity from
ratio bounds attained at position-dependent depths on a half-line.
They thus obtain estimates at growing depths and bounds on the number
of iterations needed for inverse-linear initial curvature.

In Corollary~\ref{thm:analytic-amplification}, the curvature lower
bound $\mu$ and the analytic radius $R$ are separate parameters.
The hypotheses require a lower bound on $\Rea h_0$ throughout the
disk but no upper modulus bound there. An upper bound at the center
is still used for two-sided iteration bounds. This also applies to
power-law decay with $0<p<2$ and, after finitely many iterations, to
inverse-square curvature with leading coefficient greater than $2$,
as shown in Theorem~\ref{thm:power-law}.
The global criterion accommodates both fixed support and essential
iteration, with the boundary conventions used in the applications.
\end{remark}

\begin{corollary}\label{cor:normalized-stop}
Let \(x>0\), \(\kappa\in\mathbb R\), and \(0<r\le1/2\).  Suppose that
\(h_0\) is holomorphic on
\(|z-(\kappa+x)|\le rx\), is real on the real diameter, and satisfies
\begin{equation}\label{eq:normalized-disk-bound}
|(z-\kappa)^2h_0(z)-B|\le E
\end{equation}
there, where \(B>0\).  If
\begin{equation}\label{eq:normalized-size}
r^2\left(\frac{B}{(1+r)^2}-\frac{E}{(1-r)^2}\right)\ge128,
\end{equation}
then, for some finite $T\ge0$, the iterates $h_j$
are holomorphic with $\Rea h_j>0$ on nested disks for $0\le j\le T$,
and $h_T(\kappa+x)\ge\rho_*$.
\end{corollary}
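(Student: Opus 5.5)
The plan is to reduce the statement to Corollary~\ref{thm:analytic-amplification}. Take the center $M=\kappa+x$ and radius $R=rx$. The task is then to find a lower bound $\mu$ for $\Rea h_0$ on the disk $|z-M|\le R$ with $\mu R^2\ge128$. Once that holds, the corollary directly gives holomorphic iterates with $\Rea h_j>0$ on the nested disks $|z-M|\le R(3/4)^j$ for $0\le j\le K$, together with $h_K(M)\ge\rho_*$. Taking $T=K$, which is finite, proves the claim.

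To bound $\Rea h_0$ from below, substitute $\zeta=z-\kappa$. The disk becomes $|\zeta-x|\le rx$, which lies in the right half-plane since $r\le1/2$. Write $h_0=B\zeta^{-2}+u\,\zeta^{-2}$ with $|u|\le E$ by \eqref{eq:normalized-disk-bound}. Since $|\zeta|\ge(1-r)x$ on the disk, the perturbation satisfies $|u\zeta^{-2}|\le E/((1-r)^2x^2)$. It therefore suffices to prove
\[
 \Rea\zeta^{-2}\ge\frac1{(1+r)^2x^2}\qquad(|\zeta-x|\le rx).
\]
With this, one can take $\mu=x^{-2}\bigl(B/(1+r)^2-E/(1-r)^2\bigr)$, and then $\mu R^2\ge128$ is exactly \eqref{eq:normalized-size}. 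In particular $\mu>0$.

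The main step is the inequality for $\Rea\zeta^{-2}$, and this is where $r\le1/2$ is used. The map $v=1/\zeta$ sends the disk $|\zeta-x|\le rx$ onto the disk with real diameter $[1/((1+r)x),\,1/((1-r)x)]$. That image disk has center $c=1/(x(1-r^2))$ and radius $\rho=r/(x(1-r^2))=rc$.

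Since $\Rea v^2$ is harmonic, its minimum is attained on the boundary, so write $v=c+\rho e^{it}$. With $s=\cos t$,
\[
 \Rea v^2=c^2+2c\rho s+\rho^2(2s^2-1).
\]
This is a convex quadratic in $s\in[-1,1]$ with vertex at $s=-c/(2\rho)=-1/(2r)\le-1$. It is therefore nondecreasing on $[-1,1]$, and its minimum is at $s=-1$, where it equals $(c-\rho)^2=1/((1+r)x)^2$. This gives the required inequality.

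Finally, the reality of $h_0$ on the real diameter is part of the hypotheses, so every hypothesis of Corollary~\ref{thm:analytic-amplification} holds and the conclusion follows.
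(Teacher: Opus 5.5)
Your proposal is correct and is essentially the paper's proof. The paper likewise bounds $\Rea h_0$ below by $\mu=x^{-2}\bigl(B/(1+r)^2-E/(1-r)^2\bigr)$, using $\Rea(z-\kappa)^{-2}\ge((1+r)x)^{-2}$ and $|z-\kappa|^{-2}\le((1-r)x)^{-2}$ obtained by inversion and minimizing $\Rea w^2$, and then applies Corollary~\ref{thm:analytic-amplification} with $R=rx$. Your boundary computation with vertex at $s=-1/(2r)\le-1$ supplies the detail the paper leaves implicit.
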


\begin{proof}
Write $z=\kappa+x(1+s)$, so $|s|\le r$.  The elementary inversion estimate
\[
 \Rea(1+s)^{-2}\ge(1+r)^{-2},\qquad
 |(1+s)^{-2}|\le(1-r)^{-2}\qquad(r\le1/2)
\]
follows by mapping $|s|\le r$ under $w=(1+s)^{-1}$ and minimizing
$\Rea w^2$. Hence, by \eqref{eq:normalized-disk-bound},
\[
\Rea h_0(z)\ge x^{-2}\left(\frac{B}{(1+r)^2}
-\frac{E}{(1-r)^2}\right)=:\mu>0.
\]
The disk has radius $R=rx$, and \eqref{eq:normalized-size} is exactly
$\mu R^2\ge128$.  Apply Corollary~\ref{thm:analytic-amplification}.
\end{proof}

Factorial and binomial curvatures are explicit. In several applications,
the remaining holomorphic factor can be estimated through a bound on
its argument, as in the following proposition.
Consider positive entries $a_k=B_kF(k)$ on their natural integer
interval. For $B_k=1/k!$ put $d=k$; for $B_k=\binom Nk$ put
$d=\min(k,N-k)$.

\begin{proposition}\label{prop:reference-factors}
Suppose $F$ has a holomorphic logarithm $L$ near $|z-k|\le d/2$,
with \mbox{$|\Im(L-c)|\le C$} there, where $C$ is fixed and $c$ may depend
on the center. For all sufficiently large $d$, uniformly over the family,
the holomorphic iterates satisfy $\Rea h_j>0$ on nested disks for
$0\le j\le\tau(k)<\infty$, with $h_{\tau(k)}(k)\ge\rho_*$ and
\[
 \tau(k)=\log_2d+O_C(1).
\]
Here $\tau(k)$ is defined by the holomorphic recurrence. It equals
$T_{q_*}(k)$ for the sequence if every integer in
$[k-\tau(k)-1,k+\tau(k)+1]$ belongs to the original positive support.
\end{proposition}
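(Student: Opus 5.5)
The plan is to split the initial curvature as in Corollary~\ref{cor:reference-entry} and then apply Corollary~\ref{thm:analytic-amplification} on a disk of radius proportional to $d$. Since $a_k=B_kF(k)$ and $h_0=-\D\log a$, write $h_0=H+u$ with $H=-\D\log B$ and $u=-\D L=-\D(L-c)$; subtracting the constant $c$ does not change $\D L$.

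\emph{Step 1: the reference curvature.} In the factorial case, $\log B(z)=-\log\Gamma(z+1)$ and $H(z)=\D\log\Gamma(z+1)=\Log(1+1/z)$. In the binomial case, $H(z)=\Log(1+1/z)+\Log(1+1/(N-z))$. Both are holomorphic on $|z-k|\le d/2$, since the relevant arguments stay in $\Rea>d/2-1>0$ there.

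I will work on the smaller disk $|z-k|\le R:=d/4$. There $z$ (and $N-z$ in the binomial case) lies in a disk of center $w_0$ and radius $|w_0|/4$ with $w_0\ge d$ real. Writing $1/z=w$, the points $w$ lie in a disk around $1/w_0$ of radius comparable to $1/w_0$, inside the right half-plane. Using $\Rea\Log(1+w)\ge \Rea w-|w|^2$ for $|w|$ small, this gives $\Rea H\ge c_1/d$ for an absolute $c_1>0$ once $d$ is large. At the center, $H(k)\le 2/d$ in both cases.

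\emph{Step 2: the correction.} Apply Lemma~\ref{lem:strip-difference} to $g=(\pi/(2C))(L-c)$, which is holomorphic on $|z-k|\le d/2$ with $|\Im g|\le\pi/2$. With outer radius $d/2$ and inner radius $d/4$, this gives
\[
|u|\le\frac{2C}{\pi}\cdot\frac{4}{(d/4)^2-1}=O(C/d^2)\quad\text{on }|z-k|\le d/4 .
\]
Hence $\Rea h_0\ge\mu:=c_1/d-O(C/d^2)\ge c_1/(2d)$ for $d\ge d_0(C)$. Also $\mu R^2\ge c_1d/32\ge128$ for $d$ large, uniformly over the family because only $C$ enters.

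\emph{Step 3: amplification and the two-sided bound.} Corollary~\ref{thm:analytic-amplification} now yields holomorphic iterates with $\Rea h_j>0$ on the nested disks $|z-k|\le R(3/4)^j$ for $0\le j\le K$. It also gives $h_K(k)\ge\rho_*$, so $\tau(k)\le K=\lceil\log_2(2\rho_*/\mu)\rceil=\log_2 d+O_C(1)$. For the lower bound, \eqref{eq:disk-time-sandwich} together with
\[
h_0(k)+54/R^2\le 2/d+O(C/d^2)+O(1/d^2)
\]
gives $\tau(k)\ge\log_2(\rho_* d/3)=\log_2d-O(1)$.

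\emph{Step 4: identification with the sequence.} The recurrence at center $k$ through depth $\tau(k)$ uses only initial values at integers in $[k-\tau(k)-1,k+\tau(k)+1]$. If all of these lie in the positive support, the holomorphic $h_0$ interpolates the sequence curvature there, and uniqueness of the recurrence identifies $h_j(k)$ with the sequence curvature for $j\le\tau(k)$. Positivity of $\Rea h_j$ at the real points gives $c_j<1$, hence positive entries along the dependency cone \eqref{eq:local-dependency-cone}. Therefore $\tau(k)=T_{q_*}(k)$, since $h\ge\rho_*$ is equivalent to $c\le q_*$.

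\emph{Expected main obstacle.} The delicate part is Step~1: a uniform lower bound for $\Rea\Log(1+1/z)$ on the whole disk with explicit absolute constants, plus checking that the branch of $\log\Gamma$ and the radius choice keep everything holomorphic. The choice $R=d/4$ (rather than $d/2$) is made to leave room for both the Lemma~\ref{lem:strip-difference} gap and this real-part estimate.
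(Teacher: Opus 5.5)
Your outline is the paper's proof: the reference curvature $\log(1+1/z)$, plus the second endpoint term in the binomial case; Lemma~\ref{lem:strip-difference} applied to a rescaled $L-c$ from radius $d/2$ to $d/4$; and the two-sided bounds of Corollary~\ref{thm:analytic-amplification}. Your Steps~1--3 estimates are fine. The gap is that you use the given logarithm $L$ as it stands. The hypotheses only say that $L$ is \emph{some} holomorphic logarithm of $F$ with $|\Im(L-c)|\le C$. Nothing forces $L$ to be real between the integers. For example, $L(z)=i\varepsilon\sin(\pi z)/\cosh(\pi d/2)$ satisfies the hypothesis, has $F(k)=1$, and is non-real off the integers. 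At integers you only know $L(k)=\log F(k)+2\pi i m_k$ with $m_k\in\mathbb Z$, and these need not be constant when $C\ge\pi$. Consequently your $h_0=H-\D L$ need not be real on the real diameter, which is a stated hypothesis of Corollary~\ref{thm:analytic-amplification} and Theorem~\ref{thm:unified-entry}. Your Step~4 assertion that $h_0$ interpolates the sequence curvature, so that $h_j(k)$ is real and $h\ge\rho_*$ is equivalent to $c\le q_*$, is likewise unproved. Your opening identification of $h_0=-\D\log a$ with $H-\D L$ silently assumes $L(k)=\log F(k)$.

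The paper removes this with one symmetrization: replace $L$ by $L_*(z)=(L(z)+\overline{L(\overline z)})/2$. It is holomorphic on the symmetric disk and real on the real diameter. At integers it equals $\log F(k)$, because the terms $\pm2\pi i m_k$ cancel. It still satisfies $|\Im L_*|\le C$, since $\Im L_*(z)=(v(z)-v(\overline z))/2$ with $v=\Im(L-c)$. With $L_*$ in place of $L$, your Steps~2--4 go through unchanged.

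Alternatively, you could observe that $|\Im\D L(k)|=2\pi|m_{k-1}-2m_k+m_{k+1}|$ is bounded by your $O(C/d^2)$ estimate. For large $d$ this is less than $2\pi$, forcing $\D L(k)=\D\log F(k)$ at the relevant integers. You would then have to check that the center bounds you invoke only use values at integer points, since the corollary as stated requires realness on the whole diameter. Either way, some such argument is needed before the corollary can be applied and the identification with $T_{q_*}(k)$ claimed.
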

\begin{proof}
Set $L_*(z)=(L(z)+\overline{L(\overline z)})/2$. This function is real on the real diameter and equals
$\log F$ at integers.  If $v=\Im(L-c)$, then
$\Im L_*(z)=(v(z)-v(\overline z))/2$, so $|\Im L_*|\le C$.
It follows from Lemma~\ref{lem:strip-difference} that
$|\D L_*|=O_C(d^{-2})$ on $|z-k|\le d/4$.
The factorial and binomial factors have respective curvatures
\[
 h_{\mathrm{ref}}(z)=\log(1+1/z),\qquad
 h_{\mathrm{ref}}(z)=\log(1+1/z)+\log(1+1/(N-z)).
\]
On this disk one denominator is within $d/4$ of $d$, and the other,
if present, has positive real part.  Hence $\Rea h_{\mathrm{ref}}\ge c_1/d$ and
$h_{\mathrm{ref}}(k)\le2/d$. Thus $h_0=h_{\mathrm{ref}}-\D L_*$
has positive real part of order $d^{-1}$ on a disk of radius
$d/4$, while $h_0(k)=O_C(d^{-1})$. For large $d$, the upper and lower
bounds in Corollary~\ref{thm:analytic-amplification} give
$\tau(k)=\log_2d+O_C(1)$. If the stated initial interval lies in the
positive support, $h_0$ agrees with the original curvatures at all
required integer arguments. The recurrences then agree through
layer $\tau(k)$, giving $T_{q_*}(k)=\tau(k)$.
\end{proof}

Under essential iteration, the support condition requires
$\tau(k)<M(k)$, so that the ratio at layer $\tau(k)$ is defined.

For Boros--Moll sequences, the logarithm of the Laplace correction is a
difference of two logarithms with bounded argument. For the Medina--Moll--Rowland polynomials,
the correction is a Stieltjes sum with positive real part.  In Euler's table,
we factor $n!e^{z/n-1}$ out of the interpolating polynomial and
bound the logarithm of the remaining factor.
Positive geometric factors leave the ratios unchanged. If $b_k=AB^ka_k$
with $A,B>0$, then
\[
 (\LL^j b)_k=A^{2^j}B^{2^jk}(\LL^j a)_k.
\]

If the hypotheses of the holomorphic estimates fail at layer zero,
we may check them
after finitely many positive iterations. To estimate these iterates,
we express the curvature as a reference term and a remainder.

\begin{lemma}\label{lem:reference-scale}
Let $H$ be independent of $j$ and admit a holomorphic logarithm
on a domain containing all required shifted arguments. Write
\[
 h_j(z)=2^jH(z)\{\beta_j+R_j(z)\}.
\]
Suppose the logarithms admit holomorphic branches on a common connected
domain containing the required shifted arguments. The curvature recurrence
is then equivalent to
\begin{equation}\label{eq:reference-scale}
\begin{aligned}
 R_{j+1}={}&R_j+\beta_j-\beta_{j+1}
 -\frac{\D\log H}{2^{j+1}H}\\
 &-\frac1{2^{j+1}H}\D\!\left\{
 \log(\beta_j+R_j)
 +\phi\bigl(2^jH(\beta_j+R_j)\bigr)\right\}.
\end{aligned}
\end{equation}
\end{lemma}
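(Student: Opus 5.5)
This is a direct substitution into the curvature recurrence \eqref{eq:curvature-iteration}. The only points requiring care are the factorization of the correction term and the choice of logarithm branches.

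\emph{Step 1: factor the correction.} For $w$ with a logarithm branch, the definition $\phi(w)=\log((1-e^{-w})/w)$ gives
\[
 g(w)=\log w+\phi(w)
\]
modulo $2\pi i$. Take $w=h_j=2^jH(\beta_j+R_j)$. Then
\[
 \log(1-e^{-h_j})=j\log2+\log H+\log(\beta_j+R_j)+\phi\bigl(2^jH(\beta_j+R_j)\bigr),
\]
up to an additive constant in $2\pi i\mathbb Z$. This uses the chosen holomorphic branches of $\log H$ and $\log(\beta_j+R_j)$ on the common connected domain.

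\emph{Step 2: apply $\D$.} The constants $j\log2$ and any locally constant multiple of $2\pi i$ are annihilated by $\D$. Such multiples are constant because the domain is connected and every term is a holomorphic branch, so their difference is a continuous function with values in $2\pi i\mathbb Z$. This is the only delicate point. I would stress that a mismatch of branches at the shifted arguments $z\pm1$ cannot occur, precisely because the domain contains all required shifted arguments and is connected. Hence
\[
 \D g(h_j)=\D\log H+\D\bigl\{\log(\beta_j+R_j)+\phi(2^jH(\beta_j+R_j))\bigr\}.
\]

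\emph{Step 3: substitute and divide.} Insert these into $h_{j+1}=2h_j-\D g(h_j)$ and write $h_{j+1}=2^{j+1}H(\beta_{j+1}+R_{j+1})$. Dividing by $2^{j+1}H$ gives
\[
 \beta_{j+1}+R_{j+1}=\beta_j+R_j-\frac{\D\log H}{2^{j+1}H}-\frac{1}{2^{j+1}H}\D\{\cdots\},
\]
and rearranging yields \eqref{eq:reference-scale}. Division is legitimate since $H$ has a logarithm, so $H\ne0$.

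\emph{Converse.} Every step is reversible: multiplying back by $2^{j+1}H$ recovers the recurrence. So, given $\beta_j$ as arbitrary normalizations, the two formulations define the same $h_j$ sequence, which establishes the claimed equivalence.
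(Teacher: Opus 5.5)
Your proof is correct and takes the same approach as the paper: write $\log(1-e^{-h_j})=\log h_j+\phi(h_j)$, expand $\log h_j=j\log2+\log H+\log(\beta_j+R_j)$, use that $\D$ annihilates $j\log 2$ and any branch constants, and divide by $2^{j+1}H$. Your argument that the $2\pi i\mathbb{Z}$ discrepancies are constant on the connected domain spells out what the paper compresses into ``the relevant logarithmic identities hold up to additive constants.''
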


\begin{proof}
The relevant logarithmic identities hold up to additive constants,
which are annihilated by $\D$. We may therefore use
$\log(1-e^{-h_j})=\log h_j+\phi(h_j)$ in
\eqref{eq:curvature-iteration}. The constant $j\log2$ also disappears
under $\D$, and division by $2^{j+1}H$ gives \eqref{eq:reference-scale}.
\end{proof}

The holomorphic estimates also apply from layer $J$. Suppose
$a_v^{[j]}>0$ for $0\le j\le J$ on all required index ranges, and
$h_J$ agrees with a holomorphic function $h$ at the integer points
of its initial disk. Starting from $h$ at layer $J$, define the subsequent
analytic curvatures by \eqref{eq:curvature-iteration}. If the estimates
apply through $K$ further layers and give
$h_{J+K}(k)\ge\log(1/q)$, where $0<q\le q_*$, uniqueness identifies
these functions with the sequence curvatures wherever the latter are
admissible. Thus $a_v^{[j]}>0$ for $0\le j\le\min\{J+K,M(v)\}$
on the required index ranges, and $c_{J+K}(k)\le q$ if $J+K<M(k)$.
These index ranges must contain all neighboring coefficients used at
each stage and all earlier coefficients needed to compute them.

\subsection{Exact bounds for finite parameter ranges}\label{sec:models}
For bounded parameter intervals, we approximate the entries and ratios
by polynomials with explicit error bounds.
We use monomials on $[0,1]$ or $[-1,1]$ and let $\|P\|_1$ be the
sum of absolute coefficients. Then $\sup|P|\le\|P\|_1$ and
$\|PQ\|_1\le\|P\|_1\|Q\|_1$.
We say that $(P,E)$ encloses $f$ if $|f-P|\le E$, where $P$ is a
polynomial with rational coefficients and $E$ is a nonnegative rational
number. The same bounds hold for multivariate monomials on products
of these intervals.

\begin{lemma}\label{lem:model-arithmetic}
If $(P,E)$ and $(Q,F)$ enclose $f$ and $g$, then addition and subtraction
have error $E+F$. In the displayed enclosures, any irrational error term
is understood to be replaced by a rational upper bound.
For any polynomial $A$ with rational coefficients, the product is enclosed by
\[
 \left(A,\|PQ-A\|_1+E\|Q\|_1+F\|P\|_1+EF\right).
\]
If $g\ge d>0$, an enclosure for the quotient is
\[
 \left(A,\frac{\|P-QA\|_1+E+F\|A\|_1}{d}\right).
\]
In particular, if $f\ge d>0$, an enclosure for its reciprocal is
$\bigl(A,(\|1-PA\|_1+E\|A\|_1)/d\bigr)$.
\end{lemma}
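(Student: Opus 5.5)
The plan is to expand each operation and apply the triangle inequality together with the two norm properties recorded just before the lemma: $\sup|P|\le\|P\|_1$ and $\|PQ\|_1\le\|P\|_1\|Q\|_1$. For addition and subtraction, $|(f\pm g)-(P\pm Q)|\le|f-P|+|g-Q|\le E+F$, which is immediate. If an error bound such as a square root or logarithm arises in a displayed enclosure, we replace it by any larger rational number. The inequality is monotone in the error term, so this is harmless.

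For the product, we write $f=P+\varepsilon$ and $g=Q+\delta$ with $|\varepsilon|\le E$ and $|\delta|\le F$. Then
\[
 fg-A=(PQ-A)+\varepsilon Q+\delta P+\varepsilon\delta .
\]
We bound $|PQ-A|\le\|PQ-A\|_1$ by the sup-norm estimate, and $|Q|\le\|Q\|_1$, $|P|\le\|P\|_1$ likewise, to obtain the stated error. The key point is that $PQ-A$ is itself a polynomial with rational coefficients, so its coefficient norm is exactly computable. This is what allows $A$ to be a truncated or rounded version of $PQ$.

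For the quotient, we use $g\ge d>0$ and write
\[
 \frac fg-A=\frac{f-gA}{g},\qquad f-gA=(P-QA)+\varepsilon-\delta A .
\]
Hence $|f-gA|\le\|P-QA\|_1+E+F\|A\|_1$, and dividing by $g\ge d$ gives the claimed bound. The reciprocal is the special case in which the numerator is exactly $1$ with zero error and the denominator is $f$ with enclosure $(P,E)$. The bound then reads $(\|1-PA\|_1+E\|A\|_1)/d$.

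The multivariate statement needs only the two norm properties on products of the intervals $[0,1]$ and $[-1,1]$. Both follow because every monomial has modulus at most $1$ there, and because the coefficients of a product are convolutions of the coefficients of the factors. There is no real obstacle. The only points needing care are that the lower bound $d$ must apply to the true function $g$, not to $Q$, and that all quantities remain rational, which holds because rational polynomials are closed under these operations.
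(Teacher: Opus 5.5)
Your proposal is correct and follows essentially the same route as the paper. You use the same expansion $fg-A=(PQ-A)+(f-P)Q+P(g-Q)+(f-P)(g-Q)$, the same identity $f/g-A=\bigl(P-QA+(f-P)-(g-Q)A\bigr)/g$ with the lower bound $d$ applied to the true denominator, and the same treatment of the reciprocal as the quotient with exact numerator $1$.
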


\begin{proof}
Expand $fg-PQ=(f-P)Q+P(g-Q)+(f-P)(g-Q)$, and use
$f/g-A=(P-QA+(f-P)-(g-Q)A)/g$.  The reciprocal is the case $f=1$
of the quotient rule. The error includes every discarded coefficient,
and the enclosure is valid for any approximating polynomial with rational coefficients satisfying
the stated residual bound.
\end{proof}

At each prescribed scale $S=2^b$, we round retained polynomial
coefficients downward to multiples of $S^{-1}$ and error bounds upward.
For products, we truncate at the stated degree and include all discarded
coefficients and rounding errors in $\|PQ-A\|_1$.
In the Boros--Moll coefficient, Euler, and power computations, we choose
quotient approximations by formal division. If
$P=\sum p_mx^m$ and $Q=\sum q_mx^m$, with $q_0>0$, set
\[
 a_m=\frac1S\left\lfloor\frac{S}{q_0}
 \left(p_m-\sum_{i=1}^m q_i a_{m-i}\right)\right\rfloor
 \qquad(0\le m\le D),
\]
where $D$ is the stated degree and missing coefficients are zero.
The quotient error is computed from the full residual $P-QA$ for
$A=\sum_{m=0}^D a_mx^m$, including terms of degree greater than $D$.
For the transposed sequences, we first use this rule with $P=1$,
certify the reciprocal, and then apply the product bound. In their
initial Riccati recurrence we instead estimate the whole difference,
as described in Subsection~\ref{sec:certificates}.
The bivariate quotient approximations are specified in the proof of
Proposition~\ref{ver:mmr}.
For each family, neighboring entries use the same parameter, so the
inequalities hold on whole parameter intervals.

Fixed-parameter computations use rational intervals with outward
rounding. Before every division, a
strictly positive lower bound for its denominator is established.
Before every ratio update, we prove that every retained ratio at the
preceding layer has an upper bound below one. Nonnegativity follows inductively
from the initial ratios and the recurrence. The coefficient iterates are therefore positive. Positivity of
$(1-c_j(k))^2$ alone would not imply positivity of the next coefficient.
Once a calculation
certifies $c_T(k)\le q\le q_*$, these inequalities give the hypotheses
of Lemma~\ref{lem:local-stop} at $k$.

The support convention determines the initial data required by each
calculation. A coefficient $a_k^{[j]}$ depends only on the initial
entries with indices in $[k-j,k+j]$, and its ratio $c_j(k)$ depends
on those in $[k-j-1,k+j+1]$. This follows by induction from the
three-term definition of $\LL$. Thus a finite calculation includes
these intervals, with zeros assigned only outside the original fixed
support. Under essential iteration both endpoints are discarded at
each step. An artificial truncation endpoint is also discarded and is never
assigned a zero value. In the following sections, we specify the initial intervals for each
local computation and their overlap with the ranges where the
analytic estimates hold.

\section{Boros--Moll sequences}\label{sec:rows}
The Boros--Moll polynomials arise from the evaluation
\[
 \int_0^\infty\frac{dt}{(t^4+2xt^2+1)^{n+1}}
 =\frac{\pi P_n(x)}{2^{n+3/2}(x+1)^{n+1/2}},
 \qquad x>-1,
\]
where $n$ is a nonnegative integer \cite{BorosMollIntegral}.
Their coefficients have the positive sum representation
\begin{equation}\label{eq:coefficients}
 P_n(x)=\sum_{k=0}^n d_k(n)x^k,\qquad
 d_k(n)=4^{-n}\sum_{r=k}^n2^r
 \binom{2n-2r}{n-r}\binom{n+r}{r}\binom rk.
\end{equation}

Boros and Moll \cite{BorosMoll1999} proved that these coefficient
sequences are unimodal. Kauers and Paule \cite{KauersPaule} proved
their log-concavity using coefficient recurrences, and Chen, Pang, and
Qu \cite{ChenPangQuLogConcavity} gave a combinatorial proof using
partially $2$-colored permutations. Chen and Xia
\cite{ChenXiaRatio,ChenXia2LC} established ratio monotonicity and
$2$-log-concavity. Chen, Dou, and Yang \cite{ChenDouYang2013} proved
Br\"and\'en's conjectures on the real-rootedness of two factorially
normalized coefficient polynomials. These real-rootedness results
imply $2$- and $3$-log-concavity of the Boros--Moll coefficient sequences.

Boros and Moll \cite[p.~157]{BorosMoll} conjectured nonnegativity
at every iteration depth. The polynomials
$P_n(x)$ are not real-rooted for $n\ge2$ \cite{BorosMoll1999}, so
the preservation theorem of Br\"and\'en \cite{Branden} does not apply
to them directly.
Chen, Yang, and Zhang \cite[Conjecture~1.3]{ChenYangZhang2018}
instead conjectured that the generating polynomial of the first
iterate has only real zeros. Xie and Zhang
\cite{XieZhangRows} proved this stronger statement by showing
strict interlacing with a Narayana polynomial. Infinite log-concavity
then follows from Br\"and\'en's theorem.

Here we give a proof through the criterion of
Section~\ref{sec:stopping}, together with an explicit bound on the
number of iterations needed to reach $3$-factor log-concavity.
We express each coefficient as a binomial factor times an integral
and estimate the logarithmic curvature of each factor to prove
the required inequalities in the interior. Near the endpoints,
we verify them using exact rational bounds.
For each $n\ge0$, put $a_k=d_k(n)$ for $0\le k\le n$, extend by zero,
and write $a_k^{[j]}=(\LL^j a)_k$.

\begin{theorem}\label{thm:rows}
For every integer $n\ge0$, put
$a^{[j]}=\LL^j(d_0(n),\ldots,d_n(n))$, with zero extension.
Then $a_k^{[j]}>0$ for every $j\ge0$ and $0\le k\le n$.
If $n\ge2$ and $j\ge\lfloor\log_2n\rfloor+1$, then
\begin{equation}\label{eq:row-main}
 (a_k^{[j]})^2\ge3a_{k-1}^{[j]}a_{k+1}^{[j]},
 \qquad 1\le k<n.
\end{equation}
\end{theorem}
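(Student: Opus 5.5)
We plan to apply Lemma~\ref{lem:local-stop} with $q=1/3$ and fixed support $\mathcal I=\{0,\ldots,n\}$. The endpoints take $T(k)=0$. For each interior $1\le k\le n-1$ we aim to exhibit some $T(k)\le\lfloor\log_2n\rfloor+1$ with $c_{T(k)}(k)\le1/3$, keeping all required iterates positive on the dependency cone \eqref{eq:local-dependency-cone}. The lemma then gives positivity of every iterate. Since $1/3<q_*$, persistence in \eqref{eq:threshold-preservation} gives $c_j(k)\le1/3$ for all $j\ge T(k)$, hence for all $j\ge\lfloor\log_2n\rfloor+1$. This is exactly \eqref{eq:row-main}. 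The cases $n\le1$ are trivial, since the sequence is then a constant or two positive entries, which are only squared.

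For the interior we use the representation $d_k(n)=\binom nk F_n(k)$. Here $F_n$ is a Laplace-type integral obtained from \eqref{eq:coefficients}, for instance via a beta-integral representation of $\binom{2n-2r}{n-r}\binom{n+r}{r}\binom rk/\binom nk$ summed in $r$. We arrange $F_n$ so that it extends holomorphically to $|z-k|\le d/2$, $d=\min(k,n-k)$, and so that $\log F_n$ is a difference of two logarithms of functions with positive real part there. This gives $|\Im(\log F_n-c)|\le C$ with $C$ absolute (e.g.\ $C=\pi$).

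Proposition~\ref{prop:reference-factors}, or more quantitatively Corollary~\ref{cor:reference-entry} with $H$ the binomial curvature $\log(1+1/z)+\log(1+1/(n-z))$ and $u=-\D L_*$, then applies. Lemma~\ref{lem:strip-difference} bounds $|u|\le 4/((d/2-r)^2-1)$ on the nested disks, while $\Rea H\gtrsim 1/k+1/(n-k)\ge 2/n$. We feed this into Corollary~\ref{thm:analytic-amplification} with $R\asymp d/2$ and $\mu\approx 1/d$, where the condition $\mu R^2\ge128$ holds once $d\ge d_0$ (roughly $d_0$ of a few thousand). To get the sharp depth $\lfloor\log_2n\rfloor+1$ rather than $\log_2 d+O(1)$, we use the center bound \eqref{eq:entry-center-bounds}. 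The initial curvature at the center is $h_0(k)\approx 1/k+1/(n-k)\ge 4/n$ up to $O(d^{-2})$. After $J=\lfloor\log_2n\rfloor+1$ doublings we get $h_J(k)\ge 2^J(4/n-\mathcal E_J)>4-o(1)>\log 3$, with plenty of room. The final radius must exceed one at depth $J$. When $d$ is small compared with $n$ the radii $R(3/4)^j$ would shrink too fast, but in that regime $h_0\approx1/d$ is large, so the needed depth is only about $\log_2 d+2$ and we stop earlier.

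The remaining cases are $d<d_0$ (indices near the endpoints, for all $n$) and small $n$ where the asymptotic regime is not yet valid. Near the endpoints we would use the exact-rational machinery of Subsection~\ref{sec:models}. With $k$ (or $n-k$) fixed and $x=1/n\in[0,1/N_0]$, we expand the ratios $d_{k\pm i}(n)/d_k(n)$ as polynomial enclosures in $x$. We then run the $s$-recurrence \eqref{eq:s-iteration} on shrinking index sets $I_j$ through depth about $\log_2 d_0+2$, certifying $\sigma_T(k)\ge 2/3$ uniformly in $n\ge N_0$. The zero-extension endpoint enters with $\sigma=1$. Finitely many $n<N_0$ are checked directly with interval arithmetic.

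The main obstacle is the matching step. We need a single holomorphic representation of $F_n$ whose argument bound is uniform in both $k$ and $n$; the choice of integral and the verification that its two logarithms have bounded imaginary part is the delicate analytic point. We also need the threshold $d_0$ to be small enough that the endpoint enclosures remain of feasible degree. We would first try the Jacobi/Legendre-type integral form of $d_k(n)$ and optimize the radius fraction ($r\approx 1/2$ rather than $1/4$) to shrink $d_0$.
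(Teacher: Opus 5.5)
Your architecture matches the paper's proof. It uses Lemma~\ref{lem:local-stop} with $q=1/3$, a binomial factor times a Laplace-type correction whose logarithm has bounded argument, nested-disk amplification at interior positions, uniform-in-$n$ rational enclosures near the edges, and a direct check for small $n$. On the depth, the detour through $h_0(k)\ge4/n$ is unnecessary. The paper takes $T(k)=\lfloor\log_2 d\rfloor+2$, which is at most $\lfloor\log_2 n\rfloor+1$ because $d\le n/2$, and checks $h_T(k)>\log3$ with the center bound.

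\textbf{Edge step.} The step that does not go through as written is the edge certification in $x=1/n$. On the right edge it is fine, because the backward Riccati recurrence from $p_n=n$ makes $p_{n-k}$ rational in $n$. On the left edge, the generating functions of $d_0(n)$ and $d_1(n)$ give $p_0=n-d_1(n)/d_0(n)=2\sqrt n\,\kappa_n-\tfrac12$ with $\kappa_n=\frac1{4\sqrt n}\prod_{i<n}\frac{4i+5}{4i+3}$. Hence already
$c_0(1)=\tfrac12\bigl(1-p_0(1+p_0)/(n-p_0)^2\bigr)=\tfrac12-\tfrac{a^2}{2n}-a^3n^{-3/2}+O(n^{-2})$, with $a=\Gamma(3/4)/(2\Gamma(5/4))$. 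Polynomials in $x$ approximate such half-integer powers near $x=0$ only to accuracy polynomial in the degree. Meanwhile the ratio updates amplify errors by factors of order $1/(1-c_j(k))\approx k+1$ in the early layers, so the precision will not survive. You also need a certified enclosure of the Gamma ratio $\kappa_n$ valid for every $n\ge N_0$, and ``expand $d_{k\pm i}(n)/d_k(n)$ in $x$'' does not supply one. The paper instead works in $u=n^{-1/2}\in[0,1/32]$. It encloses $\kappa_n$ by $gR(1/n)$ within $2^{-71}$, by bounding $y_{n+1}/y_n$ for $y_n=\kappa_n/R(1/n)$. It then propagates $W_k=up_k$ by the forward Riccati recurrence and iterates the $s$-recurrence eight times.

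\textbf{Interior representation and threshold.} The other missing input is the one you flag as delicate, and it is also what keeps the edge computation small. The paper proves $d_k(n)=\binom nk C_n\int_0^1x^{-1/2}(1-x)^{n+k}(1+x)^{n-k}\,dx$ by checking the Kauers--Paule recurrence and the value at $k=n$. The substitution $x=\tanh(t/2)$ turns this into $\mu_n(z)=\int_0^\infty e^{-zt}w_n(t)\,dt$ with $w_n>0$. Since $e^{\sqrt n\,t}w_n$ is decreasing (AM--GM applied to $-w_n'/w_n$), integration by parts gives $\Rea F_n>0$ for $F_n=(z+\sqrt n)\mu_n$ on $\Rea z>-\sqrt n$. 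The correction is then $\Log(1-(z+\sqrt n)^{-2})-\D\Log F_n$, bounded by \eqref{eq:halfplane-strip} with $\Rea z+\sqrt n\ge(1-r)d+32$.

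Four ingredients together bring the analytic threshold down to $d\ge48$: this half-plane bound, the radius $19d/20$, the M\"obius errors \eqref{eq:mobius-error}, and the exact center bound $h_0(k)>\log(1+1/d)$ from $k<p_k<n$. With your $d_0$ in the thousands, the edge models would have to resolve the dependence on $kn^{-1/2}$ for $k$ up to $d_0$, unless $N_0\gg d_0^2$. Either way, both the uniform edge certificate and the direct check for $n<N_0$ become far heavier.
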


\subsection{Moment representation and initial estimates}\label{sec:integrals}
We begin with the moment representation. The binomial factors in
this section and the next have respective curvatures
\begin{equation}\label{eq:two-profiles}
 \log\left(1+\frac{n+1}{z(n-z)}\right),\qquad
 \log\left(1+\frac{\lambda}{z(z+\lambda+1)}\right).
\end{equation}
The rational functions inside these logarithms have poles on opposite
sides of the index interval in the first case and on the same side in
the second. We estimate
the integral corrections to apply the criterion in Section~\ref{sec:stopping}.

For $n\ge0$ and $0\le r\le n$, put
\[
 A_{n,r}=4^{-n}2^r
 \binom{2n-2r}{n-r}\binom{n+r}{r},
 \qquad d_k(n)=\sum_{r=k}^n A_{n,r}\binom rk.
\]
Set $A_{n,r}=0$ for all other integer $r$.  Fix $n\ge1$.
We use the coefficient recurrence from
Kauers and Paule~\cite{KauersPaule} and Chen, Pang, and
Qu~\cite{ChenPangQuLogConcavity},
\begin{equation}
 (k+1)(k+2)d_{k+2}(n)
 -(2n+1)(k+1)d_{k+1}(n)
 +(n-k)(n+k+1)d_k(n)=0,
 \label{row:eq:coefficient-recurrence}
\end{equation}
for $0\le k<n$, with $d_{n+1}(n)=0$.

\begin{lemma}\label{lem:moment}
For every integer $n\ge0$ and $0\le k\le n$,
\begin{equation}\label{eq:moment}
d_k(n)=\binom nk C_n
\int_0^1x^{-1/2}(1-x)^{n+k}(1+x)^{n-k}\,dx,
\qquad
C_n=\frac{2^{-n}\Gamma(2n+3/2)}{\sqrt\pi\,\Gamma(n+1)^2}.
\end{equation}
\end{lemma}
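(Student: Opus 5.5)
The plan is to prove \eqref{eq:moment} by showing that the right-hand side satisfies the coefficient recurrence \eqref{row:eq:coefficient-recurrence} and has the correct terminal values. For $0\le k<n$ the coefficient $(n-k)(n+k+1)$ of $d_k(n)$ in \eqref{row:eq:coefficient-recurrence} is nonzero. Hence the recurrence, run downward from $d_{n+1}(n)=0$ and $d_n(n)$, determines every $d_k(n)$. The case $n=0$ is a direct check, so assume $n\ge1$.

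Put
\[
I_k=\int_0^1x^{-1/2}(1-x)^{n+k}(1+x)^{n-k}\,dx,\qquad
e_k=\binom nk C_nI_k .
\]
We have $e_{n+1}=0$ because $\binom n{n+1}=0$. At $k=n$,
\[
I_n=B(1/2,2n+1)=\frac{\sqrt\pi\,\Gamma(2n+1)}{\Gamma(2n+3/2)},
\]
so $e_n=2^{-n}\binom{2n}{n}$. On the other hand, \eqref{eq:coefficients} gives $d_n(n)=A_{n,n}=4^{-n}2^n\binom{2n}n$, so the two agree.

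The recurrence is the substantive step. Substitute $e_k$ into \eqref{row:eq:coefficient-recurrence} and multiply by $k!(n-k)!/(C_n\,n!)$. The binomial prefactors become $(n-k)(n-k-1)$, $(2n+1)(n-k)$ and $(n-k)(n+k+1)$. Dividing by $n-k>0$, it suffices to prove, for $0\le k\le n-1$,
\[
(n-k-1)I_{k+2}-(2n+1)I_{k+1}+(n+k+1)I_k=0 .
\]
I would obtain this by integration by parts. Take
\[
\Phi(x)=x^{1/2}(1-x)^{a}(1+x)^{b},\qquad a=n+k+1,\quad b=n-k-1 .
\]
Since $a\ge1$, $\Phi$ vanishes at both $x=0$ and $x=1$, so $\int_0^1\Phi'(x)\,dx=0$. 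We have
\[
\Phi'(x)=x^{-1/2}(1-x)^{a-1}(1+x)^{b-1}\Bigl[\tfrac12(1-x)(1+x)-ax(1+x)+bx(1-x)\Bigr].
\]
Write $x=\tfrac12\bigl((1+x)-(1-x)\bigr)$. Then the bracket becomes a combination of $(1+x)^2$, $(1-x)(1+x)$ and $(1-x)^2$. Each resulting term integrates to one of $I_k$, $I_{k+1}$, $I_{k+2}$, since in every term the total exponent of $(1-x)$ and $(1+x)$ is $2n$.

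Explicitly, the bracket equals
\[
\tfrac12(1-x)(1+x)-\tfrac a2(1+x)^2+\tfrac a2(1-x)(1+x)+\tfrac b2(1+x)(1-x)-\tfrac b2(1-x)^2 .
\]
This gives
\[
-\tfrac a2 I_k+\tfrac{1+a+b}2 I_{k+1}-\tfrac b2 I_{k+2}=0 .
\]
Here $1+a+b=2n+1$, so after multiplying by $-2$ this is exactly the required three-term identity.

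The boundary case $k=n-1$ needs care, because then $b=0$ and the exponent $b-1=-1$ appears. In that case the $b$-terms carry the factor $b=0$ and drop out, and the remaining integrands are integrable. The identity therefore still holds and reduces to $(2n+1)I_n=2nI_{n-1}$, which is consistent with $\binom n{n+1}=0$.

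The main obstacle is bookkeeping: checking that the binomial normalization turns \eqref{row:eq:coefficient-recurrence} exactly into this integral identity, including at the edge $k=n-1$. Once that is done, induction downward in $k$ gives $e_k=d_k(n)$ for all $0\le k\le n$.
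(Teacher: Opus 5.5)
Your proposal is correct and is essentially the paper's proof. You normalize by $\binom nk$ to reduce the coefficient recurrence to $(n-k-1)I_{k+2}-(2n+1)I_{k+1}+(n+k+1)I_k=0$, whose case $k=n-1$ is the terminal relation $(2n+1)I_n=2nI_{n-1}$; you verify it by integration by parts and match $k=n$ via $\mathrm B(1/2,2n+1)$, with your explicit $\Phi$ supplying the computation the paper omits (your extra care at $k=n-1$ is harmless but unnecessary, since $(1+x)^{-1}$ is bounded on $[0,1]$).
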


\begin{proof}
The case $n=0$ is immediate. Assume $n\ge1$ and put
$e_k=d_k(n)/\binom nk$. The coefficient recurrence
\eqref{row:eq:coefficient-recurrence} gives
\[
 (n-k)e_{k+1}=(2n+1)e_k-(n+k)e_{k-1}
 \qquad(1\le k<n),
\]
together with the terminal relation $(2n+1)e_n=2ne_{n-1}$.
We verify by integration by parts that the integrals in
\eqref{eq:moment} satisfy the same recurrence and terminal relation.
At $k=n$, the integral is $\mathrm B(1/2,2n+1)$, so
$d_n(n)=2^{-n}\binom{2n}n$ gives the stated value of $C_n$.
The terminal relation and backward recurrence determine all remaining terms.
\end{proof}

The substitution $x=\tanh(t/2)$ in \eqref{eq:moment} gives
\begin{equation}\label{row:eq:moments}
 d_k(n)=\frac{C_n}{2}\binom nk\mu_n(k),\qquad
 \mu_n(z)=\int_0^\infty e^{-zt}w_n(t)\,dt,
\end{equation}
where
\[
 w_n(t)=\bigl(\sinh(t/2)\bigr)^{-1/2}
         \bigl(\cosh(t/2)\bigr)^{-2n-3/2}.
\]
The estimates $w_n(t)\sim\sqrt{2/t}$ at zero and
$w_n(t)=O(e^{-(n+1)t})$ at infinity imply that $\mu_n$ is holomorphic on
$\Rea z>-n-1$. To choose a holomorphic logarithm, we first show that it
has no zeros on a smaller half-plane.
Set $v_n(t)=e^{\sqrt n\,t}w_n(t)$.
By the arithmetic--geometric mean inequality,
\[
 -\frac{w_n'(t)}{w_n(t)}
 =\frac14\coth(t/2)+\left(n+\frac34\right)\tanh(t/2)
 \ge\sqrt{n+\frac34}>\sqrt n.
\]
Thus $v_n$ is integrable and strictly decreasing.
For $x=\operatorname{Re} z>-\sqrt n$, integration by parts yields
\[
 \begin{aligned}
 F_n(z)&:=(z+\sqrt n)\mu_n(z)
   =\int_0^\infty(1-e^{-(z+\sqrt n)t})(-v_n'(t))\,dt,\\
 \operatorname{Re} F_n(z)&\ge F_n(x)>0.
 \end{aligned}
\]
At the origin, $-v_n'(t)=O(t^{-3/2})$, so the integrand is
$O(t^{-1/2})$ and the boundary term is $O(t^{1/2})$. At infinity,
$v_n$ and $v_n'$ decay exponentially. The integral therefore converges
and both boundary terms vanish, justifying the integration by parts.
Thus $\operatorname{Log}F_n$ is holomorphic on
$\operatorname{Re} z>-\sqrt n$ and $|\operatorname{Im}\operatorname{Log}F_n|<\pi/2$.
On the same half-plane, $\operatorname{Log}F_n(z)-\operatorname{Log}(z+\sqrt n)$
defines a holomorphic logarithm of $\mu_n(z)$.

We derive a lower bound for the initial curvature from the coefficient
recurrence and estimate its holomorphic continuation using the moment
formula. Recall that $s_j(k)=1-c_j(k)$.
For $0\le k<n$, define
\[
 p_k=n-\frac{(k+1)d_{k+1}(n)}{d_k(n)},\qquad p_n=n.
\]
By the coefficient formula,
\[
 (n-k)d_k(n)-(k+1)d_{k+1}(n)
 =\sum_{r=k}^n A_{n,r}\binom rk(n-r)>0.
\]
The term with $r=k$ is positive.  Together with $d_{k+1}(n)>0$, this gives
\begin{equation}
 k<p_k<n.
 \label{row:eq:p-range}
\end{equation}
Substitution in~\eqref{row:eq:coefficient-recurrence} gives the backward
Riccati recurrence
\begin{equation}
 p_k=\frac{np_{k+1}+k(k+1)}{n+1+p_{k+1}},\qquad 0\le k<n.
 \label{row:eq:riccati}
\end{equation}
The equivalent forward recurrence is
$p_{k+1}=((n+1)p_k-k(k+1))/(n-p_k)$, whose denominators are positive.
For an interior index $1\le k<n$, put $d=\min\{k,n-k\}\ge1$.
We also have $n-p_{k-1}=(n+k)(n-k+1)/(n+1+p_k)$. Consequently,
\begin{equation}
 s_0(k)
 =1-\frac{k(n-p_k)(n+1+p_k)}
 {(k+1)(n+k)(n-k+1)}.
 \label{row:eq:s-initial}
\end{equation}
Since $p_k>k$,
\[
\begin{aligned}
 c_0(k)
 &<\frac{k(n-k)(n+k+1)}{(k+1)(n+k)(n-k+1)}\\
 &<\min\left\{\frac{k}{k+1},\frac{n-k}{n-k+1}\right\}
 =\frac{d}{d+1}.
\end{aligned}
\]
The comparisons with $k/(k+1)$ and $(n-k)/(n-k+1)$ follow from
$2k>0$ and $n>0$, respectively.
The first bound is equivalent to the lower bound of Chen and Gu
\cite[Theorem~1.2]{ChenGu}.
It follows that $s_0(k)>1/(d+1)$ and
\begin{equation}
 h_0(k)>\log(1+1/d)\ge\frac1d-\frac1{2d^2}.
 \label{row:eq:initial-bound}
\end{equation}

For $n\ge1024$, fix $k$ with $d=\min(k,n-k)\ge48$.
The moment representation gives the initial curvature
\[
 h_0(z)=\Log(1+1/z)+\Log(1+1/(n-z))
       +\Log\bigl(1-(z+\sqrt n)^{-2}\bigr)-\D\Log F_n(z).
\]
It is holomorphic on $1<\Rea z<n$ and agrees with $-\log c_0$ at
interior integer arguments there. For $|w-d|\le rd$, we have
$\Rea\Log(1+1/w)\ge((1+r)d+1)^{-1}$.
Applying this to the nearer endpoint term, and using
\eqref{eq:halfplane-strip} with $\sqrt n\ge32$, gives,
for $0\le r\le19/20$ and $|z-k|\le rd$,
\begin{equation}\label{eq:bm-mobius-seed}
 \Rea h_0(z)\ge f_r(d):=
 \frac1{(1+r)d+1}-\frac3{((1-r)d+32)^2-1}.
\end{equation}
At the center, \eqref{row:eq:initial-bound} gives
$h_0(k)>d^{-1}-(2d^2)^{-1}$.

\begin{proposition}\label{row:lem:interior}
For $n\ge1024$ and $d=\min(k,n-k)\ge48$, put
$L=\lfloor\log_2d\rfloor+2$. Then
\[
 a_v^{[j]}>0\quad
 (0\le j\le L,\ 0\le v\le n,\ |v-k|\le L+1-j),
 \qquad h_L(k)>\frac{85}{64}>\log3.
\]
\end{proposition}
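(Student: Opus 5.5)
We apply Theorem~\ref{thm:unified-entry} at the integer center $M=k$, with the geometric radii of Corollary~\ref{cor:geometric-budget}, namely $R_j=R(3/4)^j$ for $0\le j\le L$, and a starting radius $R=rd$ for a fixed $r\le19/20$ chosen later. The seed bound \eqref{eq:bm-mobius-seed} gives $\Rea h_0\ge f_r(d)$ on $D_0$. The smaller disks $D_j$ lie inside $D_0$, so we may take $\inf_{D_j}\Rea h_0\ge f_r(d)$ for every $j$. We must check two things: the margin condition \eqref{eq:entry-margin}, and that the final radius $R_L=rd(3/4)^L$ still exceeds one, or at least that the successive gaps $R_{j}-R_{j+1}=R_j/4\cdot(3/4)^{0}$ exceed one through $j=L-1$.

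Since $L\le\log_2d+2$, we have $(3/4)^L\ge(3/4)^2d^{\log_2(3/4)}\approx(9/16)d^{-0.415}$. Hence the smallest gap is $R_{L-1}/4$, which is of order $d^{0.585}$, and this is greater than one once $d\ge48$ and $r$ is near $19/20$. We check this explicitly at $d=48$, where $R_{L-1}$ is smallest relative to one, and then use monotonicity in $d$. Corollary~\ref{cor:geometric-budget} then gives $\mathcal E_j<42/((1-D^{-2})r^2d^2)$, where $D$ is the minimal gap. This is $O(d^{-2})$, whereas $f_r(d)\sim1/((1+r)d)$. So \eqref{eq:entry-margin} reduces to an explicit inequality in $d$ that holds for $d\ge48$. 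We would verify this inequality at $d=48$ and argue monotonicity: multiply through by $d$, so that the left side increases to $1/(1+r)$ and the error decreases. Theorem~\ref{thm:unified-entry} then yields holomorphic iterates with $\Rea h_j>0$ on $D_j$ for $j\le L$.

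For the center lower bound, \eqref{eq:center-error} combined with \eqref{eq:geometric-center-error} gives $h_L(k)\ge2^L(h_0(k)-\mathcal C_L)$. We use $h_0(k)>d^{-1}-(2d^2)^{-1}$ and $2^L\ge2d$, which follows from $2^{\lfloor\log_2d\rfloor}>d/2$. This gives $h_L(k)>2d\bigl(d^{-1}-(2d^2)^{-1}-18/((1-\eta^{-1})r^2d^2)\bigr)=2-O(1/d)$. The constant in the $O(1/d)$ term is about $1+36/r^2$, so at $d=48$ we need roughly $(1+36/r^2)/48<1-85/128\approx0.336$. This is the tight point. With $r$ close to $19/20$ the constant is about $41$, and $41/48$ is too large. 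We therefore expect to need the sharper Möbius-type bound $\widetilde{\mathcal E}$, or the direct central evaluation with the actual radii $R_i$ instead of the crude geometric-sum constant $18$. We also expect to use $2^L\ge2d$ more carefully by splitting $d$ into dyadic ranges, since $2^L/d$ ranges over $(2,4]$. We regard this numerical margin near $d=48$ as the main obstacle. If it fails, we would instead compute the finite sum $\sum_{i<L}2^{1-i}/(R_i^2-1)$ exactly for each dyadic block of $d$ up to a few thousand, and use the asymptotic argument beyond that.

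Finally, we identify the analytic iterates with the sequence. The disk $D_0$ has radius at most $19d/20<d$, and $n\ge1024$, so all integers $v$ with $|v-k|\le L+1$ lie in $[1,n-1]$, because $L+1\le\log_2 d+3<d$ for $d\ge48$. On these integers $h_0$ agrees with $-\log c_0$, and the initial entries are positive. Since consecutive radii differ by more than one, every unit shift used at depth $j$ stays inside $D_{j-1}$. By the uniqueness remark after \eqref{eq:center-error}, $h_j(v)=-\log c_j(v)$ on the dependency cone $|v-k|\le L+1-j$. Positivity $\Rea h_j>0$ at real points gives $c_j(v)<1$, and hence $a_v^{[j+1]}=(a_v^{[j]})^2(1-c_j(v))>0$ inductively. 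This gives the cone positivity, and $h_L(k)>85/64>\log3$ completes the proof.
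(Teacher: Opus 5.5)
Your architecture is the paper's: geometric radii from $19d/20$, Theorem~\ref{thm:unified-entry} at $M=k$, the center bound \eqref{eq:center-error}, and identification through the dependency cone. The gap is in the margin condition \eqref{eq:entry-margin}. You treat it as a routine comparison of $O(d^{-2})$ against $1/((1+r)d)$, but it fails at $d=48$ with the tools you name.

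For $r=19/20$ you have $d^2f_{19/20}(48)\approx18.5$. The exact first error is $d^2\mathcal E_1=2\cdot48^2/((R_0-R_1)^2-1)\approx35.7$, and your bound $42/((1-D^{-2})r^2)$ is larger still. No choice of $r$ helps. Even with the better lower bound $f_{3r/4}$ on $D_1$, you have $d^2f_{3r/4}(48)<2304/(49+36r)$ and $d^2\mathcal E_1=4608/(144r^2-1)$. The $j=1$ condition would then force $144r^2-72r-99>0$, i.e.\ $r>1.1$. At this scale the constants are comparable, not separated.

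The paper needs three refinements at once:
\begin{itemize}
\item the M\"obius errors $\widetilde{\mathcal E}_j$ of \eqref{eq:mobius-error} in place of $\mathcal E_j$;
\item disk-dependent lower bounds $f_{r_j}(d)$ with $r_j=(19/20)(3/4)^j$, giving $d^2\inf_{D_j}\Rea h_0>24,28,31,34$ for $j=1,2,3$ and $j\ge4$ (using that $d^2f_r(d)$ increases in $d$ for $r\le57/80$);
\item explicit values $d^2\widetilde{\mathcal E}_1<21$, $d^2\widetilde{\mathcal E}_2<26$, $d^2\widetilde{\mathcal E}_3<28$ at $d=48$, with $d^2\widetilde{\mathcal E}_j<34$ afterwards from Corollary~\ref{cor:geometric-budget}.
\end{itemize}
Your uniform infimum fails even against the M\"obius error, since $d^2\widetilde{\mathcal E}_1\approx20.4>18.5$. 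The paper also sets $R_L=0$. Then the smallest gap among $R_0,\dots,R_{L-1}$ is $R_{L-1}/3$, with square greater than $7$, whereas your last gap $R_{L-1}/4$ has square only about $4.1$. The terminal condition then reduces to $h_0(k)>\widetilde{\mathcal E}_L=\mathcal C_L$.

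Conversely, the center estimate you single out as the tight point is not an obstacle. Since $R_{L-1}^2>64$, \eqref{eq:geometric-center-error} gives $\mathcal C_L<21/d^2$, so $h_L(k)>2^L(d-43/2)/d^2$. On each dyadic block this is smallest at the top end. Letting $d\to64$ with $L=7$ gives exactly $85/64$, and $L\ge8$ gives $2-43/2^{L-1}\ge213/128$.

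Your computation failed for two reasons. You applied $2^L\ge2d$ at $d=48$, where actually $2^L/d=8/3$. Also, $2-c/d>85/64$ means $c/d<43/64$, not $43/128$. The dyadic split you mention already suffices with the constant $18$. The M\"obius refinement belongs in the margin check, not at the center.
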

\begin{proof}
Take $R_j=(19d/20)(3/4)^j$ for $0\le j<L$ and $R_L=0$,
and write $D_j=\{|z-k|\le R_j\}$.
The initial disk lies in $1<\Rea z<n$. The initial curvature has positive real
part there, because at $r=19/20$ the denominator comparison in
\eqref{eq:bm-mobius-seed} reduces to
\[
 (d/20+32)^2-1-3(39d/20+1)
 =\frac{(d-530)^2}{400}+\frac{1271}{4}>0.
\]
For $0\le r\le57/80$, $d^2f_r(d)$ increases with $d\ge48$.
Indeed, with $A=1+r$, $B=1-r>1/4$, the positive part of its derivative
is $A^{-1}(1-(Ad+1)^{-2})>49/100$, and the magnitude of its negative
part is bounded by
\[
 192(1024/1023)^2\frac{d}{(Bd+32)^3}
 \le\frac{(1024/1023)^2}{36B}<\frac18.
\]
Also $f_r(d)$ decreases with $r$. Substitution of $d=48$ at
$r_j=(19/20)(3/4)^j$ gives
\[
 d^2\inf_{D_1}\Rea h_0>24,\quad
 d^2\inf_{D_2}\Rea h_0>28,\quad
 d^2\inf_{D_3}\Rea h_0>31,\quad
 d^2\inf_{D_j}\Rea h_0>34\quad(j\ge4).
\]
Among $R_0,\ldots,R_{L-1}$, all successive gaps have square greater than seven. For
$L=7$ use $d\ge48$; for $L\ge8$ use $d\ge2^{L-2}$, with the smallest
bound at $L=8$. The common lower bound is
$((19\cdot48/80)(3/4)^5)^2>7$.
By Corollary~\ref{cor:geometric-budget},
\[
 d^2\widetilde{\mathcal E}_j<
 \frac{26}{(1-1/7)(19/20)^2}<34\qquad(1\le j<L).
\]
At the first three iterations, we obtain the sharper bounds
\[
 d^2\widetilde{\mathcal E}_1<21,\qquad
 d^2\widetilde{\mathcal E}_2<26,\qquad
 d^2\widetilde{\mathcal E}_3<28.
\]
Each normalized sum decreases with $d$, so these comparisons follow
by substituting $d=48$ in \eqref{eq:mobius-error}.
Thus $\inf_{D_j}\Rea h_0>\widetilde{\mathcal E}_j$ for $j<L$.
The last positive radius is three times the smallest of these gaps;
the displayed lower bound gives $R_{L-1}^2>64$. Using
\eqref{eq:geometric-center-error}, we obtain
\[
 \mathcal C_L<\frac{18}{(1-1/64)(19/20)^2d^2}<\frac{21}{d^2}.
\]
In particular, $h_0(k)>\mathcal C_L$.
Theorem~\ref{thm:unified-entry}, with the errors in
\eqref{eq:mobius-error}, now gives
\[
 h_L(k)>\frac{2^L(d-43/2)}{d^2}\ge\frac{85}{64}.
\]
Indeed, $(d-43/2)/d^2$ decreases for $d\ge48$.
The upper endpoint $d=64$ gives $85/64$ when $L=7$;
for $L\ge8$, $d=2^{L-1}$ gives
$2-43/2^{L-1}\ge213/128>85/64$. For $1\le j\le L$, we have $R_{j-1}>L+1-j$, since
$R_{L-1}>8$ and the successive gaps exceed one.
Also $L+1<R_0+1<d$. Hence all initial indices required for the
stated iterates lie in the original support. Induction using
$a_v^{[j+1]}=(a_v^{[j]})^2(1-e^{-h_j(v)})$ identifies the analytic curvatures with
the sequence curvatures and proves positivity on the indicated ranges.
\end{proof}

\subsection{Boundary estimates and completion}
For $n\ge1024$ it remains to consider distances $d\le47$.
We prove the required inequalities there, and for the complete sequences
with $n\le1023$, by exact rational bounds.

\begin{proposition}\label{prop:row-certificates}
For $n\ge1024$,
\[
 \begin{gathered}
 a_v^{[j]}>0\quad
 (0\le j\le8,\ 0\le v\le n,\ \min(v,n-v)\le56-j),\\
 s_8(k)>4/5\quad(1\le\min(k,n-k)\le47).
 \end{gathered}
\]
If $\min(k,n-k)=1,2,3,4$, respectively, then $s_T(k)>2/3$
for $T=2,2,3,4$.
For $2\le n\le1023$, there is an integer
$0\le t_n\le\min\{9,\lfloor\log_2n\rfloor\}$ such that
$a_v^{[j]}>0$ for $0\le j\le t_n$, $0\le v\le n$, and
$s_{t_n}(k)>2/3$ for $1\le k<n$.
\end{proposition}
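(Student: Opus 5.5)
This proposition is a finite computer-assisted certificate, and I would prove it with the exact rational machinery of Subsection~\ref{sec:models}. The second part, $2\le n\le1023$, is a finite list of sequences. For each such $n$ I compute $d_k(n)$ exactly from \eqref{eq:coefficients}, or faster from the recurrence \eqref{row:eq:coefficient-recurrence}. I then form the initial ratios $c_0(k)$ with rational interval arithmetic and outward rounding, and run the $s$-recurrence \eqref{eq:s-iteration} with the endpoint convention $s_j=1$ at $k=0,n$. Before each update I certify that every retained ratio is below one. I stop at the first layer $t_n$ where $s_{t_n}(k)>2/3$ for all $1\le k<n$, and record that $t_n\le\min\{9,\lfloor\log_2 n\rfloor\}$. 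Positivity of all coefficients through layer $t_n$ then follows inductively, as explained after Lemma~\ref{lem:model-arithmetic}.

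For $n\ge1024$ the difficulty is that $n$ is unbounded, so I need enclosures uniform in $n$ near each endpoint. By the symmetry-free but analogous structure at both ends, I would treat $k$ near $0$ and $n-k$ near $0$ separately. In each case I use the variable $x=1/n\in(0,1/1024]$, rescaled to $[0,1]$.

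Near the left end, the backward Riccati recurrence \eqref{row:eq:riccati} is inconvenient because it starts at $k=n$. Instead I would use the forward form $p_{k+1}=((n+1)p_k-k(k+1))/(n-p_k)$ starting from $p_0=n-d_1(n)/d_0(n)$. I need a polynomial-in-$1/n$ enclosure of $p_0$ with explicit error, which I can obtain from the moment formula (Lemma~\ref{lem:moment}) or from the sum \eqref{eq:coefficients} by Laplace-type expansion. Feeding this through the forward recurrence with the quotient rule of Lemma~\ref{lem:model-arithmetic} gives enclosures of $p_k$ for $k\le56$, hence of $s_0(k)$ via \eqref{row:eq:s-initial}. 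This is all in the variable $x=1/n$, with $k$ fixed.

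Near the right end, I would set $m=n-k$ and run the backward recurrence from $p_n=n$. Writing $p_{n-m}=n-\pi_m$ should give $\pi_m$ a regular expansion in $1/n$, which I would enclose exactly.

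With enclosures of $s_0(v)$ for $\min(v,n-v)\le56$, I then iterate \eqref{eq:s-iteration} eight times symbolically in $x$. The dependency cone shrinks by one index per layer, which is why the range is $\min(v,n-v)\le56-j$. At each layer I certify lower bounds $\sigma_j>0$ and $1-\sigma_j<1$ via $\|\cdot\|_1$ bounds. Finally I read off $s_8(k)>4/5$ for $\min(k,n-k)\le47$, and the earlier-layer bounds $s_T(k)>2/3$ for the four smallest distances.

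The main obstacle I expect is error growth. Eight rounds of the squaring/quotient recurrence on intervals of width $\sim1$ in $x$ could blow up the enclosure radii. So I would choose the truncation degree and rounding scale $S=2^b$ generously, and check that the margins $4/5$ and $2/3$ survive. If they do not, I would subdivide $[0,1/1024]$ into a few subintervals.

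A secondary obstacle is getting a rigorous uniform enclosure of the initial ratio $p_0$ (or $\pi_0$). If the asymptotic expansion is awkward, I would instead bound it from the monotone Riccati iteration \eqref{row:eq:riccati}, combined with the a priori bracket \eqref{row:eq:p-range}, to get two-sided rational functions of $n$.
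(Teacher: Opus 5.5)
Your overall plan matches the paper's proof in most respects. It uses exact interval arithmetic on $4^nd_k(n)$ for $2\le n\le1023$. For $n\ge1024$ it uses parameter-uniform polynomial enclosures at both edges, followed by eight $s$-updates on a cone shrinking by one index per layer. On the right edge it runs the backward recurrence from $p_n=n$; your $\pi_m$ equals $m-V_m/n$ in the paper's notation $V_m=n(p_{n-m}-(n-m))$, which is rational in $1/n$ as you expect.

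\textbf{First gap: the wrong variable at the left edge.} There $p_0$ is neither bounded nor a function of $1/n$. The generating function $\sum_n d_0(n)t^n=(1-2t)^{-3/4}$, together with the corresponding one for $d_1(n)$, gives the closed form $p_0=\frac12\bigl((5/4)_n/(3/4)_n-1\bigr)=2\sqrt n\,\kappa_n-\frac12$. Here $\kappa_n=\frac1{4\sqrt n}\prod_{i<n}\frac{4i+5}{4i+3}\to\Gamma(3/4)/\Gamma(1/4)$. Consequences:
\begin{itemize}
\item $p_0\approx0.68\sqrt n$, and every $p_k$ with bounded $k$ grows like $\sqrt n$.
\item The left-edge data are functions of $u=n^{-1/2}$ containing odd powers of $u$; already $up_0=2\kappa_n-u/2$.
\item So no polynomial-in-$1/n$ enclosure of $p_0$ exists, and the forward recurrence in the unnormalized $p_k$ cannot be carried by bounded models.
\item Normalizing by $n$ instead leaves a $\sqrt x$ dependence. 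Polynomial residuals for $\sqrt x$ on $[0,\epsilon]$ decay only like $\sqrt\epsilon$ divided by the degree.
\item This dependence is not a small perturbation here: for $n$ near $1024$, the indices $k\le55$ are comparable to or larger than $\sqrt n=32$.
\end{itemize}
The normalization that works is $W_k=up_k$ with $v=u^2$. It satisfies $W_{k+1}=((1+v)W_k-k(k+1)uv)/(1-uW_k)$, and $s_0(k)$ is a rational function of $u$ and $W_k$. The paper covers $0\le u\le1/32$ by eight subintervals in this variable.

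\textbf{Second gap: no uniform enclosure of $p_0$.} The Laplace-type expansion is left unspecified, and your fallback cannot work. The bracket \eqref{row:eq:p-range} gives only $0<p_0<n$. The backward map $p\mapsto(np+k(k+1))/(n+1+p)$ sends an upper bound $n/m$ to roughly $n/(m+1)$. Pushing the bound $p_K<n$ down to an $O(\sqrt n)$ bound at $k=0$ therefore takes on the order of $\sqrt n$ steps, which is not a finite computation uniform in $n$.

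The paper instead uses the closed form above and certifies $|\kappa_n-gR(1/n)-2^{-72}|\le2^{-72}$ for all $n\ge1024$. Here $R(v)=(4+v)(128+19v^2)/(4(128+21v^2))$ and $g=\kappa_{1024}/R(1/1024)$. The proof shows that $y_n=\kappa_n/R(1/n)$ is increasing with $(y_{n+1}/y_n)^2-1<11/(1000n^7)$. Since $\kappa_n$ itself is a good function of $1/n$, all of the $\sqrt n$ sits in the prefactor. That is exactly why the left-edge computation must be done in $u$ rather than $x$.
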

\begin{proof}[Computer-assisted proof]
For the uniform edge bounds, set
\[
 \kappa_n=\frac1{4\sqrt n}\prod_{i=0}^{n-1}\frac{4i+5}{4i+3},
 \qquad
 R(v)=\frac{(4+v)(128+19v^2)}{4(128+21v^2)},\qquad
 g=\frac{\kappa_{1024}}{R(1/1024)}\in\mathbb Q.
\]
Setting $k=0,1$ in \eqref{eq:coefficients} and summing over $n$, we obtain
\begin{align*}
 \sum_{n\ge0}d_0(n)t^n&=(1-2t)^{-3/4},\\
 \sum_{n\ge0}d_1(n)t^n&=
 \frac34(1-2t)^{-7/4}-\frac12(1-2t)^{-5/4}
                         -\frac14(1-2t)^{-3/4}.
\end{align*}
Equivalently,
$d_0(n)=2^n(3/4)_n/n!$ and
$d_1(n)=(n+1/2)d_0(n)-2^{n-1}(5/4)_n/n!$.
Thus the definition of $p_0$ gives
\[
 p_0=\frac12\left(\frac{(5/4)_n}{(3/4)_n}-1\right)
     =2\sqrt n\,\kappa_n-1/2.
\]
For every integer $n\ge1024$ we use the enclosure
\begin{equation}\label{eq:row-edge-seed-enclosure}
 \left|\kappa_n-gR(1/n)-2^{-72}\right|\le2^{-72}.
\end{equation}
To prove this enclosure for all such indices, put
$y_n=\kappa_n/R(1/n)$. Then
\[
 \left(\frac{y_{n+1}}{y_n}\right)^2-1=\frac{N(n)}{D(n)},
\]
where
\begin{align*}
 N(n)&=44285952n^4+88571904n^3+58190848n^2
                  +13904896n+8014561,\\
 D(n)&=n(4n+3)^2(128n^2+21)^2(128n^2+256n+147)^2.
\end{align*}
For $n\ge1024$, the positive coefficients give
\[
 \frac{N(n)}{n^4}\le44285952+\frac{88571904}{1024}
 +\frac{58190848}{1024^2}+\frac{13904896}{1024^3}
 +\frac{8014561}{1024^4}<44\,400\,000.
\]
Also $D(n)>16\cdot128^4n^{11}$. Since
$44\,400\,000/(16\cdot128^4)<11/1000$, we obtain
$0<N(n)/D(n)<11/(1000n^7)$. Consequently
\[
 0\le\log(y_n/g)<a:=\frac{11}{2000}
                  \left(1024^{-7}+\frac1{6\cdot1024^6}\right).
\]
Evaluating the exact product at $1024$, we find $0<g<1/2$, and
$R(1/n)\le4097/4096$.  Since $e^a-1\le a/(1-a)$,
\[
 0\le\kappa_n-gR(1/n)
 <\frac12\frac{4097}{4096}\frac a{1-a}<2^{-71}.
\]
Thus \eqref{eq:row-edge-seed-enclosure} holds.

Put $u=n^{-1/2}$, $v=u^2$, $W_k=up_k$, and
$V_k=(p_{n-k}-(n-k))/v$. By the recurrence for $p_k$, we have
\begin{align*}
 W_0&=2\kappa_n-u/2,&
 W_{k+1}&=\frac{(1+v)W_k-k(k+1)uv}{1-uW_k},\\
 V_0&=0,&
 V_k&=\frac{k(1+vV_{k-1})}{2+(2-k)v+v^2V_{k-1}}.
\end{align*}
The two initial edge arrays have $s_0(0)=s_0(n)=1$ and
\begin{align*}
 s_0(k)&=\frac1{k+1}\left(1+
 \frac{kv\{2kv+(W_k-ku)(W_k+(k+1)u)\}}
 {(1+kv)(1-(k-1)v)}\right),\\
 s_0(n-k)&=\frac1{k+1}\left(1+
 \frac{v\{k+(1-kv)V_k(2+(1-2k)v+v^2V_k)\}}
 {(1-(k-1)v)(2-kv)}\right).
\end{align*}
The formulas extend continuously to $u=0$. We cover $0\le u\le1/32$
by the eight substitutions
\[
 u=\frac{2i+1+t}{512},\qquad -1\le t\le1,\quad 0\le i\le7.
\]
For each substitution we use monomial models of degree $20$, with
coefficients rounded to multiples of $2^{-512}$ and every discarded
coefficient included in the residual bound of
Lemma~\ref{lem:model-arithmetic}.  Starting with both arrays through
$k=55$, we retain the original endpoint value $s_j(0)=1$ and apply
\[
 s_{j+1}(k)=1-\left(\frac{1-s_j(k)}{s_j(k)}\right)^2s_j(k-1)s_j(k+1).
\]
The same update is applied to the reflected array $s_j(n-k)$.
At depth $j$ the retained range on either edge is $0\le k\le55-j$.
We verify that every denominator has a positive lower bound and
that the enclosures certify $0<s_j(k)\le1$ at every retained position
of each array. After eight
iterations, the lower bounds at $1\le k\le47$ exceed $994/1000>4/5$
throughout each parameter interval. From the same models, we obtain
the earlier bounds
\[
\begin{array}{c|cccc}
 d&1&2&3&4\\ \hline
 T&2&2&3&4\\
 \text{strict lower bound for }s_T(d),\ s_T(n-d)&6/7&7/10&4/5&3/4
\end{array}
\]
with every required predecessor retained.

For $2\le n\le1023$, we compute the initial ratios from the exact
integers $4^n d_k(n)$ given by \eqref{eq:coefficients}. We verify the stated inequalities
by interval arithmetic with outward rounding, using the full sequence
with both original endpoints retained. For each sequence, the computation based on the $p_k$ recurrence gives
the same number of iterations as the direct coefficient computation.
\end{proof}

\begin{proof}[Proof of Theorem~\ref{thm:rows}]
The initial sequence is positive by \eqref{eq:coefficients}. Its endpoints
are repeatedly squared, so the conclusion also holds for $n=0,1$.  For $n\ge2$ set
$K=\lfloor\log_2n\rfloor+1$. For $2\le n\le1023$, we have $t_n<K$
by Proposition~\ref{prop:row-certificates}.

For $n\ge1024$, put $d=\min(k,n-k)$.
If $1\le d\le47$, Proposition~\ref{prop:row-certificates} gives
$a_k^{[j]}>0$ for $0\le j\le8$ and $c_8(k)<1/3$.
For $d\ge48$, take
\[
 t_k=\lfloor\log_2d\rfloor+2\le\lfloor\log_2n\rfloor+1=K,
\]
where the inequality follows from $d\le n/2$.
By Proposition~\ref{row:lem:interior}, $a_k^{[j]}>0$ for
$0\le j\le t_k$ and $c_{t_k}(k)<1/3$.
Since $8<K$, Lemma~\ref{lem:local-stop} with $q=1/3$ gives
strict positivity of all iterates and $c_j(k)\le1/3$ for every $j\ge K$.
\end{proof}

\section{Transposed Boros--Moll sequences}\label{sec:tails}
For a fixed nonnegative integer $\ell$, consider the transposed sequence
$(d_\ell(\ell+k))_{k\ge0}$.
Jiang and Wang \cite[Theorems~3.1--3.3]{JiangWang} studied the
sequences obtained by fixing the coefficient index and varying the
degree. They proved log-convexity for $\ell=0$, log-concavity for
$\ell\ge1$, and $2$-log-concavity for $\ell\ge2$. They also established
the higher-order Tur\'an inequality for $\ell\ge2$, and asked how
the signs of the iterates of $\LL$ depend on the coefficient index
and the iteration depth. This higher-order inequality is also called the cubic Newton inequality
and is distinct from a condition on all iterates of $\LL$.

Zhao \cite{ZhaoPublished} obtained more precise bounds for the
adjacent ratios and proved extended reverse ultra log-concavity. In our notation,
for $\ell\ge1$ and $n\ge\ell+1$, his bounds are
\[
 \frac{(n-\ell+1)n^3}{(n-\ell)(n+1)(n^2+1)}
 <\frac{d_\ell(n)^2}{d_\ell(n-1)d_\ell(n+1)}
 <\frac{(n-\ell+1)n}{(n-\ell)(n+1)}.
\]
By the upper bound,
$(d_\ell(n)/\binom n\ell)_{n\ge\ell}$ is strictly log-convex;
the lower bound is stronger than ordinary log-concavity. For fixed
$\ell$, both bounds tend to $1$ as $n\to\infty$, so the initial
curvature tends to zero along the transposed sequence.

Motivated by the infinite log-concavity conjecture of Boros and Moll,
Zhao \cite{ZhaoPublished} conjectured that $(d_\ell(n))_{n\ge\ell}$
is strictly infinitely log-concave for every $\ell\ge3$. With
$n=\ell+k$, this means that all iterates of the sequence
$(d_\ell(\ell+k))_{k\ge0}$ are positive at every index, using zero
extension to the left. We prove this conjecture in the following theorem.

\begin{theorem}\label{thm:main}
For every integer $\ell\ge3$, the sequence
$a_k=d_\ell(\ell+k)$, $k\ge0$, is strictly infinitely log-concave.
\end{theorem}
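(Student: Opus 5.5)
We will apply Lemma~\ref{lem:local-stop} with fixed support $\mathcal I=\mathbb Z_{\ge0}$ and $q=q_*$. We split the indices $k$ into a finite initial range, handled by exact rational computation, and a tail $k\ge k_0(\ell)$, handled by the holomorphic estimates. The starting point is the moment formula \eqref{row:eq:moments} with $n=\ell+k$:
\[
 a_k=d_\ell(\ell+k)=\frac{C_{\ell+k}}2\binom{\ell+k}{\ell}\mu_{\ell+k}(\ell).
\]
Geometric factors do not affect ratios, so we pull out the asymptotic part of $C_{\ell+k}$ and write the rest of the curvature in $z=k$. The binomial factor $\binom{z+\ell}{\ell}$ gives the second profile in \eqref{eq:two-profiles} with $\lambda=\ell$, up to the shift $z\mapsto z+1$. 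Its curvature is $\ell/z^2+O(z^{-3})$. The factor $\Gamma(2z+2\ell+3/2)2^{-z}/\Gamma(z+\ell+1)^2$ gives, by Stirling, a curvature $-1/(4z^2)+O(z^{-3})$ that is holomorphic in a right half-plane. The integral $\mu_{\ell+z}(\ell)=\int_0^\infty e^{-\ell t}w_{\ell+z}(t)\,dt$ depends on $z$ through the exponent $-2z$ of $\cosh(t/2)$. A Laplace-type expansion near $t=0$ shows that its logarithm is smooth and contributes only $O_\ell(z^{-5/2})$ to the curvature. Together these give $h_0(z)=(\ell-1/4)/z^2+O_\ell(z^{-5/2})$, which matches Table~\ref{tab:five-applications}. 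To make this rigorous we bound $|z^2h_0(z)-(\ell-1/4)|\le E(x)$ on a disk $|z-x|\le rx$. For the $\mu$ factor we use the positivity argument already given for $F_n$ to get a holomorphic logarithm, bound its argument, and then apply \eqref{eq:modulus-difference} or \eqref{eq:strip-difference} to control the second difference.

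The leading coefficient is $B=\ell-1/4\ge 11/4>2$. This is essential: for inverse-square decay with $\alpha\le 2$, iteration does not push the curvature up by doubling alone. Corollary~\ref{cor:normalized-stop} needs $r^2B/(1+r)^2\gtrsim128$, which fails at layer zero for small $\ell$. We therefore first run $J$ preliminary iterations using Lemma~\ref{lem:reference-scale} with $H(z)=z^{-2}$. The recurrence \eqref{eq:reference-scale} then becomes, to leading order,
\[
 \beta_{j+1}=\beta_j-\frac{\D\log z^{-2}}{2^{j+1}z^{-2}}\approx\beta_j-2^{-j},
\]
because $\D\log z^{-2}\approx 2/z^2$. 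After $J$ steps this gives $2^J(\beta_J)\approx 2^J(\alpha-2)+2$. The $\phi$-term is negligible while $2^jH$ is small. So after $J$ iterations $h_J(z)\approx (2^J(\ell-1/4-2)+2)/z^2$ on a sector. We choose $J=J(\ell)$ fixed so that this coefficient $B_J$ satisfies \eqref{eq:normalized-size} with $r=1/2$, say $B_J\ge 9\cdot 128$ with room for errors. We then invoke Corollary~\ref{cor:normalized-stop} starting from layer $J$, as described after Lemma~\ref{lem:reference-scale}. This yields $T_{q_*}(k)=2\log_2 k+O_\ell(1)$ for all $k\ge k_0(\ell)$.

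There is one more requirement. The dependency cone $[k-T-1,k+T+1]$ must stay inside $\mathbb Z_{\ge0}$, and the holomorphic domain must avoid the left boundary. Since $T\approx2\log_2k\ll k$, this holds for large $k$. The indices $k<k_0(\ell)$ must be handled separately, together with the fact that the result is claimed for every $\ell\ge3$ and not just large $\ell$.

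\textbf{Main obstacle: uniformity in $\ell$.} For large $\ell$ we would treat the left edge as in the proof of Proposition~\ref{prop:row-certificates}. Using the Riccati recurrence obtained from \eqref{row:eq:coefficient-recurrence}, rewritten for the transposed sequence via the contiguous relations in $n$, we would model the initial ratios $c_0(k)$ for $k\le K_1$ as polynomials in $u=\ell^{-1/2}$ on small intervals. We would then iterate the $s$-recurrence \eqref{eq:s-iteration} with retained zero-extension endpoint $s(0)=1$ until $s_T(k)\ge1-q_*$. The interior $k>K_1$ would be covered analytically, with errors uniform in $\ell$ once $\ell$ is large, since then $B=\ell-1/4$ is already huge and fewer preliminary iterations are needed. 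The finitely many small $\ell$ with their finite initial ranges would be certified directly by exact interval arithmetic. The step I expect to be hardest is obtaining explicit, uniform error bounds in the preliminary reference-scale iterations near the threshold $\alpha=2$, and in particular the case $\ell=3$ where $\alpha-2=3/4$ is small. I would first try taking $J$ large (about $10$) and bounding the $R_j$ remainders by crude Cauchy estimates on a half-disk $|z-k|\le k/2$, absorbing everything into a large but explicit $k_0(3)$ that is then closed by computation.
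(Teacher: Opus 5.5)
\textbf{Verdict: there is a genuine gap, in the step you yourself flag as the main obstacle (uniformity in $\ell$).} Your global architecture is the paper's: Lemma~\ref{lem:local-stop} with $q=q_*$, an analytic tail in which finitely many preliminary iterations raise the leading coefficient $2+2^j(\alpha-2)$ before the disk amplification, and certified computations at the initial indices. The resolution you propose for large $\ell$ would fail.

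You take a boundary range $k\le K_1$ with $K_1$ independent of $\ell$. You then cover $k>K_1$ with the model $h_0\approx(\ell-\tfrac14)z^{-2}$, $H=z^{-2}$, and claim errors uniform in $\ell$. But the binomial curvature is exactly $\log(1+W_\ell(z))$ with $W_\ell(z)=\ell/(z(z+\ell+1))$, which is $\approx1/k$ for $k\ll\ell$. The inverse-square behaviour appears only for $k\gg\ell$. The remainder in $(\ell-\tfrac14)z^{-2}+O_\ell(z^{-5/2})$ is also not uniform:
\begin{itemize}
\item the binomial term alone contributes $-\ell(\ell+1)z^{-3}$;
\item the bracket in \eqref{eq:small-Gamma} is $1-2\gamma\ell z^{-1/2}+\cdots$, with coefficients growing in $\ell$.
\end{itemize}
So on $K_1<k\ll\ell$ one has $z^2h_0\approx z$, not $\approx\ell-\tfrac14$. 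Then \eqref{eq:normalized-disk-bound} with $B=\ell-\tfrac14$ fails there, and no computation with $\ell$-independent $K_1$ bridges this range as $\ell\to\infty$.

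The missing idea is a reference profile valid in both regimes. The paper keeps the exact two-pole function $W_\lambda$ and normalizes by $\beta_{\lambda,j}=1-(2-2^{1-j})/\lambda$. It separates the drift using \eqref{eq:two-pole-identity}. The effective size $\chi=\lambda k/(k+\lambda+1)$, not $\ell$, governs the number $J\le7$ of preliminary steps. The correction is bounded relative to $W_\lambda$ uniformly in real $\lambda\ge19$, through the Stieltjes representation of $-\log Q_\lambda$ (Lemmas~\ref{lem:strips} and~\ref{lem:kernel}). This is what lets the analytic tail $k\ge256$ (Propositions~\ref{prop:uniform-majorant} and~\ref{prop:uniform-tail}) overlap the parameter-interval computations in $t=1/\lambda$ for $k\le500$.

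\textbf{Secondary points to correct.}

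First, the $-1/(4z^2)$ term is misattributed. By duplication, $2^{-n}\Gamma(2n+3/2)/\Gamma(n+1)^2$ is a constant times $2^n\Gamma(n+3/4)\Gamma(n+5/4)/\Gamma(n+1)^2$, whose curvature is $O(n^{-3})$. It is $\mu_{\ell+z}(\ell)\sim\Gamma(1/4)(\ell+z)^{-1/4}$ that supplies $-1/(4z^2)$, so the integral does not contribute only $O_\ell(z^{-5/2})$.

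Second, the positivity argument for $F_n$ is in the Laplace variable with $n$ fixed, whereas here the degree varies. Even a valid argument bound gives, via \eqref{eq:halfplane-strip}, only an $O(z^{-2})$ error of size about $2/z^2$. That exceeds $\alpha-2=3/4$ at $\ell=3$.

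For small $\ell$ you therefore need an explicit expansion with remainder $o(z^{-2})$, as in \eqref{eq:small-Gamma} together with Lemma~\ref{lem:Gamma-remainder}. The preliminary iterations must also be controlled sharply enough that the resulting threshold stays within reach of the certified computation. The paper obtains $n\ge8192$ via the $u=z^{-1/2}$ model and Lemma~\ref{lem:analytic-wiener}.
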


Using the moment formula in Subsection~\ref{sec:integrals}, we write
each term as a Gamma factor times an integral and estimate the
initial curvature. We prove the required inequalities analytically
in the tail and by exact rational bounds at the remaining
initial indices.

\subsection{Integral representation and uniform estimates}\label{subsec:uniform-tail}
Write $(\xi)_r=\xi(\xi+1)\cdots(\xi+r-1)$, with $(\xi)_0=1$, and let $\B$ denote
the beta function.
Replace \(n,k\) in the right side of \eqref{eq:moment} by real numbers
\(y,\lambda\), with \(y\ge\lambda>0\), and denote the resulting positive
function by \(D_\lambda(y)\). This function interpolates the coefficients.
For fixed $\lambda$, we consider the sequence
$a_k=D_\lambda(\lambda+k)$, $k\ge0$, with zero extension to the left.
The substitution \(u=x^2\) and the duplication and reflection formulas
give
\begin{equation}\label{eq:factorization}
D_\lambda(y)=\mathcal B_\lambda(y)Q_\lambda(y+\lambda+1),
\end{equation}
where
\begin{align}
\mathcal B_\lambda(y)
&=\frac{2^y\Gamma(y+3/4)\Gamma(y+5/4)\Gamma(y+\lambda+1)}
{\Gamma(3/4)\Gamma(\lambda+1)\Gamma(y-\lambda+1)
 \Gamma(y+1)\Gamma(y+\lambda+5/4)},\label{eq:gamma-factor}\\
Q_\lambda(\xi)
&=\frac1{\B(1/4,\xi)}
\int_0^1u^{-3/4}(1-u)^{\xi-1}(1+\sqrt u)^{-2\lambda}\,du.
\label{eq:Q-integral}
\end{align}

Lagrange inversion gives
\begin{equation}\label{eq:Q-series}
Q_\lambda(\xi)=\sum_{r\ge0}w_{\lambda,r}
\frac{(\xi)_r}{(\xi+1/4)_r},
\qquad
w_{\lambda,r}=4^{-\lambda-r}\frac{\lambda}{\lambda+r}
\binom{2\lambda+2r}{r},
\qquad \sum_rw_{\lambda,r}=1.
\end{equation}

We use positivity of the weights to obtain an integral representation
for the logarithm.

\begin{lemma}\label{lem:strips}
Let $0<b<1$, $w_r>0$, and $\sum_{r\ge0}w_r=1$.  If
$Q(\xi)=\sum_{r\ge0}w_r(\xi)_r/(\xi+b)_r$, then there exist
$\alpha_i$, $i\ge0$, with $i<\alpha_i<i+b$, such that, for $\Rea \xi>0$,
\begin{equation}\label{eq:strips}
 -\log Q(\xi)=\sum_{i\ge0}\int_{\alpha_i}^{i+b}\frac{du}{\xi+u}
            =\int_0^\infty e^{-\xi t}\eta(t)\,dt,
\end{equation}
with the logarithm real on the positive axis, and
\begin{equation}\label{eq:eta-bound}
 0<t\eta(t)<\frac{1-e^{-bt}}{1-e^{-t}}\qquad(t>0).
\end{equation}
\end{lemma}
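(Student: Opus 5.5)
The plan is to exhibit $Q$ as a Stieltjes-type function with interlacing real zeros and poles, factor it as an infinite product, and then take logarithms and Laplace transforms. For $r\ge0$ write
\[
 f_r(\xi)=\frac{(\xi)_r}{(\xi+b)_r}=\prod_{i<r}\frac{\xi+i}{\xi+i+b}.
\]
Each $f_r$ is rational, equals $1$ at infinity, and has simple poles at $-i-b$ for $i<r$. At $\xi=-i-b$ the factor $m=i$ contributes $-b$ to the numerator. Every other factor is positive there: for $m<i$ it is a quotient of two negative numbers, and for $m>i$ we have $m-i-b>0$ because $b<1$. Hence every residue of $f_r$ is negative, so
\[
 f_r(\xi)=1-\sum_{i<r}\frac{\rho_{r,i}}{\xi+i+b},\qquad \rho_{r,i}>0 .
\]

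I would first work with the truncations $Q_N=\bigl(\sum_{r\le N}w_r\bigr)^{-1}\sum_{r\le N}w_rf_r$, which are finite positive combinations. Each has the form $Q_N=1-\sum_{i<N}\rho_i^{(N)}/(\xi+i+b)$ with $\rho_i^{(N)}>0$.
\begin{itemize}
\item On every real interval between consecutive poles, $Q_N'=\sum\rho_i^{(N)}/(\xi+i+b)^2>0$, so $Q_N$ is strictly increasing there.
\item Since $\Im Q_N$ has the sign of $\Im\xi$, all zeros of $Q_N$ are real.
\item At $\xi=-i$, every $f_r$ with $r>i$ vanishes. For $r\le i$ the factors $(m-i)/(m-i+b)$, $m<r$, are quotients of negative numbers, so $Q_N(-i)>0$.
\item Since $Q_N\to-\infty$ as $\xi\downarrow-i-b$, there is exactly one zero $-\alpha_i^{(N)}$ in $(-i-b,-i)$, that is, $i<\alpha_i^{(N)}<i+b$.
\end{itemize}
Counting degrees, the numerator of $Q_N$ has degree $N$, and these $N$ zeros exhaust it. Therefore
\[
 Q_N(\xi)=\prod_{i<N}\frac{\xi+\alpha_i^{(N)}}{\xi+i+b}.
\]

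Next I pass to the limit $N\to\infty$. Along a subsequence, each $\alpha_i^{(N)}\to\alpha_i\in[i,i+b]$ by compactness. For $\Rea\xi>0$ I would use the uniform bound
\[
 \Bigl|\log\frac{\xi+i+b}{\xi+\alpha_i}\Bigr|\le\frac{b}{|\xi+i|},
\]
where the logarithm is taken as $\int_{\alpha_i}^{i+b}du/(\xi+u)$. This bound, together with the locally uniform convergence $Q_N\to Q$ on $\Rea\xi>0$ (from $|f_r|\le1$ and $\sum w_r=1$), should give
\[
 -\log Q(\xi)=\sum_{i\ge0}\int_{\alpha_i}^{i+b}\frac{du}{\xi+u}.
\]
To justify this, I would split the sum at a large index $I$: the tail $\sum_{i\ge I}$ is controlled uniformly in $N$ by comparison with the explicit model $\sum_{i\ge I}\int_i^{i+b}e^{-ut}\,du$ in Laplace form, and the finitely many remaining terms converge directly. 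The strict inequalities $i<\alpha_i<i+b$ then follow from the limit. The argument that does not use the truncation applies: $Q$ itself is increasing on each real interval between its poles and satisfies $Q(-i)>0$, with $Q$ continued meromorphically as in the truncated case.

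Finally, inserting $1/(\xi+u)=\int_0^\infty e^{-(\xi+u)t}\,dt$ and applying Tonelli gives the second equality in \eqref{eq:strips} with
\[
 \eta(t)=\sum_{i\ge0}\int_{\alpha_i}^{i+b}e^{-ut}\,du>0 .
\]
Moreover,
\[
 t\eta(t)=\sum_{i\ge0}\bigl(e^{-\alpha_it}-e^{-(i+b)t}\bigr)
 <\sum_{i\ge0}\bigl(e^{-it}-e^{-(i+b)t}\bigr)=\frac{1-e^{-bt}}{1-e^{-t}},
\]
using $\alpha_i>i$, which is \eqref{eq:eta-bound}. The logarithm is real on the positive axis because each integral is.

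The main obstacle is the limiting step. Since $\sum_i\rho_{r,i}=rb$, the residues of $Q$ need not be summable when $\sum_r rw_r=\infty$, so I would avoid the partial-fraction form of $Q$ itself. Instead I would rely on the product/integral representation of the truncations, whose terms are bounded uniformly by $b/|\xi+i|$, together with Vitali's theorem on $\Rea\xi>0$.
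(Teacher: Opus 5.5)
Your treatment of the truncations and your final steps are correct and agree with the paper. That covers the negative residues, $Q_N(-i)>0$, the single zero in each $(-i-b,-i)$, the finite product for $Q_N$, the Tonelli step, and the bound \eqref{eq:eta-bound}. The gap is the passage $N\to\infty$, which you flag as the main obstacle but do not close.

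\textbf{Why your limiting argument fails.} Neither majorant you propose is summable.
\begin{itemize}
\item The sum $\sum_i b/|\xi+i|$ diverges.
\item So does the model tail $\sum_{i\ge I}\int_i^{i+b}du/(\xi+u)$, for every $I$. In Laplace form, $\sum_{i\ge I}\int_i^{i+b}e^{-ut}\,du\sim b/t$ as $t\to0$, which is not integrable.
\end{itemize}
So these comparisons give no control of $\sum_{i\ge I}\int_{\alpha_i^{(N)}}^{i+b}du/(\xi+u)$ uniformly in $N$. Vitali's theorem only reproduces $\log Q_N\to\log Q$, which you already have. It does not justify exchanging the limit with the infinite sum.

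Along your subsequence, Fatou's lemma and the bound $Q_N(\xi)\ge w_0$ for real $\xi>0$ give only
\[
 \sum_{i\ge0}\int_{\alpha_i}^{i+b}\frac{du}{\xi+u}\le-\log Q(\xi).
\]
Equality requires excluding loss of mass as $i\to\infty$ (equivalently, at $t=0$), and nothing in your argument does this. Even convergence of the limiting series does not follow from $i<\alpha_i<i+b$; it must come from the lower bound on $Q$.

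\textbf{The missing idea: monotonicity in $N$.} The paper supplies this as follows.
\begin{itemize}
\item For $i<N$, the new summand $f_{N+1}$ is negative on $(-i-b,-i)$. So the increasing function $Q_{N+1}$ is negative at the old zero $-\alpha_i^{(N)}$, and its zero moves to the right. Thus $\alpha_i^{(N)}$ decreases in $N$.
\item The limit satisfies $\alpha_i\le\alpha_i^{(N_0)}<i+b$. Local uniform convergence on $[-\alpha_i^{(N_0)},-i]$ together with $Q(-i)>0$ gives $\alpha_i>i$.
\item For real $\xi>0$, $f_r(\xi)$ decreases in $r$, so $Q_N(\xi)$ decreases to $Q(\xi)\ge w_0>0$.
\item Hence each term $\int_{\alpha_i^{(N)}}^{i+b}du/(\xi+u)$ increases with $N$. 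Monotone convergence gives both the identity and finiteness on $(0,\infty)$.
\item Domination of each term by its value at $\Rea\xi$, plus the identity theorem, extends the identity to $\Rea\xi>0$.
\end{itemize}

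\textbf{An alternative repair within your compactness framework.} For $u\ge I$ and $0<\xi_1<\xi_2\le X$, the kernels satisfy
\[
 \frac1{\xi_1+u}-\frac1{\xi_2+u}\le\frac XI\cdot\frac1{\xi_2+u}.
\]
Combined with the uniform bound $-\log Q_N\le-\log w_0$, this shows that the escaped mass $-\log Q(\xi)-\sum_i\int_{\alpha_i}^{i+b}du/(\xi+u)$ is a constant $c\ge0$ independent of $\xi$. Letting $\xi\to+\infty$, where both $Q$ and the limiting product tend to $1$, forces $c=0$.
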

\begin{proof}
Write $f_r(\xi)=(\xi)_r/(\xi+b)_r$,
$Q_N=\sum_{r=0}^Nw_rf_r$ and $S_N=\sum_{r=0}^Nw_r$.
The identity
\[
 1-f_r(\xi)=\sum_{i=0}^{r-1}
 \frac{(b)_{i+1}(1-b)_{r-i-1}}{i!(r-i-1)!}\frac1{\xi+b+i}
\]
implies that $Q_N$ is strictly increasing between consecutive poles. In
$(-i-b,-i)$ it goes from negative infinity to
$Q_N(-i)=\sum_{r=0}^iw_rf_r(-i)>0$.  Its unique zero there is
$-\alpha_{N,i}$, and these are all its $N$ zeros.  Hence
\[
 \frac{Q_N(\xi)}{S_N}
 =\prod_{i=0}^{N-1}\frac{\xi+\alpha_{N,i}}{\xi+i+b}.
\]
On this interval $f_{N+1}<0$, so adding the next summand moves the
zero to the right.  Thus $\alpha_{N,i}$ decreases with $N$.

To pass to the limit, observe that the series defining $Q$ converges
locally uniformly away from its poles. On any such compact set, all
factors $(\xi+\ell)/(\xi+b+\ell)$ with sufficiently large index
have modulus at most one, while the finitely
many earlier factors are uniformly bounded. The claim follows from
$\sum w_r=1$.
In particular, $Q(-i)=\sum_{r=0}^iw_rf_r(-i)>0$. By local uniform
convergence, the limiting zero is distinct from $-i$. The strict bound
at the other endpoint already holds for each finite truncation.
Consequently $\alpha_{N,i}\downarrow\alpha_i$ with
$i<\alpha_i<i+b$.

For real $\xi>0$, the values $f_r(\xi)$ decrease with $r$, so
$Q_N(\xi)/S_N$ decreases to $Q(\xi)>0$.  Taking logarithms of the finite
product and applying monotone convergence gives
\[
 -\log Q(\xi)=\sum_{i\ge0}\int_{\alpha_i}^{i+b}\frac{du}{\xi+u}<\infty.
\]
The sum converges locally uniformly in $\Rea \xi>0$, since on
$\Rea \xi\ge\sigma>0$ it is dominated by its value at $\sigma$.
Exponentiating and applying the identity theorem, we obtain the
asserted holomorphic logarithm throughout that half-plane.

Set $\eta(t)=\sum_{i\ge0}\int_{\alpha_i}^{i+b}e^{-ut}\,du$.
By Tonelli's theorem, the Laplace representation holds for real $\xi>0$;
by absolute convergence, it also holds for $\Rea \xi>0$.
Enlarging each integration interval to $[i,i+b]$ proves
\eqref{eq:eta-bound}, with strict inequalities because
$i<\alpha_i<i+b$.
\end{proof}

\begin{lemma}\label{lem:kernel}
Put \(H_\lambda(y)=D_\lambda(y)/(2^y\binom y\lambda)\).  There is a
kernel \(k_\lambda\) such that
\begin{equation}\label{eq:kernel}
-(\log H_\lambda)'(y)=\int_0^\infty e^{-yt}k_\lambda(t)\,dt,
\qquad 0<k_\lambda(t)<\frac14e^{-3t/8}.
\end{equation}
Consequently, for \(e_\lambda=\D\log H_\lambda\),
\begin{equation}\label{eq:e-bound}
e_\lambda(y)=\int_0^\infty e^{-yt}
\frac{2\cosh t-2}{t}k_\lambda(t)\,dt,
\qquad
|e_\lambda(y)|<\frac1{4(\Rea y-5/8)^2}
\quad(\Rea y>1).
\end{equation}
\end{lemma}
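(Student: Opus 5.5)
The plan is to split $\log H_\lambda$ into a Gamma part, handled by the integral representation of the digamma function, and a $Q$ part, handled by Lemma~\ref{lem:strips}, and then combine the two kernels. By \eqref{eq:factorization} and \eqref{eq:gamma-factor}, with $\binom y\lambda=\Gamma(y+1)/(\Gamma(\lambda+1)\Gamma(y-\lambda+1))$, the factors $2^y$, $\Gamma(\lambda+1)$ and $\Gamma(y-\lambda+1)$ cancel, leaving
\[
 H_\lambda(y)=\frac{\Gamma(y+3/4)\Gamma(y+5/4)}{\Gamma(3/4)\Gamma(y+1)^2}\cdot
 \frac{\Gamma(\xi)}{\Gamma(\xi+1/4)}\,Q_\lambda(\xi),\qquad \xi=y+\lambda+1 .
\]
For the Gamma factors I use $-\psi(x)=\int_0^\infty\bigl(e^{-xt}/(1-e^{-t})-e^{-t}/t\bigr)\,dt$ for $\Rea x>0$. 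The constant terms cancel because the numerator and denominator contain equally many Gamma factors. The first quotient then contributes the kernel
\[
 e^{-yt}\,\frac{e^{-t}(2\cosh(t/4)-2)}{1-e^{-t}}.
\]
The factor $\Gamma(\xi)/\Gamma(\xi+1/4)$ contributes $e^{-yt}e^{-(\lambda+1)t}(1-e^{-t/4})/(1-e^{-t})$.

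For $Q_\lambda$, I apply Lemma~\ref{lem:strips} with $b=1/4$, which is legitimate by \eqref{eq:Q-series} since $w_{\lambda,r}>0$ and $\sum_r w_{\lambda,r}=1$. Differentiating \eqref{eq:strips} under the integral sign gives $-(\log Q_\lambda)'(\xi)=-\int_0^\infty e^{-\xi t}t\eta(t)\,dt$. This contribution carries a \emph{negative} sign, and the sign matters: it must be paired with the positive Gamma-ratio term. The combined kernel is
\[
 k_\lambda(t)=\frac{e^{-t}(2\cosh(t/4)-2)}{1-e^{-t}}
 +e^{-(\lambda+1)t}\Bigl(\frac{1-e^{-t/4}}{1-e^{-t}}-t\eta(t)\Bigr).
\]
By \eqref{eq:eta-bound} the bracket lies strictly between $0$ and $(1-e^{-t/4})/(1-e^{-t})$. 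This gives $k_\lambda>0$ at once. Using $\lambda>0$ for the upper bound,
\[
 k_\lambda(t)<\frac{e^{-t}(e^{t/4}-1)}{1-e^{-t}}=\frac{x^6-x^8}{1-x^8},\qquad x=e^{-t/8}\in(0,1).
\]
The desired bound $k_\lambda(t)<\tfrac14 e^{-3t/8}=x^3/4$ then reduces to $4x^3<(1+x^2)(1+x^4)$. This follows from $1+x^2\ge 2x$ and $1+x^4\ge2x^2$, with strict inequality for $x<1$.

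For \eqref{eq:e-bound}, integrating the representation \eqref{eq:kernel} gives $\log H_\lambda(y)=\text{const}+\int_0^\infty e^{-yt}k_\lambda(t)t^{-1}\,dt$. The integrand is $O(1)$ at $t=0$ only after $\D$ is applied, so I would apply $\D$ first and verify convergence afterwards. Applying $\D$ multiplies the integrand by $e^{t}+e^{-t}-2$, which gives the stated formula; it is valid for $\Rea y>1$ because $k_\lambda(t)\le e^{-3t/8}/4$ ensures absolute convergence at infinity. For the bound, set $a=\Rea y+3/8>11/8$. Then
\[
 |e_\lambda(y)|\le\frac14\int_0^\infty e^{-at}\frac{e^t+e^{-t}-2}{t}\,dt
 =-\frac14\log\bigl(1-a^{-2}\bigr)\le\frac1{4(a^2-1)}<\frac1{4(a-1)^2},
\]
and $a-1=\Rea y-5/8$.

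The main obstacle is the bookkeeping of domains and signs, not the inequalities. The identity $\log H_\lambda(y)=\text{const}+\int_0^\infty e^{-yt}k_\lambda(t)t^{-1}\,dt$ must hold as a holomorphic identity for $\Rea y>1$. The digamma representations require $\Rea(y+3/4)>0$, and Lemma~\ref{lem:strips} requires $\Rea\xi>0$; both hold there. One must also check that the branch of $\log Q_\lambda$ used is the one real on the positive axis, so that the constant is annihilated by $\D$. If the naive sum of the Gamma and $Q$ kernels is taken with the wrong sign, the bound fails near $t=0$ (one gets $1/2$ instead of $1/4$). The cancellation between $(1-e^{-t/4})/(1-e^{-t})$ and $t\eta(t)$ is therefore the crux of the argument.
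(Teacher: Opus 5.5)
Your proposal is correct and follows the paper's proof. With $u=e^{-t/4}$ your kernel is exactly the paper's $\frac{u^3-u^4+u^{4\lambda+4}}{1+u+u^2+u^3}-tu^{4\lambda+4}\eta_\lambda(t)$, and you bound it the same way: the strip bound gives positivity, and $t\eta_\lambda>0$ with $\lambda>0$ gives $u^3/(1+u+u^2+u^3)\le u^{3/2}/4$. The only difference is the last inequality, where you evaluate the Frullani integral $-\tfrac14\log(1-a^{-2})$ exactly, whereas the paper bounds $(\log H_\lambda)''(y)=\int_0^\infty te^{-yt}k_\lambda(t)\,dt$ by $1/(4(\Rea y+3/8)^2)$ and integrates it against the triangular weight on $[-1,1]$; that route also avoids the divergent $t^{-1}$ integral at the origin that you flag.
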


\begin{proof}
Apply Lemma~\ref{lem:strips} to \eqref{eq:Q-series} with \(b=1/4\).
Writing \(u=e^{-t/4}\), logarithmic differentiation of
\eqref{eq:gamma-factor} gives
\[
k_\lambda(t)=\frac{u^3-u^4+u^{4\lambda+4}}{1+u+u^2+u^3}
-tu^{4\lambda+4}\eta_\lambda(t).
\]
By the strip bound, \(0<t\eta_\lambda<1/(1+u+u^2+u^3)\), and hence
\(0<k_\lambda<u^3/(1+u+u^2+u^3)\le u^{3/2}/4\).
Taking a second difference proves
the first formula in \eqref{eq:e-bound}. The corresponding bound for
\((\log H_\lambda)''\) implies the last inequality.
\end{proof}

We will also need bounds for the Taylor coefficients of the integral
correction. The following estimate is independent of the measurable
function in the integrand.

\begin{lemma}
\label{lem:normalized-laplace-coefficients}
Let $0\le f(t)\le1$ be measurable and put
$F(v)=v^2\int_0^\infty te^{-vt}f(t)\,dt$ for $\Rea v>0$.
For every $v_0>0$, write $F(v_0(1+\xi))=\sum_{i\ge0}F_i\xi^i$.
Then $0\le F_0\le1$ and $|F_i|\le1$ for $i\ge1$.
\end{lemma}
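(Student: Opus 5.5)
Write $v=v_0(1+\xi)$. I plan to compute the Taylor coefficients directly, by expanding $e^{-v_0\xi t}$ inside the integral and then bounding the result in terms of $f$.

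\textbf{Step 1: the coefficients as integrals.} Put $G(v)=\int_0^\infty te^{-vt}f(t)\,dt$. Since $0\le f\le1$, for $\Rea v>0$ we have $|G(v)|\le\int_0^\infty te^{-t\Rea v}\,dt=(\Rea v)^{-2}$, so $G$ is holomorphic there. Expanding $e^{-v_0\xi t}=\sum_m(-v_0\xi t)^m/m!$ gives
\[
 G(v_0(1+\xi))=\sum_{m\ge0}\frac{(-v_0\xi)^m}{m!}\int_0^\infty t^{m+1}e^{-v_0t}f(t)\,dt .
\]
This interchange is justified for $|\xi|<1$ by Tonelli's theorem, because $\sum_m|v_0\xi t|^m/m!=e^{v_0|\xi|t}$ keeps the integrand integrable. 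Now substitute $s=v_0t$ and put $g(s)=f(s/v_0)\in[0,1]$. Set
\[
 I_m=\frac1{(m+1)!}\int_0^\infty s^{m+1}e^{-s}g(s)\,ds\in[0,1].
\]
Here $I_m\ge0$ because $g\ge0$, and $I_m\le1$ because the integral with $g\equiv1$ equals $(m+1)!$. With this notation, $v_0^2G(v_0(1+\xi))=\sum_m(-1)^m(m+1)I_m\xi^m$.

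\textbf{Step 2: the coefficients of $F$.} Multiplying by $(1+\xi)^2=1+2\xi+\xi^2$ gives $F(v_0(1+\xi))$. Writing $J_m=(-1)^m(m+1)I_m$, the coefficients are
\[
 F_i=J_i+2J_{i-1}+J_{i-2}.
\]
At $i=0$ this is $F_0=I_0\in[0,1]$, which is the first claim. At $i=1$,
\[
 F_1=-2I_1+2I_0=\frac{1}{1}\int_0^\infty\bigl(2s-s^2\bigr)e^{-s}g\,ds .
\]
This naive form is not obviously bounded by one, so the coefficients have to be combined into a single integral against a fixed kernel.

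\textbf{Step 3: a kernel representation.} For each $i\ge1$, write $F_i=\int_0^\infty K_i(s)e^{-s}g(s)\,ds$. Using $(m+1)I_m=\frac1{m!}\int s^{m+1}e^{-s}g$, the kernel is
\[
 K_i(s)=(-1)^i\Bigl(\frac{s^{i+1}}{i!}-\frac{2s^{i}}{(i-1)!}+\frac{s^{i-1}}{(i-2)!}\Bigr),
\]
with the usual conventions when $i=1$. One checks that $s^{i-1}\bigl(s^2-2is+i(i-1)\bigr)/i!$ is a Laguerre-type expression. Indeed, $e^{-s}K_i(s)=\frac{(-1)^i}{i!}\frac{d^2}{ds^2}\bigl(s^{i+1}e^{-s}\bigr)$ up to sign, after verifying $(s^{i+1}e^{-s})''=e^{-s}\bigl(s^{i+1}-2(i+1)s^i+(i+1)is^{i-1}\bigr)$ and matching the constants carefully.

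Since $0\le g\le1$, we get
\[
 |F_i|\le\max\Bigl\{\int(K_ie^{-s})_+,\ \int(K_ie^{-s})_-\Bigr\}.
\]
Let $\psi=s^{i+1}e^{-s}/i!$. Its second derivative has total integral $0$, because $\psi'$ vanishes at $0$ and at $\infty$. Hence the positive and negative parts of $\psi''$ have equal mass, and that common mass is bounded by the total variation of $\psi'$ on an interval where $\psi''$ has a fixed sign. Here $\psi'$ changes sign only at $s=i+1$, and $\psi''$ has two sign changes at $s=i+1\pm\sqrt{i+1}$. The negative part of $\psi''$ therefore has mass $\psi'(s_-)-\psi'(s_+)$, and the positive part has mass $\psi'(s_-)+|\psi'(s_+)|$ spread over both tails. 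Both are at most $2\max|\psi'|$.

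\textbf{Main obstacle.} The remaining and hardest step is the explicit inequality $\max|\psi'|\le1/2$, that is,
\[
 \max_s\Bigl|\frac{d}{ds}\frac{s^{i+1}e^{-s}}{i!}\Bigr|\le\frac12 ,
\]
uniformly in $i\ge1$. This has to be checked with care, and possibly sharpened by splitting at the inflection points rather than using the crude factor $2$. If the matching of constants in Step 3 makes the kernel exactly $\psi''$, I expect the bound to follow from Stirling-type estimates of $\psi'$ at $s=i+1\pm\sqrt{i+1}$ together with a direct check for small $i$.
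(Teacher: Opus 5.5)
\textbf{There is a gap: Step 3 misidentifies the kernel, and the final uniform bound is never proved.}

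Your Steps 1 and 2 are correct. Your kernel $K_i$ agrees with the coefficient formula the paper uses, $F_i=\frac{(-1)^i}{i!}\int_0^\infty s^{i-1}e^{-s}\bigl((s-i)^2-i\bigr)g(s)\,ds$. The reduction $|F_i|\le\max\{\int(K_ie^{-s})_+,\int(K_ie^{-s})_-\}$ is also valid.

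The claimed identity $e^{-s}K_i=\pm\frac1{i!}(s^{i+1}e^{-s})''$ is false, and no matching of constants can rescue it. The kernel's polynomial factor is $s^{i+1}-2is^i+i(i-1)s^{i-1}$. Your own computation gives $s^{i+1}-2(i+1)s^i+(i+1)is^{i-1}$ for $(s^{i+1}e^{-s})''$. For $i=1$, compare $(2s-s^2)e^{-s}$ with $(s^2-4s+2)e^{-s}$. The correct structure is
\[
 e^{-s}K_i(s)=\frac{(-1)^i}{i!}\,s\,(s^ie^{-s})''
 =\frac{(-1)^{i+1}}{i!}\frac{d}{ds}\bigl[s^i(s-i+1)e^{-s}\bigr],
\]
with sign changes at $i\pm\sqrt i$ rather than $i+1\pm\sqrt{i+1}$. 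So the inequality $\max|\psi'|\le1/2$ you single out concerns the wrong function. In any case it is never proved. Appealing to Stirling-type estimates plus a check of small $i$ is a plan, not an argument uniform in $i$.

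The missing idea is a one-line mass comparison that avoids any extremal analysis. Write $e^{-s}K_i=\frac{(-1)^i}{i!}(A_i-B_i)$ with $A_i(s)=s^{i-1}(s-i)^2e^{-s}\ge0$ and $B_i(s)=is^{i-1}e^{-s}\ge0$. From $\int_0^\infty s^me^{-s}\,ds=m!$, both have total mass $i!$, since $(i+1)!-2i\cdot i!+i^2(i-1)!=i!$ and $i(i-1)!=i!$. Because $0\le g\le1$, the integrals $\int A_ig$ and $\int B_ig$ both lie in $[0,i!]$. Their difference therefore lies in $[-i!,i!]$, giving $|F_i|\le1$; this is the paper's proof.

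In your own framework, this says each of $(K_ie^{-s})_+$ and $(K_ie^{-s})_-$ is dominated pointwise by $A_i/i!$ or $B_i/i!$, so each has mass at most one. Your antiderivative route, carried out with the correct primitive, would give a sharper constant. That constant is the oscillation of $s^i(s-i+1)e^{-s}/i!$, about $0.54$ for $i=1$ and tending to $2/\sqrt{2\pi e}$. But the lemma needs only the bound $1$, and getting the sharper bound uniformly in $i$ requires exactly the Stirling estimates you left open.
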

\begin{proof}
After the change of variables $u=v_0t$, the coefficient for $i\ge1$ is
\[
 F_i=\frac{(-1)^i}{i!}\int_0^\infty u^{i-1}e^{-u}
       \bigl((u-i)^2-i\bigr)f(u/v_0)\,du.
\]
The kernels $u^{i-1}e^{-u}(u-i)^2$ and $iu^{i-1}e^{-u}$ both integrate
to $i!$. Since $0\le f\le1$, the difference of their weighted integrals lies between $-i!$ and $i!$.
This proves the bound. The constant term is immediate, and dominated
convergence on $|\xi|\le r<1$ justifies the expansion.
\end{proof}
Since the factor \(2^y\) has zero logarithmic second difference, the
initial shifted curvature is
\begin{equation}\label{eq:initial-curvature}
h_{\lambda,0}(z):=-\D\log D_\lambda(\lambda+z)
=B_\lambda(z)-e_\lambda(\lambda+z),
\qquad
B_\lambda(z)=\log\frac{(z+1)(z+\lambda)}{z(z+\lambda+1)}.
\end{equation}
We use \eqref{eq:curvature-iteration} for the later curvatures. The
Stieltjes representation is required only for the initial curvature.

To obtain the asymptotic expansion in Table~\ref{tab:five-applications}, we use
\eqref{eq:small-Gamma} and \eqref{eq:EO}, which hold for every
fixed integer $\ell\ge1$. By the recurrence, $E_\ell(0)=1$ and
$O_\ell(0)=2\ell$, so the bracket in \eqref{eq:small-Gamma} is
$1+O_\ell(|z|^{-1/2})$ on closed subsectors of $\Rea z>0$.
Using Lemma~\ref{lem:Gamma-remainder} and Cauchy's estimate, we bound
the second difference of its logarithm by $O_\ell(|z|^{-5/2})$.
Applying the Gamma functional equation, we obtain
\[
 h_{\ell,0}(z-\ell)
 =-\ell\Log(1-z^{-2})
  +\Log\frac{(z+1)(z-1/4)}{z(z+3/4)}
  +O_\ell(|z|^{-5/2}).
\]
After translating back to the sequence coordinate, this becomes
\[
 h_{\ell,0}(z)=(\ell-1/4)z^{-2}+O_\ell(|z|^{-5/2}).
\]
For each fixed $\ell\ge3$, we consequently obtain, by
Theorem~\ref{thm:power-law},
\[
 T_{q_*}(k)=2\log_2k-\log_2(\ell-9/4)+O_\ell(1).
\]

To bound the iterated correction, we use an absolute coefficient norm.
For $f(\xi)=\sum_{i\ge0}f_i\xi^i$, put
$\pnorm f r=\sum_{i\ge0}|f_i|r^i$.
For power series write \(f\preceq U\) if every absolute Taylor coefficient
of \(f\) is at most the corresponding nonnegative coefficient of \(U\).
Then \(|f(\xi)|\le U(r)\) on \(|\xi|\le r\).  For \(0<h\le H\), put
\[
\mathcal T_hf(\xi)=\frac{f(\xi-h)-2f(\xi)+f(\xi+h)}{h^2}.
\]
If \(f\preceq U\), then
\begin{equation}\label{eq:T-majorant}
\mathcal T_hf\preceq\mathcal T_HU,
\qquad
\mathcal T_HU=2\sum_{i\ge1}\frac{H^{2i-2}}{(2i)!}U^{(2i)}.
\end{equation}
In particular, \(\mathcal T_H\) preserves nonnegative coefficients.

Using the second curvature in \eqref{eq:two-profiles}, we estimate
the tail uniformly for coefficient indices $\lambda\ge19$.
For real $\lambda\ge19$ and integer $k\ge256$, we keep both denominator factors in
\begin{equation}\label{eq:uniform-W}
 W_\lambda(z)=\frac{\lambda}{z(z+\lambda+1)},
 \qquad B_\lambda(z)=\log(1+W_\lambda(z)).
\end{equation}
Put
\begin{equation}\label{eq:uniform-parameters}
 \delta=\frac1k,\quad \omega=\frac{k}{k+\lambda+1},\quad
 \chi=\lambda\omega>35/2,\quad z=k(1+\xi),\quad P(\xi)=(1+\xi)(1+\omega \xi).
\end{equation}
Thus $W_\lambda(z)=\chi\delta^2/P(\xi)$. Normalize the curvatures by
\begin{equation}\label{eq:uniform-normalization}
 h_{\lambda,j}(k(1+\xi))=2^jW_\lambda(k(1+\xi))(\beta_{\lambda,j}+R_j(\xi)),
 \qquad \beta_{\lambda,j}=1-\frac{2-2^{1-j}}\lambda.
\end{equation}
The identity
\begin{equation}\label{eq:two-pole-identity}
 P(\xi)\left\{\frac1{(1+\xi)^2}+\frac{\omega^2}{(1+\omega \xi)^2}\right\}
 =2\omega+\frac{(1-\omega)^2}{P(\xi)}
\end{equation}
separates the constant drift from the remainder, for which
\begin{equation}\label{eq:uniform-small-drift}
 \frac{(1-\omega)^2}{\chi}\le\frac{1+1/\lambda}{k}<\frac9{8k}.
\end{equation}
By Lemma~\ref{lem:reference-scale}, the exact recurrence is
\begin{align}\label{eq:uniform-exact}
 R_{j+1}={}&R_j+\frac{2^{-j}}\lambda
 +\frac{P}{2^{j+1}\chi}\mathcal T_\delta\log P\notag\\
 &-\frac{P}{2^{j+1}\chi}\mathcal T_\delta
 \{\log(\beta_{\lambda,j}+R_j)+\phi(h_{\lambda,j})\}.
\end{align}

\begin{proposition}\label{prop:uniform-majorant}
Put $r=11/50$ and $J=\min\{j\ge0:2^j\chi\ge2240\}\le7$.
The curvature $h_{\lambda,j}$ is holomorphic with positive real part
near $|z-k|\le k(r+(7-j)/256)$ for $0\le j\le J$. Moreover,
\begin{equation}\label{eq:uniform-terminal}
 |R_J(\xi)|<
 \begin{cases}
  1/3,& |\xi|\le r,\\
  1/10,& |\xi|\le3r/4,\\
  3/100,& \xi=0.
 \end{cases}
\end{equation}
\end{proposition}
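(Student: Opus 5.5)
We propose to prove Proposition~\ref{prop:uniform-majorant} by running the exact recurrence \eqref{eq:uniform-exact} for at most $J\le7$ steps. At every step the remainder $R_j$ is controlled by a power-series majorant in $\xi$ on shrinking disks $|\xi|\le r+(7-j)/256$. Two preliminary facts are needed. First, $J\le7$: since $\chi>35/2$, we have $2^7\chi>2240$. Second, the seed $R_0$ must be controlled. At $j=0$ we have $\beta_{\lambda,0}=1$, and by \eqref{eq:initial-curvature},
\[
 R_0=\frac{B_\lambda-W_\lambda}{W_\lambda}-\frac{e_\lambda(\lambda+z)}{W_\lambda}.
\]
The first term is $\log(1+W)/W-1$. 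Because $|W_\lambda|\le \chi\delta^2/|P|$ is tiny for $k\ge256$, this term is $O(\chi/k^2)$. For the second term we use Lemma~\ref{lem:kernel}: on the disk we have $\Rea(\lambda+z)\ge\lambda+k(1-r')$, so
\[
 \frac{|e_\lambda|}{|W_\lambda|}\le\frac{|z(z+\lambda+1)|}{4\lambda(\Rea(\lambda+z)-5/8)^2}.
\]
This quantity is bounded by roughly $(1+r')^2/(4\lambda(1-r')^2)$ times a factor close to $1$, and since $\lambda\ge19$ this is below about $1/40$. To get Taylor-coefficient bounds rather than only sup bounds, we would instead use Lemma~\ref{lem:normalized-laplace-coefficients}. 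It applies because $e_\lambda$ is a Laplace transform whose kernel, after normalization by $t$ and the factor $2\cosh t-2\le t^2e^{t}$, is dominated by a function of the form required there. This gives $R_0\preceq U_0$ with $U_0$ an explicit series whose coefficients are all $O(1/\lambda)$.

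The inductive step has three parts. Assume $R_j\preceq U_j$ on $|\xi|\le\rho_j$ with $U_j(\rho_j)$ small. Since $\beta_{\lambda,j}\ge1-2/19$, the functions $\log(\beta_{\lambda,j}+R_j)$ and $\phi(h_{\lambda,j})$ are holomorphic, and $\Rea h_{\lambda,j}>0$ follows from $\Rea W_\lambda>0$ on the disk, since $\Rea P^{-1}>0$ for $|\xi|\le1/4$.

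\begin{itemize}[leftmargin=*]
\item \emph{Composition.} Majorize $\log(\beta+R)-\log\beta\preceq -\log(1-U/\beta)$. For $\phi(w)=-w/2+w^2/24-\cdots$ use $|\phi(w)|\le|w|/2+O(|w|^2)$ with $w=2^jW(\beta+R)$; here $|w|\le 2^j\chi\delta^2\cdot O(1)$ is at most about $2240\cdot 2/k^2$, hence tiny.
\item \emph{Difference operator.} Apply \eqref{eq:T-majorant} with $h=\delta$, $H=1/256$. This gives $\mathcal T_\delta f\preceq\mathcal T_H U$, which in turn is bounded by Cauchy-type coefficient growth on the slightly larger disk. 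Equivalently, use \eqref{eq:modulus-difference} in rescaled form, where the gap $1/256$ in $\xi$ corresponds to the gap $k/256\ge1$ in $z$. Here we would compare instead with \eqref{eq:strip-difference}, exploiting $|\Im\log|<\pi/2$ if that gives better constants.
\item \emph{Prefactor and drift.} Multiply by the prefactor $P/(2^{j+1}\chi)$. The drift terms $2^{-j}/\lambda$ and $P\,\mathcal T_\delta\log P/(2^{j+1}\chi)$ nearly cancel: by \eqref{eq:two-pole-identity},
\[
 -P\,\mathcal T_\delta\log P\approx 2\omega+(1-\omega)^2/P,
\]
and the choice of $\beta_{\lambda,j}$ absorbs the $2\omega/\chi=2/\lambda$ part. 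What survives is of size $(1-\omega)^2/\chi<9/(8k)$ by \eqref{eq:uniform-small-drift}, together with $O(\delta^2)$ corrections from higher differences.
\end{itemize}

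The net effect is a recursion of the form
\[
 U_{j+1}\le U_j+\frac{C}{2^{j}\chi}\,\mathcal T_H\bigl(\text{majorant of }\log(\beta+U_j)\bigr)+O(1/k).
\]
Because of the $2^{-j}$ weights and $\chi\ge35/2$, the accumulated error is summable.

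The main obstacle is quantitative: we must get the explicit constants $1/3$, $1/10$, $3/100$ on radii $r$, $3r/4$ and $0$. The term $\mathcal T_\delta\log(\beta+R_j)/\chi$ loses a factor of roughly the inverse squared gap at each step. For small $j$, where $2^j\chi$ is only about $17.5$, this factor is not small in absolute terms, so the crude gap $1/256$ is far too wasteful. We would therefore first try to exploit the structure of $R_j$. On the disk of radius $\rho$, the seed $R_0$ has derivatives bounded by $\rho^{-i}$ times $O(1/\lambda)$ (Lemma~\ref{lem:normalized-laplace-coefficients} gives unit coefficients at scale $1$ in $\xi$), so $\mathcal T_\delta R_0$ costs only $O(1)$ in the $\xi$ variable. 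We would propagate this by tracking the coefficient majorants $U_j$ as truncated polynomials with rational coefficients, in the style of Lemma~\ref{lem:model-arithmetic}, over at most seven steps. Uniformity over $\lambda\ge19$ and $k\ge256$ is handled by monotonicity in the parameters $\omega\in(0,1)$, $\chi>35/2$ and $\delta\le1/256$. The three final bounds then come from evaluating $U_J$ at $r$, at $3r/4$, and at $0$; the last reduces to its constant coefficient, which by the drift cancellation is of order $1/\lambda+1/k$.
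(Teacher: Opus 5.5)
Your plan is essentially the paper's proof: a coefficient-majorant recursion $R_j\preceq U_j$ derived from \eqref{eq:uniform-exact}, seeded via Lemmas~\ref{lem:kernel} and~\ref{lem:normalized-laplace-coefficients} (your kernel bound with the worst shift $\lambda-5/8$ yields essentially the paper's $U_0$), with the drift cancelled through \eqref{eq:two-pole-identity}, the bound $\mathcal T_\delta f\preceq\mathcal T_HU$, the worst-case values $\chi>35/2$, $\beta_{\lambda,j}\ge17/19$, $\delta\le H$, and a final rational check of $U_J$ at $r$, $3r/4$ and $0$. The one point to correct is the difference step, in two respects: first, the alternative via \eqref{eq:modulus-difference} is not equivalent, since it is vacuous at $k=256$ (where the gap $kH$ equals one) and loses a factor of order $256^2$ otherwise; second, there is no inverse-squared-gap loss once $\mathcal T_HU_j$ is evaluated directly, because for nonnegative coefficients it is at most $U_j''(\xi+H)$ at real $\xi\ge0$, which stays moderate since the majorants' singularities lie well outside $|\xi|\le r+7H$, and the paper exploits this by keeping the $U_j$ as closed-form rational functions evaluated only at $\xi_*+mH$ with $\xi_*\in\{r,3r/4,0\}$ and $|m|\le7-j$, which also removes the tail control your truncated polynomial majorants would need.
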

\begin{proof}
Put $H=1/256$. We begin with the initial majorant
\begin{equation}\label{eq:uniform-U0}
 U_0(\xi)=\frac{H}{2((1-\xi)^2-H)}+\frac1{76(1-\xi)}
       +\frac{9H}{32(1-\xi)^2}+\frac{7H^2}{16(1-\xi)^3}.
\end{equation}
The first term bounds $B_\lambda/W_\lambda-1$, since
$W_\lambda\preceq H(1-\xi)^{-2}$ and
$\log(1+v)/v-1\preceq v/(2(1-v))$.
For the kernel term, write $k_\lambda(t)=\tfrac14e^{-3t/8}f_\lambda(t)$,
where $0<f_\lambda<1$ by Lemma~\ref{lem:kernel}. Set
\begin{align*}
 G_a(z)&=\frac{z(z+\lambda+1)}{\lambda(z+a)^2}
 =\frac1\lambda+\frac{\lambda+1-2a}{\lambda(z+a)}
            +\frac{a(a-\lambda-1)}{\lambda(z+a)^2},\\
 F_a(z)&=(z+a)^2\int_0^\infty te^{-(z+a)t}f_\lambda(t)\,dt.
\end{align*}
The integral form of the second difference gives
\[
 \frac{e_\lambda(\lambda+z)}{W_\lambda(z)}
 =\frac14\int_{-1}^1(1-|v|)
 G_{\lambda+v+3/8}(z)F_{\lambda+v+3/8}(z)\,dv.
\]
For $a=\lambda+v+3/8$ and $|v|\le1$, applying Lemma~\ref{lem:normalized-laplace-coefficients} at $k+a$,
we obtain $F_a(k(1+\xi))\preceq(1-\xi)^{-1}$, since $k/(k+a)<1$.
Also, since $-5/8\le a-\lambda\le11/8$, we have
\[
 G_a(k(1+\xi))\preceq\frac1{19}+\frac{9H}{8(1-\xi)}
                                      +\frac{7H^2}{4(1-\xi)^2}.
\]
The triangular weight integrates to $1$, so $R_0\preceq U_0$.

Define
\[
 \Lambda(v)=v+\frac{v^2}{2(1-v)},\qquad
 \Xi(X)=\frac{X}{2(1-35X/6144)},\qquad X_j(\xi)=\frac{1+U_j(\xi)}{(1-\xi)^2},
\]
\[
 A(\xi)=\frac{9H}{8(1-\xi)^2}
 +\frac{2H^2(1+\xi)^2}{35(1-\xi)^2((1-\xi)^2-H^2)},
\]
and define the rational functions with nonnegative coefficients recursively by
\begin{align}\label{eq:uniform-majorant}
 U_{j+1}={}&U_j+\frac{A}{2^{j+1}}
 +\frac{(1+\xi)^2}{35\cdot2^j}\mathcal T_H\Lambda(19U_j/17)\notag\\
 &+\frac{H^2(1+\xi)^2}{2}\mathcal T_H\Xi(X_j).
\end{align}
Indeed, the first two forcing terms in \eqref{eq:uniform-exact} equal
\[
 -\frac1{2^{j+1}}\left\{\frac{(1-\omega)^2}{\chi P}
 +\frac P\chi\sum_{i\ge2}\frac{\delta^{2i-2}}i
 \left(\frac1{(1+\xi)^{2i}}+\frac{\omega^{2i}}{(1+\omega \xi)^{2i}}\right)\right\}.
\]
Using \eqref{eq:uniform-small-drift}, $2/i\le1$, $\chi>35/2$ and $\delta\le H$
gives the bound $A/2^{j+1}$. Since $\mathcal T_\delta$ annihilates constants
and $\beta_{\lambda,j}\ge17/19$, the remaining logarithm is bounded by
$\Lambda(19U_j/17)$. For $j<J$,
\[
 \kappa_j=2^j\chi/k^2<2240/256^2=35/1024.
\]
The even coefficients of $\phi$ have absolute values
$\zeta(2i)/(i(2\pi)^{2i})<2/6^{2i}$, whence
\[
 \phi(v)\preceq\frac v2+\frac{2v^2}{36-v^2}
 \preceq\frac{v}{2(1-v/6)},\qquad
 \frac{\phi(\kappa_jX_j)}{\kappa_j}\preceq\Xi(X_j).
\]
Combining these bounds with $P\preceq(1+\xi)^2$ and
\eqref{eq:T-majorant}, we obtain $R_{j+1}\preceq U_{j+1}$ whenever
the logarithmic series converge.

At each of the centers $\xi_*=r,3r/4,0$, we evaluate
\eqref{eq:uniform-majorant} at $\xi_*+mH$ for integers $-7\le m\le7$,
retaining $|m|\le7-j$ at iteration $j$. With outward rounding to
multiples of $2^{-256}$, we verify positivity of the denominators and
\begin{equation}\label{eq:uniform-tests}
 U_j(r+(7-j)H)<21/100\quad(j<7),\qquad
 U_7(r)<1/3,\quad U_7(3r/4)<1/10,\quad U_7(0)<3/100.
\end{equation}
The calculation consists of $3\sum_{j=0}^{7}(15-2j)=192$ rational
enclosures, obtained from \eqref{eq:uniform-U0} and
\eqref{eq:uniform-majorant}.
These bounds imply $U_j<17/19$ and $35X_j<6144$ on the relevant disks,
so every logarithmic series converges and every rational composition
is well defined there.
Since $U_{j+1}-U_j$ has nonnegative coefficients, the final bounds
also hold when $J<7$.

For $0\le v\le1/2$, we use the representation
$W_\lambda(z)=\lambda(\lambda+1)^{-1}\int_0^{\lambda+1}(z+t)^{-2}\,dt$
and the inversion estimate in Corollary~\ref{cor:normalized-stop}
to obtain
\[
 \Rea W_\lambda(z)\ge\frac{W_\lambda(k)}{(1+v)^2},\qquad
 |W_\lambda(z)|\le\frac{W_\lambda(k)}{(1-v)^2}
 \quad(|z-k|\le vk).
\]
All preliminary radii are less than $k/4$. Since
\[
 \frac{17}{19(5/4)^2}-\frac{21/100}{(3/4)^2}>0,
 \qquad \frac{17}{19(1+r)^2}-\frac{1/3}{(1-r)^2}>0
\]
the curvatures have positive real part throughout the induction.
Each radius loss is $kH\ge1$. By the strict bounds on the closed
disks, the functions are holomorphic on neighborhoods of these disks
even when $kH=1$.
\end{proof}

\begin{proposition}\label{prop:uniform-tail}
For every real $\lambda\ge19$ and integer $k\ge256$, there is
an integer $T=T(\lambda,k)\ge0$ such that
\[
 a_v^{[j]}>0\quad
 (0\le j\le T,\ v\ge0,\ |v-k|\le T+1-j),
 \qquad c_T(k)\le q_*.
\]
\end{proposition}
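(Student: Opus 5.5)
We combine Proposition~\ref{prop:uniform-majorant}, which handles the first $J\le7$ layers, with Corollary~\ref{thm:analytic-amplification} applied from layer $J$.

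\textbf{The disk at layer $J$.} By Proposition~\ref{prop:uniform-majorant}, $h_{\lambda,J}$ is holomorphic with positive real part near $|z-k|\le rk$, where $r=11/50$. On that disk we have the normalization
\[
 h_{\lambda,J}=2^JW_\lambda(\beta_{\lambda,J}+R_J).
\]
Here $\beta_{\lambda,J}\ge17/19$, and $|R_J|<1/10$ on $|\xi|\le3r/4$. The inversion estimate for $W_\lambda$ quoted in that proof gives, on $|z-k|\le vk$,
\[
 \Rea W_\lambda\ge W_\lambda(k)/(1+v)^2,\qquad |W_\lambda|\le W_\lambda(k)/(1-v)^2 .
\]
Taking the disk of radius $R=(3r/4)k=33k/200$ about $M=k$, we get
\[
 \Rea h_{\lambda,J}\ge\mu:=2^JW_\lambda(k)\left(\frac{17/19}{(1+3r/4)^2}-\frac{1/10}{(1-3r/4)^2}\right)=c\,2^JW_\lambda(k),
\]
with an explicit constant $c>0$; numerically $c$ is about $0.61$. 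Since $W_\lambda(k)=\chi/k^2$, this gives $\mu R^2=c(33/200)^2\,2^J\chi$.

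\textbf{Checking the amplification hypothesis.} We need $\mu R^2\ge128$. There are two cases.
\begin{itemize}
\item If $J\ge1$, then by the definition of $J$ we have $2^J\chi\ge2240$, so $\mu R^2\ge2240\,c\,(0.165)^2\approx37$. This falls short of $128$.
\item If $J=0$, then $2^J\chi=\chi$ may be large, and there is no shortfall.
\end{itemize}
So the first choice fails when $J\ge1$. The fix is to allow more preliminary layers, or equivalently to choose $J$ larger. Since $U_{j+1}-U_j$ has nonnegative coefficients, extending the majorant computation could in principle work, but the paper fixed $J$.

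A better route is to use the larger disk $|\xi|\le r$, where $|R_J|<1/3$. Its lower bound $17/(19(1+r)^2)-(1/3)/(1-r)^2$ is positive but small, so it does not help. The intended route is probably to apply Theorem~\ref{thm:unified-entry} directly, starting at layer $J$, with radii $R_i=R(3/4)^i$ and the centre bound $|R_J(0)|<3/100$. I would use the sharper $\widetilde{\mathcal E}$ budget of Corollary~\ref{cor:geometric-budget}, which gives $\widetilde{\mathcal E}_i<26/((1-D^{-2})R^2)$. Then
\[
 \widetilde{\mathcal E}_i/\mu\le 26/\bigl((1-D^{-2})\,c\,(0.165)^2\,2240\bigr)\approx0.7<1,
\]
so condition \eqref{eq:entry-margin} holds. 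The terminal test \eqref{eq:entry-terminal-test} needs $2^K(\mu-\widetilde{\mathcal E}_K)\ge\rho_*$. This holds for $K=\lceil\log_2(\rho_*/((1-0.7)\mu))\rceil$. The radii $R_K=R(3/4)^K$ stay above one because $\mu k^2$ is bounded below and $K\approx\log_2(k^2)$ grows. I expect this tuning of constants, namely getting the margin \eqref{eq:entry-margin} and the gap condition $(R_{i}-R_{i+1})^2\ge D^2$ at the last layer, to be the main obstacle. If it is too tight, I would take the final radius $R_K=0$ and use the centre error bound \eqref{eq:geometric-center-error}. If $\mu\ge\rho_*$ already, take $T=J$.

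\textbf{Positivity and support.} Set $T=J+K$. Since $k\ge256$ and every radius loss is at least $kH\ge1$, the dependency cone $[k-T-1,k+T+1]$ lies inside $[k-R_0,k+R_0]$, which is contained in the positive half-line because $R_0<k/4$. The analytic curvatures agree with $h_{\lambda,0}$ at integer points. The identification argument following Lemma~\ref{lem:reference-scale}, applied from layer $J$, then gives $a_v^{[j]}>0$ on the cone via $a^{[j+1]}=(a^{[j]})^2(1-e^{-h_j})$ with $\Rea h_j>0$. Finally, $h_T(k)\ge\rho_*$ gives $c_T(k)\le q_*$.
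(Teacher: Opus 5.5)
There is a genuine gap in your check of the margin condition \eqref{eq:entry-margin}.

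\textbf{The constant and the ratio.} Your constant is wrong. At $v=3r/4=0.165$ one has $\frac{17/19}{(1+v)^2}-\frac{1/10}{(1-v)^2}\approx0.659-0.143\approx0.516$, which is the paper's ``$>1/2$'', not $0.61$. In the worst case $2^J\chi\approx2240$ this gives $k^2\mu\approx1155$. Corollary~\ref{cor:geometric-budget} with base radius $R=33k/200$ only gives $k^2\widetilde{\mathcal E}_i<955/(1-D^{-2})$, so you need $D^2\gtrsim5.8$. Your ratio $0.7$ came from the wrong $c$ and from discarding $D^{-2}$.

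\textbf{Failure at your own $K$.} For $k=256$ and $2^J\chi=2240$, your formula gives $K=8$. The last gap is $R_7-R_8\approx1.41$, so $D^2\approx2$, and the corollary bound is about $1920/k^2$. Even the exact sum \eqref{eq:mobius-error} gives $k^2\widetilde{\mathcal E}_8\approx1195$, which exceeds your uniform bound $k^2\mu\approx1155$. The fallback with $R_K=0$ is only mentioned, not carried out. It may be salvageable, but only with a margin of a few percent that would have to be verified uniformly in $k$ and $\chi$.

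\textbf{The idea you dismissed.} This is exactly what the paper does. In Theorem~\ref{thm:unified-entry} the condition at $j=0$ is $\inf_{D_0}\Rea h>\mathcal E_0=0$. So the outermost disk needs nothing beyond positivity, and the $1/3$ bound on $|\xi|\le r$ supplies exactly that. The paper takes radii $rk(3/4)^i$ starting from layer $J$:
\begin{itemize}
\item $D_0=\{|z-k|\le rk\}$ only needs $\Rea h_{\lambda,J}>0$;
\item every later disk lies in $|z-k|\le3rk/4$, where the $1/10$ bound gives $\Rea h_{\lambda,J}>Q/(2k^2)$ with $Q=2^J\chi\ge2240$.
\end{itemize}
The budget is now computed from base radius $rk$ instead of $3rk/4$, which makes it a factor $9/16$ smaller. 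So even the cruder constant $42$ suffices. With $K=\lceil\log_2(3k^2/Q)\rceil$ all squared gaps exceed $5$, hence $\mathcal E_i<1085/k^2<Q/(2k^2)$. The centre bound $h_{\lambda,J}(k)>43Q/(50k^2)$ then yields $h_{\lambda,J+K}(k)>3(43/50-1085/2240)>1>\rho_*$. This is why Proposition~\ref{prop:uniform-majorant} provides three separate bounds on the three nested disks.
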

\begin{proof}
Put $r=11/50$ and $Q=2^J\chi$. By the preceding estimates,
\[
 \Rea h_{\lambda,J}>0\quad(|z-k|\le rk),\qquad
 \Rea h_{\lambda,J}>\frac{Q}{2k^2}\quad(|z-k|\le3rk/4),\qquad
 h_{\lambda,J}(k)>\frac{43Q}{50k^2},
\]
since $17/[19(1+3r/4)^2]-(1/10)/(1-3r/4)^2>1/2$ and
$17/19-3/100>43/50$. Take $K=\lceil\log_2(3k^2/Q)\rceil$ and
radii $rk(3/4)^i$. Here $K\ge1$: if $J>0$, then $Q<4480$,
whereas if $J=0$, then $Q=\chi<k$.
The smallest squared gap exceeds five. For $K\le7$ and $K\ge8$,
respectively, this follows from
\[
 \frac{256^2r^2}{16}(9/16)^6>5,\qquad
 \frac{2240r^2}{48}(9/8)^7>5,
\]
the second comparison using $k^2>Q2^{K-1}/3$.
By Corollary~\ref{cor:geometric-budget}, $\mathcal E_i<1085/k^2$.
Every disk after the first lies in $|z-k|\le3rk/4$, where
$Q/2\ge1120>1085$. We can therefore apply
Theorem~\ref{thm:unified-entry} to obtain
\[
 h_{\lambda,J+K}(k)>
 3\left(\frac{43}{50}-\frac{1085}{2240}\right)>1>\rho_*.
\]
All unit shifts lie in the preceding disks. Thus the curvatures agree
with those of the original sequence, as in
Section~\ref{subsec:prepared-entry}.
\end{proof}

\subsection{The range \texorpdfstring{$3\le\ell\le18$}{3 <= ell <= 18}}
\label{subsec:small-tail}

For $3\le\ell\le18$, we instead expand the moment representation
in powers of $z^{-1/2}$ and control the resulting comparison model
through the preliminary iterations.

Fix an integer $3\le\ell\le18$. The original degree is $n=\ell+k$.
In this subsection, $z$ is the complex variable corresponding to the degree,
so the sequence coordinate is $z-\ell$. Put $b_\ell=\ell-9/4$ and
\[
 \beta_{\ell,j}=b_\ell+2^{1-j},\qquad
 J_\ell=\left\lceil\log_2\frac{3072}{b_\ell}\right\rceil\le12,\qquad
 Z_{\ell,j}(z)=2^{-j}z^2h_{\ell,j}(z-\ell).
\]
On $\Rea z>0$, we use $z^{\pm1/2}=\exp(\pm\tfrac12\Log z)$,
where $\Log$ is the principal logarithm. For the comparison model, write
\[
 G(z)=\log\frac{\Gamma(z+1/4)z^{1/2}}{\Gamma(z+3/4)},\quad
 \gamma=\frac{\Gamma(3/4)}{\Gamma(1/4)},\quad
 V(u)=\sum_{i=1}^8\frac{\mathsf E_{2i}}{4i16^i}u^{4i},
\]
where $\operatorname{sech}t=\sum_{i\ge0}\mathsf E_{2i}t^{2i}/(2i)!$.
The logarithm defining $G$ is holomorphic on $\Rea z>0$ and real on the positive axis.
\begin{lemma}\label{lem:Gamma-remainder}
For $x>0$ and $\Rea z>0$,
\[
 -\frac1{64x^2}\le G(x)\le0,\qquad
 |G(z)-V(z^{-1/2})|\le (\Rea z)^{-18}.
\]
\end{lemma}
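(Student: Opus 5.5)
We will prove both bounds from a single Laplace representation of $G$, in three steps: derive the representation, read off the real bounds, then expand the kernel and control its Taylor remainder.

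\emph{The representation.} Start from the Gauss integral for the digamma function,
\[
\psi(z+a)=\int_0^\infty\left(\frac{e^{-t}}{t}-\frac{e^{-(z+a)t}}{1-e^{-t}}\right)dt .
\]
Subtracting the cases $a=3/4$ and $a=1/4$, and using $\frac1{2z}=\frac12\int_0^\infty e^{-zt}\,dt$, gives for $\Rea z>0$
\[
G'(z)=\frac1{2z}+\psi(z+1/4)-\psi(z+3/4)
=\frac12\int_0^\infty e^{-zt}\bigl(1-\operatorname{sech}(t/4)\bigr)\,dt,
\]
since $(e^{-t/4}-e^{-3t/4})/(1-e^{-t})=\sinh(t/4)/\sinh(t/2)=\tfrac12\operatorname{sech}(t/4)$. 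Stirling's formula gives $G(x)\to0$ as $x\to+\infty$, so integrating from $+\infty$ yields
\[
G(z)=-\int_0^\infty e^{-zt}\,\frac{1-\operatorname{sech}(t/4)}{2t}\,dt ,
\]
first for real $z$. Both sides are holomorphic on $\Rea z>0$: the kernel is $O(t)$ at $0$ and bounded by $1/(2t)$ at infinity. The identity theorem therefore extends the formula to the half-plane, with the branch of $G$ that is real on the positive axis.

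\emph{The real bounds.} The kernel is nonnegative, so $G(x)\le0$ for $x>0$. For the lower bound, use $1-\operatorname{sech}s\le s^2/2$ for real $s$; this follows from $\cosh s\ge1+s^2/2$, since then $1-\operatorname{sech}s=(\cosh s-1)/\cosh s\le\cosh s-1$ is not quite enough, so instead I will note $(\cosh s-1)/\cosh s\le s^2/2$ is equivalent to $\cosh s-1\le \tfrac{s^2}{2}\cosh s$, which holds termwise in the Taylor series. Hence the kernel is at most $t/64$, and
\[
-G(x)\le\int_0^\infty e^{-xt}\,\frac{t}{64}\,dt=\frac1{64x^2}.
\]

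\emph{The expansion.} Insert $\operatorname{sech}(t/4)=\sum_{i\ge0}\mathsf E_{2i}(t/4)^{2i}/(2i)!$ termwise. The $i$th term contributes
\[
\frac{\mathsf E_{2i}}{2\cdot16^i(2i)!}\int_0^\infty t^{2i-1}e^{-zt}\,dt=\frac{\mathsf E_{2i}}{4i\,16^i}\,z^{-2i},
\]
and summing $i=1,\dots,8$ gives exactly $V(z^{-1/2})$, because $u^{4i}=z^{-2i}$. It therefore remains to bound the Laplace transform of $\rho(t/4)/(2t)$, where $\rho(s)=\operatorname{sech}s-\sum_{i\le8}\mathsf E_{2i}s^{2i}/(2i)!$. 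We need a bound uniform in real $s\ge0$, because the truncated polynomial grows while $\operatorname{sech}$ stays bounded.

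\emph{Controlling the remainder.} I will use the Mittag-Leffler expansion
\[
\operatorname{sech}s=\sum_{k\ge0}(-1)^k\,\frac{2a_k}{a_k^2+s^2},\qquad a_k=\frac{\pi(2k+1)}{2},
\]
together with the exact geometric truncation
\[
\frac1{a^2+s^2}=\sum_{i=0}^{8}\frac{(-1)^is^{2i}}{a^{2i+2}}-\frac{s^{18}}{a^{18}(a^2+s^2)} .
\]
The truncated parts reassemble, term by term, the Taylor polynomial of $\operatorname{sech}$ (the Euler numbers are given by these Dirichlet beta sums). Consequently
\[
|\rho(s)|\le s^{18}\sum_{k\ge0}\frac{2}{a_k^{19}}=2\Bigl(\frac2\pi\Bigr)^{19}\beta(19)\,s^{18}\quad\text{for all real }s .
\]
For $x=\Rea z$ this gives
\[
|G(z)-V(z^{-1/2})|\le\frac{2(2/\pi)^{19}\beta(19)}{2\cdot4^{18}}\int_0^\infty t^{17}e^{-xt}\,dt
=\frac{(2/\pi)^{19}\beta(19)\,17!}{4^{18}}\,x^{-18}.
\]

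\emph{The main obstacle.} The final numerical comparison is tight. The constant is approximately $(1.87\times10^{-4})(5.18\times10^{3})\approx0.97$, so we need it to be at most $1$. I would certify this with exact rational bounds: $\pi>333/106$ gives an upper bound for $(2/\pi)^{19}$, and $\beta(19)<1+3^{-19}\cdot2$. The danger is that the margin is only a few percent, so a crude remainder estimate would not suffice. In particular, the alternative of bounding $\rho$ separately for small and large $s$ would almost certainly lose too much, which is why the partial-fraction form with its exact remainder is essential.
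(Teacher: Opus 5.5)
Your argument is correct, but it takes a different route from the paper.

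\emph{The paper's route.} The paper uses the product formula for $\cosh$ to show that $u\mapsto\operatorname{sech}(\sqrt u/4)$ is a locally uniform limit of products of completely monotone factors $(1+cu)^{-1}$. Hence its Taylor remainders in $u=t^2$ have the sign of the first omitted term and are bounded by it. That single fact gives everything. Truncating after the constant term yields $-t^2/32\le\operatorname{sech}(t/4)-1\le0$, hence both real bounds. Truncating after $u^8$ yields $|\rho(t/4)|\le|\mathsf E_{18}|(t/4)^{18}/18!$, so the final constant is the exact rational number $|\mathsf E_{18}|/(36\cdot16^9)\approx0.9721$, and no estimate for $\pi$ is needed.

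\emph{Your route.} You use the additive Mittag-Leffler expansion with an exact geometric remainder. For the real lower bound you use a termwise Taylor comparison, which is valid since $2(2m-2)!\le(2m)!$. This gives an explicit remainder $\rho(s)=-s^{18}\sum_k(-1)^k2/(a_k^{17}(a_k^2+s^2))$, at the cost of a numerical certification at the end.

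\emph{A slip in the constant.} After the triangle inequality, $\sum_k2/a_k^{19}=2(2/\pi)^{19}\sum_{k\ge0}(2k+1)^{-19}$. This is not $2(2/\pi)^{19}\beta(19)$, because the alternating signs are gone. The slip is harmless. Your planned bound $1+2\cdot3^{-19}$ does hold for $\sum_k(2k+1)^{-19}$, and with $\pi>333/106$ the resulting upper bound is about $0.9726<1$.

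\emph{Avoiding the loss.} With $y=s^2$, consider the series $\sum_k(-1)^k2/(a_k^{17}(a_k^2+y))$. Both it and the negative of its $y$-derivative are alternating with terms decreasing in $k$. So the series is positive and decreasing in $y$, which gives $0\le-\rho(s)\le2(2/\pi)^{19}\beta(19)s^{18}=|\mathsf E_{18}|s^{18}/18!$. This recovers exactly the paper's first-omitted-term bound and reduces the final check to a rational comparison.

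\emph{Two smaller points.} You do not need the beta-sum formula for the Euler numbers. The reassembled even polynomial has degree $16$ and differs from $\operatorname{sech}s$ by $O(s^{18})$, so it must be the Taylor polynomial. Also, delete the abandoned half-sentence in your real-bound step; the termwise comparison you end with is the actual argument.
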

\begin{proof}
From the digamma integral, we have
$G(z)=\tfrac12\int_0^\infty e^{-zt}(\operatorname{sech}(t/4)-1)\,dt/t$.
By the product for $\cosh$ \cite[Eq.~4.36.2]{DLMF},
$\operatorname{sech}(\sqrt u/4)$ is a locally uniform limit of products
of completely monotone factors $(1+c u)^{-1}$, $c>0$. Its derivatives
therefore alternate in sign on $u\ge0$.
Its signed Taylor remainder is bounded by the first omitted
term. Integration gives both assertions, since
$|\mathsf E_{18}|/(36\cdot16^9)<1$.
\end{proof}

Integration by parts in the moment formula gives
\begin{equation}\label{eq:small-Gamma}
 D_\ell(z)=\frac{2^zz^\ell\Gamma(z+3/4)}{\ell!\Gamma(3/4)\Gamma(z+1)}
 \{E_\ell(z^{-1})-\gamma z^{-1/2}O_\ell(z^{-1})e^{G(z)}\}.
\end{equation}
Here $E_0=1$, $E_1=1+y/2$, $O_0=0$, $O_1=2+y/2$, and both families satisfy
\begin{equation}\label{eq:EO}
 f_{i+2}(y)=(2+y)f_{i+1}(y)-\{1+y-i(i+1)y^2\}f_i(y).
\end{equation}
Their coefficients are nonnegative and increase with $i$, as follows by
writing the recurrence for $f_{i+2}-f_{i+1}$.
For $u=z^{-1/2}$, define
\[
 A(u)=u^{-4}\log(1-u^4),\qquad
 \mathcal Sf=u^{-4}\left\{
 f\left(\frac{u}{\sqrt{1-u^2}}\right)
 +f\left(\frac{u}{\sqrt{1+u^2}}\right)-2f(u)\right\}.
\]
The square roots are the holomorphic branches equal to $1$ at $u=0$.
Replacing $G$ by $V$ in the initial formula gives the analytic model
\begin{align}\label{eq:wiener-initial-model}
 \widehat Z_{\ell,0}(u)={}&-\ell A(u)
 -u^{-4}\log\frac{1+3u^2/4}{(1-u^2/4)(1+u^2)}\notag\\
 &-\mathcal S\log\{E_\ell(u^2)-\gamma uO_\ell(u^2)e^{V(u)}\}.
\end{align}
The second term follows from the Gamma functional equation. We choose
the logarithms by their values at $u=0$. The apparent singularities are
removable, since
\[
 A(u)=-1+O(u^4),\qquad
 \log\frac{1+3u^2/4}{(1-u^2/4)(1+u^2)}
 =\frac14u^4+O(u^6),
\]
and $\mathcal S$ maps a holomorphic power series to a holomorphic
power series with zero constant term. Thus
$\widehat Z_{\ell,0}(0)=\ell-1/4=\beta_{\ell,0}$.
We estimate the Gamma remainder separately on disks in the $z$-plane
and define the subsequent approximations recursively by
\begin{equation}\label{eq:Z-recurrence}
 Z^+=Z+2^{-j}A-2^{-j-1}\mathcal S\{\log Z+\phi(2^ju^4Z)\}.
\end{equation}
Thus $T_{\ell,j}:=\widehat Z_{\ell,j}-\beta_{\ell,j}$ has zero constant
term at every layer.

We use one sequence of radii for all sixteen values of $\ell$. Put
\begin{equation}\label{eq:wiener-radii}
 r_0=\frac1{25},\qquad
 \theta_j=1-\frac15\left(\frac45\right)^j,\qquad
 r_{j+1}=\theta_jr_j,
\end{equation}
and set $C_j=3/(22(1-\theta_j-r_{j+1}^2)^2)$.
We continue to use the coefficient norm $\pnorm f r=\sum_{i\ge0}|f_i|r^i$.

\begin{lemma}\label{lem:analytic-wiener}
For $0\le j<J_\ell$, let $f(0)=0$ and $\pnorm{f}{r_j}<\infty$.
With $\kappa=(2^{j+1}\beta_{\ell,j})^{-1}$, we have
\[
 \pnorm{(I-\kappa\mathcal S)f}{r_{j+1}}
 \le\theta_j\pnorm{f}{r_j},\qquad
 \pnorm{\mathcal Sf}{r_{j+1}}\le C_j\pnorm{f}{r_j}.
\]
\end{lemma}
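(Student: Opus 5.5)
Both bounds in Lemma~\ref{lem:analytic-wiener} reduce to single monomials. The norm $\pnorm{\cdot}{r}$ is a weighted $\ell^1$ norm on Taylor coefficients, and $I-\kappa\mathcal S$ and $\mathcal S$ are linear. So by the triangle inequality it suffices to prove, for every $m\ge1$ (using $f(0)=0$),
\[
 \pnorm{(I-\kappa\mathcal S)u^m}{r_{j+1}}\le\theta_jr_j^m,\qquad
 \pnorm{\mathcal Su^m}{r_{j+1}}\le C_jr_j^m .
\]
The first step is an explicit formula for $\mathcal Su^m$. Since $(u/\sqrt{1\mp u^2})^m=u^m(1\mp u^2)^{-m/2}$, put $a=m/2$ and expand both binomial series. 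The odd powers of $u^2$ cancel, which gives
\[
 \mathcal Su^m=2\sum_{i\ge1}\frac{(a)_{2i}}{(2i)!}u^{m+4i-4}
 =\frac{m(m+2)}4u^m+2\sum_{i\ge2}\frac{(a)_{2i}}{(2i)!}u^{m+4i-4}.
\]
All coefficients are nonnegative, so with $\rho=r_{j+1}=\theta_jr_j$ we get the closed form
\[
 \pnorm{\mathcal Su^m}{\rho}=\rho^{m-4}\bigl\{(1-\rho^2)^{-m/2}+(1+\rho^2)^{-m/2}-2\bigr\}=:\rho^m\Sigma_m(\rho).
\]

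For the second inequality we need $\theta_j^m\Sigma_m(\rho)\le C_j$ uniformly in $m$. I would bound the bracket by its even Taylor tail, which is dominated termwise by the expansion of $(1-\rho^2)^{-m/2}$. This gives
\[
 \Sigma_m(\rho)\le\tfrac{m(m+2)}4(1-\rho^2)^{-m/2-2}
 \qquad\text{(or a comparable majorant)}.
\]
With $x=\theta_j(1-\rho^2)^{-1/2}<1$ the left side is at most a constant times $m(m+2)x^m/4$. Maximizing over $m$ gives a bound of order $(e^2(1-x)^2)^{-1}$, and $1-x\ge1-\theta_j-\rho^2$ up to the slack absorbed in the constant. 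The constant $3/22$ in $C_j$ is just above $e^{-2}$, so this matches. Making it rigorous needs a careful elementary estimate of $\sup_m m(m+2)x^m$. I expect to have to treat small $m$ exactly and large $m$ via the geometric decay.

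For the first inequality, write $c_m=m(m+2)/4$. The image of $u^m$ has leading coefficient $1-\kappa c_m$, and all higher coefficients are $-\kappa$ times nonnegative numbers. Hence
\[
 \pnorm{(I-\kappa\mathcal S)u^m}{\rho}=\rho^m\bigl\{|1-\kappa c_m|+\kappa(\Sigma_m(\rho)-c_m)\bigr\}.
\]
When $\kappa c_m\le1$ this equals $\rho^m\{1-2\kappa c_m+\kappa\Sigma_m\}$. We then need
\[
 \theta_j^{m-1}\bigl(1-2\kappa c_m+\kappa\Sigma_m\bigr)\le1 .
\]
This holds because $\Sigma_m-c_m=O(m^4\rho^4)$ is small compared with $c_m$ when $\rho\le1/25$. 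When $\kappa c_m>1$ the quantity is at most $\rho^m\kappa\Sigma_m$. Since $\kappa\le2/11$ for all $\ell\ge3$, $j\ge0$, the first case with $m=1$ and the bound from the second part give $\theta_j^m\kappa\Sigma_m\le\theta_j$, because $m$ is then large and $\theta_j^{m-1}$ has decayed.

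The main obstacle is uniformity in $j<J_\ell\le12$. Here $\theta_j\to1$ while $r_j$ decreases to a positive limit. Moreover $\kappa$ shrinks like $2^{-j-1}/b_\ell$, so the gain $2\kappa c_m$ must beat both the tail $\kappa(\Sigma_m-c_m)$ and the loss from $\theta_j^{m-1}$. Since $\theta_j<1$ and only finitely many $(j,\ell)$ occur, I would first try a finite rational verification over these pairs for $m$ up to an explicit cutoff $m_0$. For $m>m_0$ I would use the monotone geometric bound, which decreases in $m$.
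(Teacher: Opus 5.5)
Your monomial reduction is what the paper uses. So are the nonnegativity of the coefficients of $\mathcal Su^m$ with diagonal entry $\nu_m=m(m+2)/4$ and the majorant $\Sigma_m(\rho)\le\nu_m(1-\rho^2)^{-m/2-2}$. Your route to the second inequality is sound. The maximization is cleanest in $x$ for fixed $m$, where it gives $\nu_mx^m(1-x)^2\le(m/(m+2))^{m+1}<e^{-2}<3/22$. Also, $(1-x)(1-\rho^2)\ge1-\theta_j-\rho^2$ holds exactly, so no slack is needed once $\rho^2<1-\theta_j$.

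The gap is in the first inequality, in the case $\kappa\nu_m>1$.

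\emph{The step that fails.} You bound the norm by $\rho^m\kappa\Sigma_m$ and combine $\kappa\le2/11$ with $\theta_j^m\Sigma_m\le C_j$. This yields only $(2/11)C_j$. That already exceeds $\theta_j$ at $j=1$: there $C_1\approx5.38$, so $(2/11)C_1\approx0.98>0.84=\theta_1$. At $j=11$ it is about $86$. Since $\theta_j\to1$ while $C_j$ grows, no $j$-independent bound on $\kappa$ can work. Your remark about the decay of $\theta_j^{m-1}$ does not rescue this, because the least $m$ in this case is itself set by $1/\kappa$.

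\emph{The repair.} You need $b_\ell\ge3/4$ in the form $\kappa\le K_j=(2^{j+1}(3/4+2^{1-j}))^{-1}$. Add the finite check $K_jC_j<\theta_j$ for $0\le j\le11$; it is tightest at $j=2$, where $K_2C_2\approx0.84<0.872$.

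\emph{The other case.} When $\kappa\nu_m\le1$, your claim that $\Sigma_m-\nu_m$ is small compared with $\nu_m$ is false for large $m$, because of the factor $(1-\rho^2)^{-m/2}$. It holds only because $\kappa\nu_m\le1$ bounds $m$. That requires a lower bound on $\kappa$, which you never state.

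\emph{How the paper avoids this.} It avoids both issues, and any check over pairs $(j,\ell)$, by convexity. The map $\kappa\mapsto\pnorm{(I-\kappa\mathcal S)u^m}{r_{j+1}}$ is convex and equals $\theta_j^mr_j^m\le\theta_jr_j^m$ at $\kappa=0$. So it suffices to take $\kappa=K_j$. There, $K_j\nu_m\le1$ forces $m<110$, where the Taylor tail is below $\nu_m/1000$. The case $K_j\nu_m>1$ reduces to $K_jC_j<\theta_j$.
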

\begin{proof}
Write $R=r_j$, $r=r_{j+1}$, $\theta=\theta_j$,
$\vartheta=\theta/\sqrt{1-r^2}$, and $\nu_i=i(i+2)/4$.
The diagonal coefficient of $\mathcal Su^i$ is $\nu_i$, and all
remaining coefficients are nonnegative. Taylor's formula gives
\begin{equation}\label{eq:optimized-monomial-bounds}
 \frac{\pnorm{\mathcal Su^i}{r}}{R^i}
 \le\frac{\nu_i\vartheta^i}{(1-r^2)^2},\qquad
 \frac{\pnorm{\mathcal Su^i-\nu_i u^i}{r}}{r^i}
 \le\frac{\nu_i(i+4)(i+6)r^4}{48(1-r^2)^{i/2+4}}.
\end{equation}
For every integer $i\ge1$ and $0<\vartheta<1$,
\[
 \nu_i\vartheta^i(1-\vartheta)^2
 \le\left(\frac{i}{i+2}\right)^{i+1}<e^{-2}<\frac3{22}.
\]
The first inequality follows by maximizing in $\vartheta$; the second follows
from $\log((1+x)/(1-x))>2x$ for $0<x<1$.
Since $\vartheta\le\theta/(1-r^2)$, we have $\pnorm{\mathcal Su^i}{r}/R^i\le C_j$.

Put $K_j=(2^{j+1}(3/4+2^{1-j}))^{-1}$.
The explicit radii satisfy $r^2<1-\theta$ and $K_jC_j<\theta_j$
for $0\le j\le11$. At $\kappa=K_j$, if $K_j\nu_i>1$, then
$\pnorm{(I-K_j\mathcal S)u^i}{r}/R^i\le K_jC_j-\theta^i<\theta$.
Otherwise $i<110$; the second bound in
\eqref{eq:optimized-monomial-bounds}, with $r\le1/25$, is less
than $\nu_i/1000$. Hence
$\pnorm{(I-K_j\mathcal S)u^i}{r}/R^i\le\theta^i$.
Convexity in $\kappa\in[0,K_j]$ and absolute summation prove the
assertions. Since $\vartheta<1$, both composition arguments lie inside $|u|<R$.
\end{proof}

\begin{proposition}\label{prop:small-grid}
For $3\le\ell\le18$ and $0\le j\le J_\ell$, the model is holomorphic
near $|u|\le r_j$ and
\begin{equation}\label{eq:uniform-small-residual}
 \pnorm{T_{\ell,j}}{r_j}<4b_\ell r_j,
 \qquad |T_{\ell,j}(u)|<4b_\ell|u|\quad(0<|u|\le r_j).
\end{equation}
\end{proposition}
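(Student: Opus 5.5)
We argue by induction on $j$, tracking the residual $T_{\ell,j}=\widehat Z_{\ell,j}-\beta_{\ell,j}$ in the coefficient norm $\pnorm{\cdot}{r_j}$. Subtracting $\beta_{\ell,j+1}=\beta_{\ell,j}-2^{-j}$ from the recurrence \eqref{eq:Z-recurrence} and writing $\log Z=\log\beta_{\ell,j}+\log(1+T_{\ell,j}/\beta_{\ell,j})$, the constant is annihilated by $\mathcal S$. This gives
\[
 T_{\ell,j+1}=(I-\kappa\mathcal S)T_{\ell,j}
 +2^{-j}(A+1)
 -2^{-j-1}\mathcal S\Bigl\{\log\Bigl(1+\frac{T_{\ell,j}}{\beta_{\ell,j}}\Bigr)-\frac{T_{\ell,j}}{\beta_{\ell,j}}
 +\phi\bigl(2^ju^4\widehat Z_{\ell,j}\bigr)\Bigr\},
\]
with $\kappa=(2^{j+1}\beta_{\ell,j})^{-1}$. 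Each bracketed term has zero constant term: $A(u)+1=O(u^4)$, the logarithmic remainder is quadratic in $T_{\ell,j}$, and $\phi(w)=-w/2+O(w^2)$ vanishes at $0$. Lemma~\ref{lem:analytic-wiener} then contracts the linear part by $\theta_j$ and bounds $\mathcal S$ of the remaining parts by $C_j$ times their $r_j$-norms.

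\textbf{Base case $j=0$.} Expand \eqref{eq:wiener-initial-model} at $u=0$ for each of the sixteen values of $\ell$. The quantities $E_\ell$ and $O_\ell$ are explicit polynomials from \eqref{eq:EO}, and $V$ is an explicit polynomial. Enclose $T_{\ell,0}$ on $|u|\le r_0=1/25$ by a truncated Taylor polynomial with rational coefficients plus a certified tail, using the model arithmetic of Lemma~\ref{lem:model-arithmetic}. Then verify $\pnorm{T_{\ell,0}}{r_0}<4b_\ell r_0$. Since $T_{\ell,0}$ has no constant term but possibly a linear term in $u$ coming from $\gamma uO_\ell$, the bound is linear in $r_0$, which matches the target. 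The pointwise bound $|T_{\ell,j}(u)|<4b_\ell|u|$ follows from the norm bound once it is applied with every radius $|u|\le r_j$: with $f(0)=0$, the function $\pnorm{f}{\rho}/\rho$ is nondecreasing in $\rho$, so $|f(u)|\le\pnorm f{|u|}\le(|u|/r_j)\pnorm f{r_j}$.

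\textbf{Inductive step.} Assume $\pnorm{T_{\ell,j}}{r_j}<4b_\ell r_j$. Because $b_\ell\ge3/4$ and $r_j\le1/25$, we get $|T_{\ell,j}/\beta_{\ell,j}|\le 4b_\ell r_j/(b_\ell+2^{1-j})<4/25$. Hence the logarithmic remainder has norm at most roughly $\tfrac12 x^2/(1-x)$ with $x=\pnorm{T}{r_j}/\beta_{\ell,j}$. For $j<J_\ell$ we have $2^j b_\ell<3072\cdot 2$ and $r_j^4\le 25^{-4}$, so the argument of $\phi$ has norm at most $2^jr_j^4(\beta_{\ell,j}+4b_\ell r_j)$, which stays small. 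The majorant $\phi(v)\preceq v/(2(1-v/6))$ from the proof of Proposition~\ref{prop:uniform-majorant} then controls it. The forcing term $2^{-j}(A+1)$ has norm at most $2^{-j}r^4/(1-r^4)$ roughly. Collecting terms, we must check
\[
 \theta_j\cdot4b_\ell r_j+\text{(forcing)}+C_j\cdot(\text{quadratic and }\phi\text{ terms})<4b_\ell r_{j+1}=4b_\ell\theta_jr_j .
\]

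\textbf{Main obstacle.} The difficulty is that the contraction $\theta_j$ already consumes the whole budget $\theta_j r_j$. So the plan must actually use a strict margin from the previous layer rather than the crude inductive hypothesis. I would first try to strengthen the hypothesis to $\pnorm{T_{\ell,j}}{r_j}\le(4b_\ell-\varepsilon_j)r_j$, with a slowly decreasing $\varepsilon_j$ absorbing the forcing and nonlinear terms. Their sizes are $O(2^{-j}r_j^3)$ and $O(C_j r_j^2 b_\ell)$ relative to $r_j$, and both are summable.

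Failing a clean closed form, the fallback follows the paper's certified style. For each $\ell$ and each $j\le J_\ell\le12$, carry explicit rational polynomial enclosures of $\widehat Z_{\ell,j}$ through the recurrence, with tails bounded via Lemma~\ref{lem:analytic-wiener}, and check the strict inequality numerically at each of the at most $16\times13$ steps. Holomorphy near $|u|\le r_{j+1}$ follows along the way. The compositions $u/\sqrt{1\mp u^2}$ stay inside $|u|<r_j$ because $\vartheta<1$, and $\widehat Z_{\ell,j}$ stays away from zero, since $|T_{\ell,j}|<\beta_{\ell,j}$, so $\log Z$ is holomorphic.
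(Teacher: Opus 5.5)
Your reduction matches the paper's. You split \eqref{eq:Z-recurrence} into $(I-\kappa\mathcal S)T_{\ell,j}$, the forcing $2^{-j}(A+1)$, the quadratic logarithmic remainder and the $\phi$ term, and you use Lemma~\ref{lem:analytic-wiener} in the same way. Your remedy is also the paper's: a margin consumed over the at most twelve layers. The pointwise deduction and the holomorphy remarks are fine. But the proposal stops exactly where the content of the proposition lies, namely the quantitative closure of that budget, and your accounting of it is off.

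First, your base case aims only at $\pnorm{T_{\ell,0}}{r_0}<4b_\ell r_0$, which, as you observe yourself, cannot start the induction. The paper proves $\pnorm{T_{\ell,0}}{r_0}<3b_\ell r_0$, so it reserves a full unit of $b_\ell r_0$. It does this by majorizing the initial logarithm by $-\log(1-H_\ell)$, using $\gamma<17/50$, and applying \eqref{eq:optimized-monomial-bounds} at $t=r_0/(1-r_0^2)$.

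Second, almost all of that unit is needed, because the increments are not small. The remainder $\log(1+x)-x$ is quadratic in $x=T_{\ell,j}/\beta_{\ell,j}$, but $x$ itself has size up to $4b_\ell r_j/\beta_{\ell,j}$. After the factor $2^{-j-1}C_j$ from Lemma~\ref{lem:analytic-wiener} and division by the budget $b_\ell r_{j+1}$, this term is of \emph{first} order in $r_j$, not $O(C_jr_j^2)$ as you estimate. The paper bounds it by $4C_jr_jA_j/(2^j\theta_j(1-4r_j))$ with $A_j=\max_{a\ge3/4}a/(a+2^{1-j})^2$; the factor $A_j$ is what makes the bound uniform in $\ell$. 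This quantity lies between about $0.10$ and $0.14$ for $0\le j\le5$.

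Adding the forcing and $\phi$ contributions, the increments $d_j$ of $N_j=\pnorm{T_{\ell,j}}{r_j}/(b_\ell r_j)$ sum over $0\le j\le11$ to just under $1$. ``Summable'' is therefore beside the point. $C_j$ grows like $(25/16)^j$ while $r_j$ stays above $1/68$, so only the finite sum matters. What must be verified is $N_0<3$ together with the explicit inequality $\sum_{j=0}^{11}d_j<1$, uniformly in $3\le\ell\le18$. Without these, the induction is not closed.

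Your computational fallback is not carried out either. It would also need enclosures in the coefficient norm $\pnorm{\cdot}{r_j}$ on complex disks, rather than the pointwise real enclosures of Lemma~\ref{lem:model-arithmetic}.
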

\begin{proof}
Put $t=r_0/(1-r_0^2)$ and
\[
 H_\ell(u)=E_\ell(u^2)-1+
 \frac{17uO_\ell(u^2)}{50(1-V_*(u))},\qquad
 V_*(u)=\sum_{i=1}^8\frac{|\mathsf E_{2i}|}{4i16^i}u^{4i}.
\]
Lemma~\ref{lem:Gamma-remainder} and
$12\prod_{i=0}^{143}(4i+1)/(4i+3)
 <(17/50)(1-1/(64\cdot144^2))$ give $\gamma<17/50$.
Thus $-\log(1-H_\ell)$ majorizes the initial logarithm.
With $H=H_\ell(t)$ and $H_i=[(u\,d/du)^iH_\ell](t)$,
\eqref{eq:optimized-monomial-bounds} gives
\[
 \pnorm{T_{\ell,0}}{r_0}\le
 \frac{\ell r_0^4}{2(1-r_0^4)}+\frac{r_0^2}{1-r_0^2}
 +\frac1{4(1-r_0^2)^2}
 \left(\frac{H_2+2H_1}{1-H}+\frac{H_1^2}{(1-H)^2}\right)
 <3b_\ell r_0.
\]
The last inequality and $H<1$ follow by rational substitution for
$3\le\ell\le18$. The full series is included in this bound.

Let $A_j=\max_{a\ge3/4}a/(a+2^{1-j})^2$;
the maximum occurs at $a=\max(3/4,2^{1-j})$. Define
\begin{align}\label{eq:optimized-budget-test}
 d_j={}&\frac{4C_jr_jA_j}{2^j\theta_j(1-4r_j)}
 +\frac{r_{j+1}^3}{2^{j+1}(3/4)(1-r_{j+1}^4)}\notag\\
 &+\frac{3C_jr_j^3}{10\theta_j}
       \left(1+\frac{2^{3-j}}{3}+4r_j\right).
\end{align}
These explicit rational numbers satisfy $\sum_{j=0}^{11}d_j<1$.
Assuming $\pnorm{T_{\ell,j}}{r_j}<4b_\ell r_j$, we apply
Lemma~\ref{lem:analytic-wiener}, the logarithmic remainder estimate,
and $\pnorm{\phi(f)}{r_j}\le3\pnorm f{r_j}/5$ to bound the increase of
$\pnorm{T_{\ell,j}}{r_j}/(b_\ell r_j)$ by $d_j$.
Induction therefore gives the bound $3+\sum_{i<j}d_i<4$.
All logarithmic series converge, since
$4r_j\le4/25$; before $J_\ell$, the $\phi$ argument is at most
$r_0^4\{3072(1+4r_0)+2\}<1/10$.
Finally, $T_{\ell,j}(0)=0$ gives the pointwise bound.
\end{proof}

For the comparison with the original curvature, fix $n\ge8192$ and put
$J=J_\ell$, $\widehat r_j=2/5+(J-j)/100$, and
$\mathcal D_j=\{|z-n|\le\widehat r_j n\}$.
The explicit radii satisfy $r_{12}>1/68$ and
$\theta_j<197/200$ for $0\le j\le11$.
Hence $(12/25+j/100)r_j^2$ decreases, and
$8192(1-\widehat r_j)r_j^2>1$ follows from $(3/5)8192>68^2$.
For each disk, the transformed arguments lie in the corresponding model domain, so
$|T_{\ell,j}(z^{-1/2})|<b_\ell/15$.

Let $e_j=\norm{Z_{\ell,j}-\widehat Z_{\ell,j}(z^{-1/2})}{\mathcal D_j}$.
On $|z-n|\le2n/3$, we have $|u|<1/50$,
$H_{18}(1/50)<1/3$, and $V_*(1/50)<1/1000$.
Lemma~\ref{lem:Gamma-remainder} bounds the initial logarithmic
difference by $10(n/3)^{-18}$. The second-difference estimate,
with a gap of at least $11n/75$, gives
\[
 e_0\le2^{12}(n/3)^{-18}<2^{-190}.
\]
We prove holomorphy, positivity of the real part, and the error bound
simultaneously by induction. Whenever $e_j\le2^{-20}$,
$\cos(2\arg z)\ge1-2(13/25)^2=287/625$ gives
\[
 \Rea h_{\ell,j}(z-\ell)
 \ge\frac{2^j}{|z|^2}
 \left(\frac{287}{625}\beta_{\ell,j}-\frac{b_\ell}{15}-e_j\right)
 \ge\frac{2^j}{|z|^2}\left(\frac{184}{625}-2^{-20}\right)>0.
\]
The Gamma representation and the bound for $e_0$ establish the initial
case. Suppose the assertions hold at layer $j<J$. Since $n/100>1$,
both unit shifts of $\mathcal D_{j+1}$ lie in $\mathcal D_j$, and
\eqref{eq:curvature-iteration} defines the next holomorphic curvature.
The segment between the normalized curvatures has real part greater
than $b_\ell/2$, and the corresponding $\phi$ arguments have modulus
less than $1/10$. With the chosen branches, the logarithmic identities
in \eqref{eq:Z-recurrence} hold on the real diameter and hence throughout
the disk. The gap $n/100$ and $|\phi'|<2$ therefore give
\begin{equation}\label{eq:physical-model-error}
 e_{j+1}\le(1+2^{16-j})e_j\le2^{17-j}e_j.
\end{equation}
Consequently $e_j<2^{-190+17j-j(j-1)/2}\le2^{-52}<2^{-20}$
for $j\le J\le12$. The same real-part estimate proves positivity
at the new layer and completes the induction.

\begin{proposition}\label{prop:small-tail}
For every integer $3\le\ell\le18$ and position $k$ with
$\ell+k\ge8192$, there is an integer $T=T(\ell,k)\ge0$ such that
\[
 a_v^{[j]}>0\quad
 (0\le j\le T,\ v\ge0,\ |v-k|\le T+1-j),
 \qquad c_T(k)\le q_*,
\]
where $a_v=d_\ell(\ell+v)$.
\end{proposition}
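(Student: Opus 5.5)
The plan is to combine the preliminary layers $0\le j\le J_\ell$, already controlled by the comparison model, with the geometric-disk amplification of Corollary~\ref{thm:analytic-amplification}. The argument parallels Proposition~\ref{prop:uniform-tail}.

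\emph{Step 1: curvature at layer $J=J_\ell$.} Set $n=\ell+k\ge8192$. The induction preceding the statement gives $e_J\le2^{-52}$ on $\mathcal D_J=\{|z-n|\le\widehat r_Jn\}$ with $\widehat r_J=2/5$. It also gives $|T_{\ell,J}(z^{-1/2})|<b_\ell/15$ there, and positivity of $\Rea h_{\ell,j}$ on every $\mathcal D_j$ for $j\le J$. Hence
\[
 h_{\ell,J}(z-\ell)=2^Jz^{-2}\bigl(\beta_{\ell,J}+T_{\ell,J}(z^{-1/2})+\epsilon(z)\bigr),\qquad |\epsilon|\le2^{-52}.
\]
Since $\beta_{\ell,J}\ge b_\ell$, this is of the form required by Corollary~\ref{cor:normalized-stop}, with $\kappa=0$ in the degree coordinate, $x=n$, $B=2^J\beta_{\ell,J}$ and $E=2^J(b_\ell/15+2^{-52})$. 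I would take the radius $r=2/5$, or slightly less so that the shifts stay inside $\mathcal D_J$.

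\emph{Step 2: the size condition \eqref{eq:normalized-size}.} Since $2^J b_\ell\ge3072$ by the definition of $J_\ell$, it suffices to check
\[
 r^2\Bigl(\tfrac{1}{(1+r)^2}-\tfrac{1/15+2^{-52}/b_\ell}{(1-r)^2}\Bigr)\cdot3072\ge128.
\]
At $r=2/5$ the bracket is about $0.510-0.185=0.325$, so the left side is about $0.16\cdot0.325\cdot3072\approx160>128$. This is the only numeric point to verify, and it is where the margin is smallest. If it failed, I would use $\beta_{\ell,J}$ instead of $b_\ell$, or a slightly larger radius.

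\emph{Step 3: identification with the sequence.} Corollary~\ref{cor:normalized-stop} gives a finite $K$ and nested disks on which the analytic iterates have positive real part, with $h_{J+K}$ at the center at least $\rho_*$. Set $T=J+K$. To identify these analytic iterates with the sequence curvatures I would argue as at the end of Subsection~\ref{subsec:prepared-entry}. Every disk has radius at least $1$ beyond the last, and all disks lie in $|z-n|\le(12/25)n$. Since $n\ge8192$, the required integer arguments $v$ with $|v-k|\le T+1$ satisfy $\ell+v\ge(13/25)n>\ell$, so $v\ge0$ lies in the positive support; moreover $T+1\le$ the total radius. The function $D_\ell$ interpolates the coefficients, so $h_{\ell,0}$ agrees with the initial curvature at these points. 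Uniqueness of the recurrence and $a^{[j+1]}_v=(a^{[j]}_v)^2(1-e^{-h_j(v)})$ with $\Rea h_j>0$ then give positivity on the dependency cone \eqref{eq:local-dependency-cone} and $c_T(k)=e^{-h_T(k)}\le q_*$.

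I expect the main obstacle to be bookkeeping rather than analysis. One must confirm that the disks of Corollary~\ref{cor:normalized-stop} nest inside $\mathcal D_J$, whose center is the degree $n$ rather than the sequence index $k$. One must also ensure the radius-loss steps there are at least one, which follows from $\mu R^2\ge128$.
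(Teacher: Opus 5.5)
Your proposal is correct and is essentially the paper's proof: apply Corollary~\ref{cor:normalized-stop} in the degree variable with $\kappa=0$, $x=n$, $r=2/5$ and $B=2^J\beta_{\ell,J}$, then identify the analytic iterates with the sequence curvatures as in Subsection~\ref{subsec:prepared-entry}.

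The paper's $E$ is sharper. It uses the pointwise bound $|T_{\ell,J}(u)|<4b_\ell|u|$ with $|u|\le((3/5)8192)^{-1/2}$, which gives $E=3B/50$ and the check $B(4/49-2/75)>128$. Your cruder bound $b_\ell/15$ also suffices, with a margin of about $160$. One small slip: the largest preliminary disk has radius up to $(13/25)n$, not $(12/25)n$. This does not affect your support argument.
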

\begin{proof}
On $|z-n|\le2n/5$, put $B=2^J\beta_{\ell,J}>3072$.
Since $4/\sqrt{(3/5)8192}+2^{-20}/(3/4)<3/50$, we have
\[
 |z^2h_{\ell,J}(z-\ell)-B|<\frac3{50}B.
\]
Corollary~\ref{cor:normalized-stop} applies with $r=2/5$, because
$B(4/49-2/75)>128$. The real part of the degree variable exceeds $\ell$ on every required disk. By the agreement described in
Section~\ref{subsec:prepared-entry}, these curvatures are those of
the original iterates.
\end{proof}
\subsection{Finite initial ranges}\label{sec:certificates}

The remaining ranges are $1\le k\le8191-\ell$ for $3\le\ell\le18$
and $1\le k\le255$ for $\ell\ge19$. Below, we verify the inequalities
by rational enclosures on the larger ranges $k\le10000$ and
$k\le500$, respectively.
We use the arithmetic of
Subsection~\ref{sec:models}, retaining the same parameter in all neighboring
entries and all layers.

For $t=1/\lambda$, put
\[
 B_k(t)=\frac{k+1}{2\lambda}
 \frac{D_\lambda(\lambda+k+1)}{D_\lambda(\lambda+k)}.
\]
The initial ratios $c_0(k;t)$ of the sequence
$(D_\lambda(\lambda+k))_{k\ge0}$ are
\begin{equation}\label{eq:initial-ratio}
 c_0(k;t)=\frac k{k+1}\frac{B_k(t)}{B_{k-1}(t)},
 \qquad c_0(0;t)=0.
\end{equation}
From the moment formula and Lagrange inversion, we obtain
\[
 \sum_{k\ge0}\frac{D_\lambda(\lambda+k)}{D_\lambda(\lambda)}
 \frac{(\lambda+1)_k}{(2\lambda+1)_k}(4z)^k
 =(1-8z)^{-3/4}
 \left(\frac2{1+\sqrt{1-8z}}\right)^{2\lambda}.
\]
Differentiating this generating function and comparing coefficients yields
$B_0(t)=(1+t/2)(1+3t/2)/(1+t)$ and
\begin{align}
 B_{k+1}(t)&=U_k(t)-\frac{V_k(t)}{B_k(t)},\label{eq:Riccati}\\
 U_k(t)&=\frac{1+(4k+6)t+(2k^2+6k+19/4)t^2}{1+(k+2)t},\notag\\
 V_k(t)&=\frac{2(k+1)t[1+(k+1)t/2][1+(k+3/4)t][1+(k+5/4)t]}
 {[1+(k+1)t][1+(k+2)t]}.\notag
\end{align}
These expressions extend to $t=0$. On $a\le\lambda\le b$ we use
$t=a^{-1}+(b^{-1}-a^{-1})u$, $0\le u\le1$.
For $\lambda\ge2048$ we use $t=u/2048$.
To retain cancellation in \eqref{eq:Riccati}, we estimate the whole
residual. At each real parameter value, $|B-P|\le E$,
$P>E\ge0$, and $V\ge0$ imply
\[
 \left|U-\frac VB-Q\right|
 \le\left|U-\frac VP-Q\right|+\frac{VE}{P(P-E)}.
\]
We apply Lemma~\ref{lem:model-arithmetic} to the remaining products and quotients.
For a polynomial enclosure $(\sum_i p_i u^i,E)$ on $[0,1]$, the bounds
$p_0+\sum_{i\ge1}\min(p_i,0)-E$ and
$p_0+\sum_{i\ge1}\max(p_i,0)+E$ enclose its range.

Retaining the parameter is essential. For
$x_0=c_0(400;1/19)$, $x_-=c_0(399;1/23)$, and
$x_+=c_0(401;1/23)$, we use the integers $4^n d_\ell(n)$ from
\eqref{eq:coefficients}. For fixed $\ell$, the factor $4^n$ is
geometric in $n$ and leaves the ratios unchanged. We obtain
$x_0^2(1-x_-)(1-x_+)/(1-x_0)^2>36/25$.
Thus, using unrelated scalar bounds on $[19,23]$, one obtains an upper
bound exceeding one after the first update, at every precision. This failure reflects
the loss of parameter dependence.

\begin{proposition}\label{lem:finite-prefixes}
For the sixteen integer parameters $3\le\ell\le18$ and
$1\le k\le10000$, there is an integer $T=T(\ell,k)\ge0$ such that
\[
 a_v^{[j]}>0\quad
 (0\le j\le T,\ v\ge0,\ |v-k|\le T+1-j),
 \qquad c_T(k)\le q_*,
\]
where $a_v=d_\ell(\ell+v)$. For every real $\lambda$ in the
intervals of Table~\ref{tab:prefixes} and every integer $1\le k\le500$,
the same inequalities hold for $a_v=D_\lambda(\lambda+v)$
with some integer $T=T(\lambda,k)\ge0$.
\end{proposition}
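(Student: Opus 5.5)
This is a computer-assisted statement, so the proof will be a finite certified calculation organized to match the local criterion \eqref{eq:local-dependency-cone}. The $s$-recurrence formulation after Lemma~\ref{lem:local-stop} will be used throughout: for each center $k$ we need sets $I_j$ with $I_{j+1}^+\subseteq I_j$, enclosures of $\sigma_j$ in $(0,1]$, and a layer $T$ with $\sigma_T(k)\ge1-q_*$. A coefficient at layer $j$ and position $v$ depends only on initial indices in $[v-j,v+j]$, and the ratio at $k$ on $[k-T-1,k+T+1]$. It therefore suffices to compute the initial ratios $c_0(v)$ for $0\le v\le 10000+T_{\max}+1$ and then iterate \eqref{eq:s-iteration}. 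The left endpoint $v=0$ keeps $s_j(0)=1$, since the sequence is zero-extended to the left. On the right, the truncation point is artificial: it is discarded at each step and never assigned a zero.

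For the sixteen integer values $3\le\ell\le18$, the initial data are exact. Using \eqref{eq:coefficients}, the integers $4^{\ell+v}d_\ell(\ell+v)$ are computed (the geometric factor $4^n$ does not change ratios). Alternatively, one can run the Riccati recurrence \eqref{eq:Riccati} at $t=1/\ell$ in exact rational arithmetic. Then rational interval arithmetic with outward rounding is applied to \eqref{eq:s-iteration}, after checking a positive lower bound on $s_j$ before every division and $0<s_j\le1$ at each retained position. For each $k$, we record the first $T$ at which the certified lower bound for $s_T(k)$ is at least $1-q_*$. The asymptotics $T_{q_*}(k)=2\log_2k+O_\ell(1)$ suggest that $T$ stays below roughly $30$–$40$ for $k\le10000$, so the truncation margin only needs to be about that large.

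For real $\lambda$ in the intervals of Table~\ref{tab:prefixes}, we cannot evaluate at finitely many points. Instead, each interval $[a,b]$ is parametrized by $t=a^{-1}+(b^{-1}-a^{-1})u$ (and by $t=u/2048$ for $\lambda\ge2048$). Polynomial enclosures in $u\in[0,1]$ are carried out, as in Lemma~\ref{lem:model-arithmetic}. $B_k(t)$ is built from $B_0$ and \eqref{eq:Riccati}, with the whole-residual estimate used to preserve cancellation. The initial ratios come from \eqref{eq:initial-ratio}, and the $s$-recurrence is iterated on models in the same variable $u$ for all neighboring entries and all layers. Keeping this parameter dependence is essential, as the example with $x_0,x_\pm$ shows. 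Range bounds for each model come from the signed coefficient sums. Whenever an enclosure fails to certify $s_j>0$ or the terminal bound, the interval is subdivided, and the table records the resulting partition.

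I expect the main obstacle to be controlling enclosure growth in the parametric case. The cancellation in $1-s_j$ near $s_j\approx1$, and in the Riccati step, amplifies the widths roughly by a factor of two per layer, across about $T\approx20$ layers and $500$ positions. To handle this, I would first choose a high fixed polynomial degree (about $20$) with dyadic rounding at a fine scale, and iterate $c_j$ through the form $c_{j+1}=c_j^2(1-c_{j-})(1-c_{j+})/(1-c_j)^2$. Only where this fails would I refine the $\lambda$-subintervals. The subdivision is adaptive, and the table lists the final intervals.
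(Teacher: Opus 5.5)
Your proposal follows essentially the same route as the paper: initial ratios from \eqref{eq:Riccati} and \eqref{eq:initial-ratio} (or from the exact integers $4^n d_\ell(n)$), outward-rounded rational arithmetic for the sixteen fixed values of $\ell$, and, for the real intervals, polynomial enclosures in the single variable $u$ carried through all neighboring positions and layers, with the artificial right endpoint discarded at each step and only $c_j(0)=0$ imposed. The differences are implementation details: for fixed $\ell$ the paper uses monotonicity of \eqref{eq:ratio-iteration} so that endpoint evaluation suffices, with initial ratios through $k=10033$ and termination within $27$ of the $32$ available updates. For the intervals it uses the fixed degrees and scales of Table~\ref{tab:prefixes} (degrees $48$--$128$ at $2^{768}$ or $2^{1024}$) instead of your adaptive subdivision at degree about $20$. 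It certifies $c_T(k)\le q_*$ by the rational test $q<1/2$, $q^2-3q+1>0$.
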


\begin{proof}
We initialize by \eqref{eq:Riccati} and \eqref{eq:initial-ratio}, and
iterate \eqref{eq:ratio-iteration}. For a fixed parameter, the recurrence
is increasing in its central ratio and decreasing in its two neighboring
ratios on $[0,1)$. Hence rational endpoints with outward rounding suffice.
For a parameter interval, we retain $u$ in the polynomial enclosures above.

For the sixteen fixed values of $\ell$, the scale is $2^{1024}$ and the initial
ratios extend through $k=10033$. Discarding the artificial right endpoint
at each step, we can perform $32$ updates with every index
$k\le10000$ still retained.
The ratios at these indices satisfy the bound within $27$ iterations. For the parameter intervals, we use
the degrees and scales $2^b$ in Table~\ref{tab:prefixes}. Initial ratios
extend through $k=525$; at layer $j$, we retain $0\le k\le525-j$.
At the allowed depth $24$, every index $k\le500$ is therefore retained.
Only the original left endpoint is assigned the ratio $c_j(0)=0$.

At every computed layer, including the last, each retained
ratio has upper bound strictly less than one. Thus $1-c_j(k)$ has
a positive lower bound before every update. Nonnegativity follows
from the initial ratios and \eqref{eq:ratio-iteration}, even if an enclosure
has a negative lower bound. Thus the coefficient iterates are positive.
We continue updating every retained position after its ratio satisfies the bound. At each
index under consideration, the upper bound $q$ satisfies
\begin{equation}\label{eq:stop-rational}
 q<\frac12,\qquad q^2-3q+1>0,
\end{equation}
by the stated depth. These inequalities imply $q<q_*$.
All rounding and truncation errors are included by
Lemma~\ref{lem:model-arithmetic}.
\end{proof}

\begin{table}[ht]
\centering
\caption{Finite parameter ranges for $1\le k\le500$.}
\label{tab:prefixes}
\begin{tabular}{crrr}
\toprule
Parameter interval & Polynomial degree & Bits $b$ & Largest depth\\
\midrule
$[19,23]$&96&1024&14\\
$[24,32]$&96&1024&14\\
$[33,48]$&96&1024&14\\
$[49,80]$&128&1024&13\\
$[81,128]$&96&1024&12\\
$[129,256]$&96&1024&12\\
$[257,512]$&96&1024&11\\
$[512,2048]$&80&768&10\\
$[2048,\infty)$&48&768&10\\
\bottomrule
\end{tabular}
\end{table}
\FloatBarrier

The intervals cover every integer $\ell\ge19$. The assertion for real
parameters concerns their union and does not include the intervening gaps.

\begin{proof}[Proof of Theorem~\ref{thm:main}]
Fix \(\ell\ge3\). If $3\le\ell\le18$, we obtain positive local
iterates for all degrees $n\ge8192$ from Proposition~\ref{prop:small-tail}
and for all positions $k\le10000$ from Proposition~\ref{lem:finite-prefixes}.
These ranges overlap and cover the sequence. If \(\ell\ge19\), we use
Proposition~\ref{prop:uniform-tail} for \(k\ge256\) and the bounds in
Table~\ref{tab:prefixes} for \(1\le k\le500\). Thus, for each $k\ge1$,
there is a finite $T=T(k)$ such that $a_k^{[j]}>0$ for
$0\le j\le T$ and $c_T(k)\le q_*$. The initial coefficients are positive by
\eqref{eq:coefficients}, and the entry at the left endpoint is repeatedly squared.
The conclusion follows from Lemma~\ref{lem:local-stop} with $q=q_*$.
\end{proof}

\section{Euler's difference table}\label{sec:euler}
Euler's difference table arose in the study of derangements, that is,
permutations without fixed points \cite{Rakotondrajao}.
For $n\ge0$, its entries are defined by
\[
 E_{n,n}=n!,\qquad
 E_{n,k-1}=E_{n,k}-E_{n-1,k-1}\quad(1\le k\le n).
\]
Dumont and Randrianarivony \cite{DumontRandrianarivony} showed that
$E_{n,k}$ counts the permutations of $\{1,\ldots,n\}$ whose fixed
points all belong to $\{1,\ldots,k\}$. In particular, $E_{n,0}$ is
the number of derangements of $n$ elements, whereas $E_{n,n}=n!$
counts all permutations. Writing
$D_m=m!\sum_{s=0}^m(-1)^s/s!$ for the derangement numbers, we obtain
\[
 E_{n,k}=\sum_{s=0}^k\binom ksD_{n-s}\qquad(0\le k\le n)
\]
by choosing the set of fixed points and deranging the remaining elements.

We consider the normalized entries
\[
 \widehat E_{n,k}=\frac{E_{n,k}}{k!}\qquad(0\le k\le n).
\]
These are integers with a further combinatorial interpretation.
Rakotondrajao \cite{Rakotondrajao} showed that $\widehat E_{n,k}$ counts
permutations with no fixed points among $k+1,\ldots,n$ and with
the elements $1,\ldots,k$ lying in distinct cycles.

Chen, Gu, Ma, and Wang \cite{EulerOriginal} proved that, for every
$n\ge1$, the normalized sequence
$(\widehat E_{n,0},\ldots,\widehat E_{n,n})$ is log-concave and
essentially $2$-log-concave. They conjectured that these sequences
are essentially infinitely log-concave.
Here essential iteration means that both endpoints are discarded
after each application of $\LL$. More precisely, set
\[
 E_{n,k}^{[0]}=\widehat E_{n,k}\qquad(0\le k\le n).
\]
For $0\le r<\lfloor n/2\rfloor$, define
\[
 E_{n,k}^{[r+1]}=(E_{n,k}^{[r]})^2
 -E_{n,k-1}^{[r]}E_{n,k+1}^{[r]}
 \qquad(r+1\le k\le n-r-1).
\]
Thus the indices at depth $r$ are $r\le k\le n-r$.
The ratios $c_r(k)$ and curvatures $h_r(k)$ refer to these normalized
iterates, as in Section~\ref{sec:stopping}.
We prove the conjecture, with strict positivity for $n\ge3$.
\begin{theorem}\label{thm:euler}
For every integer $n\ge3$, the essential iterates of
$(\widehat E_{n,0},\ldots,\widehat E_{n,n})$ satisfy
\[
 E_{n,k}^{[r]}>0\qquad
 (1\le r\le\lfloor n/2\rfloor,\ r\le k\le n-r).
\]
\end{theorem}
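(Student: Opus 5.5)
We follow the same pattern as for Theorem~\ref{thm:rows}, now under essential iteration with $M(k)=\min\{k,n-k\}$, and conclude with Lemma~\ref{lem:local-stop} for $q=1/3$. The first step is an interpolating holomorphic function for the normalized row. Since $D_m=m!\sum_{s\le m}(-1)^s/s!$, we have
\[
E_{n,k}=\sum_{s}\binom ks D_{n-s}=\int_0^\infty e^{-t}(t-1)^{n-k}t^k\,dt
\]
(up to the standard integral form of derangements). This gives, after factoring $n!e^{z/n-1}$ as indicated at the end of Subsection~\ref{subsec:prepared-entry}, a representation
\[
\widehat E_{n,k}=\frac{1}{k!}\,n!\,e^{k/n-1}\,F_n(k)
\]
with $F_n$ holomorphic near $|z-k|\le k/2$. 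We will show that $\Log F_n$ has imaginary part bounded by an absolute constant $C$ there. The curvature of $1/k!$ is $\log(1+1/z)$, and the geometric factor is invisible to $\D$. Hence $h_0(z)=\log(1+1/z)-\D\Log F_n(z)$, and Lemma~\ref{lem:strip-difference} gives $h_0=1/z+O(k^{-2})$ on $|z-k|\le k/4$, as in Table~\ref{tab:five-applications}.

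The analytic range is $k\ge128$ with $J(k)=\lceil\log_2 k\rceil+1<n-k$, and $n\ge750$. There we apply Theorem~\ref{thm:unified-entry} with geometric radii $R_j=(k/4)(3/4)^j$, exactly as in Proposition~\ref{row:lem:interior}. The inputs are an explicit lower bound $\Rea h_0\ge c/k$ on each $D_j$ (from $\Rea\log(1+1/w)$ and the strip bound for $\D\Log F_n$) and Corollary~\ref{cor:geometric-budget} for $\mathcal E_j$. The conclusion is $h_{J(k)}(k)\ge\log 3$, together with positivity of all coefficients in the dependency cone \eqref{eq:local-dependency-cone}. The condition $J(k)<n-k$ ensures that the required indices $[k-J-1,k+J+1]$ survive essential iteration, so the analytic curvatures agree with the sequence curvatures. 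No right-endpoint analysis is needed analytically, because $F_n$ has no pole or zero near $n$ on the relevant disk. This is the main difference from the binomial case.

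The remaining positions are handled by exact rational bounds using the $s$-recurrence of Section~\ref{sec:models}, with newly exposed edges never assigned $\sigma=1$. These are $k<128$, positions with $n-k\le J(k)$ near the right end, and the whole rows with $3\le n<750$. For the left block and large $n$ we need a parametrized enclosure in $u=1/n$. The ratios $\widehat E_{n,k+1}/\widehat E_{n,k}$ satisfy a Riccati-type recurrence obtained from $E_{n,k}=E_{n,k-1}+E_{n-1,k-1}$ and the three-term recurrence of derangements. We cover $u\in[0,1/750]$ by polynomial models, iterate to depth about $10$, and certify $s_T(k)\ge2/3$ for $k<128$.

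For positions near the right end, where $k$ is large and $n-k$ is small, the entries are close to $n!/k!$ times a nearly constant factor. The curvature is therefore about $1/k$ while only $n-k$ layers exist. For $M(k)=n-k$ we only need positivity through layer $n-k$ with no terminal ratio, which is allowed by $T(k)=M(k)$ in Lemma~\ref{lem:local-stop}. We will prove this positivity via the $s$-recurrence with the explicit expansion $s_0(k)=1/(k+1)+O(\cdot)$, uniformly in $n$ through a second enclosure in the variable $m=n-k$ and $u$.

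The main obstacle is this right-end regime, where the number of available layers is smaller than $\log_2 k$. The ratios may never reach $q$ there, and positivity must be propagated along shrinking triangles. It requires a uniform estimate showing that $s_j$ stays bounded away from zero, roughly $s_j\gtrsim 2^j/k$, for $j\le n-k$. We plan to try a comparison with the model sequence $1/k!$, whose essential iterates are explicit, and to bound the perturbation by an induction on $j$.
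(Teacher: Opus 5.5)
\paragraph{Gap: the right end.} The gap is the regime $n-k\le J(k)$. You call it the main obstacle and leave it as a plan. As described, a finite enclosure in $(m,u)$ cannot close it: the number of layers $m=n-k$ to be controlled goes up to $\lceil\log_2k\rceil+1$, which is unbounded. Proving $s_j(k)\gtrsim 2^j/k$ through that many layers is exactly what the holomorphic curvature estimate does.

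The missing observation is that no separate argument is needed. Under essential iteration an entry $E^{[j]}_{n,v}$ exists only when $j\le\min\{v,n-v\}$. It therefore depends only on initial indices in $[v-j,v+j]\subseteq[0,n]$. No zero is ever imposed, so the iterates of the interpolating function agree with the sequence iterates at every retained entry. The condition $J(k)<n-k$ is needed only to make the terminal ratio $c_{J(k)}(k)$ admissible, not to identify curvatures.

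Suppose the disk estimates around $k$ hold for every $128\le k\le n$; you note yourself that the interpolant has no singularity near $n$. Then $\Rea h_j>0$ on the nested disks gives positivity at all retained entries of the dependency cone through the last layer $n-k$. Lemma~\ref{lem:local-stop} then applies with $T(k)=\min\{\lceil\log_2k\rceil+1,k,n-k\}$, which is how the paper treats the right end. The same convention, $T(k)=k$, is needed on the left for $k\le T$. There your certificate $s_T(k)\ge2/3$ refers to a ratio that no longer exists at depth $T$.

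\paragraph{The analytic input you only promise.} The argument above needs a bound on the logarithm of the corrected factor, on disks around $k$, uniform up to and beyond $z=n$. Your integral $\int_0^\infty e^{-t}(t-1)^{n-z}t^z\,dt$ is correct at integers. For non-integer $z$, however, the factor $(t-1)^{n-z}$ is not real on $0<t<1$, so it does not give the interpolant.

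The paper uses the polynomial $\mathscr E_n(z)=\sum_{s=0}^n(-1)^s\binom{n-z}{s}(n-s)!$. It proves $|\mathscr E_n(nw)/n!-e^{w-1}|<1/100$ on the whole disk $|1-w|\le1$ for $n\ge750$. Since $|1-z/n|\le1-k/(4n)$ on $|z-k|\le3k/4$ for every $k\le n$, this bounds $L_n(z)=\log\{e^{1-z/n}\mathscr E_n(z)/n!\}$ by $1/30$ there. It then gives $|h_0(z)-z^{-1}|<7/k^2$ on radius $3k/5$, uniformly up to the right end.

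\paragraph{Radii.} Your radii $(k/4)(3/4)^j$ are too small at $k=128$. There $R_0=32$ and $R_1=24$. Even the sharper first-layer error is $\widetilde{\mathcal E}_1=64/(56\cdot63)>1/60$, while $\inf_{D_1}\Rea h_0\approx1/(1.19k)$ is less than half as large. You need radii comparable to $3k/5$, as in the paper, or a much larger analytic cutoff with a correspondingly longer finite computation.
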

For $n=0,1$, there are no essential iterates of positive order.  For $n=2$ the sequence
is $(1,1,1)$ and its sole first-order interior entry is zero.

The essential convention is necessary. For $n=3$, the normalized sequence is
$(\widehat E_{3,0},\ldots,\widehat E_{3,3})=(2,3,2,1)$.
If this sequence is instead extended by zero, its second iterate at index
$2$ equals $-4$.

The factor $1/k!$ contributes a curvature of order $k^{-1}$.
To estimate the remaining factor, we interpolate $E_{n,k}$ and compare
the interpolation with an exponential. The logarithm of that
exponential is affine and disappears under the second difference.
Define
\begin{equation}\label{eq:euler-interpolation}
 \mathscr E_n(z)=\sum_{s=0}^n(-1)^s\binom{n-z}{s}(n-s)!.
\end{equation}
We apply inclusion-exclusion to the possible fixed points among
$k+1,\ldots,n$. Since $E_{n,k}\ge D_n>0$ for $n\ge3$, we obtain
\[
 \mathscr E_n(k)=E_{n,k}>0\qquad(0\le k\le n,\ n\ge3).
\]

For $n\ge750$ and $|1-w|\le1$, we have
\begin{equation}\label{eq:euler-exponential-approx}
 |\mathscr E_n(nw)/n!-e^{w-1}|<1/100.
\end{equation}
Indeed, put $v=1-w$ and
$u_s=(v-s/n)/(1-s/n)$, $P_r=\prod_{s<r}u_s$.  Then
$\mathscr E_n(nw)/n!=\sum_{r=0}^n(-1)^rP_r/r!$.
For $s\le19$, we have $|u_s|\le9/8$ and $|u_s-v|\le3s/n$.
Applying these bounds to a telescoping expansion of $P_r-v^r$ gives
\[
 \sum_{r=0}^{20}\frac{|P_r-v^r|}{r!}
 \le\frac{27e^{9/8}}{16n}<\frac6n.
\]
For all $r\le n$, $|P_r|\le6^r$, since
$n(n-1)\cdots(n-r+1)\ge(n/e)^r$.  The two omitted exponential tails
are bounded by $3\cdot6^{21}/21!$.  Thus
$6/n+3\cdot6^{21}/21!<1/100$, proving
\eqref{eq:euler-exponential-approx}.

Fix integers $n\ge750$ and $128\le k\le n$. Define
\[
 L_n(z)=\log\{e^{1-z/n}\mathscr E_n(z)/n!\},\qquad
 h_0(z)=\log(1+1/z)-\D L_n(z).
\]
On $|z-k|\le3k/4$, we have $|1-z/n|\le1-k/(4n)<1$.
Thus the logarithm series and \eqref{eq:euler-exponential-approx}
give $|L_n|<1/30$ there. On the disk of radius $3k/5$,
\eqref{eq:modulus-difference} bounds $|\D L_n|<3/k^2$, while the
logarithm series gives $|\log(1+1/z)-z^{-1}|<4/k^2$. Thus
\begin{equation}\label{eq:euler-seed}
 |h_0(z)-z^{-1}|<7/k^2\qquad(|z-k|\le3k/5).
\end{equation}
At the center, using the full radius $3k/4$ instead gives
$|\D L_n(k)|<1/(8k^2)$, and consequently
\begin{equation}\label{eq:euler-center-seed}
 k^{-1}-k^{-2}<h_0(k)<k^{-1}+k^{-2}.
\end{equation}

\begin{proposition}\label{lem:euler-analytic}
Let $n\ge750$, $128\le k\le n$, and $M=\lceil\log_2k\rceil+1$.
Put $R_j=(3k/5)(3/4)^j$ for $0\le j<M$ and $R_M=0$.
The curvature iterates $h_j$ are holomorphic near $|z-k|\le R_j$
and have positive real parts there, with $h_M(k)>19/16>\log3$.
\end{proposition}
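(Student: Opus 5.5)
We follow the template of Proposition~\ref{row:lem:interior}: apply Theorem~\ref{thm:unified-entry} with the sharper errors \eqref{eq:mobius-error}, taking the seed bounds \eqref{eq:euler-seed} and \eqref{eq:euler-center-seed} as input. First we bound $\Rea h_0$ from below on each disk $D_j=\{|z-k|\le R_j\}$. For $|z-k|\le r_jk$ with $r_j=(3/5)(3/4)^j$, the inversion estimate of Corollary~\ref{cor:normalized-stop} (valid since $r_j\le1/2$ for $j\ge1$; for $j=0$ use directly $\Rea z^{-1}\ge1/((1+r)k)$) gives $\Rea z^{-1}\ge((1+r_j)k)^{-1}$. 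Combined with \eqref{eq:euler-seed}, this yields
\[
 k^2\inf_{D_j}\Rea h_0\ge\frac{k}{1+r_j}-7 .
\]
With $k\ge128$ this is already at least $73$ on $D_0$, and it tends to $k-7$ as $j$ grows.

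Next we control the errors. For $j<M$ we have $R_{M-1}=(3k/5)(3/4)^{M-1}$ and $2^{M-2}<k\le2^{M-1}$. Hence $R_{M-1}\ge(3/5)k(3/4)^{\log_2k+1}=(9/20)k^{1-\log_2(4/3)}$. At $k=128$ this is about $(9/20)\cdot128^{0.585}\approx 7.6$, so the smallest gap $R_{M-2}/4\cdot$, i.e.\ $R_{M-1}/3$, is about $2.5$ and its square is about $6$. Corollary~\ref{cor:geometric-budget} then gives $k^2\widetilde{\mathcal E}_j<26\cdot(25/9)/(1-1/6)\approx 87$. This slightly exceeds the lower bound $73$ on the first disks, so at small $j$ we must instead evaluate $\widetilde{\mathcal E}_1,\widetilde{\mathcal E}_2,\widetilde{\mathcal E}_3$ directly from \eqref{eq:mobius-error}. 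These are much smaller, roughly $2/(R_0-R_1)^2\cdot R_0/(R_0+R_1)$ with $R_0-R_1=3k/20$, which gives about $50/k^2$. The normalized sums $k^2\widetilde{\mathcal E}_j$ decrease in $k$, so it suffices to check $k=128$, just as in the Boros--Moll proof. For larger $j$ the geometric bound applies because the lower bound $k/(1+r_j)-7$ grows. We expect this matching of constants on the first few disks at the worst case $k=128$ to be the main obstacle. If it fails, we would loosen the radii ratio or refine \eqref{eq:euler-seed} on the smaller disks $D_j$, $j\ge1$, by rerunning \eqref{eq:modulus-difference} with a larger gap.

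Finally we handle the center. Since $R_{M-1}^2>49$, \eqref{eq:geometric-center-error} gives $\mathcal C_M<18(25/9)/((1-1/49)k^2)<52/k^2$. Then \eqref{eq:center-error} and \eqref{eq:euler-center-seed} yield
\[
 h_M(k)>2^M\frac{k-53}{k^2}.
\]
The map $k\mapsto(k-53)/k^2$ is decreasing for $k\ge106$, so on each dyadic block $2^{M-2}<k\le2^{M-1}$ its minimum is at $k=2^{M-1}$. There the bound becomes $2(1-53/2^{M-1})$, which at $M=8$ ($k=128$) equals $2\cdot75/128=75/64>19/16$? This is marginally false, since $75/64<76/64$. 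So the constant $53$ must be tightened, presumably by evaluating $\mathcal C_M$ exactly at $k=128$ rather than through the crude geometric sum; a value below $52$ suffices. Holomorphy near the last disk and at the center follows as in Theorem~\ref{thm:unified-entry} with $R_M=0$. Since $\log3<1.0987<19/16$, the conclusion follows.
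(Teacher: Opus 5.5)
Your argument is the paper's proof. It uses the same disks, the lower bound $k/(1+r)-7$ for $k^2\Rea h_0$, squared gaps above $6$ in Corollary~\ref{cor:geometric-budget} to get $k^2\widetilde{\mathcal E}_j<87$, a direct evaluation of the first M\"obius error, and \eqref{eq:geometric-center-error} at the center.

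The shortfall you leave at the end is not a real gap, and it closes without any exact evaluation at $k=128$. It comes only from your weak choice $\eta=49$. Your own bound $R_{M-1}>7.6$ already gives $R_{M-1}^2>54$; the paper gets this by noting that $R_{M-1}$ is three times a gap whose square exceeds $6$. With $\eta=54$ you get $k^2\mathcal C_M<50\cdot54/53<51$. Hence $h_M(k)>2^M(k-52)/k^2\ge2(1-52/k)\ge19/16$, using $2^M\ge2k$ and $k\ge128$. Your suggested exact evaluation would also work, since $k^2\mathcal C_8\approx31$ at $k=128$.

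Your concern about the bound $73$ is also misplaced. That bound is only needed on $D_0$, where $\widetilde{\mathcal E}_0=0$. On $D_1$ the lower bound exceeds $81>51>k^2\widetilde{\mathcal E}_1$, and on $D_j$ for $j\ge2$ it exceeds $88>87$. So only $\widetilde{\mathcal E}_1$ has to be evaluated directly, not $\widetilde{\mathcal E}_2$ and $\widetilde{\mathcal E}_3$ as well.
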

\begin{proof}
The successive gaps among $R_0,\ldots,R_{M-1}$ have square greater than six. For $M=8$ use
$k\ge128$; for $M\ge9$ use $k>2^{M-2}$, with the smallest bound at
$M=9$. The latter squared gap is greater than
$((3\cdot128/20)(3/4)^7)^2>6$.
Corollary~\ref{cor:geometric-budget} and the first term of
\eqref{eq:mobius-error} give, for $1\le j<M$,
\[
 \widetilde{\mathcal E}_j<\frac{87}{k^2},\qquad
 k^2\widetilde{\mathcal E}_1
 =\frac{8/7}{9/400-k^{-2}}<51.
\]
On a disk of radius $rk$, \eqref{eq:euler-seed} gives
$k^2\Rea h_0\ge k/(1+r)-7$. At $r=3/5,9/20,27/80$ these lower
bounds are at least $73$, greater than $81$, and greater than $88$,
respectively. Comparing with the preceding error bounds, we obtain
$\inf_{|z-k|\le R_j}\Rea h_0>\widetilde{\mathcal E}_j$
for $0\le j<M$.
The radius $R_{M-1}$ is three times the smallest of these gaps,
so $R_{M-1}^2>54$. By \eqref{eq:geometric-center-error},
\[
 \mathcal C_M<\frac{18}{(1-1/54)(3/5)^2k^2}<\frac{51}{k^2}.
\]
Since $h_0(k)>\mathcal C_M$, Theorem~\ref{thm:unified-entry}
with \eqref{eq:mobius-error} applies also at the center. Using
\eqref{eq:euler-center-seed} and $2^M\ge2k$, we obtain
\[
 h_M(k)>\frac{2^M}{k}\left(1-\frac{52}{k}\right)
 \ge\frac{19}{16}>\log3.
\]
For $1\le j\le M$, $R_{j-1}>M+1-j$.
At integer arguments required by essential iteration, $h_0$
agrees with the original sequence curvature. The corresponding
initial entries and both unit shifts are therefore included wherever
they are used.
\end{proof}
If $M<\min\{k,n-k\}$, then $T_{1/3}(k)\le M$. By \eqref{eq:center-error}, we also have $h_j(k)<2^j(k^{-1}+52k^{-2})$
for $0\le j\le M$, which gives
$T_{1/3}(k)=\log_2k+O(1)$ uniformly in $n$ in this range. Otherwise, the same estimates prove positivity through the last
essential coefficient layer. We identify the analytic and sequence
recurrences only on these admissible index ranges.

For the remaining initial indices, we verify the inequalities using
exact rational bounds on the shrinking support of essential iteration.

\begin{proposition}\label{ver:euler}
For every integer $n\ge1500$,
\[
 E_{n,k}^{[r]}>0\quad(0\le r\le11,\ r\le k\le1500-r),
 \qquad c_{11}(k)\le3/10<1/3\quad(12\le k\le1488).
\]
These quantities depend only on the original initial entries
$(\widehat E_{n,0},\ldots,\widehat E_{n,1500})$.
The ratio inequality covers all $1477$ retained indices.
\end{proposition}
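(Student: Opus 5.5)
The proof is a computer-assisted verification, uniform in the parameter $n\ge1500$, built from exact rational enclosures in the spirit of Subsection~\ref{sec:models}. First I would confirm the dependency claim. Under essential iteration, $E^{[r]}_{n,k}$ depends only on the initial entries with indices in $[k-r,k+r]$, and $c_{11}(k)$ only on those in $[k-12,k+12]$. No zero is ever assigned at the artificial endpoint $1500$; that endpoint is simply discarded at each step. Hence for $r\le11$ and $r\le k\le1500-r$ only $\widehat E_{n,0},\ldots,\widehat E_{n,1500}$ enter. Since $n\ge1500$, the right edge $n-r$ of the true essential support lies beyond $1500-r$. So the truncated computation coincides with the true essential iterates on the retained range.

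Next I need initial data uniform in $n$. I would write the ratios $\widehat E_{n,k+1}/\widehat E_{n,k}$ as functions of $t=1/n\in[0,1/1500]$. The entries satisfy $E_{n,k}=\sum_s\binom ks D_{n-s}$, or equivalently the table recurrence $E_{n,k}=E_{n,k-1}+E_{n-1,k-1}$. Dividing by $n!$ gives
\[
 \frac{E_{n,k}}{n!}=\sum_{s\le k}\binom ks\frac{D_{n-s}}{(n-s)!}\cdot\frac{(n-s)!}{n!},
\]
where $D_m/m!=e^{-1}+O(1/(m+1)!)$ is exponentially close to $e^{-1}$ and $(n-s)!/n!=\prod_{i<s}(1-it)^{-1}t^s$. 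This expresses $e\,E_{n,k}/n!$ as a polynomial in $t$ with nonnegative coefficients (essentially $\sum_s\binom ks t^s\prod_{i<s}(1-it)^{-1}$), plus a remainder bounded by $e\cdot1500^{-k}/(n-k+1)!$-type terms. For $k\le1500\le n$ the remainder is negligible, but it is not uniform near $k=n$, so I would use it only through $k\le1500$. I would then define $B_k(t)=\widehat E_{n,k+1}/\widehat E_{n,k}$ via a Riccati-type recurrence derived from the table recursion. The advantage is that cancellations are kept in the whole residual, as was done in \eqref{eq:Riccati} for the transposed Boros--Moll sequences. I would enclose $B_k$ by polynomial models in $u\in[0,1]$ with $t=u/1500$, possibly split into a few subintervals, with the initial ratio $c_0(k;t)=B_k/B_{k-1}$. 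The same parameter $t$ must be retained in neighboring entries, since the paper warns that scalar bounds fail.

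I would then iterate \eqref{eq:ratio-iteration}, or \eqref{eq:s-iteration}, eleven times with outward rounding. Layer $r$ keeps the indices $r+1\le k\le1499-r$ that carry ratios. At each layer I would check that every retained ratio has an upper bound below one, which gives positivity of $E^{[r+1]}_{n,k}$ via $E^{[r+1]}=(E^{[r]})^2(1-c_r)$. At layer $11$ I would check $c_{11}(k)\le3/10$ for $12\le k\le1488$. The upper range bound is $1500-12$, because the ratio at layer $11$ needs neighbors at layer $11$, which exist up to $1489$. The main obstacle is this enclosure step. Near small $k$ the initial ratios $c_0(k)\approx k/(k+1)$ are close to $1$, so errors are amplified roughly by $(1-c)^{-2}$ at each step. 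This forces high polynomial degree and precision, analogous to the $2^{-512}$ scales used in Proposition~\ref{prop:row-certificates}. If uniformity at $t\to0$ is delicate, I would first treat the limiting sequence $e\,\widehat E_{n,k}/n!\to\sum_s t^s/(k-s)!\cdots$ at $t=0$, and then use the model in $u$ to absorb the perturbation.
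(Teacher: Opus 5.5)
Your outline agrees with the paper's proof on the essentials. You use the window argument with the artificial endpoint at $1500$ discarded, rational polynomial enclosures uniform in the single parameter $x=1500/n$ (your $u$) built from $E_{n,k}=\sum_s\binom ks D_{n-s}$ and $D_m\approx m!/e$, the same parameter retained in every ratio, and eleven updates with an upper ratio bound below one checked before each update.

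\textbf{The step that would fail is the Riccati detour for $B_k(t)$.} The table recursion alone relates rows $n$ and $n-1$. A relation at fixed $n$ needs a second identity, for example $E_{n,k}=kE_{n-1,k-1}+(n-k)E_{n-1,k}$, which gives
\[(n-k)(k+1)B_k=(n-2k+1)+1/B_{k-1}.\]
More seriously, this forward recurrence is unstable for $k>n/2$. It has the second solution $\int_0^1e^{-t}t^k(t-1)^{n-k}\,dt$, whose ratio to $E_{n,k}$ is proportional to $1/\binom nk$ up to bounded factors. Hence a relative error is multiplied by about $k/(n-k)$ at each step. At $n=1500$, the end $u=1$ of your parameter interval, a rounding error made near $k=750$ reaches $k=1488$ amplified by about $\binom{1499}{749}/\binom{1499}{11}\approx2^{1400}$. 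That is far beyond the $2^{-512}$ scale you propose. So for $1500\le n<3000$ you could not certify the ratios near the right end of the window this way, and those are exactly the ratios the final check at $k\le1488$ needs.

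\textbf{The fix is already in your first display: enclose each $F_k=e\,\mathscr E_n(k)/n!$ directly, as the paper does.}
\begin{itemize}
\item The derangement error is at most $3\cdot2^k/n!<2^{-4096}$, uniformly for all $k\le n$, by $|eD_q-q!|<3$. Your worry about non-uniformity near $k=n$ is unnecessary.
\item The terms with $s\le32$ are expanded via $1/(n)_{\underline s}=\sum_m\mathrm S(m-1,s-1)n^{-m}$ and truncated at degree $32$. Since the coefficients are nonnegative, the discarded tail is bounded by its value at $x=1$.
\item Each term with $s>32$ is at most $1/s!$, because $k\le n$.
\end{itemize}
All summands are nonnegative, so there is no cancellation. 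Then $c_0(k)=\frac{k}{k+1}F_{k-1}F_{k+1}/F_k^2$ follows from Lemma~\ref{lem:model-arithmetic}, using $F_k\ge1-2^{-4096}$ as the denominator bound.

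\textbf{You also have the hard region reversed.} The ratios $c_0(k)\approx k/(k+1)$ are closest to one at the large end of the window, not at small $k$. There the relevant information sits in second differences of size about $k^{-3}$. The margin is also thinnest there: $2^{11}/1488\approx1.38$, and the paper's worst certified bound is $297/1000$.
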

\begin{proof}
Put $x=1500/n\in(0,1]$ and $F_k=e\mathscr E_n(k)/n!$.
From the derangement formula and $|eD_q-q!|<3$, including $q=0,1$,
we obtain
\[
 \left|F_k-\sum_{s=0}^k\frac{\binom{k}{s}}{(n)_{\underline s}}\right|
 \le\frac{3\cdot2^{1500}}{1500!}<2^{-4096}
 \qquad(n\ge1500,\ 0\le k\le1500).
\]
Let $\mathrm S(a,b)$ denote the Stirling numbers of the second kind.
From their generating function, we obtain the degree-$32$ Taylor polynomial
\[
 P_k(x)=1+\sum_{m=1}^{32}\frac{x^m}{1500^m}
              \sum_{s=1}^{\min(k,m)}\binom{k}{s}\mathrm S(m-1,s-1).
\]
All Taylor coefficients are nonnegative. The discarded part from
$s\le32$ is therefore bounded by its value at $x=1$, while each term
with $s>32$ is at most $1/s!$. Consequently
\[
 |F_k-P_k(x)|\le
 \sum_{s=0}^{\min(k,32)}\frac{\binom{k}{s}}{(1500)_{\underline s}}
 -P_k(1)+\eta_k+2^{-4096},
\]
where $\eta_k=0$ for $k\le32$ and $\eta_k=2/33!$ otherwise.
We thus obtain a rational polynomial enclosure uniformly for every
integer $n\ge1500$.

We apply Lemma~\ref{lem:model-arithmetic} with the same variable $x$
in every ratio, rounding coefficients downward and error bounds upward
at scale $S=2^{384}$.
The initial ratios are
\[
 c_0(k)=\frac{k}{k+1}\frac{F_{k-1}F_{k+1}}{F_k^2},
 \qquad 1\le k\le1499.
\]
The initial denominator satisfies $F_k^2>(S-2)/S$, since
$F_k\ge1-2^{-4096}$. Before each subsequent update, the bound
$U/S<1$ implies that the squared denominator is at least $(1-U/S)^2$.
The recurrence then preserves nonnegativity of the ratios.
After $r$ updates the retained indices are
$r+1\le k\le1499-r$, with no imposed boundary values.
These ranges contain all predecessors needed for essential iteration.
At $r=11$ all $1477$ upper bounds are below $297/1000<3/10$.
\end{proof}

\begin{proof}[Proof of Theorem~\ref{thm:euler}]
We first consider $3\le n\le1500$. We compute the initial ratios from
the exact integer entries $\widehat E_{n,k}$ and enclose them by rational
intervals with outward rounding at scale $2^{128}$.
Both endpoints are discarded after each iteration. At each position,
the ratio is at most $1/3$ within eleven iterations, unless the last
essential coefficient layer occurs earlier. All entries remain positive
through these iterations.

For $n>1500$, define
\[
 T(k)=\begin{cases}
 0,&k=0,\\
 k,&1\le k\le11,\\
 11,&12\le k\le127,\\
 \min\{\lceil\log_2k\rceil+1,k,n-k\},&128\le k\le n.
 \end{cases}
\]
For $1\le k<128$, Proposition~\ref{ver:euler} gives positivity
through layer $T(k)$ and $c_{T(k)}(k)\le3/10<1/3$ whenever
$T(k)<\min\{k,n-k\}$. For $k\ge128$, Proposition~\ref{lem:euler-analytic}
gives positivity through layer $T(k)$ and $c_{T(k)}(k)<1/3$ whenever
$T(k)<\min\{k,n-k\}$, since at interior integer arguments
\[
 c_0(t)=\frac{t}{t+1}
 \frac{\mathscr E_n(t-1)\mathscr E_n(t+1)}{\mathscr E_n(t)^2}
 =e^{-h_0(t)}\qquad(1\le t<n).
\]
At $k=0,n$, we have $T(k)=0=\min\{k,n-k\}$, so only initial
positivity is required. Thus, for every $n\ge3$, the hypotheses of
Lemma~\ref{lem:local-stop} hold at every position with $q=1/3$.
The lemma gives positivity at every essential iteration depth.
\end{proof}
\section{The Medina--Moll--Rowland polynomials}\label{sec:mmr}
Medina, Moll, and Rowland \cite[Theorem~2.1]{MMROriginal}
studied the repeated integration of $\log(1+x)$. For $x>-1$, define
\[
 f_0(x)=\log(1+x),\qquad
 f_n(x)=\int_0^x f_{n-1}(t)\,dt\quad(n\ge1).
\]
They obtained
\[
 f_n(x)=\frac{(1+x)^n}{n!}\log(1+x)-xA_n(x),
\]
where $A_n(x)$ is a polynomial of degree $n-1$ for $n\ge1$.
Writing $H_n=\sum_{s=1}^n1/s$ and $H_0=0$, their formula is
\begin{equation}\label{eq:mmr-polynomials}
 A_n(x)=\frac1{n!}\sum_{s=1}^n\binom ns(H_n-H_{n-s})x^{s-1}.
\end{equation}
Medina, Moll, and Rowland
\cite[Theorem~4.1, Conjecture~4.3, and Theorem~4.5]{MMROriginal}
proved that these coefficient sequences are log-concave for every
$n\ge1$, conjectured infinite log-concavity, and verified the
conjecture for $n\le300$.
We prove the conjecture for every $n\ge1$.
\begin{theorem}\label{thm:mmr}
For every $n\ge1$, the coefficient sequence of $A_n$ is strictly infinitely
log-concave on its original support.  Its $j$th iterate is $8/3$-factor
log-concave whenever $j\ge\max\{8,\lfloor\log_2n\rfloor+1\}$.
\end{theorem}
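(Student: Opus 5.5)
The plan follows the Boros--Moll proof of Theorem~\ref{thm:rows}. First I reverse and normalize the coefficients. Up to a factor $1/n!$, which is a positive geometric factor and leaves all ratios unchanged, the coefficient of $x^{s-1}$ in $A_n$ is $\binom ns(H_n-H_{n-s})$. Reversing via $k=n-s$ gives
\[
 a_k=\binom nk(H_n-H_k),\qquad 0\le k\le n-1 .
\]
Reversal commutes with $\LL$ and preserves the fixed support. So it suffices to prove that all iterates of $(a_k)_{k=0}^{n-1}$ are positive, and that $c_j(k)\le 3/8$ for $j\ge\max\{8,\lfloor\log_2n\rfloor+1\}$. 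The bound $c_j\le3/8$ is exactly $8/3$-factor log-concavity, and $3/8<q_*$.

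The global conclusion comes from Lemma~\ref{lem:local-stop} with $q=3/8$. That lemma requires, at each interior $k$, a depth $T(k)\le\max\{8,\lfloor\log_2n\rfloor+1\}$ with positivity through $T(k)$ and $c_{T(k)}(k)\le3/8$. Persistence then gives the factor bound at all later layers.

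For the analytic input I write
\[
 H_n-H_z=\sum_{m\ge1}\Bigl(\frac1{m+z}-\frac1{m+n}\Bigr),
\]
and take the interpolant $F(z)=H_n-H_z$. This is a Stieltjes-type sum. For $\Rea z>-1$ each term $\frac{n-z}{(m+z)(m+n)}$ suggests comparison with $(n-z)$ times a function of positive real part. Concretely, put
\[
 G(z)=\frac{F(z)}{n-z}=\sum_{m\ge1}\frac1{(m+z)(m+n)} .
\]
This $G$ has $\Rea G>0$ on $\Rea z>-1$, so $\Log G$ exists with $|\Im \Log G|<\pi/2$.

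The initial curvature then splits as
\[
 h_0(z)=\Log(1+1/z)+\Log(1+1/(n-z))-\D\Log(n-z)-\D\Log G(z)
 =\Log(1+1/z)+\Log\!\Bigl(1+\frac1{n-1-z}\Bigr)-\D\Log G .
\]
The first two terms combine with the binomial's $\Log(1+1/(n-z))$ and the $(n-z)$ factor into the tabulated $\frac1k+\frac1{n-1-k}$ profile. By \eqref{eq:strip-difference} or \eqref{eq:halfplane-strip}, $\D\Log G=O(b^{-2})$ on disks of radius proportional to $b=\min\{k,n-1-k\}$.

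With this decomposition, Proposition~\ref{prop:reference-factors} handles the interior, but I want explicit constants. I mimic Proposition~\ref{row:lem:interior}. For $n$ large, say $n\ge1024$, and $b\ge b_0$ (around $48$–$64$), I take radii $R_j=(19b/20)(3/4)^j$ for $j<L$ and $R_L=0$, with $L=\lfloor\log_2 b\rfloor+2$. I bound $\Rea h_0\ge f_r(b)$ on $|z-k|\le rb$ and check the margins against $\widetilde{\mathcal E}_j$ from Corollary~\ref{cor:geometric-budget}. A real lower bound $h_0(k)>\log(1+1/b)$-type estimate at the center would come from an elementary inequality for $c_0(k)$. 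Then Theorem~\ref{thm:unified-entry} should give $h_L(k)>\log(8/3)$ with $L\le\lfloor\log_2 n\rfloor+1$, since $b<n/2$.

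The remaining cases are handled by exact rational computation, as in Proposition~\ref{prop:row-certificates}.

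\emph{Edges, $b<b_0$ with $n\ge1024$.} I would write the initial ratios $c_0(k)$ and $c_0(n-1-k)$ as rational functions of $k$ and of $v=1/n$, plus the small quantity $H_n-H_{n-1-k}$. The latter is expanded in $v$ with explicit enclosures. I then run the $s$-recurrence \eqref{eq:s-iteration} on polynomial models via Lemma~\ref{lem:model-arithmetic}, over subintervals of $v\in[0,1/1024]$. The target is $s_8(k)\ge5/8$ at every edge index within $b_0$, with every predecessor retained. This is the step that forces the constant $8$ in $\max\{8,\cdot\}$.

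\emph{Small $n$, $1\le n\le1023$.} I verify directly with interval arithmetic from the exact rationals $a_k$ that some $t_n\le\max\{8,\lfloor\log_2n\rfloor+1\}$ gives $c_{t_n}\le3/8$.

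I expect the main obstacle to be the edge regime. Near $k=n-1$ the factor $H_n-H_k$ is of size $(n-k)/n$, so it is nearly linear rather than Stieltjes-smooth on the scale $b$. The uniform-in-$n$ enclosure of $H_n-H_{n-1-k}$ in $v$ must keep the parameter dependence across neighboring entries, otherwise the first update overshoots, as the paper observes for the transposed sequences. I would first try bivariate models in $(v,t)$ with $t$ a local parameter on subintervals. I would also increase the degree until the eight-step lower bounds on $s_8$ clear $5/8$ uniformly.
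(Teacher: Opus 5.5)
Your decomposition is the paper's: your $G$ is exactly $M_n$ from \eqref{eq:mmr-stieltjes}, with $m=s+1$. The overall architecture is also the paper's: Lemma~\ref{lem:local-stop} with $q=3/8$, interior depth $\lfloor\log_2b\rfloor+2$, eight certified edge updates for $n\ge1024$, and direct computation for $n\le1023$. The genuine gap is in the edge certification, at the \emph{left} edge, which your parametrization does not capture. For small $k$,
\[
 c_0(k)=\frac{k}{k+1}\,\frac{1-kv}{1-(k-1)v}\Bigl(1+\frac{u_k-u_k^2}{k(k+1)}\Bigr),\qquad u_k=\frac1{H_n-H_k}.
\]
So $c_0(k)$ is not a rational function of $k$ and $v=1/n$ plus a small quantity. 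Here $H_n-H_k\approx\log n$ is neither small nor a usable function of $v$. Indeed $u_k\approx1/\log(1/v)$ tends to zero with a logarithmic singularity at $v=0$. Hence no polynomial model in $v$ on $[0,1/1024]$, of any degree or on any finite subdivision, encloses it usefully. The missing idea is to introduce $t=1/H_n$ as a second parameter, treated as independent of $v$. Since $H_{1024}>15/2$, one has $0\le t\le2/15$. Since $H_{137}<6$, one has $u_k=t/(1-tH_k)\in[0,2/3]$ for $k\le136$. The left ratios are then rational on a fixed rectangle in $(v,t)$. The eight updates are certified with bivariate models that keep both variables, shared by all positions, through every layer. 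Your closing reference to models in $(v,t)$ does not say what $t$ is; it must be this parameter, not a subdivision variable for $v$. By contrast, the right edge you single out as the main obstacle is benign. There $a_{n-1-k}/n^k=f_k(v)\prod_{s=1}^k(1-sv)/k!$, with $f_k(v)=(k+1)^{-1}\sum_{s=0}^k(1-sv)^{-1}$ analytic in $v$, so your univariate plan works.

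Your interior constants also need to change. $M_n$ has positive real part only on $\Rea z>-1$, so \eqref{eq:halfplane-strip} bounds $\D\Log M_n$ on $|z-k|\le rb$ only by $2/(((1-r)b+1)^2-1)$. Unlike the Boros--Moll case, there is no $\sqrt n$ shift. With $r=19/20$, the resulting lower bound for $\Rea h_0$ on the outer disk is negative unless $b$ exceeds roughly $1500$. With $(3/4)$-geometric radii and $b\le64$, no radius fraction makes the lower bound on the first inner disk exceed $\widetilde{\mathcal E}_1$. The paper uses $R_j=(4b/5)(3/4)^j$ and $b\ge128$. It also obtains the center bound $h_0(k)>b^{-1}-3b^{-2}$ from the same half-plane estimate. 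A bound $c_0(k)<b/(b+1)$ is not immediate here, since log-convexity of $M_n$ pushes $c_0$ upward. Consequently the edge computation must reach distance $128$, with initial data through distance $137$.
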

The polynomials $A_n$ need not have only real zeros. For example,
$36A_3(x)=11x^2+15x+6$ has discriminant $-39$.
We therefore cannot apply the real-rootedness criterion of
Br\"and\'en \cite{Branden} directly to this family.

Put $a_k=\binom nk(H_n-H_k)$ for $0\le k\le n-1$.
These are the reversed coefficients of $n!A_n$. Reversal and positive
rescaling preserve positivity and coefficient ratios. We separate the
binomial factor from a correction with positive real part. The series
\begin{equation}\label{eq:mmr-stieltjes}
 M_n(z)=\sum_{s=0}^{\infty}\frac1{(s+n+1)(s+z+1)},\qquad\Rea z>-1,
\end{equation}
converges locally uniformly and has positive real part.  Partial
fractions give
\[
 M_n(k)=\frac{H_n-H_k}{n-k},\qquad
 a_k=n\binom{n-1}{k}M_n(k).
\]
Its principal logarithm has imaginary part in $(-\pi/2,\pi/2)$.
Consequently the initial curvature is interpolated by
\begin{equation}\label{eq:mmr-h0}
 h_0(z)=\log(1+1/z)+\log(1+1/(n-1-z))-\D\log M_n(z).
\end{equation}

\begin{proposition}\label{prop:mmr-interior}
For $d=\min(k,n-1-k)\ge128$, put $L=\lfloor\log_2d\rfloor+2$.
Then
\[
 a_v^{[j]}>0\quad(0\le j\le L,\ 0\le v\le n-1,\ |v-k|\le L+1-j),
 \qquad h_L(k)>87/64>\log(8/3).
\]
In particular, $c_L(k)<3/8$.
\end{proposition}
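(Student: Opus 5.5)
The plan is to follow the proof of Proposition~\ref{row:lem:interior} almost verbatim, with the Boros--Moll correction $\Log F_n$ replaced by $\Log M_n$. The first step is a holomorphic lower bound for $\Rea h_0$ on disks $|z-k|\le rd$ with $0\le r\le 19/20$, using \eqref{eq:mmr-h0}.

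For the two binomial terms, apply the estimate $\Rea\Log(1+1/w)\ge((1+r)d+1)^{-1}$ to the nearer endpoint term, with $w=z$ or $w=n-1-z$. The farther term has positive real part, since its argument has real part at least $d$. For the correction, $M_n$ is holomorphic with $\Rea M_n>0$ on $\Rea z>-1$, so $|\Im\Log M_n|<\pi/2$. The half-plane bound \eqref{eq:halfplane-strip}, applied with $a=1$ at $x=\Rea z+1\ge(1-r)d+1$, then gives
\[
|\D\Log M_n(z)|\le \frac{2}{((1-r)d+1)^2-1}.
\]
This yields
\[
\Rea h_0(z)\ge f_r(d):=\frac{1}{(1+r)d+1}-\frac{2}{((1-r)d+1)^2-1}.
\]
There is no $\sqrt n$ shift here, so the initial radius may have to be smaller than $19d/20$; I would try $r_0=3/4$ so that $f_{r_0}(d)>0$ for $d\ge128$. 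At the center, the same two estimates give $h_0(k)\ge 1/d-O(1/d^2)$, and in fact $h_0(k)>\log(1+1/d)-2/(d^2-1)$. This step settles positivity at layer zero.

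Next, take $R_j=r_0d(3/4)^j$ for $j<L$ and $R_L=0$. Check that the successive gaps have squares bounded below by a constant exceeding one, which holds since $R_{L-1}\gtrsim r_0 d(3/4)^{L-1}$ and $d\ge2^{L-2}$. Corollary~\ref{cor:geometric-budget} then bounds $d^2\widetilde{\mathcal E}_j$ by an explicit constant. For the first few layers I would use \eqref{eq:mobius-error} directly at $d=128$ for sharper values, as in the Boros--Moll proof. Compare these bounds with $d^2\inf_{D_j}\Rea h_0$, using monotonicity of $d^2f_r(d)$ in $d$ and of $f_r$ in $r$, evaluated at $d=128$ and $r_j=r_0(3/4)^j$.

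Then bound $\mathcal C_L$ by \eqref{eq:geometric-center-error} and apply Theorem~\ref{thm:unified-entry} at the center. This gives $h_L(k)>2^L(d-c)/d^2$ for an explicit constant $c$. Since $2^L\ge 2d$ and $2^L\le4d$, minimizing over dyadic ranges of $d\ge128$ should give a value above $87/64$. Because $87/64>\log(8/3)\approx0.981$, this also gives $c_L(k)<3/8$.

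Finally, the support bookkeeping: $R_{j-1}>L+1-j$ and $L+1<d$ place every required initial index inside $\{0,\ldots,n-1\}$. Induction via $a_v^{[j+1]}=(a_v^{[j]})^2(1-e^{-h_j(v)})$ then gives positivity on the stated cone and identifies the analytic and sequence curvatures.

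The main obstacle is the numerical budget. Without the $\sqrt n$ shift, the correction bound $2/((1-r)d)^2$ is relatively larger, so the margins $\inf\Rea h_0>\widetilde{\mathcal E}_j$ at the early layers may fail for large $r_0$. If they do, I would shrink $r_0$ or improve the correction estimate using the Stieltjes structure of $M_n$. For the latter, writing $\D\Log M_n$ via the integral form of the second difference and using that $M_n$ is a Stieltjes sum with positive weights should give a bound of order $1/(2x^2)$ rather than $2/x^2$.
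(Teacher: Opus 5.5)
Your proposal is correct and is essentially the paper's proof. Both use the decomposition \eqref{eq:mmr-h0}, the half-plane bound \eqref{eq:halfplane-strip} for $\D\Log M_n$, geometric radii with Corollary~\ref{cor:geometric-budget}, and Theorem~\ref{thm:unified-entry} at the center. The paper takes $R_0=4d/5$, the cruder $d$-independent bound $d^2\Rea h_0\ge d^2/((1+r)d+1)-2/(1-r)^2$, and the plain budget $\mathcal E_j<79/d^2$ to get $h_L(k)>2^L(d-82)/d^2\ge 87/64$. Your variant with $r_0=3/4$, $\widetilde{\mathcal E}_j$ and $\mathcal C_L$ has more room, roughly $h_L(k)>2^L(d-35)/d^2$, so the budget obstacle you feared does not arise and the Stieltjes refinement is unnecessary.
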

\begin{proof}
Set $R_j=(4d/5)(3/4)^j$. On $|z-k|\le rd$, we estimate the nearer
endpoint term in \eqref{eq:mmr-h0} and apply \eqref{eq:halfplane-strip}
to obtain
\[
 d^2\Rea h_0(z)\ge
 \frac{d^2}{(1+r)d+1}-\frac{2}{(1-r)^2}.
\]
For $r=4/5,3/5,9/20$, respectively, the right side exceeds $20,67,81$
at $d=128$ and increases with $d$. These bounds apply to the initial
disk, the first smaller disk, and all subsequent disks.
Since $L\ge9$ and $d\ge2^{L-2}$, the smallest radius gap is at least
$(3/2)^L/15$, whose square exceeds $6$. Hence, by
Corollary~\ref{cor:geometric-budget},
\[
 \mathcal E_j<\frac{315}{4d^2}<\frac{79}{d^2},\qquad
 d^2\mathcal E_1=\frac{2}{1/25-d^{-2}}<51.
\]
These bounds verify the hypotheses on the real part in
Theorem~\ref{thm:unified-entry}.
At the center, $h_0(k)>d^{-1}-3d^{-2}$ also holds by
\eqref{eq:halfplane-strip}. Consequently
\[
 h_L(k)>\frac{2^L(d-82)}{d^2}\ge\frac{87}{64}.
\]
For the last inequality, on $2^{L-2}\le d\le2^{L-1}$ the minimum occurs
at an endpoint; both endpoint bounds increase with $L\ge9$.
Since $R_0+1<d$ and every radius gap exceeds one, all indices required
by the recurrence remain within the original support.
\end{proof}
The same argument also yields
$h_0(k)\le2/d+2/d^2$ and $h_j(k)<3\cdot2^j/d$ through $L$. Hence
\begin{equation}\label{eq:mmr-stopping}
 \left\lceil\log_2\frac{\rho_*d}{3}\right\rceil
 \le T_{q_*}(k)\le\lfloor\log_2d\rfloor+2,
 \qquad d\ge128.
\end{equation}

It remains to consider distances at most $128$ from either endpoint.
At these positions, we express the ratios as rational functions of
two independent parameters
\[
 0\le w\le1/1024,\qquad 0\le t\le2/15.
\]
Put $v_k=t/(1-tH_k)$ for $1\le k\le136$ and
$f_k(w)=(k+1)^{-1}\sum_{s=0}^k(1-sw)^{-1}$ for $0\le k\le137$.
For $1\le k\le136$, the families are
\begin{align}
 c^L_0(k;w,t)&=\frac{k}{k+1}\frac{1-kw}{1-(k-1)w}
 \left(1+\frac{v_k-v_k^2}{k(k+1)}\right),\label{eq:mmr-left}\\
 c^R_0(k;w)&=\frac{k}{k+1}\frac{1-(k+1)w}{1-kw}
 \frac{f_{k-1}(w)f_{k+1}(w)}{f_k(w)^2}.\label{eq:mmr-right}
\end{align}
Set $c^L_0(0;w,t)=c^R_0(0;w)=0$.
For the sequence $(a_k)_{k=0}^{n-1}$ defined above, the initial ratios satisfy
\[
 \begin{aligned}
 c^L_0(k;1/n,1/H_n)&=c_0(k),\\
 c^R_0(k;1/n)&=c_0(n-1-k)
 \end{aligned}
 \qquad(0\le k\le136,\ n\ge1024).
\]
Here left and right refer to the order of $a_k$, which is the reverse
of the coefficient order of $A_n$.
By the elementary rational
bounds $H_{1024}>15/2$ and $H_{137}<6$, the parameters for every
$n\ge1024$ lie in the rectangle, and every initial denominator is positive.

After removing positive geometric factors, the left entries are
\[
 \frac{1-tH_k}{k!}\prod_{s=0}^{k-1}(1-sw),
\]
while $a_{n-1-k}/n^k=f_k(w)\prod_{s=1}^k(1-sw)/k!$ on the right.
Thus \eqref{eq:mmr-left}--\eqref{eq:mmr-right} hold, and the initial
entries through distance $137$ are positive. The parameters are
kept independent throughout the finite boundary calculation;
no approximation to $H_n$ is needed.

\begin{proposition}\label{ver:mmr}
For $n\ge1024$,
\[
\begin{aligned}
 &a_v^{[j]}>0\quad
 (0\le j\le8,\ 0\le v\le n-1,\ \min(v,n-1-v)\le137-j),\\
 &c_8(k)<3/20\quad(0\le k\le n-1,\ \min(k,n-1-k)\le128).
\end{aligned}
\]
For $1\le n\le1023$, there is an integer $0\le T\le8$ such that
$a_k^{[j]}>0$ for $0\le j\le T$, $0\le k\le n-1$, and
$c_T(k)\le3/8$ for $0\le k\le n-1$.
\end{proposition}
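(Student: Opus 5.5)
This proposition is a finite computer-assisted verification, in the same style as Proposition~\ref{prop:row-certificates}. It has two parts. For $n\ge1024$ we work with the two parametrized edge arrays \eqref{eq:mmr-left}--\eqref{eq:mmr-right} on the rectangle $0\le w\le1/1024$, $0\le t\le2/15$. For $1\le n\le1023$ we do a direct fixed-parameter computation from the exact rational entries $a_k=\binom nk(H_n-H_k)$.

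For the uniform part, we build bivariate polynomial enclosures of the initial ratios $c^L_0(k;w,t)$ and $c^R_0(k;w)$ for $1\le k\le136$. We use Lemma~\ref{lem:model-arithmetic} with monomials in $(w,t)$, normalized to $[0,1]^2$ by $w=w'/1024$ and $t=2t'/15$. Each factor is handled by the quotient rule. The factor $1/(1-sw)$ is a geometric series with an explicit tail. The factor $v_k=t/(1-tH_k)$ is a quotient whose denominator is at least $1-(2/15)\cdot6>0$, using $H_k<6$. The factor $f_k(w)$ is a finite sum of these series. Then we iterate \eqref{eq:ratio-iteration} on each edge array separately. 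We keep the original endpoint value $c_j(0)=0$ and discard the artificial inner truncation endpoint at each step, so that at depth $j$ the retained range is $0\le k\le137-j$ (equivalently $136-j$ for ratios). Crucially, we keep the same variables $(w,t)$ in neighboring entries and in all layers. The remark after \eqref{eq:initial-ratio} shows that decoupled scalar bounds lose the needed cancellation.

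Before each update, we certify that every retained ratio has an upper bound strictly below one. This makes $1-c_j$ positive, so nonnegativity and positivity of the coefficient iterates follow inductively, exactly as explained in Subsection~\ref{sec:models}. After eight updates we check that the upper bounds at $0\le k\le128$ are below $3/20$. We should subdivide the rectangle in $w$ or $t$, as in the eight substitutions used for the Boros--Moll edges, if one model on the whole rectangle is too coarse. The dependency-cone count ($137-8\ge128+1$) ensures that every needed predecessor is retained. The identities $c^L_0(k;1/n,1/H_n)=c_0(k)$ and $c^R_0(k;1/n)=c_0(n-1-k)$, together with $(1/n,1/H_n)$ lying in the rectangle, transfer the bounds to every $n\ge1024$. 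Since $n-1\ge2\cdot137$, the two edge windows do not interact.

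For $1\le n\le1023$, we compute the initial ratios from exact rationals and enclose them with outward-rounded rational intervals. We iterate \eqref{eq:ratio-iteration} on the full sequence with both original endpoints retained (set $c_j=0$ there), certifying ratios below one at each step. We record the first layer $T\le8$ at which all interior ratios are at most $3/8$. Small $n$ such as $n=1,2$ are trivial, since the sequences have at most two positive entries.

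I expect the main obstacle to be the uniform bivariate part. The naive degree and precision may fail to keep the upper bounds below one through eight layers, particularly near $k=1,2$, where the ratios start close to $1/2$ and the wrapping error of the product and quotient rules compounds. If that happens, I would first raise the truncation degree and scale, say to degree about $24$ in each variable at scale $2^{512}$, and subdivide the $t$-interval, because the $t$-dependence through $v_k-v_k^2$ is the least linear.
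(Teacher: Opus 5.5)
Your proposal is correct and follows essentially the same route as the paper. The paper builds polynomial enclosures of the edge families \eqref{eq:mmr-left}--\eqref{eq:mmr-right} with the parameters kept coupled, applies eight directed updates of \eqref{eq:ratio-iteration} on the retained ranges $0\le k\le136-j$, and finishes with a fixed-parameter interval computation for $n\le1023$. Its concrete choices match your fallback plan: eight $t$-subrectangles with centered variables $w=(1+x)/2048$, $t=(2i+1+y)/120$, total degree $24$ at scale $2^{256}$, and Newton-iterated quotient approximations certified by the full residual, giving final upper bounds below $7/50$.
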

\begin{proof}[Computer-assisted proof]
For the left family use the eight rectangles
\[
 w=\frac{1+x}{2048},\qquad t=\frac{2i+1+y}{120},
 \qquad |x|,|y|\le1,\quad 0\le i\le7.
\]
For the right family, only the parameter $x$ is needed. We use the norm
$\|P\|_1=\sum_{a,b}|[x^ay^b]P|$, which is submultiplicative and bounds
$|P|$ on these rectangles. Thus Lemma~\ref{lem:model-arithmetic}
applies without change. We use total degree $24$ and dyadic scale $2^{256}$,
retaining both variables through every operation and bounding every
rounding error and discarded coefficient. To choose a quotient
approximation to $P/Q$, let $[F]_{m,S}$ denote truncation at total
degree $m$, with each retained coefficient rounded downward to a
multiple of $S^{-1}$. Starting from
$B=S^{-1}\lfloor S/Q(0,0)\rfloor$, apply
\[
 B\leftarrow[B(2-[QB]_{m,S})]_{m,S}
 \qquad\text{for }m=1,3,7,15,24,
\]
and set $A=[PB]_{24,S}$. We then use the full residual $P-QA$ in
Lemma~\ref{lem:model-arithmetic} to certify the quotient. This
certification does not require convergence of the Newton iterates.

Starting with $0\le k\le136$, we apply \eqref{eq:ratio-iteration} eight times, keeping the original endpoint ratio zero and
retaining $0\le k\le136-j$ at layer $j$. Before every update, we verify
a positive denominator bound and an upper ratio below one by exact
integer comparisons. After eight iterations, all upper ratio bounds are
less than $7/50<3/20$. The initial ratios are nonnegative throughout the rectangle, since
$0\le v_k\le2/3$. By induction, the corresponding iterates of the
original sequence are positive. All required initial coefficient
indices lie within distance $137$.

For $1\le n\le1023$, we compute rational interval bounds for the ratios from
the coefficients in \eqref{eq:mmr-polynomials}, with outward rounding at scale $2^{128}$.
We retain both original endpoints, with ratios zero, and apply the same
directed updates. Every preceding upper ratio is below one, and all
ratios are at most $3/8$ within eight updates. The cases $n=1,2$ need no update.
\end{proof}

\begin{proof}[Proof of Theorem~\ref{thm:mmr}]
The coefficients are positive, and the entries at the original endpoints
are repeatedly squared. For $n\le1023$, we apply Proposition~\ref{ver:mmr}.
For $n\ge1024$, take $T(k)=8$ when $\min(k,n-1-k)\le128$, by the
same proposition. At the remaining positions, take
$T(k)=\lfloor\log_2d\rfloor+2\le\lfloor\log_2 n\rfloor+1$, by
Proposition~\ref{prop:mmr-interior}. In both cases,
$a_k^{[j]}>0$ for $0\le j\le T(k)$ and $c_{T(k)}(k)\le3/8$.
With $q=3/8$, Lemma~\ref{lem:local-stop} implies positivity at all
orders and preservation of this ratio bound. Reversal and
multiplication by $n!$ preserve the conclusion.
\end{proof}

\section{Power sequences}\label{sec:powers}
For real $d>0$, put $a^{[0]}_k(d)=k^d$ for $k\ge0$, and use zero
extension to negative indices. Thus $a^{[j]}_0=0$ and $a^{[j]}_1=1$
for every $j\ge0$. The positive support starts at one.
Br\"and\'en and Chasse
\cite[Question~1]{BrandenChasse} asked whether every positive integer
exponent gives an infinitely log-concave sequence.

Hou and Li \cite[Theorem~3.2 and Example~3.1]{HouLi} proved that,
for every real $d\ge2$ and every fixed positive integer $r$,
the sequence $(k^d)_{k\ge0}$ is asymptotically $r$-log-concave.
The starting index from which the first $r$ iterates are nonnegative
may depend on $d$ and $r$.
This includes $d=2$, although the original sequence is not infinitely
log-concave, as shown below.
We establish the following result for all positive indices and all iteration orders.
\begin{theorem}\label{thm:powers}
For every real $d\ge3$, all iterates of $(k^d)_{k\ge0}$ are strictly
positive at every index $k\ge1$.
For a positive integer $d$, this sequence is infinitely log-concave if and
only if $d\ne2$.
\end{theorem}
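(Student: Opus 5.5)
The proof splits into three parts: the real case $d\ge3$, the failure at $d=2$, and the small integer exponent $d=1$. For $d=1$ the sequence $(k)_{k\ge0}$ with zero extension has $(\LL a)_0=0$ and $(\LL a)_k=k^2-(k-1)(k+1)=1$ for $k\ge1$. It is therefore the indicator of $\{1,2,\ldots\}$, which $\LL$ fixes, so every iterate is nonnegative. For $d=2$ we compute a few iterates explicitly. The first is $(\LL a)_k=k^4-(k^2-1)^2=2k^2-1$ for $k\ge1$ (with $(\LL a)_1=1$). Iterating a bounded number of times, we exhibit an index, presumably a small one such as $k=2$ or $3$, where some iterate is negative. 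We expect that exact integer arithmetic on finitely many entries suffices, consistent with the critical value $2$ of the leading coefficient in the inverse-square regime. It then remains to prove the first assertion for real $d\ge3$, which covers all integers $d\ge3$.

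For $d\ge3$ we apply Lemma~\ref{lem:local-stop} with $q=3/8$ on the support $\{1,2,\ldots\}$. The endpoint $k=1$ has $a_1^{[j]}=1$ permanently, so we may take $T(1)=0$. The interpolating curvature is
\[
h_0(z)=-d\,\Log(1-z^{-2}),
\]
which equals $d z^{-2}+O_d(z^{-4})$ and is holomorphic with $(z-0)^2h_0(z)$ close to $d$ on disks $|z-k|\le rk$. The leading coefficient $d\ge3>2$ is the favourable case. We first run finitely many preliminary layers using Lemma~\ref{lem:reference-scale} with $H=z^{-2}$. The normalization is $h_j=2^jz^{-2}(\beta_j+R_j)$, where $\beta_{j+1}=\beta_j-(1-2^{-j})\cdot(\text{const})$ tracks the drift coming from $\D\log H$. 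In the analogue of $\beta_{\ell,j}$, the limit is $\beta_\infty=d-2$. We control $R_j$ by a majorant recursion in the style of Proposition~\ref{prop:uniform-majorant}, uniformly over $d\ge3$ and $k\ge256$. The aim is a layer $J$ at which $|z^2h_J(z)-B|\le E$ with $B\approx2^J(d-2)$ large enough that \eqref{eq:normalized-size} holds. Corollary~\ref{cor:normalized-stop} (with $\kappa=0$) then gives $h_T(k)\ge\rho_*$, and a fortiori $c_T(k)\le3/8$.

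We tentatively organize the proof of uniformity in $d$ as follows. For large $d$ the curvature $d/k^2$ is already large, and the number of iterations before the bound is reached becomes small. This is where the term $\lceil\log_2(3k^2/(d-2))\rceil$ in Table~\ref{tab:five-applications} comes from. Once $\Rea h_0\ge\mu$ with $\mu R^2\ge128$, Corollary~\ref{thm:analytic-amplification} applies directly. This happens when $d$ exceeds a fixed multiple of $1$, since $R\sim rk$ and $\mu\sim d/k^2$ give $\mu R^2\sim r^2d$. So only a bounded range of $d$ (say $3\le d\le d_0$) needs the preliminary layers. In that range we treat $d$ as a parameter on subintervals, keeping it symbolic as the paper does for $\lambda$.

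The remaining indices $2\le k\le255$ (or up to a safety margin such as $k\le300$) are handled by exact rational enclosures following Subsection~\ref{sec:models}. We model the initial ratios $c_0(k)=(1-k^{-2})^d$ as polynomials in a parameter $t$ over subintervals of $d$, and in $1/d$ on a tail $[d_1,\infty)$. We then iterate \eqref{eq:ratio-iteration}, checking denominators and ratios below one, until every ratio is at most $3/8$. As the paper stresses for the transposed sequences, the same parameter must be retained across neighbours. Large $d$ should be easy, because $c_0(k)\le e^{-d/k^2}$ is already small for $k$ small relative to $\sqrt d$. We expect the main obstacle to be uniformity near $d=3$, where $d-2=1$ makes the drift margin smallest and the iteration depth largest (about $2\log_2k$). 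Both the preliminary majorant estimate and the finite certificate are tightest there, so we would first attempt the certificate on a narrow interval $[3,3+\epsilon]$ with high polynomial degree.
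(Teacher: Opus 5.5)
Your overall architecture matches the paper's. You use Lemma~\ref{lem:local-stop}, preliminary layers that raise the leading coefficient to $p_j=2+2^j(d-2)$, geometric amplification for $k\ge256$, a certificate with the exponent kept symbolic on parameter intervals for $k\le256$, and explicit iterates for $d=1,2$.

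\textbf{Main gap.} The gap is the step you flag as the main obstacle and then leave open. You must control the remainder through the preliminary layers uniformly for bounded $d$, above all near $d=3$, down to $k=256$. A majorant recursion in the style of Proposition~\ref{prop:uniform-majorant} replaces the linear part $R\mapsto R-\frac{z^2}{2^{j+1}\beta_j}\D R$ by $U\mapsto U+\frac{(1+\xi)^2}{2^{j+1}\beta_j}\mathcal T_HU$, discarding its sign. In that proposition this is affordable: after linearizing the logarithm the coefficient is $P/(2^{j+1}\chi\beta_{\lambda,j})$ with $\chi\beta_{\lambda,j}>15$. At $d=3$ you have $\beta_j=1+2^{1-j}\le3$, so the coefficient is roughly five to fifteen times larger. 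You also need about twelve preliminary layers instead of at most seven. Each step raises the pole order of the majorant by two, so it should grow by orders of magnitude, while the true relative remainder is only of order $k^{-2}\approx10^{-5}$ at $k=256$. Your proposal gives no estimate showing $|R_J|<1/3$ is reachable this way. Note that the paper uses that proposition only for $\lambda\ge19$, and for $3\le\ell\le18$ switches to an exterior expansion with a contraction estimate (Lemma~\ref{lem:analytic-wiener}).

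\textbf{The missing idea} is to expand at infinity in $w=1/z$. Write $h_j=p_jz^{-2}(1+e_j(1/z))$ with $e_j$ even. Then $z^2\D$ becomes $\mathscr S$, which sends $w^m$ to $m(m+1)w^m$ plus higher even powers with nonnegative coefficients. The linear part of the recurrence is $\mathscr A_pf=(2pf-\mathscr Sf)/(2p-2)$. Lemma~\ref{lem:power-scaled-difference} gives $\pnorm{\mathscr A_pf}{R/\sqrt2}\le\frac45\pnorm{f}{R}$ for even $f$ with $f(0)=0$. Monomials with $m(m+1)\le2p$ are damped by the cancellation in $2p-m(m+1)$, and the rest by the factor $2^{-m/2}$ from shrinking the radius. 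This yields $\pnorm{e_j}{R_j}<\min\{1/8,j/(4(p_j-2))\}$ for every $j\ge1$ and every real $d\ge3$ (Proposition~\ref{prop:power-analytic-preparation}). Hence $|h_j(z)-p_jz^{-2}|<4jp_j|z|^{-4}$ on $|z|\ge4\sqrt{p_j-2}$, which lets the tail start at $k=256$ after at most eight layers. Raising your threshold might eventually rescue a sign-blind majorant. But the certified computation would then have to cover far more indices and layers on every parameter interval, and nothing in your proposal shows that any particular threshold works.

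\textbf{Smaller points.}
\begin{itemize}
\item For $d=1$, the indicator of $\{1,2,\ldots\}$ is not fixed by $\LL$. Its image is $(0,1,0,0,\ldots)$, which is fixed, so your conclusion survives.
\item For $d=2$, write the computation out; it is short. The first iterate is $2k^2-1$ for $k\ge1$. The second is $1$ at $k=1$ and $8k^2$ for $k\ge2$. The third is $1$ at $k=1$, $952$ at $k=2$, and $64(2k^2-1)$ for $k\ge3$. So the fourth iterate at $k=3$ is $1088^2-952\cdot1984=-705024$.
\item Your drift recurrence should read $\beta_{j+1}=\beta_j-2^{-j}$, so $\beta_j=d-2+2^{1-j}$. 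With your factor $1-2^{-j}$ the $\beta_j$ would diverge.
\item Corollary~\ref{cor:normalized-stop} gives $h_T(k)\ge\rho_*$, i.e.\ $c_T(k)\le q_*$. This does not imply $c_T(k)\le3/8$, because $3/8<q_*$. Run Lemma~\ref{lem:local-stop} with $q=q_*$ instead.
\item Your direct use of Corollary~\ref{thm:analytic-amplification} from layer zero is legitimate for large $d$. However, $\mu R^2\approx d/9$ on $|z-k|\le k/2$, so it applies only from about $d\approx1.2\cdot10^3$, which enlarges the delicate bounded range. The paper reaches $d\ge241$ by factoring the first iterate as $a_k^{[1]}=dk^{2d-2}U_d(k)$ with $\log U_d$ concave (Lemma~\ref{lem:power-first}). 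Then $h_1=(2d-2)v-\D\log U_d\ge(2d-2)v$.
\item Once that corollary applies, it covers every $k\ge2$. So your certificate on a tail $[d_1,\infty)$ with a polynomial model in $1/d$ is unnecessary, and it would be poorly suited to $(1-k^{-2})^d$ as $1/d\to0$ anyway.
\end{itemize}
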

For $d\ge3$, we also obtain the following iteration bounds.
\begin{corollary}\label{cor:power-entry-bounds}
For the sequence $a_k=k^d$, $k\ge0$, with real $d\ge3$,
we have $T_{3/8}(k)\le17$ for $2\le k\le256$, and
\[
 T_{3/8}(k)\le\max\left\{0,
 \left\lceil\log_2\frac{3k^2}{d-2}\right\rceil\right\}
 \qquad(k\ge256).
\]
\end{corollary}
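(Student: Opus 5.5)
The corollary has two parts: a finite range $2\le k\le256$, handled by exact computation, and the tail $k\ge256$, handled by the holomorphic estimates of Section~\ref{sec:stopping}. The plan for the tail is to apply the reference-scale machinery of Lemma~\ref{lem:reference-scale} with $H(z)=z^{-2}$. First, the initial curvature $h_0(z)=-d\,\D\log z=-d\log(1-z^{-2})$ is holomorphic on $\Rea z>1$ with $z^2h_0(z)=d+O(d|z|^{-2})$. Each iteration roughly doubles $h_j$. The inverse-square drift term $-\D\log H/(2^{j+1}H)$ and the term $\D\log h_j$ inside $g(h_j)=\log h_j+\phi(h_j)$ shift the leading coefficient by $-2\cdot2^{-j}(1-2^{-j})\cdot$const. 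In the normalization $h_j=2^jz^{-2}(\beta_j+R_j)$, this means $\beta_j\to d-2$, with explicit form $\beta_j=d-2+2^{1-j}$. This is the same structure as the transposed Boros--Moll case, where $\beta_{\ell,j}=b_\ell+2^{1-j}$ with $b_\ell=\ell-9/4$.

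Second, I would run finitely many preliminary iterations, until $2^J(d-2)$ is of size a few thousand. Throughout, I would control $R_j$ by a majorant recurrence as in Proposition~\ref{prop:uniform-majorant}, on disks $|z-k|\le k(r+(J-j)H)$ with $H=1/256$, using $\mathcal T_h$ and \eqref{eq:T-majorant}.

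After this stage I expect $|z^2h_J(z)-B|\le\epsilon B$ on $|z-k|\le2k/5$, with $B=2^J\beta_J$. Corollary~\ref{cor:normalized-stop} then applies. To get the explicit count, I would instead follow Proposition~\ref{prop:uniform-tail}. Take $K=\lceil\log_2(3k^2/Q)\rceil$ further layers with geometric radii, and bound the error by Corollary~\ref{cor:geometric-budget} and Theorem~\ref{thm:unified-entry}. This gives $h_{J+K}(k)>1>\log(8/3)$.

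The total depth is then about $\log_2(3k^2/(d-2))$, since the $2^J$ factors cancel. The $\max\{0,\cdot\}$ covers large $d$, where $c_0(k)\le3/8$ already holds. For very large $d$ no preliminary stage is needed, and $d/k^2$ may exceed $\rho_*$ immediately. I would check that $\beta_j\ge d-2$ with the crude factor $3$ absorbing the $O(1)$ losses, so that the bound holds with exactly the stated ceiling. The identification of analytic and sequence curvatures follows as in Section~\ref{subsec:prepared-entry}, since all required indices stay above $k/2\ge128\ge1$, inside the positive support.

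For $2\le k\le256$, I would parametrize by $t=1/d\in(0,1/3]$ and write the initial ratios $c_0(k)=(1-k^{-2})^d$ as $\exp(-h_0(k)/t)$. I would split $t$ into intervals and enclose these with the polynomial arithmetic of Lemma~\ref{lem:model-arithmetic}, keeping the same parameter across neighbors. Then I would iterate \eqref{eq:ratio-iteration} on $1\le k\le256+18$, with the fixed endpoint ratio at $k=1$ equal to $0$, since $a_0=0$ is a zero-extension boundary. The check is that every ratio is at most $3/8$ within $17$ steps. Large $d$ (small $t$) is easy because $c_0$ is exponentially small.

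The main obstacle is the uniformity in $d$ near $d=3$ together with the exact constant $3$. The coefficient $d-2$ can be as small as $1$, so the margin in $\beta_j$ relative to the errors $R_j$ is thin. Getting $|R_J|$ below a fixed fraction on disks of radius $\asymp k$ with $k$ only $\ge256$ will require a carefully tuned majorant and computer-verified rational evaluations.
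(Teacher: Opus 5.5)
Your outline is sound, and its skeleton is the paper's:
\begin{itemize}
\item preliminary iterations until $p_J=2+2^J(d-2)=2^J\beta_J$ is large;
\item geometric radii with $K=\lceil\log_2(3k^2/p_J)\rceil$, through Corollary~\ref{cor:geometric-budget} and Theorem~\ref{thm:unified-entry};
\item the count $J+K=\lceil\log_2(3k^22^J/p_J)\rceil\le\lceil\log_2(3k^2/(d-2))\rceil$;
\item a parameter-retaining enclosure computation for $k\le256$, with $c_j(1)=0$.
\end{itemize}
Two components differ from the paper.

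\emph{The preliminary stage.} You propose local $\xi$-majorants as in Proposition~\ref{prop:uniform-majorant}. The paper instead expands globally in $w=1/z$, writing $h_j=p_jz^{-2}(1+e_j(1/z))$. It bounds $\pnorm{e_j}{R_j}$, with $R_j=(4\sqrt{p_j-2})^{-1}$, through the contraction estimate of Lemma~\ref{lem:power-scaled-difference}. This gives $|h_j-p_jz^{-2}|<4jp_j|z|^{-4}$ on a whole exterior region, uniformly in $d$ and $k$, with no grid evaluations. It also shows that the obstacle you anticipate near $d=3$ is absent. The remainder has relative size $O(j|z|^{-2})$, so the margin is wide and $p_J\ge256$ already suffices. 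Your local scheme would also close, since every forcing term is $O(k^{-2})$ relative to $\beta_j$, but it takes more work.

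\emph{Large exponents.} You cover all $d\ge3$ with one scheme and extend the finite computation to all $t=1/d\in(0,1/3]$. The paper splits at $d\approx241$. The factorization $a_k^{[1]}=dk^{2d-2}U_d(k)$ with $\log U_d$ concave (Lemma~\ref{lem:power-first}) gives $h_1(k)\ge(2d-2)k^{-2}$. Proposition~\ref{prop:large-real-powers} then finishes analytically from layer one at every $k\ge2$. This yields $T_{3/8}(k)\le10$ for $k\le256$, and the bound $1+\lceil\log_2(2k^2/(2d-2))\rceil\le\lceil\log_2(3k^2/(d-2))\rceil$ for larger $k$. Consequently the paper needs computation only for $3\le d\le242$.

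If you keep your route, correct the justification for small $t$. The initial ratio $c_0(k)=(1-k^{-2})^d$ is small only when $d$ is large compared with $k^2\approx6.5\cdot10^4$. So a long intermediate range of $d$ still needs full parameter-retaining enclosures. Moreover, $e^{-\ell_k/t}$ with $\ell_k=-\log(1-k^{-2})$ has an essential singularity at $t=0$. The subinterval touching $t=0$ must therefore be handled by scalar bounds, taken only where all initial ratios are already tiny.
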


\subsection{Analytic estimates}
The initial curvature has a simple convergent expansion. Write
\[
 v(z)=-\Log(1-z^{-2})=\sum_{r\ge1}\frac{z^{-2r}}r,
 \qquad |z|>1.
\]
For the power sequence, $h_0(z)=d\,v(z)$. The coefficients of
$z^{-2}$ in the subsequent curvature expansions are
\begin{equation}\label{eq:power-leading}
 p_j(d)=2+(d-2)2^j.
\end{equation}
For $d\ge241$, we estimate the successive curvatures starting from
the first iterate.
For $3\le d\le242$, we prove the required inequalities analytically
for $k\ge256$ and verify them by finite computation for $1\le k\le256$.

For large exponents, we obtain a sharper curvature bound by factoring
the first iterate. The additional factor is log-concave, as shown below.

\begin{lemma}\label{lem:power-first}
For real $d>1$ define
$U_d(x)=(1-(1-x^{-2})^d)/(dx^{-2})$, $x\ge1$.
Then $\log U_d$ is strictly concave on $(1,\infty)$ and continuous at
one. The first two iterates of $(k^d)_{k\ge0}$ are positive at all
positive indices, and
\begin{equation}\label{eq:power-first-ratio}
 c_1(k)\le(1-k^{-2})^{2d-2}<1\qquad(k\ge2).
\end{equation}
\end{lemma}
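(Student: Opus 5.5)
The plan is to factor the first iterate explicitly, get the ratio bound from midpoint log-concavity of $U_d$, and isolate the strict concavity of $\log U_d$ as a separate analytic inequality, which I expect to be the real work.

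\emph{Factorization.} For $k\ge1$ put $y=1-k^{-2}$. Since $(k-1)^d(k+1)^d=k^{2d}y^d$,
\[
 a^{[1]}_k=k^{2d}\bigl(1-(1-k^{-2})^d\bigr)=d\,k^{2d-2}\,U_d(k).
\]
At $k=1$ this gives $a_1^{[1]}=1=d\cdot U_d(1)$, because $U_d(1)=1/d$. So the factorization holds at every $k\ge1$, with $U_d$ continuous at one. Since $0\le y<1$, we have $U_d(k)>0$, so the first iterate is positive at all $k\ge1$. For $k\ge2$ the factorization gives
\[
 c_1(k)=\Bigl(\frac{(k-1)(k+1)}{k^2}\Bigr)^{2d-2}\frac{U_d(k-1)U_d(k+1)}{U_d(k)^2}
 =(1-k^{-2})^{2d-2}\frac{U_d(k-1)U_d(k+1)}{U_d(k)^2}.
\]
Concavity of $\log U_d$ on $[1,\infty)$ gives the midpoint inequality $U_d(k-1)U_d(k+1)\le U_d(k)^2$. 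This yields \eqref{eq:power-first-ratio}, and $c_1(k)<1$ because $d>1$ and $k\ge2$. Concavity is needed on the closed half-line, which follows from concavity on $(1,\infty)$ by continuity at one. At $k=1$ we have $a_0^{[1]}=0$, so $c_1(1)=0$. Hence $a^{[2]}_k=(a^{[1]}_k)^2(1-c_1(k))>0$ for all $k\ge1$.

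\emph{Reduction of concavity.} Substituting $t=x^{-2}\in(0,1)$ gives the representation
\[
 U_d(x)=\Phi(x^{-2}),\qquad
 \Phi(t)=\frac{1-(1-t)^d}{dt}=\int_0^1(1-ut)^{d-1}\,du,
\]
obtained from $1-y^d=\int_y^1 d\,s^{d-1}\,ds$ with $s=1-ut$. Put $\psi=\log\Phi$. By the chain rule,
\[
 \frac{d^2}{dx^2}\,\psi(x^{-2})=4x^{-6}\psi''(t)+6x^{-4}\psi'(t).
\]
The integrand is strictly decreasing in $t$ for $d>1$, so $\psi'<0$. It therefore suffices to prove $t\psi''(t)+\tfrac32\psi'(t)<0$ on $(0,1)$; plain log-concavity of $\Phi$ would be enough but is not required.

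\emph{Main obstacle: strict concavity.} Prékopa's theorem does not apply directly, since $\log(1-u/x^2)$ is concave in $x$ for each $u\in[0,1]$ (its second derivative has the sign of $x^2(x^2+u)-(x^2-u)^2=3ux^2-u^2>0$ in the appropriate direction) but is not jointly concave in $(x,u)$. My first approach is the variance formula. Write $g=(d-1)\log(1-ux^{-2})$ and let $\mathbb E$ be expectation under the probability density proportional to $e^{g}$ on $u\in[0,1]$. Then
\[
 (\log U_d)''=\mathbb E[g_{xx}]+\operatorname{Var}(g_x).
\]
I would bound the variance by the mean square of $g_x-g_x|_{u=1/2}$ and compare it termwise with the strictly negative $\mathbb E[g_{xx}]$. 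If that comparison fails, the fallback is a direct computation: express $t\psi''+\tfrac32\psi'$ through $y=1-t$ as $N(y)/(\text{positive})$, and show $N<0$ for $0<y<1$ and real $d>1$. The first terms of $N$ would be checked at $y\to0$ and $y\to1$, and the sign in between would be controlled by a monotonicity argument in $d$. This sign verification is the step I expect to need the most care.
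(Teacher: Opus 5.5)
Your factorization $a_k^{[1]}=dk^{2d-2}U_d(k)$ is correct, and so are the derivation of \eqref{eq:power-first-ratio} from midpoint log-concavity (including $k=2$ via continuity at one), the positivity of the first two iterates, and the reduction of strict concavity to $t\psi''(t)+\tfrac32\psi'(t)<0$ on $(0,1)$. The gap is that this last inequality, which is the real content of the lemma, is never proved. You propose two strategies and carry out neither, and the first one fails as stated.

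Take $x=1$. The density is $d(1-u)^{d-1}$ and $g_x=2(d-1)u/(1-u)$, so $\mathbb E[g_x]=2$ while $g_x|_{u=1/2}=2(d-1)$. Your proposed bound therefore replaces $\operatorname{Var}(g_x)$ by $\operatorname{Var}(g_x)+4(d-2)^2$. For $d>2$ one has $(\log U_d)''(1)=4\psi''(1)+6\psi'(1)=4-6=-2$, so the quantity you would need to show negative equals $-2+4(d-2)^2$. This is positive once $d>2+1/\sqrt2$, for example at $d=3$, and by continuity it stays positive for $x$ slightly above one. The fallback is not yet an argument. It asks for the sign of an expression $N(y)$ with a real exponent $d$, checked only as $y\to0,1$, plus an unspecified monotonicity in $d$, and that does not control the interior. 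Your aside about log-concavity of $\Phi$ also cannot be used: $\psi''(1)=1>0$ for $d>2$. For instance, for $d=3$, $\Phi(t)=(3-3t+t^2)/3$ has $(\log\Phi)''>0$ on $((3-\sqrt3)/2,1)$. So the $\psi'$ term is genuinely needed.

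The missing idea is to prove the stronger inequality $t\psi''+\psi'\le0$, that is, to show that $t\psi'(t)$ is nonincreasing. From $t\Phi(t)=(1-(1-t)^d)/d$ you get $t\psi'(t)=\frac{dt(1-t)^{d-1}}{1-(1-t)^d}-1$. With $y=1-t$,
\[
 \frac{1-y^d}{d(1-y)y^{d-1}}=\int_0^1\bigl(1+(y^{-1}-1)s\bigr)^{d-1}\,ds .
\]
For $d>1$ this is decreasing in $y$, hence increasing in $t$, so $t\psi'(t)$ decreases. Then $t\psi''+\tfrac32\psi'\le\tfrac12\psi'<0$, which gives $(\log U_d)''\le2x^{-4}\psi'(x^{-2})<0$. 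This is the paper's proof, written there as $(1-y)L''\le L'$ for $L(y)=\psi(1-y)$.
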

\begin{proof}
The first iterate factors into a power and the function $U_d$.
It is enough to prove that $\log U_d(x)$ is strictly concave. Put
$y=1-x^{-2}$ and
\[
 L(y)=\log\frac{1-y^d}{d(1-y)}
 =\log\int_0^1(y+(1-y)t)^{d-1}\,dt.
\]
Then $L'>0$ and
$(1-y)L'=1-d(1-y)y^{d-1}/(1-y^d)$ is nonincreasing, because
\[
 \frac{1-y^d}{d(1-y)y^{d-1}}
 =\int_0^1(1+(y^{-1}-1)t)^{d-1}\,dt
\]
is decreasing.  Hence $(1-y)L''\le L'$ and
$(\log U_d)''=4x^{-6}L''-6x^{-4}L'\le-2x^{-4}L'<0$.
The value at one is $U_d(1)=1/d$.  The exact factorization
\begin{equation*}
 a_k^{[1]}=k^{2d}-(k^2-1)^d=dk^{2d-2}U_d(k)>0
\end{equation*}
gives \eqref{eq:power-first-ratio} for every $k\ge2$, with the case
$k=2$ following by continuity of $U_d$ at one.
At $k=1$, zero extension gives $c_1(1)=0$, and hence the second
iterate is positive at every positive index.
\end{proof}

\begin{proposition}
\label{prop:large-real-powers}
For every real $d\ge241$, $a_k^{[j]}(d)>0$ for all $j\ge0$
and $k\ge1$. For each $k\ge2$, there is an integer $T=T(d,k)\ge0$
such that $c_T(k)<3/8$.
\end{proposition}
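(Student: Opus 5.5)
We will verify the hypotheses of Lemma~\ref{lem:local-stop} with $q=3/8$, starting from layer one. By Lemma~\ref{lem:power-first}, the layers $j=0,1$ are positive at every positive index. At $k=1$, zero extension gives $c_j(1)=0$ at every layer, so we take $T(1)=0$, and the entry $a_0^{[j]}=0$ lies outside the positive support. It therefore suffices to find, for each $k\ge2$, a finite $T$ such that the local cone conditions \eqref{eq:local-dependency-cone} hold and $c_T(k)\le3/8$.

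For the first iterate we use the factorization $a_k^{[1]}=d\,k^{2d-2}U_d(k)$ from the proof of Lemma~\ref{lem:power-first}. We then write the layer-one curvature as
\[
 h_1(z)=(2d-2)v(z)-\D\log U_d(z).
\]
Since $\log U_d$ is strictly concave, $-\D\log U_d(k)>0$ at integers $k\ge2$. Hence
\[
 h_1(k)\ge(2d-2)v(k)\ge(2d-2)k^{-2}\ge480k^{-2}.
\]
At small indices this bound is already large. For $k\le\sqrt{(2d-2)/\log(8/3)}$, and in particular for $2\le k\le19$, we get $h_1(k)\ge\log(8/3)$ directly, so $c_1(k)\le3/8$ with $T=1$. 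For larger $k$ we apply Corollary~\ref{thm:analytic-amplification}, starting from layer one as described at the end of Subsection~\ref{subsec:prepared-entry}. This needs a holomorphic interpolant of $h_1$ on a disk $|z-k|\le R$ with $\Rea h_1\ge\mu$ and $\mu R^2\ge128$.

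For the holomorphic interpolant we take $R=k/2$ and argue on the disk $|z-k|\le k/2$. The term $(2d-2)v(z)$ has real part at least roughly $(2d-2)/(9k^2/4)\cdot\tfrac{4}{9}$, up to the inversion estimate in Corollary~\ref{cor:normalized-stop}. The correction $\D\log U_d$ must then be bounded in modulus on this disk. We would write
\[
 U_d=\int_0^1(y+(1-y)t)^{d-1}\,dt,\qquad y=1-z^{-2}.
\]
For $|z|\ge k/2\ge10$ this integral is a holomorphic function with positive real part, hence has a logarithm with bounded imaginary part. Lemma~\ref{lem:strip-difference} then gives $|\D\log U_d|\le4/((R')^2-1)=O(k^{-2})$ with an absolute constant, independent of $d$. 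This is the key point: the correction does not grow with $d$, while the main term is of size $d/k^2\ge240/k^2$. Granting this, $\mu\ge c\,d/k^2$ with $c$ explicit, and $\mu R^2\ge c\,d/4\ge128$ once $d\ge241$, possibly after shrinking the radius factor or using the Möbius bound \eqref{eq:strip-difference} instead. Corollary~\ref{thm:analytic-amplification} then produces $K$ with $h_{1+K}(k)\ge\rho_*>\log(8/3)$.

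It remains to check the support requirement. The final radius exceeds one and the radii shrink by gaps greater than one. We need $R_0+1<k$, so that every required index lies in the positive support $\{1,2,\dots\}$, and this holds for $R=k/2$. Uniqueness then identifies the analytic curvatures with those of the sequence, and Lemma~\ref{lem:local-stop} finishes the proof.

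The main obstacle is the uniform-in-$d$ control of $\Rea$ and $\Im$ of $\log U_d$ on the complex disk. For large $d$, the factor $(1-z^{-2})^d$ oscillates when $|z|$ is comparable to $\sqrt d$, and a positive real part of the integrand $(y+(1-y)t)^{d-1}$ is not automatic. We would first try to show that $\Rea\bigl((y+(1-y)t)^{d-1}\bigr)>0$ fails only on a controllable set. If that fails, we would fall back on restricting to $|z-k|\le\epsilon k$ with $\arg z$ small, so that $|(d-1)\arg y|<\pi/2$, and adjust $R$ accordingly. In the regime $k\lesssim\sqrt d$ the direct bound $h_1(k)\ge\log(8/3)$ covers the index, so the delicate case is only $k\gg\sqrt d$. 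There $|d\arg y|\lesssim d\,|z|^{-2}\cdot\epsilon$ is small.
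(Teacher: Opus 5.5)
Your overall architecture matches the paper's: a split at $k\approx\sqrt{2d-2}$, the direct bound $h_1(k)\ge(2d-2)v(k)$ for small $k$, and amplification from layer one on disks of radius $k/2$. The analytic step, however, has two genuine gaps.

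\textbf{First gap: the correction term.} The uniform-in-$d$ bound on $\D\log U_d$ is never established, and the route you suggest cannot deliver it on the disk you need. Lemma~\ref{lem:strip-difference} gives an $O(k^{-2})$ bound only on a concentric disk whose radius is smaller, by a gap of order $k$, than a disk on which $\Rea U_d>0$ is known. The largest disk about $k$ inside $\{|z|\ge k/2\}$ is $|z-k|\le k/2$ itself. So even granting your positivity claim, you get nothing near the boundary of the very disk on which $\Rea h_1\ge\mu$ is required.

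Enlarging the positivity region takes you to points where $|dz^{-2}|$ is several units. Asymptotically $U_d\approx(1-e^{-w})/w$ with $w=dz^{-2}$, and its real part changes sign near $w\approx\pi i$. For $k^2\approx2d$, such points lie only about $0.76k$ from $k$, so this route is at best marginal and is unproved. Your fallback of shrinking to $|z-k|\le\epsilon k$ destroys the disk size that amplification needs.

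The missing idea is the representation $\log U_d=G=\phi(dv)-\phi(v)$, which follows from $1-e^{-v}=z^{-2}$. For $k>\sqrt{2d-2}$ one has $|dv(z)|<27/11<5/2$ on $|z-k|\le k/2+1$. On that range $|\phi'|<19/25$ and $|\phi''|<3/20$, and the chain rule gives $k^2|G''(z)|<4/(25s^6)+12/(5s^4)$ for $|z|\ge sk$. Then $\D G(z)=\int_{-1}^1(1-|t|)G''(z+t)\,dt$ uses only a unit neighbourhood, not a gap of order $k$. With $P=2d-2$, this yields $|h_1(z)-Pz^{-2}|<84/k^2$ on $|z-k|\le k/2$ and $<29/k^2$ on $|z-k|\le3k/8$.

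\textbf{Second gap: the amplification corollary does not apply.} Corollary~\ref{thm:analytic-amplification} cannot be invoked from layer one for $d$ near $241$, whatever you do with the correction. Its hypothesis needs $\Rea h_1\ge\mu$ on the whole disk, including the real point $k+R$, where $h_1\approx P/(k+R)^2$. Hence $\mu R^2\lesssim PR^2/(k+R)^2<P/4=(d-1)/2$, which is below $128$ for every $d<257$ and every admissible radius $R<k$. At $R=k/2$ you would need $P/9\gtrsim128$, i.e.\ roughly $d\ge577$.

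Your claim $\mu R^2\ge cd/4\ge128$ would need $c>2$, whereas $c$ is at most about $8/9$ at $R=k/2$. Shrinking the radius only lowers $\mu R^2$.

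The paper instead applies Theorem~\ref{thm:unified-entry} directly, with $R_j=(k/2)(3/4)^j$ and $K=\lceil\log_2(2k^2/P)\rceil$. Since $\mathcal E_0=0$, the outer disk only needs $\Rea h_1>(4P/9-84)/k^2>0$. The later disks have radius at most $3k/8$ and need $\Rea h_1>(64P/121-29)/k^2>224/k^2>\mathcal E_j$. This inequality is exactly where $P\ge480$, i.e.\ $d\ge241$, is used. It gives $h_{1+K}(k)>2^K(P-224)/k^2\ge16/15>\log(8/3)$.
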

\begin{proof}
Put $P=2d-2\ge480$. By Lemma~\ref{lem:power-first}, the first two
iterates are positive and $h_1(k)\ge Pv(k)\ge P/k^2$. For
$2\le k\le\sqrt P$, we therefore have $h_1(k)\ge1$ and $c_1(k)<3/8$.
The endpoint at one has ratio zero.

Fix an integer $k>\sqrt P$, so $k\ge22$. With the branch of $\phi$
analytic at zero, put $G(z)=\phi(dv(z))-\phi(v(z))$.
Differentiating the product for $\sinh$ \cite[Eq.~4.36.1]{DLMF} and using
$\sum m^{-2}=\pi^2/6$ and $\pi>3$ gives, for $|w|\le5/2$,
\[
 |\phi'(w)|\le\frac12+\frac{|w|}{12(1-|w|^2/36)}<\frac{19}{25},
 \qquad
 |\phi''(w)|\le\frac{1+|w|^2/36}{12(1-|w|^2/36)^2}<\frac3{20}.
\]
On $|z-k|\le k/2+1$, we have $|z|\ge5k/11\ge10$ and
$|dv(z)|<(k^2+2)/(2((5k/11)^2-1))\le27/11<5/2$.
Thus $G$ is holomorphic there. Since $1-e^{-v}=z^{-2}$,
$e^G=U_d$; on the positive ray $G=\log U_d$, so
$h_1=Pv-\D G$ on $|z-k|\le k/2$ agrees with the sequence curvature.

Direct differentiation gives $|v'|<203/(100|z|^3)$ and
$|v''|<123/(20|z|^4)$ when $|z|\ge10$. Combining these bounds with
$(d+1)/k^2<51/100$ and $(d^2+1)/k^4<253/1000$ in the chain rule gives
\[
 k^2|G''(z)|<\frac4{25s^6}+\frac{12}{5s^4}
 \qquad(|z|\ge sk,\ s\ge5/11).
\]
For $|z-k|\le rk$, use $s=1-r-1/22$ on the unit shifted segments
and integrate $G''$ against $1-|t|$ on $[-1,1]$. Also,
\[
 P|v(z)-z^{-2}|<
 \frac{1}{2k^2(1-r)^4\{1-1/(484(1-r)^2)\}}.
\]
Substitution of $r=1/2$ and $r=3/8$ yields
\[
 |h_1(z)-Pz^{-2}|<\frac1{k^2}
 \begin{cases}
 84,&|z-k|\le k/2,\\
 29,&|z-k|\le3k/8.
 \end{cases}
\]

Set $K=\lceil\log_2(2k^2/P)\rceil\ge2$ and
$R_j=(k/2)(3/4)^j$. Since $k^2>P2^{K-2}$, the smallest squared
radius gap exceeds $(P/128)(9/8)^{K-1}\ge9P/1024>4$.
By Corollary~\ref{cor:geometric-budget}, $\mathcal E_j<224/k^2$.
On the outer disk, $\Rea h_1>(4P/9-84)/k^2>0$.
Every subsequent disk has radius at most $3k/8$, and hence
\[
 \Rea h_1>\frac{64P/121-29}{k^2}>\frac{224}{k^2}>
 \mathcal E_j,
 \qquad \frac{64\cdot480}{121}-29-224=\frac{107}{121}>0.
\]
Since $\mathcal E_0=0$, Theorem~\ref{thm:unified-entry} applies.
Using the real bound $h_1(k)\ge P/k^2$, we obtain
\[
 h_{1+K}(k)>\frac{2^K(P-224)}{k^2}
 \ge2\left(1-\frac{224}{P}\right)
 \ge\frac{16}{15}>1>\log(8/3).
\]
All radius gaps exceed two. The unit shifts stay within the preceding
disks, and the initial indices needed for the first iterate lie in the
positive support. Hence the holomorphic functions agree with the
curvatures of the original sequence on the required integer ranges.
Together with positivity of the first two iterates, this gives
$a_k^{[j]}(d)>0$ for $0\le j\le1+K$ and $c_{1+K}(k)<3/8$.
Lemma~\ref{lem:local-stop} now applies.
\end{proof}

For bounded exponents, several preliminary iterations are needed.
We estimate the complete analytic remainder while keeping the cancellation
in the linear part of the normalized recurrence. For these estimates, $\pnorm{f}{R}=\sum_m|f_m|R^m$ for
$f(w)=\sum_m f_mw^m$.

\begin{lemma}\label{lem:power-scaled-difference}
For $p\ge4$, put $R=(4\sqrt{p-2})^{-1}$, $r=R/\sqrt2$, and
\[
 \mathscr Sf(w)=\frac{f(w/(1-w))-2f(w)+f(w/(1+w))}{w^2},
 \qquad \mathscr A_pf=\frac{2pf-\mathscr Sf}{2p-2}.
\]
If $f$ is even, $f(0)=0$, and $\pnorm{f}{R}<\infty$, then
\[
 \pnorm{\mathscr Sf}{r}\le6\pnorm{f}{R},\qquad
 \pnorm{\mathscr A_pf}{r}\le a(p)\pnorm{f}{R},\qquad
 a(p)=\max\left\{\frac12,\frac{24}{5(2p-2)}\right\}.
\]
In particular, $a(p)\le4/5$, and $a(p)=1/2$ for $p\ge6$.
\end{lemma}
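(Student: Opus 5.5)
The plan is to work monomial by monomial and then sum absolutely. Since $f$ is even with $f(0)=0$, write $f(w)=\sum_{m\ge1}f_{2m}w^{2m}$. Both $\mathscr S$ and $\mathscr A_p$ are linear, so
\[
 \pnorm{\mathscr Sf}{r}\le\sum_{m\ge1}|f_{2m}|\,\pnorm{\mathscr Sw^{2m}}{r}.
\]
It therefore suffices to prove $\pnorm{\mathscr Sw^{2m}}{r}\le6R^{2m}$ and $\pnorm{\mathscr A_pw^{2m}}{r}\le a(p)R^{2m}$ for every $m\ge1$.

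For a single monomial we expand
\[
 \mathscr Sw^{2m}=w^{2m-2}\bigl((1-w)^{-2m}+(1+w)^{-2m}-2\bigr)=2\sum_{i\ge1}\binom{2m+2i-1}{2i}w^{2m+2i-2}.
\]
All coefficients are nonnegative, so the norm is just the value at $w=r$:
\[
 \pnorm{\mathscr Sw^{2m}}{r}=r^{2m-2}\bigl((1-r)^{-2m}+(1+r)^{-2m}-2\bigr).
\]
The leading coefficient is $\nu_m:=2\binom{2m+1}{2}=2m(2m+1)$.

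To bound this by $6R^{2m}$, write $r=R/\sqrt2$ and use $R\le1/(4\sqrt2)$, which holds because $p\ge4$. Then
\[
 \frac{r^{2m-2}\bigl((1-r)^{-2m}+(1+r)^{-2m}-2\bigr)}{R^{2m}}=\frac{2^{1-m}}{R^2}\Bigl(\cosh\text{-type even series in } r\Bigr).
\]
The factor $r^2/R^2=1/2$ gives geometric decay $2^{-m}$. This decay has to beat the polynomial growth $\nu_m$ and the tail factor $(1-r)^{-2m}\le(1-1/(4\sqrt2))^{-2m}$.

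For the combined operator, compute
\[
 \mathscr A_pw^{2m}=\frac{(2p-\nu_m)w^{2m}-(\text{higher terms of }\mathscr Sw^{2m})}{2p-2}.
\]
Taking the norm at radius $r$ and dividing by $R^{2m}$ gives the ratio
\[
 2^{-m}\frac{|2p-\nu_m|+\bigl(\pnorm{\mathscr Sw^{2m}}{r}-\nu_mr^{2m}\bigr)r^{-2}\cdot(\ldots)}{2p-2}.
\]
The higher terms contribute $O(r^2\nu_m^2)$ relative to the main term. Because $R^2=1/(16(p-2))$, this contribution is of order $m^4/(p\cdot2^m)$ and hence small.

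The case analysis is by $m$. For $m=1$, $\nu_1=6$, so the main term is $2^{-1}|2p-6|/(2p-2)<1/2$. The higher terms push the bound only slightly, and $a(p)=\max\{1/2,24/(5(2p-2))\}$ is designed to absorb exactly this slack. For $p$ near $4$ the bound $24/(5(2p-2))$ dominates, using a crude estimate of the remainder of $6R^2$ size divided by $2p-2$. For $m\ge2$, the factor $2^{-m}\le1/4$, combined with $|2p-\nu_m|\le\max(2p,\nu_m)$, gives at most $1/2$ once the $\nu_m$-term is handled. Here the choice $R^2\propto1/(p-2)$ is what matters: $\nu_mR^{2m}\cdot2^{-m}$ versus $(2p-2)R^{2m}$ requires $\nu_m2^{-m}\lesssim p$, which holds for $p\ge4$.

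The main obstacle is the uniform-in-$m$ bookkeeping near $m=1,2,3$ with $p$ close to $4$. This is where the constants $6$ and $24/5$ are tight. I would first bound the higher terms by $(1-r)^{-2m-2}\nu_m\binom{2m+3}{2}r^2/12$ via Taylor's remainder, as in the first bound of \eqref{eq:optimized-monomial-bounds}, and then check the finitely many small $m$ by explicit rational evaluation at the extreme $r=1/8$.
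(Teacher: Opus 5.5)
Your monomial reduction and the exact normalized norms are correct and match the paper. Put $S_{2m}(w)=w^{-2}\bigl((1-w)^{-2m}+(1+w)^{-2m}-2\bigr)$; it has nonnegative coefficients and constant term $\nu_m=2m(2m+1)$. Then $\pnorm{\mathscr Sw^{2m}}{r}/R^{2m}=2^{-m}S_{2m}(r)$ and $\pnorm{\mathscr A_pw^{2m}}{r}/R^{2m}=2^{-m}\bigl(|2p-\nu_m|+S_{2m}(r)-\nu_m\bigr)/(2p-2)$.

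The bound by $6$ follows from your plan, but only if you use $r\le1/8$. Your tail factor $(1-1/(4\sqrt2))^{-2m}$ uses the bound for $R$, and $2^{-3}S_6(1/(4\sqrt2))\approx6.36>6$. Your Taylor remainder is also too small: the first higher term alone is $\nu_m\binom{2m+3}{2}r^2/6$, and the exponent should be $2m+4$.

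\textbf{The gap is in the $\mathscr A_p$ bound.} You misplace where $24/(5(2p-2))$ is needed.
\begin{itemize}
\item At $m=1$, $\nu_1=6<2p$, and the normalized norm is $(2p-12+S_2(r))/(4p-4)$ with $S_2(r)\le S_2(1/8)=24448/3969<10$. So it is always below $1/2$; there is no slack to absorb.
\item For $m\ge2$, your claim that $|2p-\nu_m|\le\max(2p,\nu_m)$ gives at most $1/2$ is false. At $p=4$ the exact normalized norms for $2m=4,6,8$ are about $0.55$, $0.795$, $0.78$. Also $\nu_m2^{-m}$ reaches $21/4>4$ at $m=3$.
\item When $\nu_m>2p$, that crude bound replaces $\nu_m-2p$ by $\nu_m$ and yields $2^{-m}S_{2m}(r)/(2p-2)$. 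At $p=4$, $2m=6$, this is $2^{-3}S_6(1/8)/6\approx0.96>4/5$, so this route cannot prove the lemma.
\end{itemize}

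\textbf{What is missing} is a split by the sign of $2p-\nu_m$, keeping the exact cancellation.

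If $\nu_m\le2p$, then $p-2\ge(\nu_m-4)/2$ gives $r^2\le\min\{1/64,\,1/(16(\nu_m-4))\}$. Hence $(2m+2)(2m+3)r^2\le1/3$ and $(2m+5)r\le1$, so $(1-r)^{-2m-4}\le e$. This gives $S_{2m}(r)-\nu_m\le\nu_m/12$ uniformly in $m$, and the normalized norm is at most $\tfrac12\bigl(2p-11\nu_m/12\bigr)/(2p-2)\le1/2$. This regime condition is what makes $mr$ small enough for your Taylor remainder to work. For fixed $p$ and growing $m$, the remainder is no longer small relative to $\nu_m$.

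If $\nu_m>2p$, the numerator is exactly $S_{2m}(r)-2p\le S_{2m}(1/8)-8$, by positivity of the coefficients and $r\le1/8$. This reduces every $p\ge4$ to $p=4$ while keeping the factor $1/(2p-2)$. It then remains to show $\max_{m\ge2}2^{-m}\bigl(S_{2m}(1/8)-8\bigr)<24/5$; the maximum is at $2m=6$, with value about $4.77$.

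Without this reduction, checking small $m$ at $r=1/8$ covers only $p=4$. It does not handle the continuum $p\ge4$, on which both $|2p-\nu_m|$ and $2p-2$ vary.
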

\begin{proof}
For even $m\ge2$, put $\nu=m(m+1)$ and
\[
 S_m(r)=\frac{(1-r)^{-m}+(1+r)^{-m}-2}{r^2}
       =2\sum_{\ell\ge1}\binom{m+2\ell-1}{2\ell}r^{2\ell-2}.
\]
The monomial norms, divided by $R^m$, are
$2^{-m/2}S_m(r)$ and
$2^{-m/2}(|2p-\nu|+S_m(r)-\nu)/(2p-2)$, respectively.
For $\nu\le2p$, we obtain from Taylor's remainder formula
\[
 \frac{S_m(r)-\nu}{\nu}
 \le\frac{(m+2)(m+3)r^2}{12(1-r)^{m+4}}<\frac12.
\]
Indeed, $(m+2)(m+3)r^2\le1/3$ and $(m+5)r\le1$, using
$r^2\le\min\{1/64,1/[16(\nu-4)]\}$; hence
$(1-r)^{-m-4}\le e<3$. Hence
$\pnorm{\mathscr A_p(w^m)}{r}/R^m\le1/2$.

For the other bounds, $r\le1/8$ and positivity of the displayed series
reduce the calculation to $p=4$. Set
$x=32/49$, $y=32/81$, and $m=2k$. The two relevant expressions are
\[
 A_k=64(x^k+y^k-2^{1-k}),\qquad
 B_k=\frac{A_k-8\,2^{-k}}6\quad(k\ge2).
\]
Both have their maximum at $k=3$ (for $A_k$, take $k\ge1$).
To see this, let $g_k=(1-x)(2x)^k+(1-y)(2y)^k$.
It increases for $k\ge1$, since
\[
 g_{k+1}-g_k=\frac{255}{2401}(64/49)^k-
                    \frac{833}{6561}(64/81)^k>0,
 \qquad g_2<1<17/16<g_3.
\]
The differences of $A_k$ and $B_k$ have the signs of $1-g_k$ and
$17/16-g_k$. The single rational comparison
\[
 A_3=\frac{360864356336}{62523502209}<\frac{29}{5}
\]
gives $A_k<6$ and $B_k<4/5$. If $\nu>2p$, then
$\pnorm{\mathscr A_p(w^m)}{r}/R^m\le6B_k/(2p-2)\le24/[5(2p-2)]$. Absolute summation proves the lemma.
The substitutions are legitimate because $r/(1-r)<R$.
\end{proof}

\begin{proposition}\label{prop:power-analytic-preparation}
For every real $d\ge3$ and $j\ge1$, put
\[
 p_j=2+2^j(d-2),\qquad R_j=(4\sqrt{p_j-2})^{-1}.
\]
The iterated curvature has the holomorphic representation
\[
 h_j(z)=\frac{p_j}{z^2}\{1+e_j(1/z)\},\qquad
 \pnorm{e_j}{R_j}<\min\left\{\frac18,\frac{j}{4(p_j-2)}\right\},
\]
where $e_j$ is even and holomorphic near $|w|\le R_j$, with $e_j(0)=0$.
Consequently,
\begin{equation}\label{eq:power-analytic-error}
 \left|h_j(z)-\frac{p_j}{z^2}\right|
 <\frac{4jp_j}{|z|^4}\qquad(|z|\ge4\sqrt{p_j-2}).
\end{equation}
At positive integer arguments in these ranges, the curvatures agree
with those of the original sequence and are positive.
\end{proposition}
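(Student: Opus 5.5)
We argue by induction on $j\ge1$ in the variable $w=1/z$. At $j=0$, $h_0=dv(z)=d\sum_{r\ge1}w^{2r}/r$, so $h_0=dw^2\{1+e_0(w)\}$ with $e_0$ even and $e_0(0)=0$. Its coefficients are $w^{2r-2}/r$, which is harmless on any radius $\le1/4$. To start the induction at $j=1$, we write $h_1=2h_0-\D g(h_0)$ and use Lemma~\ref{lem:reference-scale} with $H=w^2$, i.e.\ $z^{-2}$. A shift $z\mapsto z\pm1$ corresponds to $w\mapsto w/(1\pm w)$, so $\D$ acting in $z$ becomes $w^2\mathscr S$ acting in $w$. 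We also use $\log(1-e^{-h})=\log h+\phi(h)$ and the identity $\D\log z^{-2}=-2\D\log z=-2\log(1-z^{-2})$ to rewrite the recurrence as a recursion for $p_je_j$.

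Concretely, set $h_j=p_jw^2(1+e_j)$. The curvature recurrence gives
\[
 p_{j+1}(1+e_{j+1})=2p_j(1+e_j)+\frac{2v}{w^2}-\mathscr S\{\log(1+e_j)+\phi(p_jw^2(1+e_j))\}.
\]
Here $2v/w^2=2+\sum_{r\ge2}(2/r)w^{2r-2}$. The constant terms match, since $p_{j+1}=2p_j-2$ (one checks this from $\mathscr S(w^2)=2$ up to the lower-order terms). The linear part in $e_j$ is $2p_je_j-\mathscr Se_j=(2p_j-2)\mathscr A_{p_j}e_j$. Dividing by $p_{j+1}=2p_j-2$, we get
\[
 e_{j+1}=\mathscr A_{p_j}e_j+\frac{1}{p_{j+1}}\Bigl(2(v/w^2-1)-\mathscr S\{\log(1+e_j)-e_j+\phi(p_jw^2(1+e_j))\}\Bigr).
\]
This is the cancellation-preserving form the section announces.

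We then apply Lemma~\ref{lem:power-scaled-difference} with $p=p_j$. It maps radius $R_j$ to $r=R_j/\sqrt2$, and since $p_{j+1}-2=2(p_j-2)$ we have $R_{j+1}=R_j/\sqrt2$ exactly. This is why the radii are chosen as they are. The linear term contributes at most $a(p_j)\pnorm{e_j}{R_j}\le\frac45\pnorm{e_j}{R_j}$, and at most half of it once $p_j\ge6$. The forcing terms are all $O(1/p_{j+1})$:
\begin{itemize}
 \item the term $2(v/w^2-1)$ has norm $O(R_{j+1}^2)=O(1/p_j)$;
 \item the $\phi$ term is $\mathscr S$ applied to a function of norm $\lesssim p_jR_j^2\le1/16$ with no constant term beyond an affine piece, and $\mathscr S$ kills constants and multiplies norms by at most $6$;
 \item the quadratic logarithm remainder is bounded by $\pnorm{e_j}{R_j}^2$.
\end{itemize}
Each forcing term, divided by $p_{j+1}$, is at most about $1/(4(p_{j+1}-2))$.

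With the inductive bound $\epsilon_j\le j/(4(p_j-2))$ and the contraction factor $\frac12$, this gives
\[
 \epsilon_{j+1}\le\tfrac12\epsilon_j+\tfrac{1}{4(p_{j+1}-2)}\le\tfrac{j+1}{4(p_{j+1}-2)},
\]
because $\frac12\cdot\frac{j}{4(p_j-2)}=\frac{j}{4(p_{j+1}-2)}$. The cap $\frac18$ is handled similarly, using $a(p)\le 4/5$ for small $p$ together with the bound on the forcing terms. Parity and $e_j(0)=0$ are preserved, since $\mathscr S$ maps even functions to even functions when $w\mapsto-w$ swaps the two shifts.

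The pointwise bound \eqref{eq:power-analytic-error} follows from the coefficient bound $|e_j(w)|\le\pnorm{e_j}{R_j}(|w|/R_j)^2$. This gives
\[
 |h_j-p_jw^2|\le p_j|w|^4\,\pnorm{e_j}{R_j}R_j^{-2}\le p_j|w|^4\cdot\frac{j}{4(p_j-2)}\cdot16(p_j-2)=4jp_j|w|^4.
\]
Positivity on the real range follows from $|e_j|<1/8$. The identification with the sequence curvatures uses the agreement principle at the end of Section~\ref{subsec:prepared-entry}. For $|z|\ge4\sqrt{p_j-2}\ge4$, the indices required by the recurrence stay in $k\ge1$, and the disks $|w|\le R_j$ shrink, so the shifted arguments remain in the previous domains. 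The base case $j=1$ is done directly from the exact first iterate, as in Lemma~\ref{lem:power-first}.

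The main obstacle is the constant bookkeeping for small $p_j$, namely $d$ near $3$ and $j=1,2$. There $a(p)$ is $4/5$ rather than $1/2$, and the $\phi$ argument $p_jR_j^2\le p_j/(16(p_j-2))$ is not small. I would first treat those layers by explicit rational evaluation of the majorant series, and use the clean contraction only for $p_j\ge6$.
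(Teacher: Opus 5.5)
Your architecture is the paper's: the normalized recurrence with $\mathscr A_{p_j}$, Lemma~\ref{lem:power-scaled-difference} with $R_{j+1}=R_j/\sqrt2$, and induction on $Y_j=(p_j-2)\pnorm{e_j}{R_j}$ with increments below $1/4$, treating the first layers separately.

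First, a slip. Since $\D\log z^{-2}=2v$, the recurrence contains $-2v/w^2$, not $+2v/w^2$. It is the constant $-2$ of this term that gives $p_{j+1}=2p_j-2$. The operator $\mathscr S$ never produces a constant term; for instance $\mathscr S w^2=6w^2+10w^4+\cdots$, not $2$.

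The real gap is that the quantitative core is only asserted, and the estimates you indicate do not close the induction at the constant $1/4$. You need the \emph{total} forcing, not each term, to be below $1/(4(p_{j+1}-2))$. Your $\phi$ bound rests on $p_jR_j^2\le1/16$, but in fact $p_jR_j^2=p_j/(16(p_j-2))\ge1/16$. Use the correct value together with the generic factor $6$ of the lemma, and take $p_j=6$ (so $d=3$, $j=2$). The linear piece $-\tfrac12p_jw^2(1+e_j)$ alone then costs $\tfrac{18}{640}(1+M_j)$ against a budget of $\tfrac{20}{640}$. This exceeds the budget once $M_j>1/9$, which your hypothesis $M_2\le1/8$ does not exclude. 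Moving the clean contraction to $p_j\ge6$ does not help.

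The paper handles this in three steps:
\begin{itemize}
\item It isolates the linear piece and uses the exact bound $\pnorm{\mathscr S w^2}{r}\le\tfrac{25}{4}r^2$. This exploits $r^2=R^2/2$ and roughly halves that piece.
\item It applies the factor $6$ only to $\tfrac12p_jw^2e_j$, to $\phi(v)+v/2\preceq2v^2/(36-v^2)$, and to $\log(1+e)-e$.
\item It uses $p/p'\le2/3$.
\end{itemize}
This yields an increment of $27/112$, just under $1/4$.

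The base case is also missing. Lemma~\ref{lem:power-first} gives only the factorization of the first iterate. You must bound
\[
e_1=V-\frac{\mathscr S\log F_d}{2d-2},\qquad V=-\frac{\log(1-w^2)}{w^2}-1,\qquad F_d(w)=\int_0^1(1-tw^2)^{d-1}\,dt,
\]
by a coefficient majorant uniform in $d$. The paper uses $\log F_d\preceq\log(1-aw^2/2)-\log(1-aw^2)$ with $a=d-1$. This majorant is monotone in $a$, so it reduces to $a=2$ and yields $Y_1<3/20$.

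The margin below $1/4$ is genuinely needed. For $3\le d<4$, $a(p_1)$ can be as large as $4/5$, so the first step only gives
\[
Y_2\le\tfrac85Y_1+6\Psi(M_1)+\tfrac3{16},\qquad \Psi(t)=\frac{t^2}{2(1-t)}.
\]
This is below $1/2$ when $Y_1<3/20$, but only about $0.64$ if you know merely $Y_1<1/4$. Starting instead from $j=0$ via the lemma is unavailable in this range, because $p_0=d<4$.
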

\begin{proof}
Write $V(w)=-\log(1-w^2)/w^2-1$ and
$\Psi(t)=t^2/[2(1-t)]$. The normalized factor in the first iterate is
\[
 F_d(w)=\int_0^1(1-tw^2)^{d-1}\,dt,
 \qquad e_1=V-\frac{\mathscr S\log F_d}{2d-2}.
\]
For $a=d-1\ge2$, we bound the absolute values of the binomial
coefficients to obtain
\[
 F_d-1\preceq\frac{aw^2/2}{1-aw^2/2},\qquad
 \log F_d\preceq K_a(w):=\log(1-aw^2/2)-\log(1-aw^2).
\]
Here $a(a+1)\cdots(a+m-1)/(m+1)!\le(a/2)^m$, and the logarithm series converge
on all required shifts since $a(R_1/(1-R_1))^2<1/10$.
After multiplication by $p_1-2=2(a-1)$ and evaluation at
$R_1^2=1/[32(a-1)]$, every term in the resulting majorant is nonincreasing
in $a$: the terms from $\mathscr S K_a$ are positive constants times
$a^{m-1}/(a-1)^{m+\ell-2}$, $m,\ell\ge1$.
Thus it suffices to take $a=2$. Using $\log(1+t)\le t$ gives
\begin{equation}\label{eq:power-analytic-initial}
 (p_1-2)\pnorm{e_1}{R_1}
 \le\frac1{31}+16\left(\frac{115200}{114359}-1\right)
 =\frac{17145}{114359}<\frac3{20},
 \qquad \pnorm{e_1}{R_1}<\frac3{40}.
\end{equation}
The fraction in this bound is obtained by pairing the two unit
substitutions at $R_1^2=1/32$.

Fix an iteration and write $p=p_j$, $p'=2p-2$, $R=R_j$,
$r=R_{j+1}$, $e=e_j$, and $M=\pnorm{e}{R}$.
The exact normalized recurrence is
\begin{equation}\label{eq:power-analytic-recurrence}
 e^+=\mathscr A_pe-
 \frac{\mathscr S\{\log(1+e)-e\}}{p'}-
 \frac{2V+\mathscr S\phi(pw^2(1+e))}{p'}.
\end{equation}
Assume $M\le1/8$. The norm $\pnorm{\log(1+e)-e}{R}$ is at most $\Psi(M)$.
From the product for $\sinh$ \cite[Eq.~4.36.1]{DLMF}, we obtain the
coefficient majorant
\[
 \phi(v)+v/2\preceq\frac{2v^2}{36-v^2},\qquad
 \pnorm{pw^2(1+e)}{R}\le9/64<1/6.
\]
Retaining the linear term $-pw^2(1+e)/2$, the norm of the last term in
\eqref{eq:power-analytic-recurrence} is at most
\[
 F(p)\le\frac1{p-2}
 \left(\frac1{189}+\frac{25}{384}+\frac1{64}+\frac1{200}\right)
 <\frac3{32(p-2)}\le\frac3{64}<\frac1{20}.
\]
The four terms bound $2V/p'$, the $w^2$ part, the $w^2e$ part, and the
quadratic remainder of $\phi$, respectively. Here we use
$\pnorm{\mathscr S w^2}{r}\le(25/4)r^2$, $p/p'\le2/3$, and
the operator estimate in Lemma~\ref{lem:power-scaled-difference}.
For the last bound, writing $H=\pnorm{pw^2(1+e)}{R}$,
we have
\[
 \frac{12(p-2)H^2}{p'(36-H^2)}
 \le\frac{729}{331520}<\frac1{200},
 \qquad \frac{p^2}{p'(p-2)}\le\frac43.
\]

Therefore $M^+\le a(p)M+6\Psi(M)/p'+F(p)$.
Starting from \eqref{eq:power-analytic-initial}, we prove $M<1/8$
by induction. At the first step of the induction, the error is bounded by
$(4/5)(3/40)+\Psi(3/40)+1/20<1/8$. Thereafter $p\ge6$, and the
bound is $1/16+(3/5)\Psi(1/8)+1/20=33/280<1/8$.
For $Y_j=(p_j-2)M_j$, we obtain from the same inequalities
\[
 Y_1<3/20,\qquad
 Y_2<\frac85\frac3{20}+6\Psi(3/40)+\frac3{16}<\frac12,
 \qquad Y_{j+1}\le Y_j+\frac{27}{112}<Y_j+\frac14\quad(j\ge2).
\]
Hence $Y_j<j/4$. All compositions and logarithms are analytic by the
strict disk inclusions and the preceding smallness bounds. By evenness,
$|e_j(w)|\le M_j|w|^2/R_j^2<4j|w|^2$, which proves
\eqref{eq:power-analytic-error}. On the positive ray $1+e_j>7/8$.
Since $R_{j+1}^{-1}-R_j^{-1}=(\sqrt2-1)R_j^{-1}>1$,
all predecessors lie in the
preceding exterior domain. The first two iterates are
positive by Lemma~\ref{lem:power-first}. By induction using the identity
$a_k^{[j+1]}=(a_k^{[j]})^2(1-e^{-h_j(k)})$, these curvatures agree
with those of the original sequence, whose iterates are positive
on the required integer sets. The logarithms in the normalized
recurrence are defined by their series at $1$ and $0$, so no logarithm
of $z$ on a full exterior disk is required.
\end{proof}

For $3\le d\le242$, let $J$ be the first positive integer with
$p_J\ge256$, and put $P=p_J$. Then $J\le8$ and $256\le P<510$.
For an integer $k\ge256$, set
\[
 K=\left\lceil\log_2\frac{3k^2}{P}\right\rceil,
 \qquad R_i=\frac{3k}{5}\left(\frac34\right)^i,
 \qquad D_i=\{|z-k|\le R_i\}.
\]
On $D_0$, $|z|\ge2k/5>4\sqrt{508}$.
We apply Proposition~\ref{prop:power-analytic-preparation} to obtain
\begin{equation}\label{eq:power-real-preparation}
 \left|h_J(z)-Pz^{-2}\right|\le\frac{32P}{|z|^4}.
\end{equation}
Inversion gives $k^2\Rea z^{-2}\ge175/512$ on $D_0$ and
$k^2\Rea z^{-2}\ge(1+r)^{-2}$ on a disk of radius $rk$, $r\le1/2$.
Consequently, substituting $P\ge256$ and $k\ge256$ in
\eqref{eq:power-real-preparation} yields
\[
 k^2\inf_{D_i}\Rea h_J>
 \begin{cases}80,&i=0,\\120,&i=1,\\140,&i\ge2.\end{cases}
\]

All successive radius gaps have square greater than six. For $i\le9$,
\[
 (R_i-R_{i+1})^2\ge\frac{9\cdot256^2}{400}
 \left(\frac9{16}\right)^9>8,
\]
whereas for $10\le i<K$, we use $2^iP<3k^2$ to obtain
$(R_i-R_{i+1})^2>(3P/400)(9/8)^i\ge(48/25)(9/8)^{10}>6$.
Thus $\mathcal E_i<140/k^2$ by Corollary~\ref{cor:geometric-budget}. At the first layer the sharper exact bound is
\[
 k^2\mathcal E_1=\frac2{9/400-k^{-2}}
 \le\frac{3276800}{36839}<90.
\]
The real-part inequalities in Theorem~\ref{thm:unified-entry} hold
at every iteration. At the center, we obtain from \eqref{eq:entry-center-bounds}
\[
 h_{J+K}(k)>3\left(1-\frac{32}{k^2}-\frac{140}{P}\right)
 \ge\frac{2781}{2048}>1>\log(8/3).
\]
Since $2k/5-1>4\sqrt{508}$, all required initial coefficient indices
lie in the exterior domain where the preliminary iterates are positive.
At each later step, both unit shifts lie in the preceding domain. By
the correspondence in Section~\ref{subsec:prepared-entry},
$a_k^{[j]}(d)>0$ for $0\le j\le J+K$ and $c_{J+K}(k)<3/8$.

\subsection{Finite initial ranges}

The preceding argument applies to $k\ge256$. We now verify the
inequalities at the remaining indices uniformly in the exponent. Put
\[
 I_r=[2+2^r,\min\{2+2^{r+1},242\}],\qquad0\le r\le7.
\]
Divide each $I_r$ into closed intervals of length $2^r/8$. These $63$ intervals cover $[3,242]$.

\begin{proposition}\label{ver:power-prefixes}
For $0\le r\le7$ and $d\in I_r$,
\[
 a_v^{[j]}(d)>0\quad(0\le j\le17-r,\ 1\le v\le275-j),
 \qquad c_{17-r}(k)\le13/40\quad(1\le k\le256).
\]
\end{proposition}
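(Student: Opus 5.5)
This is a finite computation with the exponent $d$ kept symbolic, in the style of Subsection~\ref{sec:models} and Proposition~\ref{ver:mmr}. The range $1\le k\le256$ and the depth $17-r$ are fixed, so only the initial entries $a_v=v^d$ with $1\le v\le275$ matter. Indeed, $c_j(k)$ at depth $j\le17$ depends only on indices in $[k-j-1,k+j+1]$, and $256+17+1<275$. The left endpoint is handled by zero extension: $c_j(1)=0$ at every layer, and $a_0^{[j]}=0$ lies outside the positive support. The right end of the window $v\le275-j$ is an artificial truncation. It is discarded after each update and never assigned a value.

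\textbf{Parametrization and initial enclosures.} On each of the $63$ subintervals write $d=d_0+(2^r/16)(1+x)$ with $|x|\le1$, where $d_0$ is the left endpoint. The initial ratios are
\[
 c_0(k)=\exp\{-d\,v(k)\},\qquad v(k)=-\log(1-k^{-2})>0,
 \qquad 2\le k\le274,
\]
and $c_0(1)=0$. For each $k$, I would factor
\[
 c_0(k)=e^{-d_0v(k)}\exp\{-(2^r/16)(1+x)v(k)\}.
\]
The first factor gets a rational interval enclosure. For the second, the exponent has modulus at most $(2^r/8)\log(4/3)<5$, so a truncated Taylor series in $x$, of degree around $48$ with a rigorous tail bound, encloses it. These combine into a polynomial model $(P_k,E_k)$ on $[-1,1]$, rounded at a dyadic scale such as $2^{512}$ as in Lemma~\ref{lem:model-arithmetic}. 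The variable $x$ must be shared by all $k$. As the Boros--Moll discussion shows, decoupling the parameter destroys the cancellation in \eqref{eq:ratio-iteration}.

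\textbf{Iteration.} Next I would apply \eqref{eq:ratio-iteration} $17-r$ times, retaining $1\le k\le275-j$ at layer $j$ and keeping $c_j(1)=0$. Each quotient by $(1-c_j(k))^2$ is certified as in Lemma~\ref{lem:model-arithmetic}, after first checking that the upper bound of $c_j(k)$ is below one. Nonnegativity of the ratios follows inductively from \eqref{eq:ratio-iteration}, and that gives positivity of the coefficients: since $a_k^{[j+1]}=(a_k^{[j]})^2(1-c_j(k))$, an upper bound $c_j<1$ at every retained index yields the first assertion at each layer. At the final depth $17-r$, I would check $\sup c_{17-r}(k)\le13/40$ for $1\le k\le256$ via the range bound $p_0+\sum_{i\ge1}\max(p_i,0)+E$.

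\textbf{Main obstacle.} The delicate part is the first few layers for small $d$, near $d=3$ with $r=0$. There $c_0(k)\approx1-d/k^2$ is extremely close to $1$ for $k$ near $256$. The quantities $1-c_j$ are therefore of size about $k^{-2}$, and the subtractions in \eqref{eq:ratio-iteration} need enough precision to resolve relative errors of order $10^{-5}$ in $1-c$, amplified over $17$ layers.

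I would first try modeling $s_j=1-c_j$ directly through \eqref{eq:s-iteration}, initialized by $s_0(k)=1-e^{-dv(k)}$ with its cancellation removed analytically. If the enclosures blow up, I would shrink the $d$-intervals or raise the polynomial degree and scale; the interval lengths $2^r/8$ were presumably tuned for exactly this. For large $r$, the larger $d$ makes the curvature large sooner, and the reduced depth $17-r$ reflects the faster arrival at the threshold.
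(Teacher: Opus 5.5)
Your plan is essentially the paper's computer-assisted proof. On each of the $63$ intervals the paper writes $d=a+bx$ with one shared $x\in[-1,1]$. It encloses $e^{-d\ell_k}$ by degree-$32$ polynomial models at scale $2^{512}$, expanding about the interval center, with truncated logarithm series and halving/squaring for the constant exponentials. It then iterates \eqref{eq:ratio-iteration} with $c_j(1)=0$, discards the rightmost position, checks $U<1$ before each update, and finally verifies $40U\le13$ for $1\le k\le256$.

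There is one small indexing slip. At layer $j$ the ratios can only be retained for $1\le k\le274-j$, since they use $a_{k+1}$ and $c_0$ is defined only through $k=274$; only the coefficients are positive for $1\le v\le275-j$. Because $274-17=257\ge256$, this still covers the final check.
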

\begin{proof}[Computer-assisted proof]
On each parameter interval write $d=a+bx$, where $|x|\le1$ and
$a-b\ge3$. The initial ratios are
\[
 c_0(1;d)=0,\qquad c_0(k;d)=e^{-d\ell_k},\qquad
 \ell_k=-\log(1-k^{-2})\quad(k\ge2).
\]
We use polynomial enclosures of degree $32$ and coefficient and error
denominators $S=2^{512}$, retaining the same $x$ at every position and layer.
For $t=k^{-2}$, choose $m$ so that
\[
 L_m=\sum_{i=1}^m\frac{t^i}{i},\qquad
 0\le\ell_k-L_m\le\frac{t^{m+1}}{(m+1)(1-t)}=:\epsilon_m<2^{-552}.
\]
The polynomial
\[
 e^{-aL_m}\sum_{i=0}^{32}\frac{(-bL_mx)^i}{i!}
\]
has error at most
$e^{-(a-b)L_m}(bL_m)^{33}/33!+(a+b)\epsilon_m$.
To enclose each constant exponential, halve its nonnegative argument
until it is at most $1/16$, use the alternating series with error below
$2^{-552}$, and recover the original value by repeated squaring with outward rounding.
Thus every initial enclosure follows from rational arithmetic.

We apply \eqref{eq:ratio-iteration} with the product and quotient
rules of Lemma~\ref{lem:model-arithmetic}. All discarded coefficients
and rounding errors enter the residual bounds. A model $(P,E)$ on
$[-1,1]$ has upper bound $U=P(0)+\sum_{i\ge1}|[x^i]P|+E$.
Before every update, each retained upper ratio satisfies $U<1$.
In particular, $(1-U)^2$ is a positive lower bound for the corresponding
squared denominator. Nonnegativity of the ratios and positivity of the
coefficients then follow by induction from the positive initial sequence.

The initial ratios extend through index $274$, using original coefficients
through index $275$. At layer $j$ we retain $1\le k\le274-j$, preserve
the true endpoint $c_j(1)=0$, and discard the rightmost position.
By exact arithmetic, we verify $40U\le13$ for $1\le k\le256$ after
$17-r$ iterations on each whole parameter interval. Every required
predecessor is retained, and this index range overlaps
the analytic tail at $k=256$.
\end{proof}

\begin{proof}[Proof of Theorem~\ref{thm:powers}]
For $3\le d\le242$, the analytic tail estimates and
Proposition~\ref{ver:power-prefixes} give, at each $k\ge1$, a finite
integer $T=T(k)$ such that
$a_k^{[j]}(d)>0$ for $0\le j\le T$ and $c_T(k)<3/8$.
Lemma~\ref{lem:local-stop} therefore gives positivity at all orders.
For $d\ge241$, we apply Proposition~\ref{prop:large-real-powers}.

For $3\le d\le242$ and $k\ge256$, the depth in the analytic argument satisfies
\[
 T_{3/8}(k)\le J+\left\lceil\log_2\frac{3k^2}{P}\right\rceil
 =\left\lceil\log_2\frac{3k^2\,2^J}{P}\right\rceil
 \le\left\lceil\log_2\frac{3k^2}{d-2}\right\rceil,
\]
since $P=2+2^J(d-2)$. For $d\ge241$, indices $k\le\sqrt d$
have $T_{3/8}(k)=0$, while $\sqrt d<k\le\sqrt{2d-2}$ have
$T_{3/8}(k)\le1$. In the latter range the asserted ceiling is at least one.
For $k>\sqrt{2d-2}$, we similarly obtain
\[
 T_{3/8}(k)\le1+\left\lceil\log_2\frac{2k^2}{2d-2}\right\rceil
 =\left\lceil\log_2\frac{2k^2}{d-1}\right\rceil
 \le\left\lceil\log_2\frac{3k^2}{d-2}\right\rceil.
\]
For $k\le256$, at most ten iterations are needed for large exponents,
since $2\cdot256^2/480<2^9$. Together with the finite depths $17-r$,
this proves both iteration bounds.

For $d=1$,
\[
 (0,1,2,3,\ldots)\xrightarrow{\LL}(0,1,1,1,\ldots)
 \xrightarrow{\LL}(0,1,0,0,\ldots),
\]
and the last sequence is fixed.  For $d=2$, the first three iterates
begin
\[
 (0,1,7,17,31,\ldots),\quad(0,1,32,72,128,\ldots),\quad
 (0,1,952,1088,1984,\ldots).
\]
The fourth iterate at index three is
$1088^2-952\cdot1984=-705024$.
Thus the only excluded positive integer exponent is two.
\end{proof}

\section{Further results on the log-concavity operator}\label{sec:consequences}
Using Section~\ref{sec:stopping}, we derive iteration bounds from the
initial curvature and prove convergence of the normalized logarithms.

\subsection{Bounds on the number of iterations}
We relate the required number of iterations to the size of the
initial curvature and the radius of a disk on which it is holomorphic.
No particular formula for the sequence is needed.

\begin{theorem}\label{thm:curvature-scale}
Let \(\varepsilon_n>0\) tend to zero.  Suppose that the initial curvature
\(h_0\) is holomorphic on a neighborhood of
\(\lvert z-n\rvert\le R_n\), where \(0<R_n\le n/2\), and that there are
constants \(A,B>0\)
such that
\begin{equation}\label{eq:scale-hypotheses}
\Rea h_0(z)\ge A\varepsilon_n,
\qquad h_0(n)\le B\varepsilon_n,
\qquad A\varepsilon_nR_n^2\ge128.
\end{equation}
Put $K_n=\lceil\log_2(2\rho_*/(A\varepsilon_n))\rceil$.
For all sufficiently large $n$, assume that the initial coefficients
with $|k-n|\le K_n+1$ lie in the original positive support and that
$h_0(k)$ equals their initial curvature for $|k-n|\le K_n$.
Then, for all sufficiently large \(n\), $T=\tau(n)$ is finite and
\[
 a_v^{[j]}>0\quad(0\le j\le T,\ |v-n|\le T+1-j),
 \qquad c_T(n)\le q_*.
\]
Moreover,
\begin{equation}\label{eq:general-scale-time}
\tau(n)=\log_2\frac1{\varepsilon_n}+O_{A,B}(1).
\end{equation}
The conclusion is uniform over a family for which all these
hypotheses, including \(\varepsilon_n\to0\), hold uniformly.
\end{theorem}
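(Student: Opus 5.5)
The plan is to apply Corollary~\ref{thm:analytic-amplification} with center $M=n$, radius $R=R_n$ and $\mu=A\varepsilon_n$. The hypothesis $\mu R^2\ge128$ is exactly the third condition in \eqref{eq:scale-hypotheses}. Since $\varepsilon_n\to0$, for large $n$ we have $\mu<\rho_*$, so the depth in \eqref{eq:analytic-depth} is $K=\lceil\log_2(2\rho_*/(A\varepsilon_n))\rceil=K_n$. The corollary then gives holomorphic iterates with $\Rea h_j>0$ on $|z-n|\le R_n(3/4)^j$ for $0\le j\le K_n$, and $h_{K_n}(n)\ge\rho_*$. Hence $\tau(n)\le K_n<\infty$, and $K_n=\log_2(1/\varepsilon_n)+O_A(1)$.

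For the lower bound we use \eqref{eq:disk-time-sandwich}. We have $h_0(n)+54/R_n^2\le B\varepsilon_n+(54/128)A\varepsilon_n$, so
\[
\tau(n)\ge\log_2\frac{\rho_*}{(B+27A/64)\varepsilon_n}=\log_2\frac1{\varepsilon_n}-O_{A,B}(1).
\]
Together with the upper bound this gives \eqref{eq:general-scale-time}. The constants depend only on $A$ and $B$, which yields the uniformity claim for a family.

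It remains to identify the analytic curvatures with the sequence curvatures and to obtain the positivity statement. The final radius $R_n(3/4)^{K_n}$ exceeds one, as the proof of Corollary~\ref{thm:analytic-amplification} notes. Since successive gaps exceed one, every integer $v$ with $|v-n|\le T+1-j$ that is needed at layer $j\le T=\tau(n)$ lies in the disk of layer $j$, or one unit beyond it for the shifts. This is the delicate step. We must check that the integer range used by the dependency cone \eqref{eq:local-dependency-cone} lies within the disk where $h_0$ interpolates the initial curvature. We need $|k-n|\le K_n$, and this holds because $R_n\ge\sqrt{128/(A\varepsilon_n)}$ is eventually much larger than $K_n=O(\log(1/\varepsilon_n))$. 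The hypotheses also provide $h_0(k)$ equal to the initial curvature for $|k-n|\le K_n$ and positivity of the initial coefficients with $|k-n|\le K_n+1$.

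Then induction on $j$ applies, using $a_v^{[j+1]}=(a_v^{[j]})^2(1-e^{-h_j(v)})$. The real values $h_j(v)$ are positive: they are real on the real diameter by symmetry, and $\Rea h_j>0$. This gives $a_v^{[j]}>0$ for $0\le j\le T$ and $|v-n|\le T+1-j$, and it identifies $h_j$ with $-\log c_j$ at these points. Finally, $h_T(n)\ge\rho_*$ gives $c_T(n)=e^{-h_T(n)}\le q_*$. The main obstacle is this bookkeeping of index ranges versus disk radii, together with checking that eventually $K_n+1<R_n$. The latter follows from $R_n^2\varepsilon_n\gtrsim1$ and $\log(1/\varepsilon_n)=o(\varepsilon_n^{-1/2})$.
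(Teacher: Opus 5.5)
Your route is the paper's. You apply Corollary~\ref{thm:analytic-amplification} at center $n$ with $\mu=A\varepsilon_n$ and $R=R_n$. The upper bound $\tau(n)\le K_n$ comes from its depth, and the lower bound from \eqref{eq:disk-time-sandwich} via $54/R_n^2\le(27A/64)\varepsilon_n$. You then identify the analytic and sequence curvatures on the dependency cone by induction. The index bookkeeping is fine. In fact $R_n>K_n+1$ already follows from the corollary's gap and final-radius estimates, without the asymptotic comparison.

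There is, however, one unverified hypothesis. Corollary~\ref{thm:analytic-amplification}, like Theorem~\ref{thm:unified-entry} behind it, requires $h_0$ to be real on its real diameter. The theorem only assumes that $h_0$ agrees with the real sequence curvature at the integers $|k-n|\le K_n$. You apply the corollary without checking this. Later you assert that the $h_j$ are ``real on the real diameter by symmetry'', although no symmetry has been established.

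The paper's proof begins by fixing exactly this. On the conjugation-invariant disk, it replaces $h_0$ by $\tfrac12\bigl(h_0(z)+\overline{h_0(\overline z)}\bigr)$. This function is holomorphic, real on the diameter, and still has real part at least $A\varepsilon_n$. It equals $h_0$ at the integers $|k-n|\le K_n$, where $h_0$ is real, so the bound at $n$ and the agreement hypothesis are unchanged.

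Alternatively, realness is only needed at integer points, where it propagates inductively from agreement. The values $-\log c_j(v)$ are real, and $\Rea h_j>0$ makes them positive. Hence $g(h_j(v))=\log(1-c_j(v))$ is real, and the recurrence carries realness and agreement to the next layer. With this repair, however, you must rerun the corollary's center estimate rather than cite it as stated. With either repair, the rest of your argument is correct.
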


\begin{proof}
On each symmetric disk replace $h_0$ by
$(h_0(z)+\overline{h_0(\overline z)})/2$.
This holomorphic function is real on the real diameter, agrees with the
sequence curvature at integers, and preserves the bounds in
\eqref{eq:scale-hypotheses}.  Apply
Corollary~\ref{thm:analytic-amplification} with $\mu=A\varepsilon_n$.
Since $54/R_n^2\le(27A/64)\varepsilon_n$, its iteration bounds give
\[
\left\lceil\log_2\frac{\rho_*}{(B+27A/64)\varepsilon_n}\right\rceil
\le\tau(n)\le
\left\lceil\log_2\frac{2\rho_*}{A\varepsilon_n}\right\rceil
\]
for all sufficiently large $n$. The gap and final-radius estimates
in that corollary place all required integer points in the analytic
domains. The support and agreement assumptions identify their values
with the curvatures of the original iterates, as in
Section~\ref{subsec:prepared-entry}. They also ensure that the ratio
at layer $K_n$ is admissible under essential iteration.
Since the constants depend only on the stated bounds, the same proof
applies uniformly to the family.
\end{proof}

For power-law decay, we can make these bounds more explicit. In the
inverse-square case, a separate argument is needed, since the correction and initial curvature
have the same order. Here we consider a positive sequence on a half-line;
its holomorphic curvature interpolation is real on the positive axis
and agrees with the discrete curvature at all sufficiently large integers.

The inverse-square coefficient recurrence below already occurs in
the fixed-order analysis of Hou and Li
\cite[Theorem~3.2, equation~(3.10)]{HouLi}.
We estimate $h_j(n)$ for $0\le j\le\tau(n)$, where
$h_{\tau(n)}(n)\ge\rho_*$. To bound the second differences in the
recurrence, we use the sectorial hypothesis rather than real-axis
asymptotics alone.

\begin{theorem}\label{thm:power-law}
Suppose that \(h_0\) is holomorphic for
\(\lvert z\rvert>R_0\), \(\lvert\arg z\rvert<\vartheta\), where
\(0<\vartheta<\pi/2\), and that
\begin{equation}\label{eq:power-seed}
h_0(z)=\alpha z^{-p}(1+o(1)),\qquad \alpha>0,
\end{equation}
uniformly on every closed smaller sector. If \(0<p<2\), then, for all
sufficiently large $n$, $T=\tau(n)$ is finite and
\[
 a_v^{[j]}>0\quad(0\le j\le T,\ |v-n|\le T+1-j),
 \qquad c_T(n)\le q_*.
\]
In this case,
\begin{equation}\label{eq:subcritical-time}
\tau(n)=p\log_2 n-\log_2\alpha+O(1).
\end{equation}
If \(p=2\) and \(\alpha>2\), then the same conclusion holds with
\begin{equation}\label{eq:critical-time}
\tau(n)=2\log_2 n-\log_2(\alpha-2)+O(1).
\end{equation}
The parameters and the sector in these assertions are fixed.  The
implied constants may depend on the initial function and these parameters. No
uniform bound as $\alpha\downarrow2$ is asserted in
\eqref{eq:critical-time}.
\end{theorem}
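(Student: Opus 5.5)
The case $0<p<2$ will reduce to Theorem~\ref{thm:curvature-scale}, and the case $p=2$ will need a few preliminary iterations to raise the effective coefficient from $\alpha$ to a large value.

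\emph{Subcritical case $0<p<2$.} Fix a closed subsector $|\arg z|\le\vartheta'<\vartheta$ and take $R_n=cn$ with $c>0$ small, so that the disk $|z-n|\le cn$ lies in that subsector and in $|z|>R_0$ for large $n$. On this disk, \eqref{eq:power-seed} gives $h_0(z)=\alpha z^{-p}(1+o(1))$. Since $|\arg z^{-p}|\le p\arcsin c<\pi/2$, we have $\Rea h_0\ge A n^{-p}$ and $h_0(n)\le Bn^{-p}$, with $A,B$ depending on $\alpha,p,c$. Put $\varepsilon_n=n^{-p}$. Then $A\varepsilon_nR_n^2=Ac^2n^{2-p}\to\infty$, and this is exactly where $p<2$ is used. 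Theorem~\ref{thm:curvature-scale} now gives positivity together with $\tau(n)=p\log_2n+O(1)$. The sharper constant $-\log_2\alpha$ comes from \eqref{eq:center-doubling}, which gives $2^{-j}h_j(n)=h_0(n)+O(n^{-2}c^{-2})$, and from $h_0(n)=\alpha n^{-p}(1+o(1))$. Because $n^{-2}=o(n^{-p})$, the layer at which $h_j(n)$ first exceeds $\rho_*$ is $\log_2(\rho_*n^p/\alpha)+O(1)$. The support and agreement hypotheses hold for large $n$, since $K_n=O(\log n)$ is much smaller than $n$.

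\emph{Critical case $p=2$, $\alpha>2$.} Here $h_0R_n^2$ stays bounded, so I first iterate a fixed number $J$ of times on a fixed subsector. Write $h_j(z)=z^{-2}(\alpha_j+\epsilon_j(z))$. Using $\log(1-e^{-h})=\log h+\phi(h)$ and $\D\log z^{-2}=-2\log(1-z^{-2})=2z^{-2}+O(z^{-4})$, as in Lemma~\ref{lem:reference-scale}, the recurrence gives $\alpha_{j+1}=2\alpha_j-2$, so $\alpha_j=2+2^j(\alpha-2)$. The error satisfies $\epsilon_{j+1}=2\epsilon_j+o(1)$ uniformly on slightly smaller sectors. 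Indeed, $\D$ of a function that is $o(1)$ on a sector, for example $\log(1+\epsilon_j/\alpha_j)$ or $\phi(h_j)=O(|z|^{-2})$, is $o(|z|^{-2})$ on a smaller sector by \eqref{eq:modulus-difference} with gap proportional to $|z|$. Each step shrinks the sector and enlarges $R_0$, and doing this finitely often is harmless. Choose $J$ with $\alpha_J$ large, say $\alpha_J\ge 2^{10}$, depending only on $\alpha$. Then apply Corollary~\ref{cor:normalized-stop} at layer $J$ with $\kappa=0$, $x=n$, $r=1/2$ and $B=\alpha_J$. For large $n$ the error $E$ is $o(1)\cdot\alpha_J$, so \eqref{eq:normalized-size} holds because $B/(1+r)^2\cdot r^2$ is large. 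For the iteration count, apply \eqref{eq:center-doubling} from layer $J$ with $R=n/2$: $2^{-i}h_{J+i}(n)=\alpha_Jn^{-2}+O(\alpha_J o(1)n^{-2})+O(n^{-2})$. Hence the stopping layer is $J+\log_2(\rho_*n^2/\alpha_J)+O(1)=2\log_2n-\log_2(\alpha-2)+O(1)$, since $2^J/\alpha_J=1/(\alpha-2)+O(2^{-J})$.

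Positivity of the early iterates at large $n$ follows because $\Rea h_j>0$ on the disks and the sequence curvatures agree with $h_j$ at integers, as in Section~\ref{subsec:prepared-entry}. Lemma~\ref{lem:local-stop} is not needed here, since only the local conclusion at each large $n$ is claimed. I expect the main obstacle to be the critical case: the $O(1)$ in the iteration count must be controlled uniformly in $n$ after the preliminary layers, and the $O(n^{-2})$ error in \eqref{eq:center-doubling} must be shown negligible against $\alpha_Jn^{-2}$. That forces $J$ to be chosen first and $n$ large afterwards, and it is why the constant degenerates as $\alpha\downarrow2$.
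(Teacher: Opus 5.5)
Your route is the paper's. For $0<p<2$ you verify the hypotheses of Theorem~\ref{thm:curvature-scale} on a disk $|z-n|\le cn$ inside a smaller sector. For $p=2$ you run a fixed number $J$ of preliminary iterations on shrinking sectors, with $\alpha_{j+1}=2\alpha_j-2$, and then amplify from layer $J$. The paper applies Corollary~\ref{thm:analytic-amplification} directly at that point, and your Corollary~\ref{cor:normalized-stop} reduces to it. Your extra use of the center bounds to extract $-\log_2\alpha$ in the subcritical case is harmless. The paper instead takes $\varepsilon_n=\alpha n^{-p}$, and since $\alpha$ is fixed the term could also be absorbed into $O(1)$.

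One step needs repair. In the critical case you fix $r=1/2$, but the disk $|z-n|\le n/2$ contains points of argument $\pi/6$. So it leaves the sector whenever $\vartheta\le\pi/6$, and there $h_0$, hence $h_J$, is not even known to be defined. The same problem affects your use of \eqref{eq:center-doubling} with $R=n/2$.

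The fix is the order of choices you already used in the subcritical case, and it is the paper's order. First choose $r$ so that $|z-n|\le rn$ lies in the final closed subsector. Only then choose $J$ so that $\alpha_Jr^2/(1+r)^2$ exceeds $128$ with room to absorb the $o(1)$ error; the paper requires $\alpha_Jr^2/[2(1+r)^2]>128$. Even at $r=1/2$ your sample choice $\alpha_J\ge2^{10}$ is too small: \eqref{eq:normalized-size} then reads $B/9-E\ge128$, and $2^{10}/9<128$.

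Finally, for the lower bound on $\tau(n)$ you should note explicitly that $h_j(n)=O(n^{-2})<\rho_*$ at the finitely many layers $j<J$ for large $n$. Otherwise the stopping layer could in principle occur before $J$.
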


\begin{proof}
Choose $0<r<1/2$ so that $|z-n|\le rn$ lies in a smaller sector.
For $0<p<2$, the lower bound
$\Rea z^{-p}\ge\cos(p\arcsin r)(1+r)^{-p}n^{-p}>0$ implies
\eqref{eq:scale-hypotheses} with $\varepsilon_n=\alpha n^{-p}$,
since $\varepsilon_n(rn)^2\to\infty$.
Thus \eqref{eq:subcritical-time} follows from
Theorem~\ref{thm:curvature-scale}.

For $p=2$, the second-difference correction has the same order as the
initial curvature. We first increase its leading coefficient through
a fixed number of iterations. Induction on smaller sectors gives
\begin{equation}\label{eq:general-critical-drift}
 h_j(z)=\alpha_jz^{-2}+o(|z|^{-2}),\qquad
 \alpha_j=2+2^j(\alpha-2).
\end{equation}
Indeed, writing $h_j=\alpha_jz^{-2}(1+e_j)$, we have
$\D\log(1+e_j)=o(|z|^{-2})$ by Cauchy's estimate, while
$\D\log(\alpha_jz^{-2})=2z^{-2}+O(|z|^{-4})$ and
$\D\phi(h_j)=O(|z|^{-4})$.  Hence $\alpha_{j+1}=2\alpha_j-2$.
Choose a fixed $J$ with $2^J(\alpha-2)\ge1$ and
$\alpha_Jr^2/[2(1+r)^2]>128$.
By these fixed-depth asymptotics, $\Rea h_j>0$ on nested disks
for $0\le j\le J$. Their successive radius gaps exceed one, and
the last disk has radius $rn$, on which
\[
 \Rea h_J(z)\ge\frac{\alpha_J}{2(1+r)^2n^2},\qquad
 h_J(n)\le\frac{2\alpha_J}{n^2}.
\]
At each of the finitely many earlier layers the curvature at the center
tends to zero and is therefore below $\rho_*$ for large $n$. Applying
Corollary~\ref{thm:analytic-amplification} from layer $J$, we obtain
$\tau(n)=J+\log_2(n^2/\alpha_J)+O(1)$.
Since $1\le\alpha_J/[2^J(\alpha-2)]\le3$, this is
\eqref{eq:critical-time}. The asymptotics for fixed $j$ establish the
estimates through layer $J$. We use the disk estimate for
$J\le j\le\tau(n)$.
\end{proof}

\subsection{Limiting behavior of the iterates}
If $c_j(k)\le q_0<q_*$ and the neighboring ratios lie in $[0,1)$, then
$c_{j+1}(k)\le(1-q_0)^{-2}c_j(k)^2$. We use this inequality to
estimate the growth of the iterates and prove convergence of their
normalized logarithms.

\begin{theorem}\label{thm:limit-profile}
Let $a^{[j]}$ be positive on a fixed interval of integers, finite or
one-sided infinite, for every $j$. Suppose $0<q_0<q_*$ and, for each
interior $k$, there is an integer $T(k)\ge0$ with $c_{T(k)}(k)\le q_0$. Set
$C=(1-q_0)^{-2}$ and $\zeta_k=Cc_{T(k)}(k)<1$.  Then
\begin{equation}\label{eq:general-double-exp}
 c_{T(k)+r}(k)\le C^{-1}\zeta_k^{2^r}.
\end{equation}
The limit
\begin{equation}\label{eq:profile-def}
 F(k)=\lim_{j\to\infty}2^{-j}\log a^{[j]}_k
 =\log a_k+\sum_{j\ge0}2^{-j-1}\log(1-c_j(k))
\end{equation}
exists and is finite at every index of the positive support.  At each
interior index,
\begin{equation}\label{eq:profile-strict}
 2F(k)-F(k-1)-F(k+1)\ge2^{-T(k)}\log(1/\zeta_k)>0,
\end{equation}
and
\begin{equation}\label{eq:profile-error}
 0\le a^{[j]}_k e^{-2^jF(k)}-1\le\frac{c_j(k)}{1-c_j(k)}
 \qquad(j\ge T(k)).
\end{equation}
At original endpoints, $F(k)=\log a_k$ and the error is zero.
\end{theorem}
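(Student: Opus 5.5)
We argue directly from the ratio recurrence \eqref{eq:ratio-iteration} and the identity $a_k^{[j+1]}=(a_k^{[j]})^2(1-c_j(k))$.

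\textbf{Double-exponential decay.} For \eqref{eq:general-double-exp}, note that $q_0<q_*$, so \eqref{eq:threshold-preservation} gives $c_j(k)\le q_0$ for all $j\ge T(k)$. All ratios lie in $[0,1)$ by positivity. Dropping the neighbor factors $(1-c_j(k\pm1))\le1$ gives
\[
 c_{j+1}(k)\le C\,c_j(k)^2 .
\]
Setting $x_r=Cc_{T(k)+r}(k)$, this reads $x_{r+1}\le x_r^2$, hence $x_r\le\zeta_k^{2^r}$. Since $Cq_0<1$ (from $q_0<q_*$, i.e.\ $q_0<(1-q_0)^2$), we have $\zeta_k<1$. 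At original endpoints $c_j=0$ for every $j$, so those entries are simply squared.

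\textbf{Existence of $F$.} Iterating the coefficient identity gives
\[
 2^{-j}\log a_k^{[j]}=\log a_k+\sum_{i<j}2^{-i-1}\log(1-c_i(k)).
\]
The first $T(k)$ terms are finite, and the tail is bounded by $\sum_r 2^{-T-r-1}\cdot c/(1-c)$, which is summable by double-exponential decay. This proves \eqref{eq:profile-def}. For \eqref{eq:profile-error}, the normalized quantity is
\[
 a_k^{[j]}e^{-2^jF(k)}=\exp\Bigl(-\sum_{i\ge j}2^{j-i-1}\log(1-c_i(k))\Bigr)\ge1.
\]
To bound it above, I would use $c_{i+1}\le c_i$ for $i\ge j\ge T$, which follows from $Cc_i\le1$. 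Then
\[
 -\sum_{i\ge j}2^{j-i-1}\log(1-c_i)\le-\log(1-c_j)\sum_{m\ge0}2^{-m-1}=-\log(1-c_j),
\]
so the quantity is at most $(1-c_j)^{-1}$, which gives $\le c_j/(1-c_j)$ after subtracting $1$.

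\textbf{Strict concavity of $F$.} For \eqref{eq:profile-strict}, write $c_j(k)=a_{k-1}^{[j]}a_{k+1}^{[j]}/(a_k^{[j]})^2$. Then
\[
 2^{-j}\log(1/c_j(k))=2^{-j}\bigl(2\log a_k^{[j]}-\log a_{k-1}^{[j]}-\log a_{k+1}^{[j]}\bigr),
\]
and this tends to $2F(k)-F(k-1)-F(k+1)$ as $j\to\infty$, since $F$ exists at all three indices. Neighboring endpoints are no problem, because $F$ exists there too. Moreover, $2^{-j}\log(1/c_j)$ is nondecreasing in $j$ for $j\ge T$, because $c_{j+1}\le Cc_j^2$ gives
\[
 \log(1/c_{j+1})\ge2\log(1/c_j)-\log C .
\]
This alone loses a factor, so the sharper step is to work with $\zeta$ directly. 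From $x_r\le\zeta^{2^r}$ we get $\log(1/c_{T+r})\ge2^r\log(1/\zeta)+\log C$, hence
\[
 2^{-T-r}\log(1/c_{T+r})\ge2^{-T}\log(1/\zeta)+2^{-T-r}\log C .
\]
Letting $r\to\infty$ gives the bound $2^{-T(k)}\log(1/\zeta_k)>0$.

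\textbf{Main obstacle.} The only delicate point is making sure these limits are taken at a fixed interior $k$ while the neighbors' $F$ values exist. That is guaranteed, because $T$ exists at every interior index and endpoints are handled by $c_j=0$.
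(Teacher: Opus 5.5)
Your proposal is correct and follows the paper's proof essentially step for step. You use threshold preservation to get $c_{j+1}(k)\le Cc_j(k)^2$ and hence \eqref{eq:general-double-exp}, then telescope the logarithm of $a_k^{[j+1]}=(a_k^{[j]})^2(1-c_j(k))$ with the tail bounded via monotonicity of $c_i(k)$ for $i\ge T(k)$, and finally bound the limit of $2^{-j}\log(1/c_j(k))$ from below by taking logarithms in \eqref{eq:general-double-exp}. One harmless slip: your aside that $2^{-j}\log(1/c_j)$ is nondecreasing does not follow from $\log(1/c_{j+1})\ge2\log(1/c_j)-\log C$, since that inequality only makes $2^{-j}\log(1/(Cc_j))$ nondecreasing, but you never use the aside.
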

\begin{proof}
By threshold preservation, $c_{j+1}(k)\le Cc_j(k)^2$ for $j\ge T(k)$.
Since $Cq_0<1$, the ratios at this center decrease, and iteration proves
\eqref{eq:general-double-exp}. Taking logarithms of the coefficient
recurrence gives
\[
 2^{-j}\log a_k^{[j]}=\log a_k+
 \sum_{i<j}2^{-i-1}\log(1-c_i(k)).
\]
The right-hand side converges. The initial summands are finite, and
the remaining logarithms are uniformly bounded and multiplied by a
summable geometric sequence.
The omitted tail satisfies
\[
 0\le\log a_k^{[j]}-2^jF(k)
 =\sum_{i\ge j}2^{j-i-1}[-\log(1-c_i(k))]
 \le-\log(1-c_j(k)).
\]
Exponentiation proves \eqref{eq:profile-error}.  Finally
$2^{-j}[-\log c_j(k)]$ tends to the left-hand side of
\eqref{eq:profile-strict}.  Taking logarithms in
\eqref{eq:general-double-exp} gives its asserted positive lower bound.
\end{proof}

For $q_0=3/8$ the formula becomes
\[
 c_{T+r}(k)\le\frac{25}{64}
 \left(\frac{64c_T(k)}{25}\right)^{2^r}
 \le\frac{25}{64}\left(\frac{24}{25}\right)^{2^r}.
\]
The limit formula applies to power sequences with $d\ge3$,
Boros--Moll sequences, transposed Boros--Moll sequences, and the coefficient sequences of the Medina--Moll--Rowland polynomials.
A fixed normalized sequence from Euler's difference table under essential iteration has only finitely many
layers and does not fall under this theorem.

For transposed Boros--Moll sequences, $c_T(k)\le q_*$ implies
$c_T(k)<q_*$, since these ratios are rational and $q_*$ is irrational. Repeated
application of $x\mapsto(x/(1-x))^2$ brings this ratio below $3/8$
in finitely many further steps. Indeed, the map is strictly smaller
than $x$ on $(0,q_*)$, and the resulting decreasing sequence has limit
zero. Hence, at each fixed $k$, we have $c_j(k)\le3/8$ for some finite $j$.
\section*{Acknowledgments}
Matthew H.~Y.~Xie was supported by the National Natural Science Foundation
of China (Grant No.~12271403). Philip B.~Zhang was supported by the National
Natural Science Foundation of China (Grant No.~12171362) and the Natural
Science Foundation of Tianjin Municipality (Grant No.~25JCYBJC00430).

\section*{AI declaration}
Generative AI was used to develop
mathematical ideas and proof arguments, examine proposed proofs
and estimates, write and check computational code, carry out
symbolic and numerical calculations, and draft and revise the
exposition. The authors are responsible for the mathematical
content, computations, references, and exposition.

\end{document}